\def\titlerunning#1{\gdef\titrun{#1}}
\def\author#1{\gdef\autrun{\def\and{\unskip, }#1}\gdef\@author{#1}}
\def\address#1{{\def\and{\\\hspace*{18pt}}\renewcommand{\thefootnote}{}%
\footnote {#1}}%
\markboth{\autrun}{\titrun}}
\def\email#1{e-mail: #1}
\def\subjclass#1{{\renewcommand{\thefootnote}{}%
\footnote{\emph{Mathematics Subject Classification (2010):} #1}}}
\def\keywords#1{\par\medskip
\noindent\textbf{Keywords.} #1}
\numberwithin{equation}{section}
\newtheorem{theorem}{Theorem}
\newtheorem{proposition}{Proposition}[section]{\bf}{\it}
\newtheorem{definition}[proposition]{Definition}{\bf}{\it}
\newtheorem{defprop}[proposition]{Definition-Proposition}{\bf}{\it}
\newtheorem{lemma}[proposition]{Lemma}{\bf}{\it}
\newtheorem{corollary}[proposition]{Corollary}{\bf}{\it}
\newtheorem{remark}[proposition]{Remark}{\bf}{\it}
\newtheorem{example}[proposition]{Example}{\bf}{\it}
\newtheorem{conjecture}[proposition]{Conjecture}{\bf}{\it}
\newcommand{\C}{\mathbb{C}}
\newcommand{\R}{\mathbb{R}}
\newcommand{\Ss}{\mathbb{S}}
\newcommand{\N}{\mathbb{N}}
\newcommand{\Z}{\mathbb{Z}}
\newcommand{\Hh}{\mathbb{H}}
\newcommand{\PointM}{\zeta}
\DeclareMathOperator{\Op}{Op}
\DeclareMathOperator{\Res}{Res}
\DeclareMathOperator{\WF}{WF}
\DeclareMathOperator{\supp}{supp}
\DeclareMathOperator{\vol}{vol}
\DeclareMathOperator{\Span}{span}
\DeclareMathOperator{\End}{End}
\DeclareMathOperator{\Ell}{ell}
\DeclareMathOperator{\specb}{Spec_b}
\DeclareMathOperator{\proj}{\mathsf{Proj}}
\DeclareMathOperator{\comp}{\mathsf{comp}}
\DeclareMathOperator{\coord}{\mathsf{Coord}}
\DeclareMathOperator{\IndicialRes}{\mathbf{R}}
\DeclareMathOperator{\FibreM}{\mathsf{F}}
\DeclareMathOperator{\FibreL}{\mathsf{L}}
\DeclareMathOperator{\groupG}{\mathbb{G}}
\DeclareMathOperator{\groupK}{\mathbb{K}}
\DeclareMathOperator{\groupM}{\mathbb{M}}
\DeclareMathOperator{\groupA}{\mathbb{A}}
\DeclareMathOperator{\groupN}{\mathbb{N}}
\newcommand{\Xgr}{X_{\mathsf{gr}}}
\renewcommand{\Re}{\textup{Re}}
\renewcommand{\Im}{\textup{Im}}
\newcommand{\escapeparam}{\gamma}
\begin{document}
 
\titlerunning{Resonances and hyperbolic cusps}

\title{Ruelle-Pollicott resonances for manifolds with hyperbolic cusps.}

\author{Yannick Guedes Bonthonneau \and Tobias Weich}

\date{}

\maketitle

\address{Y. G. Bonthonneau: IRMAR, CNRS, Rennes, France; \email{yannick.bonthonneau@univ-rennes1.fr}
\and
T. Weich: Fakultät für Elektrotechnik, Informatik und Mathematik, Institut für Mathematik, Warburger Str. 100; \email{weich@math.uni-paderborn.de}}

\subjclass{37C30; 58J50; 35B34; 35A27}

\begin{abstract}
We present a method to construct a Ruelle-Pollicott spectrum for the geodesic 
flow on manifolds with strictly negative curvature and a finite number of hyperbolic cusps.

\keywords{Ruelle-Pollicott resonances, hyperbolic cusps, b-calculus}
\end{abstract}

The spectrum of Ruelle-Pollicott resonances is a notion that was developped in the 1980's \cite{Pollicott-85,Ruelle-86,Rue87} 
to associate Axiom A flows \cite{Sma67} with a discrete set of complex numbers that describe its mixing properties. Let us recall 
their definition: If $\varphi_t$ is a flow on some manifold $M$, $d \mu$ an invariant measure and $A,B\in C_c^\infty(M)$ two observables, then we can define the correlation function 
\[
\rho_{A,B}(t) := \int_M \left(A\circ\varphi_t\right)\cdot B d\mu,
\]
as well as its Laplace transform $\hat\rho_{A,B}(s)$ which is holomorphic
for $\Re(s)>0$. Pollicott \cite{Pollicott-85} and Ruelle \cite{Rue87} proved that for Axiom A flows, this Laplace transform $\hat\rho_{A,B}$ extends meromorphically to a small strip $\Re(s)>-\varepsilon$ for a certain class of measures. Its poles are called Ruelle-Pollicott resonances of $\varphi_t$ with respect to $\mu$. In the following decade, several works \cite{Hay90,Fri95,Rug96,Kit99} were dedicated to obtaining sharp bounds on the maximal strip on which the continuation is possible in terms of the regularity of the flow. More recently it has been understood that these resonances can be seen as the discrete spectrum in the usual sense of the generator of the flow on some carefully chosen Banach spaces. They appear as the poles of the meromorphic continuation of the resolvent kernel. See \cite{Liverani-04,Butterley-Liverani-07,Faure-Sjostrand-10,Giulietti-Liverani-Pollicott-13,Dyatlov-Zworski-16,Dyatlov-Guillarmou-16}, and also \cite{BKL02,GL06,BT07,BT08,FRS08,Bal16} for the related case of hyperbolic diffeomorphisms. A wide generality of dynamical systems is considered in these articles, however all these results have in common that they assume the system has a compact \emph{trapped set}:
\[
K(\varphi_t): = \{ x\in M\ |\ \liminf_{t\to \pm \infty} d(x,\varphi_t(x)) < +\infty\}.
\] 
Since this is where the ``non trivial'' part of the dynamics happen, one can crucially use Fredholm theory.
In this paper, we consider a family of hyperbolic flows whose trapped set is not compact. For this class, we explain how Ruelle-Pollicott resonances can be defined. We are convinced that the methods developed in this article will also apply to more general settings and that they will lead to subsequent results such as meromorphic continuation of zeta functions, and decay of correlations results. However, the new arguments that we introduce to handle the noncompact trapped set are already a bit more involved than the usual ones. We have thus chosen to restrain ourselves to the following setting, where they can be cleanly developed:

The class of dynamical systems we are considering are geodesic flows on manifolds with cusps. We assume that $(N, g)$ is a complete, smooth, $(d+1)$-dimensional Riemannian manifold, that decomposes into a compact core with strictly negative variable sectional curvature, and a finite union of hyperbolic cusps with constant negative curvature, that are attached to this core (see Definition~\ref{def:admissible_cusp_manifold} for more precision). 
We consider the geodesic flow $\varphi_t$ acting on the cosphere bundle $M=S^*N$ and denote its vectorfield by $X$. When endowed with the Sasaki metric, $M$ is a (2d+1)-dimensional Riemannian manifold. From the inclusion $M \subset T^\ast N$, $M$ inherits the Liouville measure $\mu_L$ which is preserved by the geodesic flow, and gives finite volume to $M$. As $N$ has strictly negative curvature, the geodesic flow is uniformly hyperbolic and due to the particular structure of the cusps, its trapped set (in forward \emph{and} backward times) has full measure in $M$.

As $X$ is an antisymmetric unbounded operator on $L^2(M) := L^2(M, \mu_L)$, we deduce that its resolvent $\mathscr{R}(s):=(X-s)^{-1}:L^2(M)\to L^2(M)$ is a holomorphic family of bounded operators for $\Re(s)>0$. We prove:
\begin{theorem}\label{thm:continuation-resolvent}
The resolvent has a meromorphic continuation as a family of continuous operators $\mathscr R(s):C_c^\infty(M)\to\mathcal D'(M)$ from $\Re(s)>0$ to the whole complex plane and for any pole of this meromorphic continuation, the residue is a finite rank operator.
\end{theorem}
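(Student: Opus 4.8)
\emph{Strategy.} The plan is to adapt the microlocal construction of the Ruelle spectrum of Faure--Sjöstrand \cite{Faure-Sjostrand-10} and Dyatlov--Zworski \cite{Dyatlov-Zworski-16}: build anisotropic Sobolev spaces $\mathcal H$ with $C_c^\infty(M)\subset\mathcal H\subset\mathcal D'(M)$ on which $X-s$ is Fredholm of index $0$, check that on $\mathcal H$ the operator $(X-s)^{-1}$ agrees for $\Re(s)\gg0$ with the $L^2$ resolvent, and conclude by the analytic Fredholm theorem. The genuinely new point is the non-compact trapped set, which is entirely concentrated in the hyperbolic cusps. To handle it, I would compactify each cusp end by adjoining a boundary face at infinity (with defining function $\rho = 1/y$), work there in an adapted boundary pseudodifferential calculus, and take the weight defining $\mathcal H$ to be the product of the usual escape function $m$ (equal to $+$ the order near $E_u^\ast$, to $-$ the order near $E_s^\ast$, and non-increasing along the flow) with an extra weight $e^{\nu\rho}$, the exponent $\nu$ controlling how far into $\C$ the continuation reaches.

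\emph{The cusp model.} A maximal-rank hyperbolic cusp is $\Lambda\backslash\mathbb{H}^{d+1}$ with $\Lambda\subset\mathbb R^d$ a cocompact lattice in a horospherical subgroup, so its unit (co)tangent bundle is a quotient of a homogeneous space of $G=SO(d+1,1)_0$ on which the geodesic flow is right translation by the Cartan subgroup. Conjugating $X$ by the weight and Fourier-expanding in the flat fibre $\mathbb R^d/\Lambda$ reduces the analysis near infinity to a family, indexed by $\xi\in\Lambda^\ast$, of operators in the remaining variables (the defining function $\rho$ and a fibre sphere). For $\xi\neq0$ the flow pushes mass monotonically towards $\rho=0$ and the corresponding operator is elliptic at infinity with lower bounds \emph{uniform in $\xi$}, so these modes are invertible on $\mathcal H$ and contribute no poles. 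The zero mode $\xi=0$ collapses to a one-dimensional warped half-line model whose resolvent I would continue essentially by hand, computing its indicial operator at $\rho=0$ explicitly; this gives an inverse that is meromorphic on $\C$ with poles on an explicit set of ``cuspidal resonances'' and with finite-rank residues. Assembling these pieces produces a model resolvent $\mathscr R_\infty(s)$, defined on a neighbourhood of the cusps, meromorphic on $\C$, and continuous from $C_c^\infty$ into $\mathcal H$.

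\emph{The core and the gluing.} On a neighbourhood of the compact core the trapped set is compact and the Faure--Sjöstrand / Dyatlov--Zworski argument applies directly: radial estimates at $E_u^\ast$ and $E_s^\ast$ together with propagation of singularities show that $X-s$ is Fredholm of index $0$ on $\mathcal H$ in a half-plane whose width grows with the weights, with a right parametrix $\mathscr R_0(s)$ holomorphic in $s$ and with a remainder that is compactly supported and smoothing. I would then choose $1=\chi_0+\chi_\infty$ subordinate to the core/cusp decomposition, with cutoffs $\tilde\chi_\bullet\equiv1$ near $\supp\chi_\bullet$, and set $\mathscr Q(s)=\tilde\chi_0\,\mathscr R_0(s)\,\chi_0+\tilde\chi_\infty\,\mathscr R_\infty(s)\,\chi_\infty$. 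Then $(X-s)\mathscr Q(s)=I+K(s)$ with $K(s)$ holomorphic on $\C$ and, crucially, \emph{compactly supported} and smoothing on $\mathcal H$ — the commutators $[X,\tilde\chi_\bullet]$ are supported in the compact overlap, where the two parametrix errors also live — hence compact on $\mathcal H$. The analytic Fredholm theorem then makes $(I+K(s))^{-1}$ meromorphic on $\C$ with finite-rank residues once it is invertible at one point; invertibility for $\Re(s)\gg0$ follows by comparison with the $L^2$ resolvent, which exists there ($X$ being skew-adjoint) and which, on $C_c^\infty$ inputs, coincides with $\mathscr Q(s)(I+K(s))^{-1}$. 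Therefore $\mathscr R(s)=\mathscr Q(s)(I+K(s))^{-1}$ extends meromorphically to all of $\C$ as continuous operators $C_c^\infty(M)\to\mathcal D'(M)$; the residue at a pole has range inside the finite-dimensional generalized kernel of $X-s$ on $\mathcal H$, so it is of finite rank. Taking the weights large (and letting them grow) covers the whole plane.

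\emph{Main obstacle.} I expect the crux to be the cusp model of the second step: proving the non-zero Fourier modes are uniformly harmless and carrying out the explicit indicial analysis of the zero mode, and — inseparably — choosing the weight $e^{\nu\rho}$ so that simultaneously (a) the $L^2$ resolvent for $\Re(s)>0$ really lands in $\mathcal H$ and agrees with the constructed inverse on $C_c^\infty$, (b) $X-s$ becomes Fredholm at infinity, and (c) the weight does not spoil the radial estimates in the core. By contrast the propagation and radial-point estimates in the compact core are the classical ones, and the final analytic-Fredholm packaging is routine.
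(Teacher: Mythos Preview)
Your overall architecture --- anisotropic spaces, a Fredholm problem, and analytic Fredholm theory --- is correct and matches the paper. But two points are genuine gaps rather than technicalities.

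\textbf{Non-zero Fourier modes are not ``elliptic at infinity''.} For $k\neq 0$ the restricted operator is
\[
X_k \;=\; \cos\varphi\,\partial_r + \sin\varphi\,\partial_\varphi + i e^{r}\sin\varphi\,(u\cdot k),
\]
and its principal symbol $i(\cos\varphi\,\lambda + \sin\varphi\,\eta_\varphi)$ vanishes on the same characteristic set as for $k=0$; there is no ellipticity to exploit. The only new term is the \emph{zeroth-order} oscillatory potential $ie^{r}\sin\varphi\,(u\cdot k)$, and this vanishes precisely at the poles $\varphi\in\{0,\pi\}$ --- which are the only directions that escape to infinity in the cusp --- and (for $d\geq 2$) also where $u\cdot k=0$. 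So it cannot furnish the uniform lower bound you claim. The paper does \emph{not} invert the $k\neq 0$ modes at all; instead it proves (Lemma~\ref{lemma:compact-injection-H^1}, a Lax--Phillips/Wirtinger argument) that the inclusion $H^{1}_{\mathsf a}(M)\hookrightarrow L^{2}(M)$ is compact on the subspace with vanishing zero mode. This is what makes the smoothing remainder compact on those modes.

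\textbf{The gluing architecture is different, and yours does not give a compact remainder.} The paper does \emph{not} patch a ``core parametrix'' and a ``cusp parametrix'' with a spatial partition of unity. It builds the first parametrix $\mathscr R_Q=(\mathbf X-Q-hs)^{-1}$ \emph{globally} on $M$; its error $Q\mathscr R_Q$ is smoothing, and by the compact-embedding lemma it is compact on the non-zero modes. The only failure of compactness is on the zeroth Fourier mode in each cusp, and the paper corrects precisely that piece by adding $\mathscr E_\ell\chi\big[\IndicialRes^{\mathbf X_{b,\ell}}(s)-\IndicialRes_{Q,\ell}(s)\big]\chi\mathscr P_\ell$, where $\mathscr P_\ell$ projects onto the zero mode. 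In your scheme the error $K(s)$ contains the commutator terms $[X,\tilde\chi_\bullet]\mathscr R_\bullet\chi_\bullet$; these are supported in a compact region on the left factor, but $\mathscr R_\bullet$ does not gain a derivative (the operator is not elliptic), so they are not smoothing and there is no reason for them to be compact on the anisotropic space. ``Compactly supported'' is not the same as ``compact operator'' here.

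A minor correction: the zero mode does not collapse to a one-dimensional half-line model. It lives on $\mathbb R_r\times\mathbb S^d_\zeta$; what makes it tractable is that $X_0$ is \emph{translation invariant in $r$}, so after Mellin/Fourier transform in $r$ one gets an indicial family $I(\mathbf X_b,\lambda)=\lambda\cos\varphi + h(\tfrac d2\cos\varphi+\sin\varphi\,\partial_\varphi)$ acting on $\mathbb S^d$, whose inverse is meromorphic in $\lambda$ with explicit poles (Section~\ref{sec:explicit-computations}). That is the ``indicial analysis'' you have in mind, but it is genuinely $d$-dimensional on the sphere.
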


Note that for $A,B\in C_c^\infty(M)$ and $\rho_{A,B}$ the correlation function with respect to $\mu_L$, it is easy to check that for $\Re(s)> 0$
\[
\hat\rho_{A,B}(s) = \langle \mathscr R(s)A, B\rangle_{\mathcal D',C_c^\infty}\ .
\]
Thus the meromorphic continuation of the resolvent gives the continuation of $\hat \rho_{A,B}$ to the whole complex plane and its poles coincide with the poles of $\mathscr R(s)$ --- when $A$ and $B$ vary in $C^\infty_c(M)$. Consequently we call the poles of $\mathscr R(s)$ \emph{Ruelle-Pollicott resonances} of the geodesic flow (with respect to the Liouville measure). 

To the best of our knowledge such a global definition of Ruelle-Pollicott resonances of the geodesic flow on cusp manifolds was so far not known, even in the case of constant negative curvature manifolds. We therefore want to mention another consequence:
\begin{corollary}
 Let $\Gamma \subset PSL(2, \R)$ be a co-finite Fuchsian group such 
 that there is a torsion free, normal subgroup $\tilde \Gamma \unlhd \Gamma$
 of finite index\footnote{Note that a particular example of this situation is $\Gamma=PSL(2,\Z)$ and $\tilde\Gamma=\Gamma(2)$
the principal congruence subgroup (see e.g. \cite[Section 2.3]{Iwaniec-02}).}. Let us consider the orbifold $M_\Gamma:=S^*(\Gamma\backslash \mathbb H) \cong \Gamma\backslash PSL(2,\R)$. Then the resolvent $\mathscr{R}(s):=(X-s)^{-1}:L^2(M_\Gamma) \to L^2(M_\Gamma)$ has a meromorphic continuation 
 $\mathscr R(s): C_c^\infty(M_\Gamma)\to\mathcal D'(M_\Gamma)$.
\end{corollary}
\begin{proof}
 Theorem~\ref{thm:continuation-resolvent} applies to the smooth manifold $M_{\tilde \Gamma} 
 = \tilde \Gamma\backslash PSL(2,\R)$. The corollary then follows by definition of smooth functions and distributions on orbifolds and because the resolvent commutes with isometries. 
\end{proof}

We now want to mention some results related to Theorem~\ref{thm:continuation-resolvent}:  

In order to study eigenvalues of the Laplacian on moduli spaces Avila and Gou\"ezel \cite{AG13} develop a functional analytic framework for the Teichmüller flows which are also a class of dynamical systems with noncompact finite volume trapped set. They obtain a meromorphic continuation of the resolvent to a neighbourhood of zero (cf. \cite[Prop 3.3]{AG13}). It would probably be possible to adapt their method to geodesic flows on cusp manifolds in order to obtain a continuation to a small strip along the imaginary axis (instead of $\C$ in our case). However their functional analytic tools are quite different from ours.

Another series of related results have been obtained for the special case of surfaces of constant negative curvature with cusps. It has been shown in a series of articles, by Mayer, Morita and Pohl \cite{Mayer-91,Morita-97,Poh15,Poh16} that one can associate the geodesic flow with one dimensional expanding maps, using a carefully chosen discretization. Out of this discretization one can build transfer operators with discrete spectrum and these spectra have interesting relations to number theory and the theory of Maass cusp forms \cite{LZ01,MP13,BLZ15}. One should be able to recover these spectra as a subset\footnote{More precisely the connection should be to the so called first band of Ruelle-Pollicott resonances cf. \cite{Dyatlov-Faure-Guillarmou-15,GHW18,GHW18b}} of the resonances  defined from Theorem~\ref{thm:continuation-resolvent}. It will be subject to further research to establish this connection precisely.

As Ruelle-Pollicott resonances are an important tool to study decay of correlations, let us shortly mention that the question of mixing is not yet satisfactorily answered for our class of cusp manifolds: For constant curvature manifolds with cusps, exponential decay of correlations for the Liouville measure  was proved in \cite{Moore-87}, while for variable curvature only its mixing property is known \cite{Dalbo-Peigne-98}. Two other recent results on the mixing of Weil-Petersson geodesic flows on manifolds with cusp-like singularities\footnote{Note that their notion of cusp sigularities differs from ours: They consider singularities where the distance to the cusp is bounded, but the curvature is divergent} have been obtained in \cite{BMMW17a,BMMW17b}. We hope that the analytic tools that we develop in this article will prove to be helpful in the future for studying mixing properties of geodesic flows on manifolds with hyperbolic cusps. 

The meromorphic continuation of dynamical zeta functions is another important field where meromorphically continued resolvents of flow vector fields have successfully been applied. If $\mathcal P$ denotes the set of primitive periodic orbits of a hyperbolic flow and $\ell(\gamma)$ their lengths then the 
Ruelle zeta function is defined for $\Re(s)\gg 0$ by 
\[
 \zeta_R(s) := \prod_{\gamma \in \mathcal P}\left(1-e^{-s\ell(\gamma)}\right).
\]
Smale \cite{Sma67} raised the question if for Axiom A flows, the Ruelle zeta function\footnote{Actually Smale considered a different version of a dynamical zeta function which is rather an analog of Selberg's zeta function. The question of meromorphic continuation  is however trivially related to $\zeta_R$} has a meromorphic continuation to $\C$? This question has recently been affirmatively answered by Dyatlov and Guillarmou  \cite{DG18} following a long series of precedent works, that prove meromorphic continuation under additional assumptions \cite{Rue76,PP83, Fri95, GLP13, Dyatlov-Zworski-16, Dyatlov-Guillarmou-16} (We refer to \cite{Bal16, Zwo18} for a recent overview of the literature). In all the recent accounts \cite{GLP13, Dyatlov-Zworski-16, Dyatlov-Guillarmou-16, FT17, DG18} of  these meromorphically continued zeta functions, a meromorphically continued resolvent was the central ingredient. Consequently, Theorem~\ref{thm:continuation-resolvent} indicates\footnote{In fact the full statement of our result (Theorem~\ref{thm:full-theorem-resolvent}) already provides several further ingredients necessary for the meromorphic continuation of $\zeta_R$ such as the extension to differential forms and wavefront estimates} that the Smale conjecture could  hold for geodesic flows on cusp manifolds, i.e. beyond the class of Axiom A flows. So far such a result is only known in the particular case of constant negative curvature where it is a rather direct consequence of Selberg's trace formula.

Contrary to the ``classical'' Ruelle-Pollicott resonances, the definition of ``quantum'' resonances of the Laplace Beltrami operator $\Delta_N$ on a cusp manifold $N$ has been established for a long time starting with works of Maa{\ss}~\cite{Maass-1949} and Selberg~\cite{Sel69}. See the introduction of \cite{Lax-Phillips-Automorphic-76} for the constant curvature case, and \cite{CdV-81,Muller-83} for the variable curvature case. In fact the proof of our main result borrows ideas from the definition of quantum resonances (such as the compact Sobolev embedding, Lemma~\ref{lemma:compact-injection-H^1}).

Let us shortly sketch the further ingredients for proving the meromorphic continuation of the resolvent to the whole complex plane (Theorem~\ref{thm:continuation-resolvent}): As a first step we construct a family of \emph{anisotropic spaces} $H^{\escapeparam\mathbf{m}}$ that are adapted to the hyperbolic structure of the flow. These are Hilbert spaces of distributions on $M$, and $C^\infty_c(M)$ is dense in each $H^{\escapeparam\mathbf{m}}$. 
They are an adaptation of the spaces defined by Faure-Sj\"ostrand \cite{Faure-Sjostrand-10} and Dyatlov-Zworski \cite{Dyatlov-Zworski-16}. Using a mix of their techniques we obtain much in the same way a first parametrix, which inverts $X-s$ up to a smoothing remainder. However, this parametrix is --- contrary to the compact case  ---  not sufficient for a meromorphic resolvent. Therefore, it was necessary to introduce another technique. 
We chose to use ideas from Melrose's b-calculus to deal with the explicit form of the generator $X$ in the cusp. From the very nature of these techniques, they work independently of the dimension of $N$. 
It is not entirely clear whether the technique could be applied or not to the case that the curvature is not exactly equal to $-1$ in the cusps, only \emph{close} to $-1$. However, by analogy to the resonances of the Laplace operators we conjecture that the meromorphic continuation to the full complex plane will not hold true when assuming only pinched negative curvature in the cusps. 

Let us present the structure of the paper: 
In Section~\ref{sec:geometric-notations} we introduce the precise settings in which we are working and collect several properties of the geodesic flow on cusp manifolds, that will be crucial in the sequel. 
To prove our theorem, we then build a first parametrix in Section~\ref{sec:first-parametrix} following the arguments of \cite{Faure-Sjostrand-10,Dyatlov-Zworski-16}. 
The geometric construction of the escape function is presented; however, the technical microlocal lemmas are proved in Appendix~\ref{appendix:microlocal-tools}. 
Section~\ref{sec:continuation-indicial-resolvent} is devoted to introducing techniques adapted from b-calculus and proving the meromorphic continuation of the resolvent of a certain class of translation invariant operators. 
These operators show up precisely when restricting the geodesic flow to the zeroth Fourier mode in the cusp. In Section~\ref{sec:Black-Box} such a resolvent is used for the construction of a parametrix (up to compact remainder) of the geodesic flow vector field. Then, using analytic Fredholm theory we conclude on the meromorphic continuation announced in Theorem \ref{thm:continuation-resolvent}. 
In Section~\ref{sec:continuation-indicial-resolvent}  and 
\ref{sec:Black-Box} we work in a more general setting under a list of assumptions. This should allow for an easy generalization to more general settings (such as fibred or complex hyperbolic cusps) in the future. In Section~\ref{sec:explicit-computations} we finally compute explicitly the indicial roots for the b-operators associated to the geodesic flow on our class of cusp manifolds and we check that all the necessary assumptions in Section~\ref{sec:continuation-indicial-resolvent} and \ref{sec:Black-Box} are fulfilled. 

Note that in fact we prove more general and more precise versions of Theorem~\ref{thm:continuation-resolvent}. For 
example we continue the resolvent for a certain class of derivations on vector bundles (cf. Definition~\ref{def:admissible_vector_bundle}) including the geodesic vector field with smooth potential, Lie derivatives on perpendicular $k$-forms and general associated vector bundles over constant curvature manifolds (cf. Examples \ref{exmpl:potential}-\ref{exmpl:perp_k-forms}). Furthermore we give a precise description of the wavefrontset of the resolvent. For a full statement we refer the reader to Theorem~\ref{thm:full-theorem-resolvent} and \ref{thm:admissible-bundles}.

\textbf{Acknowledgement} We thank Fr\'ed\'eric Faure, 
Sébastien Gou\"ezel, Colin Guillarmou, Luis-Miguel Rodrigues, Gabriel Rivi\`ere and Viet Dang and the referee for helpful remarks and discussions. We acknowledge 
the hospitality of the Hausdorff Institute of Mathematics in Bonn where 
part of this work was done. TW acknowledges support by the Deutsche Forschungsgemeinschaft (DFG) through the Emmy Noether group ``Microlocal Methods for Hyperbolic Dynamics''(Grant No. WE 6173/1-1)

\section{Geometric preliminaries}\label{sec:geometric-notations}
\subsection{The geodesic flow on cusp manifolds}\label{sec:geodesic_flow_on_cusps}

Let us give a precise definition of the manifolds on which 
we are working.
\begin{definition}\label{def:admissible_cusp_manifold}
A manifold $N$ will be called an \emph{admissible cusp manifold} if the following assumptions hold. First, $(N,g)$ is a $(d+1)$-dimensional Riemannian manifold, connected and complete when endowed with the corresponding Riemannian distance. Second, it decomposes as the union $N_0\cup Z_1\cup\dots\cup Z_\kappa$. $N_0$ is a compact manifold whose boundary $\partial N_0$ is a finite disjoint union of $d$-dimensional torii. At each component $\ell=1\dots \kappa$ of $\partial N_0$ is glued the \emph{hyperbolic cusp} $Z_\ell$, which takes the form
\begin{equation}\label{eq:def-cusps}
Z_\ell = [a, + \infty[_y \times \left(\R^d/\Lambda_\ell\right)_{\theta}.
\end{equation}
(Here, $\Lambda_\ell$ is a lattice in $\R^d$, and we can impose the normalization condition that it is unimodular). We require that the metric $g$ has strictly negative curvature in the whole of $N$, and additionally, we fix for each $\ell=1\dots \kappa$
\begin{equation}\label{eq:def-cusp-metric}
g_{|Z_\ell} = \frac{dy^2 + d\theta^2}{y^2}.
\end{equation}
\end{definition}

\begin{figure}
\centering
\def\svgwidth{0.5\linewidth}
%% Creator: Inkscape inkscape 0.92.3, www.inkscape.org
%% PDF/EPS/PS + LaTeX output extension by Johan Engelen, 2010
%% Accompanies image file '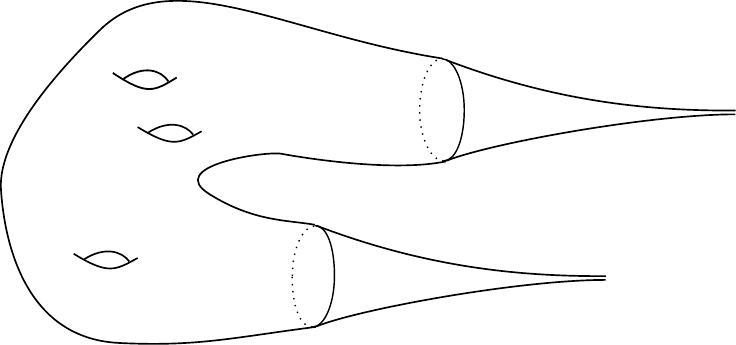' (pdf, eps, ps)
%%
%% To include the image in your LaTeX document, write
%%   \input{<filename>.pdf_tex}
%%  instead of
%%   \includegraphics{<filename>.pdf}
%% To scale the image, write
%%   \def\svgwidth{<desired width>}
%%   \input{<filename>.pdf_tex}
%%  instead of
%%   \includegraphics[width=<desired width>]{<filename>.pdf}
%%
%% Images with a different path to the parent latex file can
%% be accessed with the `import' package (which may need to be
%% installed) using
%%   \usepackage{import}
%% in the preamble, and then including the image with
%%   \import{<path to file>}{<filename>.pdf_tex}
%% Alternatively, one can specify
%%   \graphicspath{{<path to file>/}}
%% 
%% For more information, please see info/svg-inkscape on CTAN:
%%   http://tug.ctan.org/tex-archive/info/svg-inkscape
%%
\begingroup%
  \makeatletter%
  \providecommand\color[2][]{%
    \errmessage{(Inkscape) Color is used for the text in Inkscape, but the package 'color.sty' is not loaded}%
    \renewcommand\color[2][]{}%
  }%
  \providecommand\transparent[1]{%
    \errmessage{(Inkscape) Transparency is used (non-zero) for the text in Inkscape, but the package 'transparent.sty' is not loaded}%
    \renewcommand\transparent[1]{}%
  }%
  \providecommand\rotatebox[2]{#2}%
  \newcommand*\fsize{\dimexpr\f@size pt\relax}%
  \newcommand*\lineheight[1]{\fontsize{\fsize}{#1\fsize}\selectfont}%
  \ifx\svgwidth\undefined%
    \setlength{\unitlength}{353.1551283bp}%
    \ifx\svgscale\undefined%
      \relax%
    \else%
      \setlength{\unitlength}{\unitlength * \real{\svgscale}}%
    \fi%
  \else%
    \setlength{\unitlength}{\svgwidth}%
  \fi%
  \global\let\svgwidth\undefined%
  \global\let\svgscale\undefined%
  \makeatother%
  \begin{picture}(1,0.46850246)%
    \lineheight{1}%
    \setlength\tabcolsep{0pt}%
    \put(0,0){\includegraphics[width=\unitlength,page=1]{cusp_mfld.pdf}}%
    \put(0.21312973,0.09177468){\color[rgb]{0,0,0}\makebox(0,0)[lt]{\lineheight{1.25}\smash{\begin{tabular}[t]{l}$K<0$\end{tabular}}}}%
    \put(0.07812226,0.19340955){\color[rgb]{0,0,0}\makebox(0,0)[lt]{\lineheight{1.25}\smash{\begin{tabular}[t]{l}$N_0$\end{tabular}}}}%
    \put(0.4605946,0.07214408){\color[rgb]{0,0,0}\makebox(0,0)[lt]{\lineheight{1.25}\smash{\begin{tabular}[t]{l}$K=-1$\end{tabular}}}}%
    \put(0.63835021,0.29740667){\color[rgb]{0,0,0}\makebox(0,0)[lt]{\lineheight{1.25}\smash{\begin{tabular}[t]{l}$K=-1$\end{tabular}}}}%
    \put(0.79259982,0.35344646){\color[rgb]{0,0,0}\makebox(0,0)[lt]{\lineheight{1.25}\smash{\begin{tabular}[t]{l}$Z_2$\end{tabular}}}}%
    \put(0.61663501,0.02654636){\color[rgb]{0,0,0}\makebox(0,0)[lt]{\lineheight{1.25}\smash{\begin{tabular}[t]{l}$Z_1$\end{tabular}}}}%
    \put(-1.33701738,0.68675617){\color[rgb]{0,0,0}\makebox(0,0)[lt]{\begin{minipage}{1.61769514\unitlength}\raggedright \end{minipage}}}%
  \end{picture}%
\endgroup%

\caption{\label{fig:cusp_manifold} Schematic sketch of a cusp manifold}
\end{figure}
Then the sectional curvature is $-1$ in each cusp, and the volume of $N$ is finite. Since the sectional curvature of $N$ is pinched, we deduce that its geodesic flow $\varphi_t$ is a \emph{uniformly hyperbolic flow}\footnote{Often uniformly hyperbolic flows are also called \emph{Anosov flows}. Several authors use the term ``Anosov flow'' however only in the more narrow setting of hyperbolic flows on \emph{compact} manifolds. For this reason we refrain from using the term Anosov in our setting to avoid confusion.} on its cosphere bundle. More precisely we have:

\begin{proposition}\label{prop:invariant_bundles}
 Let $M=S^*N$ be the cosphere bundle of an admissible cusp manifold. There is a splitting
 \begin{equation}\label{eq:invariant_splitting}
  TM = E_0\oplus E_s\oplus E_u
 \end{equation}
 into $d\varphi_t$-invariant subbundles, which is Hölder continuous with uniform constants. Furthermore the angle between any pair of the invariant bundles is bounded from below by a uniform constant. Finally there are global constants $c, C, \beta, B > 0$ such that 
 \begin{align*}
  ce^{-Bt}\|v\| \leq \|(d\varphi_t) v\|\leq Ce^{-\beta t}\|v\|&\ \textup{for all }v\in E_s,\ t>0\\
  ce^{-Bt}\|v\| \leq \|(d\varphi_{-t}) v\|\leq Ce^{-\beta t}\|v\|&\ \textup{for all }v\in E_u,\ t>0.
 \end{align*}
\end{proposition}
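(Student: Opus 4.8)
The plan is to split the statement into two regimes — the compact core $N_0$ (plus any fixed compact neighborhood of the cusp necks) and the cusp ends $Z_\ell$ — and then show that the hyperbolic estimates on each piece can be glued with uniform constants. The point that needs genuine care here, and the reason the proposition is not a black-box citation of Anosov's theorem, is precisely the uniformity of all constants ($c,C,\beta,B$ as well as the angle lower bound and the Hölder constants) as one goes out to infinity in the cusps; the flow is on a noncompact manifold, so compactness alone does not deliver this.

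First I would recall that the pinched negative curvature assumption (sectional curvature bounded between two negative constants, say $-B_0^2 \le K \le -\beta_0^2 < 0$) is exactly Eberlein's hypothesis under which the geodesic flow of a complete manifold is Anosov on the unit (co)sphere bundle: one constructs $E_s$ and $E_u$ as the stable/unstable Jacobi fields, i.e. via the limits of solutions of the Riccati equation along the geodesic, and the pinching gives the exponential bounds $ce^{-B_0 t}\le \|d\varphi_t v\| \le Ce^{-\beta_0 t}$ on $E_s$ with constants depending only on the pinching bounds and a lower bound on the injectivity radius / a bound on the covariant derivative of the curvature tensor. On an admissible cusp manifold the curvature is globally pinched by hypothesis, and its covariant derivatives are globally bounded (they are identically parallel — indeed zero — on each cusp, where $K\equiv -1$, and $N_0$ is compact), so all the Riccati/Jacobi comparison constants are uniform over $N$. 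This already yields \eqref{eq:invariant_splitting}, the exponential estimates, and Hölder continuity of the splitting with uniform Hölder constant (the Hölder exponent being controlled by the ratio $\beta_0/B_0$, cf. the classical regularity results for Anosov splittings).

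For the uniform lower bound on the angles between $E_0$, $E_s$, $E_u$, I would argue separately in the cusp: there the geodesic flow is, up to the finite-volume quotient, the geodesic flow of $\Hh^{d+1}$ in horocyclic coordinates, and the stable/unstable bundles, the flow direction, and the angles between them can be written down explicitly in the $(y,\theta)$-coordinates of \eqref{eq:def-cusps}–\eqref{eq:def-cusp-metric} — they are the horospherical distributions and their mutual angles (measured in the Sasaki metric) are constant, hence bounded below. On the compact part $N_0$ (together with a fixed compact collar $\{a \le y \le a'\}$ of each cusp) the angle is a continuous positive function on a compact set, hence bounded below. Taking the minimum of these finitely many lower bounds gives the uniform constant.

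The main obstacle I expect is not any single estimate but the bookkeeping of \emph{matching} the construction across the interface $\partial N_0$: one must check that the stable/unstable subbundles built intrinsically on all of $N$ (via the global Riccati equation, which makes sense because the flow is complete and globally pinched) genuinely restrict in the cusp to the explicit horospherical bundles of $\Hh^{d+1}$, so that the explicit constant-curvature computations there are legitimate — and conversely that the compact-core estimates are not quantitatively degraded by the collars. This is where I would spend the real effort; everything else is a matter of invoking the standard pinched-negative-curvature Anosov theory (Eberlein, Anosov) with the observation, special to this setting, that the relevant geometric quantities controlling the constants — curvature pinching and its derivatives, injectivity radius in the $(y,\theta)$-coordinates — are uniform on a cusp manifold despite its noncompactness.
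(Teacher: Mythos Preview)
Your outline is viable, but the paper takes a shorter and cleaner route that sidesteps exactly the gluing issue you flag as ``the real effort.'' Rather than working on $N$ and splitting into core and cusps, the paper passes to the universal cover $\tilde N$, observes that $\tilde N$ is a complete simply connected manifold with pinched negative curvature and bounded curvature derivatives (since the ends are constant curvature and $N_0$ is compact), and then invokes Theorem~7.3 and Lemma~7.4 of Paulin--Pollicott--Schapira \cite{PPS-12}, which deliver the Anosov splitting on $S^\ast\tilde N$ together with all the uniform constants (exponential rates, angle lower bound, H\"older regularity) in one stroke. Since the splitting is isometry-invariant, it descends to the quotient. This bypasses any interface matching, and it also avoids the injectivity-radius issue you mention: on $N$ the injectivity radius is \emph{not} bounded below (the torus fibres shrink), but on $\tilde N$ it is.

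One point in your plan deserves a warning: the Anosov splitting of the flow on $N$, restricted to a cusp $Z_\ell$, is \emph{not} literally the horospherical splitting of $\Hh^{d+1}$ --- a geodesic that enters and leaves the cusp has its stable/unstable bundles determined by the global dynamics, including what happens in $N_0$. The two splittings are only exponentially close as $y\to\infty$ (the paper exploits precisely this in Lemma~\ref{lemma:invariance-escape-function}). Your angle argument in the cusp can be repaired using this exponential convergence, but it is not the one-line explicit computation you suggest. The universal-cover argument avoids this entirely.
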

 \begin{proof}
  Let $\tilde N$ be the universal cover of $N$. It is a simply connected, complete Riemannian manifold with pinched negative sectional curvature ($-k_{max}^2 < K <-k_{min}^2<0$), because the noncompact ends $Z_i$ are endowed with a constant negative curvature metric. For the same reason all derivatives of the sectional curvature are bounded. Thus Theorem 7.3 and Lemma 7.4 in \cite{PPS-12} apply to this situation and they provide the splitting into invariant bundles over $S^*\tilde N$ with the above properties. As the invariant bundles are invariant under isometries, taking the quotient we obtain the desired result. 
 \end{proof}

For the proof of Theorem \ref{thm:continuation-resolvent} it will be crucial 
to have a precise understanding of the geometry and the dynamics on 
the noncompact ends of $S^\ast N$. We therefore start by introducing explicit 
coordinates on $S^\ast Z_\ell$. In order to simplify the notation we will drop the 
indices $\ell = 1\dots \kappa$ that number the cusps.

Recall that a cusp is $Z=[a,\infty[\times \R^d/\Lambda$ and since we have assumed that $\Lambda$ is \emph{unimodular}, we have \emph{canonical coordinates} $y\in [a,\infty[, \theta\in \R^d/\Lambda$. In many cases it will be convenient to perform the change of variables $r=\log y\in [\log a,\infty[$ and the metric becomes 
\begin{equation}\label{eq:metric-r-theta}
g=dr^2+e^{-2r}d\theta^2.
\end{equation}
A single cusp has the \emph{local isometry pseudo-group} given by $\R\times \R^d$ which is 
realized by linear scaling and translations in the $y, \theta$ variables
\begin{equation}\label{eq:def-local-isometries}
 T_{\tau, \theta_0}(y,\theta) := (e^{\tau}y, e^{\tau}\theta +\theta_0),
\end{equation}
or in $r,\theta$-variables
\begin{equation}\label{eq:local-isometries-r-theta}
 T_{\tau, \theta_0}(r,\theta) := (r+\tau, e^{\tau}\theta +\theta_0).
\end{equation}
Using the $y,\theta$ variables we can write $\xi\in T^*_{y,\theta}Z$ as 
$\xi=Ydy+Jd\theta$ for $Y\in\R$ and $J\in \R^d$ and the Riemannian norm of such 
a cotangent vector is given by 
\begin{equation}\label{eq:metric-cotangent}
|\xi|_g = y\sqrt{Y^2+|J|_{\R^d}^2}.
\end{equation}
Elements $\xi\in S^*_{y,\theta}Z$ of a cosphere fibre are thus in bijection with 
$\zeta:= y(Y, J)\in \mathbb S^d\subset \R^{d+1}$. In particular the cosphere bundle over the cusp is trivializable 
$S^*Z\cong Z_{(y,\theta)}\times \mathbb S^d_\zeta$. The usual metric on $S^\ast Z$, the Sasaki metric (see e.g. \cite{Gudmundsson-Kappos} for an easily accessible introduction), is not a product metric. However, one can check (see the expression of the Sasaki metric in \cite[Section C.2]{Bonthonneau-2}) that it is equivalent to the \emph{product metric} $g_Z\otimes g_{\mathbb S^d}$ where $g_{\mathbb S^d}$ is the usual metric
on the sphere. We will use the product metric in the sequel.

For the study of the geodesic flow some more precise variables on the spheres are useful. We choose a orthonormal base of coordinates $\theta_1,\dots,\theta_d$ in $\R^d$. We fix $(yY=1,J=0) \simeq y^{-1}dy$ to be \emph{zenith}, and $y^{-1}d\theta_1$ the \emph{azimuthal reference}. With these conventions, a point $\zeta \in S^\ast_{y,\theta}Z$ is non-ambiguously determined by its \emph{inclination} $\varphi$ --- the angle it makes with the zenith --- and its azimuthal position, $u\in \Ss^{d-1}$ which is determined by the choice of base in $\R^d$. As a point in $\R^{d+1}$, $\zeta = (\cos\varphi, \sin\varphi u)\simeq y^{-1}\cos\varphi dy + y^{-1} \sin\varphi u\cdot d\theta$.

We single out two important points, the North pole $\mathcal N\in \mathbb S^d$ with $\varphi=0$ that corresponds to the cotangent element $y^{-1}dy=dr\in S^*Z$ pointing into the direction of the cusp and the South pole $\mathcal S$ corresponding to $-y^{-1}dy=-dr$ pointing perpendicularly to the bottom of the cusp. 

The geodesic flow is known to be the Hamiltonian flow with Hamiltonian 
\[
\mathscr h(x,\xi)=\tfrac{1}{2}g_x(\xi,\xi) = \tfrac{1}{2}y^2(Y^2+|J|_{\R^d}^2),
\]
and a straigthforward calculation
with the canonical symplectic structure on $T^*Z$ gives the associated 
Hamiltonian vector field
\[
 y^2Y\partial_y + y^2J\cdot\partial_\theta - y(Y^2+J^2)\partial_Y.
\]
Restricting this vector field to $S^*Z$ and using the spherical coordinates 
$\varphi,u$ we obtain an explicit expression for the geodesic vector 
field
\begin{align*}
 X&= y\cos(\varphi)\partial_y+y\sin(\varphi)u\cdot \partial_\theta + 
 \sin(\varphi)\partial_\varphi\\
 &=\cos(\varphi)\partial_r+e^r\sin(\varphi)u\cdot \partial_\theta + 
 \sin(\varphi)\partial_\varphi.
\end{align*}
Note that $u\cdot \partial_\theta$ is understood after identifying
$u\in\mathbb S^{d-1}\subset \R^d\cong T_\theta(\R^d/\Lambda)$.

The dynamics of the geodesic flow vectorfield is  illustrated in 
Figure~\ref{fig:cusp_geodesics}. Let us emphasize two important properties 
of the geodesic flow dynamics on $S^*Z$:

\begin{figure}
\centering
\def\svgwidth{0.7\linewidth}
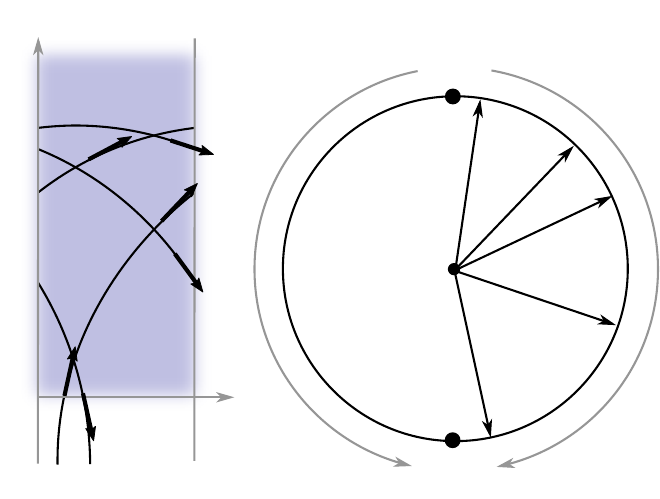
\caption{\label{fig:cusp_geodesics} This figure illustrates the dynamics of the geodesic flow on a cusp. The left part shows the fundamental domain 
of a cusp (for $d=1$) in the $y,\theta$ variable. The black solid line is the 
trace of a geodesic projected from $S^*Z$ to $Z$. The arrows indicate the 
direction of the flow and correspond to the cotangent vectors. On the right each of 
these arrows is represented by its $\zeta$ coordinate in $\mathbb S^d$, evidencing the transient dynamics from $\mathcal N$ to $\mathcal S$.}
\end{figure}
\begin{enumerate}[(A)]
	\item The Hamiltonian $\mathscr h$ is independent of the $\theta$ variable, which implies that the corresponding momentum variable $u$ is a constant of motion under the geodesic flow. 
	\item The dynamics of the variable $\zeta\in \mathbb S^d\cong S^*_{y,\theta}Z$ is decoupled from the dynamics on $Z$. By property (A) this dynamics is even rotationally invariant around the axis through $\mathcal N$ and $\mathcal S$ and it is precisely the gradient flow on the sphere $\mathbb S^d$ with the obvious height function.
\end{enumerate}

This has the following consequence for the dynamics of the geodesic flow on the 
cusp. Assume that trajectories stop when they reach the lower boundary $y=a$. Then the only \emph{wandering} trajectories are those with $\zeta = \mathcal N$ or $\zeta=\mathcal S$. 
They correspond to the geodesics that leave or enter the cusp, parallel to 
the $y$-axis. All other trajectories only rise up to a finite height into the cusp
and are thus ``trapped''. However, by chosing $\zeta$ arbitrary close to $\mathcal N$ this height can be made arbitrary large and the trapped set is noncompact.  
Since the non-compactness of the trapped set is the central problem in extending the techniques of \cite{Faure-Sjostrand-10,Dyatlov-Zworski-16}, these regions around $\mathcal N$ and $\mathcal S$ will become crucial in the analysis.

Finally let us add a third remark that is not directly related to the dynamics of
the geodesic flow, but rather to its action as a differential operator.
\begin{enumerate}[(A)]\setcounter{enumi}{2}
	\item As the geodesic vector field commutes with local isometries, it commutes in 
particular with the $\R^d$-action by translation and thus preserves the Fourier modes
in the $\theta$ variable. If $k$ is an element in the dual lattice 
$\Lambda^*\subset \R^d$ then, restricting the geodesic flow vector field
to the Fourier modes $e^{ik\theta}$ yields a differential operator
\begin{equation}\label{eq:def-X_k}
X_k = \cos(\varphi)\partial_r + \sin(\varphi)\partial_\varphi + ie^r\sin(\varphi)u\cdot k  \ .
\end{equation}
When $k=0$, this is a vector field with coefficients that do not depend on $r$.
\end{enumerate}

\begin{remark}
The structure of the flow restricted to the zeroth Fourier mode is essential to our proof. Indeed, since it is translation invariant, we can use techniques adapted from Melrose's b-calculus to find an exact inverse for the model flow on a ``full'' cusp (cf. Section~\ref{sec:continuation-indicial-resolvent}). The fact that in the other Fourier modes the flow does not have such a nice structure is compensated by the fact that we have a compact injection for functions in $H^1$ whose zeroth Fourier mode vanishes in each cusp (cf. Lemma~\ref{lemma:compact-injection-H^1}).
\end{remark}
\subsection{Admissible vector bundles}\label{sec:admissible_vb}
As mentioned in the introduction we want to prove the meromorphy of the resolvent not only for the 
geodesic vector field acting on functions but also for a larger class of admissible vector bundles. 
In order to precisely define these admissible vector bundles let us first recall how to write the 
noncompact ends $S^*Z_\ell$ as locally homogeneous spaces:

Given a cusp $Z_\ell=[a_\ell,\infty[\times \R^d/\Lambda_\ell$ we will consider the associated 
\emph{full cusp} to be the space $Z_{\ell,f}=(\R_+)_y\times(\R^d/\Lambda_\ell)_\theta$ with the metric 
$g$ defined in Equation \eqref{eq:def-cusp-metric} extended to $Z_{\ell,f}$ in the obvious way.
Let $\groupG = SO(d+1,1)$, then using the Iwasawa decomposition we can write  $\groupG=\groupN\groupA\groupK$,
where $\groupA=(\R_+,\cdot)$, $\groupN=(\R^d,+)$ are abelian groups
and $\groupK=SO(d+1)$ is the maximal compact subgroup in $\groupG$. 
Then a full cusp is simply the double quotient 
$Z_{\ell,f} = \Lambda_\ell \backslash\groupG/ \groupK$ where we consider $\Lambda_\ell\subset 
\groupN\cong\R^d$. The unit cosphere bundle can then be simply written as $S^*Z_{\ell,f} = 
\Lambda_\ell\backslash\groupG/\groupM$ where $\groupM=SO(d)$ (see e.g.
\cite{hilgert2005ergodic,GHW18} for more details). Recall furthermore that the 
Bruhat decomposition on the Lie algebra 
\begin{equation}\label{eq:Bruhat-decomposition}
\mathfrak g= \mathfrak m \oplus \mathfrak a \oplus \mathfrak n_+ \oplus \mathfrak n_-
\end{equation}
is $\textup{Ad}\groupM$ invariant. Accordingly $\groupG/\groupM$ is a reductive 
homogeneous space and for any orthogonal representation $(\tau,V)$ of $\groupM$ the associated vector
bundle $\groupG\times_\tau V$ is a homogeneous Riemannian vector bundle with a canonical compatible connection. 

Now we can define admissible vector bundles:
\begin{definition}\label{def:admissible_vector_bundle}
Let $N=N_0\cup_{\ell=1}^\kappa Z_\ell$ be an admissible cusp manifold in the sense of 
Definition~\ref{def:admissible_cusp_manifold} and $M=S^*N$. Let $L\to M$ be a Riemannian bundle endowed with a compatible connection $\nabla$. $L$ is an \emph{admissible vector bundle} if for each cusp $Z_\ell$, $\ell=1,\ldots,\kappa$ there is an orthogonal $\groupM$-representation 
$(\tau_\ell, V_\ell)$ such that $L|_{Z_\ell}$, coincides with the associated 
vector bundle 
\begin{equation}\label{eq:def-L-tau}
L_{\ell,\tau_\ell} = \Lambda_\ell\backslash \groupG  \times_{\tau_\ell}V_\ell.
\end{equation}
(the Riemannian bundle metric and connection of $L$ are also assumed to coincide with those of the 
associated bundle.)

Let $\mathcal{X}$ be a derivation on sections of $L$ that lifts the geodesic flow vector field $X$. That is to say that it satisfies the Leibnitz relation
\begin{equation}\label{eq:Leibnitz-relation}
\mathcal{X}(fs) =(Xf)s+f\mathcal{X}s \textup{ for }f\in C^\infty(M), s\in C^\infty(M,L).
\end{equation}
We say that $\mathcal{X}$ is an \emph{admissible lift} of $X$ if for each cusp $Z_\ell$, there is a fixed $A_\ell\in \End(V_\ell)^{\groupM}$ such that when restricted to $L|_{Z_\ell}$, $\mathcal{X}$ acts as
\begin{equation}\label{eq:def-X-ell}
\mathcal{X}_\ell := \nabla_X +A_\ell.
\end{equation}
\end{definition}

Let us mention three important examples of admissible vector bundles and differential operators:
\begin{example}\label{exmpl:potential}
Let $V\in C^\infty(M)$ be so that in each cusp, $V$ is just a constant. Then $X + V$ is an admissible operator on the trivial bundle.
\end{example}

\begin{example}\label{exmpl:associated_vb}
Let $\Gamma \subset \groupG$ be a non uniform torsion free lattice. Then $\Gamma \backslash \groupG/ \groupK$ is a non-compact manifold of constant curvature whose ends are cusps in the sense we have defined; it is thus an admissible cusp manifold. There is a finite number of ends. Given an orthogonal representation $\tau$ of $\groupM$ on $V$, we can then construct globally the bundle $L_\tau = \Gamma \backslash \groupG \times_\tau V$ and the corresponding connection. Then the operator $\nabla_X$ is an admissible lift of the geodesic flow on $M = \Gamma \backslash \groupG/ \groupM$.
\end{example}

\begin{example}\label{exmpl:perp_k-forms}
Let us take $N$ an admissible cusp manifold, $M=S^\ast N$ and $X$ the corresponding geodesic vector field. We can consider the Lie derivative $\mathcal{L}_X$ acting on $\Lambda(T^\ast M)$, the bundle of forms of arbitrary degree over $M$, it is an admissible lift of $X$. Further define
\begin{equation}\label{eq:def-Lambda-perp}
\Lambda^\perp(T^\ast M):= \{ \omega \in L(T^\ast M)\ |\ \imath_X\omega = 0\}.
\end{equation}
This sub-bundle of $\Lambda(T^\ast M)$ is invariant under $\mathcal{L}_X$. Also, $\mathcal{L}_X$ preserves the Liouville one form $\alpha$, which is a \emph{contact} one-form. In particular, we can identify the action of $\mathcal{L}_X$ on $\Lambda^\perp(T^\ast M)$ with the action of $\mathcal{L}_X$ on $\Lambda((\ker \alpha)^\ast)$. This is also an admissible lift of $X$.
\end{example}

\section{Anisotropic space and first parametrix}
\label{section:anisotropic-space}

The main idea that was presented in \cite{Faure-Sjostrand-10} was to resort to usual semi-classical techniques to prove the meromorphic continuation of the resolvent of the flow generator for Anosov flows on compact manifolds. This is not the only method available for compact manifolds --- see \cite{Butterley-Liverani-07} --- but it is the one we will extend to our case. Another paper \cite{Dyatlov-Zworski-16} used propagation of singularities to obtain the wavefront set of the resolvent, in order to simplify the proof of meromorphic continuation of the zeta functions. We will use a mixture of both, since we use the approach of \cite{Faure-Sjostrand-10} to continue the resolvent, and ideas from \cite{Dyatlov-Zworski-16} to obtain the wavefront set of the resolvent.

We consider $L\to M=S^\ast N$ an admissible bundle and $\mathcal{X}$ an admissible lift of $X$ the geodesic flow on $M$. Since we will use semi-classical techniques, we introduce a small parameter $0<h \leq h_0$, and we let $\mathbf{X} := h \mathcal{X}$. We refer to Appendix~\ref{appendix:microlocal-tools} where we collect the definition of the notions of microlocal and semiclassical analysis (pseudodifferential operators, symbol classes, \ldots) which we will use in the sequel.

The first result in this section is:
\begin{proposition}\label{prop:first-parametrix}
For each $\escapeparam >0$, and $h>0$, we can build a space of $L$-valued distributions $H^{\escapeparam\mathbf{m}}$ on $M$ that contains $C^\infty_c(M, L)$, and a pseudo-differential operator $Q$ microsupported in an arbitrarily small neighbourhood of the zero section in the fibers of $T^\ast M$, so that there exists $h_0>0$, so that for $0<h\leq h_0$, and for $|\Im s| < h^{-1/2}$ and $\Re(s) > -\escapeparam$,
\[
\mathbf{X} -Q - hs\text{ is invertible and } \| (\mathbf{X} - Q - h s )^{-1} \|_{H^{\escapeparam\mathbf{m}}} = \mathcal{O}(1/h).
\]
As $h$ varies, the spaces $H^{\escapeparam\mathbf{m}}$ remain the same as vector spaces, with equivalent norms.
\end{proposition}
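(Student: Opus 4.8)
The plan is to follow the Faure–Sjöstrand strategy of constructing an escape (order) function on $T^\ast M$ adapted to the hyperbolic splitting $TM = E_0 \oplus E_s \oplus E_u$ of Proposition~\ref{prop:invariant_bundles}, and then to conjugate $\mathbf{X}$ by the corresponding anisotropic pseudodifferential weight to obtain an operator whose semiclassical symbol is elliptic outside a small neighbourhood of the zero section, with the right sign of the imaginary part where ellipticity fails. Concretely, I would first fix the dual decomposition $T^\ast M = E_0^\ast \oplus E_u^\ast \oplus E_s^\ast$, where $E_u^\ast$ annihilates $E_0 \oplus E_u$ and $E_s^\ast$ annihilates $E_0 \oplus E_s$, and recall that the Hamilton flow of the symbol $p = \sigma(\mathcal{X}/i)$ expands $E_u^\ast$ and contracts $E_s^\ast$ in forward time (with the uniform rates $\beta, B$ from Proposition~\ref{prop:invariant_bundles}). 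Then I would build a function $m \in C^\infty(T^\ast M \setminus 0)$, homogeneous of degree $0$, equal to $-\mathscr{r}$ near $E_u^\ast$, to $+\mathscr{r}$ near $E_s^\ast$, monotone along the flow, and extend it to an order function $\mathbf{m}$; the weight $G = \mathbf{m}\log\langle\xi\rangle$ and the operator $A = \mathrm{Op}(e^G)$ define the anisotropic space $H^{\mathscr{r}\mathbf{m}} := A^{-1}L^2(M,L)$. The key algebraic input is that $e^{-G}(\mathbf{X}/h)e^{G}$ has, up to $O(h)$ and lower-order terms, principal symbol $p + i h (H_p G)/h = p + i H_p \mathbf{m}\log\langle\xi\rangle$, and $H_p \mathbf{m} \le 0$ everywhere with strict negativity outside a small conic neighbourhood of the zero section — this is the escape estimate. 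The microlocal lemmas needed here (the sharp Gårding / positivity estimate producing the $O(1/h)$ bound, and the construction of $m$ with the prescribed monotonicity) are exactly those deferred to Appendix~\ref{appendix:microlocal-tools}.

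Next I would introduce the smoothing-type correction $Q$: choose $\chi \in C_c^\infty(T^\ast M)$ supported in an arbitrarily small neighbourhood of the zero section, equal to $1$ on a slightly smaller one, and set $Q = C h\,\mathrm{Op}(\chi)$ for a large constant $C$. The point is that on the support of $\chi$ the escape function $m$ need not be monotone (the zero section is the single recurrent set for $H_p$ on the cosphere bundle after compactification, reflecting the trapped set), so we kill that region by brute force: the real part of the symbol of $e^{-G}(\mathbf{X}+Q)e^{G}$ becomes $\ge c h \langle\xi\rangle^{0}$ microlocally near the zero section while the imaginary part controls the flow direction elsewhere. Combining the two regions via a partition of unity and a sharp-Gårding estimate yields, for the conjugated operator $P_s := e^{-G}(\mathbf{X}+Q-hs)e^{G}$ acting on $L^2$, a bound of the form $\|P_s v\|_{L^2} \ge (h/C)\|v\|_{L^2}$ provided $\Re(s) > -\mathscr{r}$ and $|\Im s| < h^{-1/2}$ (the $\Im s$ restriction is what lets the $\log\langle\xi\rangle$ growth of $G$ dominate the spectral parameter, since $|\Im s|\,|G| = o(1)$ on the relevant frequency scale after the semiclassical rescaling). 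The same estimate for the formal adjoint $P_s^\ast$, obtained by running the escape function backwards, gives surjectivity, hence invertibility with $\|P_s^{-1}\|_{L^2} = O(1/h)$, and translating back through $A$ gives the claimed estimate on $H^{\mathscr{r}\mathbf{m}}$.

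The main obstacle — and the reason the proof is genuinely more delicate than in the compact case — is the behaviour in the cusps: the escape function and all the symbol estimates must be constructed \emph{uniformly} up to the noncompact ends $S^\ast Z_\ell$, and near the North and South poles $\mathcal{N},\mathcal{S}$ the dynamics is only transient (not uniformly hyperbolic in the naive sense on those fibres), so one must exploit the explicit form of $X$ in the coordinates $(r,\theta,\varphi,u)$, the uniform hyperbolicity with global constants from Proposition~\ref{prop:invariant_bundles}, and the fact that the Hölder bundles have uniformly bounded angles, to see that the construction of $\mathbf{m}$ and the Gårding estimates survive with constants independent of the point in $M$. I would handle this by working in the fixed trivialisation $S^\ast Z \cong Z \times \Ss^d$ with the product metric, checking that $H_p m$ has a uniform negative upper bound away from the zero section even as $y \to \infty$, and invoking the admissibility of $L$ and $\mathcal{X}$ (Definition~\ref{def:admissible_vector_bundle}) to ensure the zeroth-order term $A_\ell$ contributes only a bounded, $h$-small perturbation. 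Everything else — composition of pseudodifferential operators with the anisotropic weight, the $\mathscr{r}$-dependence of the strip, density of $C_c^\infty(M,L)$ — is routine and parallels \cite{Faure-Sjostrand-10,Dyatlov-Zworski-16}, with the microlocal bookkeeping relegated to Appendix~\ref{appendix:microlocal-tools}.
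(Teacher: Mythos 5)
Your proposal follows essentially the same route as the paper: an escape function adapted to the dual Anosov splitting $E_0^\ast\oplus E_u^\ast\oplus E_s^\ast$, the anisotropic space obtained by conjugating $L^2$ by $\Op(e^{-\mathscr{r}G})$, an absorbing pseudodifferential operator $Q$ microsupported near the zero section, a sharp G\r{a}rding positive-commutator estimate in the conjugated picture, an adjoint argument for surjectivity, and uniformity in the cusps coming from the global constants in Proposition~\ref{prop:invariant_bundles}. One point of your sketch is imprecise and, as written, the argument would not close: the escape estimate $X_\Phi G \geq 1$ cannot hold on all of $\{|\xi|\gtrsim \delta\}$, but only outside an additional conic neighbourhood $N_0$ of the flow codirection $E_0^\ast$ (where necessarily $X_\Phi G = 0$ since $G$ can only be taken flat there), so one must partition phase space into three regions --- near the zero section, handled by $Q$; the cone $N_0$, handled by ellipticity of $p$; and the remaining G\r{a}rding region, handled by the escape function --- as Lemma~\ref{lemma:escape-function} and Proposition~\ref{prop:Inverse-up-to-smoothing-FS} do; your phrase ``the imaginary part controls the flow direction elsewhere'' gestures at this but is not enough to substitute for the explicit elliptic estimate on $N_0$. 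A smaller bookkeeping difference: the paper's absorbing potential $Q = \Op(q)$ has $q\in S^0$ equal to $1$ near the zero section (so the full operator is elliptic of order $0$ there and elliptic regularity applies directly), whereas your $Q = Ch\,\Op(\chi)$ is $\mathcal{O}(h)$; this can still be made to work via the G\r{a}rding inequality with $C$ chosen large depending on $\mathscr{r}$, but it is not the cleanest route and is not what the paper does.
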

The space $H^{\escapeparam\mathbf{m}}$ will take the form (see definition \ref{def:anisotropic-space})
\[
\Op(e^{-\escapeparam G})\cdot L^2( M, L).
\]
In this formula, $G$ denotes a so-called \emph{escape function}, and $\Op$ a semi-classical quantization that we define in Appendix \ref{appendix:microlocal-tools} (see equation \eqref{eq:def-quantization-global}). The construction of $G$ will be done first for $X$ acting on functions. Then the general case is obtained by tensorizing $\Op(e^{-\escapeparam G})$ with the identity $\mathbb{1}\in\End(L)$. 

\begin{remark}
As should be clear after reading the proof, the construction of the escape function is \emph{local} in the sense that it can be done in the universal cover. In particular, Proposition \ref{prop:first-parametrix} should hold in any geometrically finite negatively curved manifold whose universal cover has \emph{bounded geometry}. We do not prove this general result because that would require the construction of an explicit quantization with uniform bounds on these noncompact spaces. This seemed too much a detour considering that our aim is to study cusp manifolds and that a suitable quantization in this setting has already been developed by the first author in \cite{Bonthonneau-2}. 
\end{remark}

\subsection{Building the escape function}
\label{sec:escape-function}
In this subsection we want to construct an escape function in complete analogy to 
\cite[Lemma 1.2]{Faure-Sjostrand-10}. As we deal with a noncompact situation we however
have to take care that the required uniform bounds hold. 

The escape function $G$ will be a function on the cotangent bundle $T^\ast M$ and we introduce 
the decomposition
\begin{equation}\label{eq:Tstar_decomp}
T^\ast M = E^\ast_0 \oplus E^\ast_u \oplus E^\ast_s,
\end{equation}
so that $E^\ast_0 = \R \alpha$ where $\alpha$ is the Liouville one-form $-\xi\cdot dx$. Furthermore $E^\ast_u = (E^u \oplus E^0)^\perp$ and $E^\ast_s = (E^s \oplus E^0)^\perp$. 

We have to introduce some notations regarding the dynamics. We lift the geodesic flow $\varphi_t$ symplectically to the flow
\[
\Phi_t :(x,\xi) \mapsto  (\varphi_t(x), (d_x \varphi_t^\ast)^{-1}\cdot \xi).
\]
It is the Hamiltonian flow associated to the Hamiltonian 
\begin{equation}\label{eq:def_p}
p(x,\xi):= \xi\cdot X(x),
\end{equation}
which is the symbol of $-i X$
and we denote by $X_\Phi$ its hamiltonian vectorfield. The decompositions (\ref{eq:Tstar_decomp}) is preserved by the flow, and
\begin{equation}\label{eq:estimate-E-ast-s-exponential-decay}
(x,\xi) \in E^\ast_s \Rightarrow |\Phi_t(x,\xi) | \leq C e^{-\beta t} | (x,\xi)|\text{ for }t>0.
\end{equation}
(Likewise in negative time for $E^\ast_u$).
\begin{lemma}\label{lemma:escape-function}
For any sufficiently small uniform conical neighbourhoods $N_0, N_u, N_s$ of $E_0^*, E_u^*, E_s^*$ there are constants $C_G, R>0$ such that for any $\delta>0$, there is an escape function $G\in C^\infty(T^\ast M)$ with 
\begin{enumerate}[(i)]
	\item $X_\Phi G  > 1$ outside of $\{ | \xi | < R \delta\}\cup N_0$.
	\item $X_\Phi G \geq 0$ globally on $\{ |\xi| > \delta\}$. 
	\item For $|\xi| > R \delta$
\[
G(x,\xi) = \begin{cases} 	+ C_G \log |\xi| + \mathcal{O}(1)	&\xi\in N_u, \\
                        	- C_G \log |\xi| + \mathcal{O}(1)	&\xi\in N_s, \\ 
			 				0 									&\xi\in N_0.
           \end{cases}
\]
	\item $e^G \in S^{\mathbf{m}}_{\log}(M)$: it is an anisotropic symbol of order $\mathbf{m}(x,\xi)$, with $\mathbf{m} \in S^0_{cl}(M)$  being a
0-homogeneous classical symbol  (see Definition~\ref{def:C-infinity-structure-compactified} for a definition of classical symbol classes) with
\[
\mathbf{m}(x,\xi/|\xi|)=	\begin{cases}	+ C_G  	&\xi\in N_u, \\
                        					- C_G 	&\xi\in N_s, \\ 
			 								0 		&\xi\in N_0.
           					\end{cases}
\]
\end{enumerate}
\end{lemma}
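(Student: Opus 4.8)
The plan is to adapt the construction of the escape function in \cite[Lemma~1.2]{Faure-Sjostrand-10}, with two points requiring extra care. First, $M$ is noncompact, so every object and every estimate must be produced \emph{uniformly} in $x\in M$; this is exactly what Proposition~\ref{prop:invariant_bundles} is for (uniform rates $\beta,B$, uniform transversality of $E_0,E_s,E_u$, uniform Hölder constants), together with the bounded geometry of the universal cover — indeed, as noted in the remark after Proposition~\ref{prop:first-parametrix}, the whole construction can be carried out $\Gamma$-equivariantly on $S^\ast\tilde N$ and pushed down. Second, item (ii) asks for $X_\Phi G\geq 0$ on \emph{all} of $\{|\xi|>\delta\}$ for an arbitrary $\delta$, not merely outside a fixed neighbourhood of the zero section; this forces us to choose the variable order $\mathbf m$ so that its sign agrees with that of the $0$-homogeneous function $\nu(x,\xi):=X_\Phi(\log|\xi|)$ (the instantaneous logarithmic expansion rate of covectors under $\Phi_t$), and this turns out to be possible thanks to a convexity property special to geodesic flows in negative curvature. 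Granting a $0$-homogeneous $\mathbf m\in S^0_{cl}(M)$ with the boundary values of item (iv) near $E_0^\ast,E_u^\ast,E_s^\ast$, with $X_\Phi\mathbf m\geq 0$ everywhere and $X_\Phi\mathbf m\geq\varepsilon_0>0$ off $N_0\cup N_u\cup N_s$, and with $\mathbf m\cdot\nu\geq 0$, one sets
\[
G(x,\xi):=\mathbf m(x,\xi)\,f(|\xi|/\delta),
\]
where $f\in C^\infty([0,\infty))$ is a fixed nonnegative nondecreasing function, $\equiv 0$ on $[0,1]$ and $=\log t$ for $t\geq R$. Items (iii) and (iv) are then immediate: for $|\xi|\geq R\delta$ one has $G=\mathbf m\log|\xi|+\mathcal O_\delta(1)$ and $e^{G}=|\xi|^{\mathbf m}$ times a bounded classical symbol, the usual logarithmic losses in the symbol estimates being harmless.

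\textbf{Construction of $\mathbf m$.} Since the curvature of $N$ is strictly negative, a Jacobi/Riccati computation shows that along every orbit of $\Phi_t$ the map $\tau\mapsto|\Phi_\tau(x,\xi)|^2$ has nonnegative second derivative, strictly positive unless $(x,\xi)\in E_0^\ast=\R\alpha$ (on which it is constant). Hence $\tau\mapsto|\Phi_\tau(x,\xi)|$ is unimodal, so $\nu(\Phi_\tau(x,\xi))=\tfrac{d}{d\tau}\log|\Phi_\tau(x,\xi)|$ changes sign exactly once, from $-$ to $+$: it is $\equiv 0$ on $E_0^\ast$, $>0$ on $(E_u^\ast\oplus E_0^\ast)\setminus E_0^\ast$, $<0$ on $(E_s^\ast\oplus E_0^\ast)\setminus E_0^\ast$, and on a small cone $N_u$ about $E_u^\ast$ one has $\nu\geq c_u>0$ uniformly (with $\nu$ explicitly constant in each cusp). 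In particular $\mathrm{sign}(\nu)$ is a flow-nondecreasing (discontinuous) function, and the quantity $-\tau_0(x,\xi)$, where $\tau_0$ is the minimiser of $|\Phi_\tau\xi|^2$, is a Lyapunov function increasing at unit speed ($X_\Phi\tau_0=-1$) that varies smoothly off $(E_u^\ast\oplus E_0^\ast)\cup(E_s^\ast\oplus E_0^\ast)$. One now builds $\mathbf m$ as in \cite{Faure-Sjostrand-10}: on $S^\ast M$ take a smooth function equal to $\pm C_G$ on small neighbourhoods of $E_u^\ast,E_s^\ast$ and to $0$ on a small neighbourhood of $E_0^\ast$, interpolating monotonically along the flow in between (possible because the flow is, away from $E_0^\ast$, gradient-like with uniform rates, essentially via $-\tau_0$), and arrange in addition that the transition crosses $0$ only inside $\{\nu=0\}$ — e.g.\ $\mathbf m=C_G\,\Psi(-\tau_0)$ in the region where all three components of $\xi$ are non-negligible, with $\Psi$ fixed, smooth, increasing, $\Psi(\pm\infty)=\pm1$, $\Psi(0)=0$, and the usual patching near $E_0^\ast$ and near $E_u^\ast\oplus E_0^\ast$, $E_s^\ast\oplus E_0^\ast$. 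The uniform transversality and rates of Proposition~\ref{prop:invariant_bundles} and the bounded geometry of $\tilde N$ are used precisely to obtain the uniformly positive progression rate (hence $\varepsilon_0$) and uniform smooth bounds on $\tau_0$. By construction $X_\Phi\mathbf m\geq 0$, with the uniform lower bound $\varepsilon_0$ off $N_0\cup N_u\cup N_s$, and $\mathrm{sign}(\mathbf m)=\mathrm{sign}(\nu)$, so $\mathbf m\,\nu\geq 0$.

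\textbf{Verification of (i) and (ii).} With $t=|\xi|/\delta$ one computes
\[
X_\Phi G=(X_\Phi\mathbf m)\,f(t)+\mathbf m\,\nu\,\bigl(tf'(t)\bigr).
\]
For (ii): on $\{|\xi|\leq\delta\}$ one has $G\equiv 0$; on $\{|\xi|>\delta\}$ (so $t>1$) both terms are $\geq 0$, since $X_\Phi\mathbf m\geq 0$, $f\geq 0$, $\mathbf m\,\nu\geq 0$ and $tf'(t)\geq 0$. For (i): on $\{|\xi|>R\delta\}\setminus N_0$ one has $tf'(t)=1$; in $N_u$, $X_\Phi\mathbf m=0$ and $\mathbf m=C_G$, so $X_\Phi G=C_G\nu\geq C_G c_u>1$ once $C_G>1/c_u$, and symmetrically in $N_s$; outside $N_0\cup N_u\cup N_s$, $(X_\Phi\mathbf m)f(t)\geq\varepsilon_0\log t>\varepsilon_0\log R$ while $|\mathbf m\,\nu|\leq C_G\sup|\nu|<\infty$ is bounded, so $X_\Phi G>1$ provided $R$ is large. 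The quantifier order is respected: the neighbourhoods $N_0,N_u,N_s$ are fixed first, they determine $\varepsilon_0$, $c_u$ and $\sup|\nu|$, then $C_G$ is chosen, then $R$, and only afterwards $\delta$ — which enters $G$ solely through the rescaling $|\xi|/\delta$.

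\textbf{Main obstacle.} The difficulty is not a single new idea but the requirement that \emph{all} of the above be uniform over the noncompact $M$: the Jacobi estimate giving convexity, the smooth dependence and boundedness of $\tau_0$, the progression rate controlling $\varepsilon_0$, the lower bound $c_u$ on $\nu$ near $N_u$, and the symbol bounds on $e^{G}$ must come with constants independent of $x$. This is exactly what Proposition~\ref{prop:invariant_bundles} and the bounded geometry of $\tilde N$ supply, and it is why the construction is most cleanly performed on the universal cover. The one genuinely new analytic input, compared with the compact case, is the use of the convexity of $\tau\mapsto|\Phi_\tau(x,\xi)|^2$ to force $\mathbf m\,\nu\geq 0$, which is what makes property (ii) hold for an arbitrary $\delta$.
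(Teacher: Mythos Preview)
Your approach is genuinely different from the paper's and has an attractive directness, but it rests on a claim that you do not prove and that is unlikely to hold in the generality required.

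\textbf{The convexity claim.} You assert that a ``Jacobi/Riccati computation'' shows $\tau\mapsto |\Phi_\tau(x,\xi)|^2$ is convex for every $(x,\xi)\in T^\ast M$. In constant curvature this is true (with the Sasaki norm one gets $|\Phi_t\xi|^2 = a^2 + e^{2t}|b|^2 + e^{-2t}|c|^2$ in an adapted frame). But in the compact core $N_0$ the curvature is only assumed strictly negative, not constant, and the second derivative of $|J|^2+|J'|^2$ along a geodesic picks up a term proportional to the \emph{derivative} of the curvature along the geodesic; there is no reason this term cannot dominate. Moreover the paper does not use the Sasaki metric on $S^\ast N$ but an equivalent \emph{product} metric in each cusp, and convexity is not preserved under passing to an equivalent norm. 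The paper addresses precisely this obstruction when it writes ``since the constant $C$ in the estimate \eqref{eq:estimate-E-ast-s-exponential-decay} is not necessarily $1$'': your convexity claim is equivalent to forcing $C=1$ in the Anosov estimates for the chosen norm, and that is not available in general.

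\textbf{Why this breaks the proof.} Everything downstream depends on it. For property~(i) on $N_u$ you need $\nu\geq c_u>0$ there, but without convexity the instantaneous rate $\nu=X_\Phi\log|\xi|$ can be negative on parts of $N_u$ (this is exactly what $C>1$ permits). For property~(ii) you need $\mathbf m\,\nu\geq 0$ globally, which you obtain by making $\mathbf m$ change sign exactly on the hypersurface $\{\nu=0\}=\{\tau_0=0\}$; but if convexity fails, $\{\nu=0\}$ need not be a single smooth hypersurface crossed once per orbit, and the construction $\mathbf m=C_G\Psi(-\tau_0)$ loses its meaning. Your ``usual patching'' near $E^\ast_u\oplus E^\ast_0$ and $E^\ast_s\oplus E^\ast_0$ must also preserve $\mathbf m\,\nu\geq 0$, which you do not justify.

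\textbf{How the paper avoids this.} The paper never uses $\log|\xi|$ as the radial part of $G$. Instead it replaces $|\xi|$ by a \emph{time-averaged} elliptic symbol
\[
f_{us}(x,\xi)=\exp\Big(\frac{1}{2T'}\int_{-T'}^{T'}\log|\Phi_t(x,\xi)|\,dt\Big),
\]
so that $X_\Phi\log f_{us}$ has a definite sign on $N_u$ and $N_s$ regardless of the constant $C$. Likewise the weight $\mathbf m$ is built by time-averaging smoothed indicator functions along the sphere-bundle flow $\tilde\Phi_t$, which gives $X_{\tilde\Phi}\mathbf m\geq 0$ and the uniform lower bound off $N_0\cup N_u\cup N_s$ purely from the transience estimates of Lemma~\ref{lem:S_star_M_dynamic} --- no pointwise sign information on $\nu$ is needed. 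This is less elegant than your proposal but robust under any choice of equivalent metric and any strictly negative curvature.

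If you can actually prove the convexity statement for \emph{some} admissible metric on $S^\ast N$ (uniformly over $M$), your argument would go through and would be a nice simplification; but as written this is the missing step.
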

In order to prove Lemma~\ref{lemma:escape-function} it will be helpful to restrict $\Phi_t$ to the unit sphere bundle $S^*M$. In order to do this, let us interpret $S^*M \cong (T^*M\setminus\{0\}) /\R$ where $\R$ acts on each fibre by linear multiplication. Then, by homogeneity, $\Phi_t$ factors to a flow $\tilde\Phi_t:S^*M\to S^*M$ with vector field $X_{\tilde\Phi}$. By an abuse of notations, we can see $E^\ast_0$, $E^\ast_u$ and $E^\ast_s$ as subsets of $S^\ast M$. From the uniform estimates in Proposition~\ref{prop:invariant_bundles} we obtain:
\begin{lemma}\label{lem:S_star_M_dynamic}
For $\epsilon >0$, let $U^\epsilon_u\subset S^\ast M$ be the  $\epsilon$ neighbourhood of $ E^*_u$ and likewise let $U^\epsilon_{0,s}\subset S^\ast M$ be the $\epsilon$ neighbourhood of $E^*_0\oplus E^*_s$. Then there exists $\epsilon >0$ such that $U^\epsilon_u$ and $U^\epsilon_{0,s}$ are disjoint. Furthermore, for any fixed $\epsilon$ as above, there is a finite maximal transition time $\tau_{\max} > 0$ such that for $t\geq \tau_{\max}$,
 \begin{enumerate}
	\item For all $(x,\xi)\in S^*M \setminus U^\epsilon_u$, $\ \tilde \Phi_{-t}(x,\xi)\in U^\epsilon_{0,s}$.
	\item For all $(x,\xi)\in S^*M \setminus U^\epsilon_{0,s}$, $\ \tilde \Phi_{t}(x,\xi)\in U^\epsilon_u$.
 \end{enumerate}
Finally for any $T>0$ there is $\epsilon'>0$ such that 
 $U_u^{\epsilon '}\subset \tilde \Phi_T (U_u^\epsilon)$ and $U_{0,s}^{\epsilon '}\subset \tilde \Phi_{-T} (U_{0,s}^\epsilon)$.
 
The same statement holds for $E^*_0\oplus E^*_u$ and $E^*_s$.
\end{lemma}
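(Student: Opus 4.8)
The plan is to reduce the whole statement to the uniform exponential estimates of Proposition~\ref{prop:invariant_bundles}, dualized to $T^\ast M$ and read on $S^\ast M \cong (T^\ast M\setminus\{0\})/\R$. First I will record the standard consequences for the lifted flow: the decomposition $T^\ast M = E^\ast_0\oplus E^\ast_u\oplus E^\ast_s$ is $\Phi_t$-invariant, H\"older, with angles between the summands bounded below by a uniform constant, and there are uniform $0<c\le C$, $\beta>0$ with $\|\Phi_{-t}\xi\|\le Ce^{-\beta t}\|\xi\|$ for $\xi\in E^\ast_u$, $t\ge0$ (this is \eqref{eq:estimate-E-ast-s-exponential-decay} in negative time), hence also $\|\Phi_t\xi\|\ge C^{-1}e^{\beta t}\|\xi\|$ on $E^\ast_u$; dually, $\Phi_t$ contracts $E^\ast_s$ for $t\ge0$, and on $E^\ast_0=\R\alpha$ the flow is uniformly bounded above and below because $\varphi_t$ preserves the contact form $\alpha$ and $|\alpha|$ is uniformly pinched. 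Combining these on the transverse summands yields $\|\Phi_{-t}\eta\|\ge c\|\eta\|$ and $\|\Phi_{t}\eta\|\le C\|\eta\|$ for $\eta\in E^\ast_0\oplus E^\ast_s$, $t\ge0$, and the time-reversed statements with $u\leftrightarrow s$. Writing $\pi_u,\pi_{0s}$ for the projections along the complementary summands, the uniform angle bound lets me compare membership in $U^\epsilon_u$ with smallness of $|\pi_{0s}\xi|/|\xi|$: there is $a(\epsilon)>0$ (with $a(\epsilon)\to0$ as $\epsilon\to0$) and a fixed $c'>0$ such that $\xi\notin U^\epsilon_u \Rightarrow |\pi_{0s}\xi|\ge a(\epsilon)|\xi|$, and $|\pi_{0s}\xi|\le c'\epsilon|\xi| \Rightarrow \xi\in U^\epsilon_u$, and symmetrically with $u$ and $0s$ exchanged.

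Granting this, disjointness is immediate: the $S^\ast M$-distance between $E^\ast_u$ and $E^\ast_0\oplus E^\ast_s$ is bounded below by a uniform constant, so any $\epsilon$ below half of it makes $U^\epsilon_u$ and $U^\epsilon_{0,s}$ disjoint; fix such an $\epsilon$. For the transition estimate (1), take $(x,\xi)\in S^\ast M\setminus U^\epsilon_u$; then $|\pi_{0s}\xi|\ge a(\epsilon)|\xi|$. Applying $\Phi_{-t}$, which preserves the three summands, gives $|\pi_u\Phi_{-t}\xi|=|\Phi_{-t}\pi_u\xi|\le Ce^{-\beta t}|\xi|$ while $|\pi_{0s}\Phi_{-t}\xi|=|\Phi_{-t}\pi_{0s}\xi|\ge c\,a(\epsilon)|\xi|$, so the ratio $|\pi_u\Phi_{-t}\xi|/|\Phi_{-t}\xi|$ is $\lesssim Ce^{-\beta t}/(c\,a(\epsilon))$ and becomes small enough to force $\tilde\Phi_{-t}(x,\xi)\in U^\epsilon_{0,s}$ once $t\ge\tau_{\max}(\epsilon)$; the threshold depends only on $\epsilon$ and the uniform constants. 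Statement (2) is the time-reversed computation: for $(x,\xi)\notin U^\epsilon_{0,s}$ one has $|\pi_u\xi|\ge a(\epsilon)|\xi|$, then $|\pi_u\Phi_t\xi|\ge C^{-1}e^{\beta t}|\pi_u\xi|$ and $|\pi_{0s}\Phi_t\xi|\le C|\xi|$, so $\tilde\Phi_t(x,\xi)\in U^\epsilon_u$ for $t\ge\tau_{\max}$; enlarging $\tau_{\max}$ handles both cases.

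For the last two inclusions: the generator $X_{\tilde\Phi}$ has uniformly bounded derivatives (it is built from the geodesic flow on a manifold of bounded geometry), so for fixed $T$ the maps $\tilde\Phi_{\pm T}$ are bi-Lipschitz on $S^\ast M$ with a uniform constant $L=L(T)$. Since $\tilde\Phi_{-T}(E^\ast_u)=E^\ast_u$, any $\zeta$ with $d(\zeta,E^\ast_u)<\epsilon/L$ satisfies $d(\tilde\Phi_{-T}\zeta,E^\ast_u)=d(\tilde\Phi_{-T}\zeta,\tilde\Phi_{-T}E^\ast_u)\le L\,d(\zeta,E^\ast_u)<\epsilon$, i.e. $\tilde\Phi_{-T}(U^{\epsilon/L}_u)\subset U^\epsilon_u$, which is $U^{\epsilon'}_u\subset\tilde\Phi_T(U^\epsilon_u)$ with $\epsilon'=\epsilon/L$; the same argument with $T\mapsto-T$ and $E^\ast_u\mapsto E^\ast_0\oplus E^\ast_s$ gives the second inclusion. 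Finally, the statement for the pair $E^\ast_0\oplus E^\ast_u$ and $E^\ast_s$ is the identical argument with time reversed throughout, since $\Phi_{-t}$ contracts $E^\ast_s$ and does not contract $E^\ast_0\oplus E^\ast_u$, again with uniform constants.

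The only genuinely non-routine point — and the step I expect to need the most care — is the \emph{uniformity} of $\tau_{\max}$ and of $\epsilon'$ over the noncompact $S^\ast M$: a priori, transition times and bi-Lipschitz constants could blow up near infinity. This is exactly where the uniform hyperbolicity constants and the uniform transversality of the splitting from Proposition~\ref{prop:invariant_bundles}, together with the bounded geometry of $M$, are indispensable. Once those uniformities are in place, everything reduces to bookkeeping with the three-term decomposition and the comparison between the $S^\ast M$-metric and the component norms.
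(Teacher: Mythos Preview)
Your proof is correct and follows precisely the approach the paper indicates: the paper does not give a written proof of this lemma but simply asserts that it is obtained ``from the uniform estimates in Proposition~\ref{prop:invariant_bundles}'', and your argument is exactly the detailed unpacking of that claim --- dualizing the uniform hyperbolic splitting to $T^\ast M$, reading it on the sphere bundle, and using the uniform contraction/expansion rates and uniform angle bounds to get the transition time and the Lipschitz inclusion. Your emphasis on uniformity over the noncompact $S^\ast M$ as the one delicate point is well placed and is indeed the reason the paper singles out Proposition~\ref{prop:invariant_bundles}.
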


\begin{proof}[proof of lemma \ref{lemma:escape-function}]
Let us first construct the weight function $\mathbf{m}$. This decomposes into two symmetrical steps. 

Take an $\epsilon>0$ from Lemma~\ref{lem:S_star_M_dynamic} 
such that $U^{3\epsilon}_u\cap U^{3\epsilon}_{0,s} = \emptyset$. In a first step we want to smooth the characteristic functions $\mathbb{1}_u^{2\epsilon}, \mathbb 1_{0,s}^{2\epsilon} \in L^1_{\mathrm{loc}}(S^*M)$ on these two sets. 
Therefore, take $\tilde \chi_m \in C_c^\infty(]-\epsilon,\epsilon[)$, with $\tilde \chi_m\geq0$ and $\tilde \chi_m(0)>0$. Then we define a smoothing kernel $\chi_m\in C^\infty(S^*M\times S^*M)$ by
\[
\chi_m(x,x'):= \frac{\tilde{\chi}_m(d(x,x'))}{\int_{S^*M} \tilde{\chi}_m(d(x,x'')) dx''} ,
\]
and we denote by $K_m$ the corresponding smoothing operator. Now we define the function $m_{0,s}^u:= K_m(\mathbb{1}_u^{2\epsilon} - \mathbb 1_{0,s}^{2\epsilon})$ which, by the construction of the smoothing operator, fulfills the following assumptions:
\begin{enumerate}
	\item $m_{0,s}^u \in \mathscr{C}^\infty(S^\ast M)$ --- this means that all derivatives are bounded, see the discussion at the start of Appendix \ref{appendix:microlocal-tools}.
	\item $m_{0,s}^u$ equals $+1$ on $U_u^{\epsilon}$ and $-1$ on $U_{0,s}^{\epsilon}$.
	\item $m_{0,s}^u$ takes values in $[-1,1]$. 
\end{enumerate}
Now take the time $T=2 \tau_{\max}$ (with $\tau_{\max}$ being the transition time from Lemma~\ref{lem:S_star_M_dynamic}) and set
\[
m^+_T = \int_{-T}^T m_{0,s}^u \circ \tilde \Phi_t dt.
\]
We have
\[
X_{\tilde\Phi} m^+_T  = m_{0,s}^u \circ \tilde \Phi_T - m_{0,s}^u\circ \tilde \Phi_{-T}.
\]
By Lemma~\ref{lem:S_star_M_dynamic} for any $(x,\xi)\in S^*M$ either $\tilde \Phi_{T}\in U^\epsilon_u$ or 
$\tilde \Phi_{-T}\in U^\epsilon_{0,s}$.
Since $m_{0,s}^u$ takes values in $[-1,1]$, we deduce that everywhere
\[
X_{\tilde\Phi} m^+_T \geq 0.
\]
Now let us define $V_u:= \Phi_T(S^*M\setminus U^\epsilon_{0,s}) \subset U^\epsilon_u$ and $V_{0,s}:= \Phi_{-T}(S^*M\setminus U^\epsilon_u) \subset U^\epsilon_{0,s}$. Then, for $(x,\xi)\notin (V_u\cup V_{0,s})$, $\tilde \Phi_{T}(x,\xi) \in U^\epsilon_u$ and $\tilde \Phi_{-T}(x,\xi) \in U^\epsilon_{0,s}$ and consequently
\[
X_{\tilde\Phi} m^+_T(x,\xi) =2.
\]
On the other side, if $(x,\xi)\in V_u$, then $m_{0,s}^u( \tilde\Phi_t(x,\xi)) =1$ for $t>-\tau_{\max}$ and from the definition of $T$, we deduce that $m_T^+(x,\xi) \geq T$. For the same reason $m_T^+<-T$ on $V_{0,s}$. Finally with $\epsilon'>0$ from Lemma~\ref{lem:S_star_M_dynamic}, we deduce that $m^+_T$ is constant equal to $2T$ (resp. $-2T$) on $U_u^{\epsilon'}$ (resp. $U^{\epsilon'}_{0,s}$).

The second step is to build a similar function $m^-_T$ replacing $E^\ast_u$ by $E^\ast_s$, and going through the same procedure. Taking
\[
m = \frac{m^+_T + m^-_T }{2},
\]
we get
\begin{enumerate}[(a)]
	\item $m\in \mathscr{C}^\infty(S^\ast M)$.
	\item $X_{\tilde \Phi} m \geq 0$ in $S^\ast M$.
	\item $X_{\tilde\Phi}m \geq 1$ on $S^*M \setminus (V_u\cup V_s\cup U^\epsilon_0)$.
	\item On $U^{\epsilon'}_u$ (resp. $U^{\epsilon'}_s$, $U^{\epsilon'}_0$), $m$ equals $2T$ (resp. $-2T$, $0$).
	\item $m>T$ on $V_u$ and $m<-T$ on $V_s$.
\end{enumerate}
The actual weight function $\mathbf{m}$ will be $m$ multiplied by a constant, that we will determine at the end.

Now comes the second part of the proof: building the symbol $G$. Choose
$N_u$ (resp. $N_s$, $N_0$) to be the cone in $T^*M$ generated by $V_u\subset S^*M$ (resp. $V_s, U_0^\epsilon$). We want to choose a symbol $f\in S^1(M)$ to be a positive \emph{elliptic} symbol, so that outside of $|\xi| < \delta$, on $N_0$ it equals $|p|$. We also want that on $N_u$ (resp. $N_s$) it satisfies $X_\Phi \log f \geq \beta/2$ (resp $\leq - \beta/2$). We would like to set $f$ to be just the norm $|\xi|$ in a neighbourhood of $E^\ast_u \oplus E^\ast_s$, but this is not suitable because the constant $C$ in the estimate \eqref{eq:estimate-E-ast-s-exponential-decay} is not necessarily $1$. However, we find that for $(x,\xi)\in E^\ast_s$,
\[
X_\Phi\left(\frac{1}{2t}\int_{-t}^t \log |\Phi_s(x,\xi)| ds \right) \leq \frac{\log C}{2t} - \beta.
\]
This suggests to pick $T' > 2\log(C)/\beta$ and define for $(x,\xi)$ in a fixed conical neighbourhood of $E^\ast_u \oplus E^\ast_s$
\[
f_{us}(x,\xi):= \exp\left( \frac{1}{2T'}\int_{-T'}^{T'} \log |\Phi_t(x,\xi)| dt \right).
\]
This is not a norm anymore, but is still $1$-homogeneous and smooth --- except at $0$. On $E^\ast_s$, $X_\phi \log f_{us} \leq -3\beta/4$, so that if $\epsilon>0$ was chosen small enough, $X_\phi \log f_{us} \leq - \beta /2 $ in $N_s$. We also have the corresponding estimates in $N_u$. We can piece together $f_{us}$ and $|p|$ around $N_0$ to obtain a globally defined elliptic $1$-homogeneous symbol. Let $c_f$ be its infimum on $\{ |\xi| = 1 \}$.

We have all the pieces to define
\[
G(x,\xi) = C_G' \left[1-\chi_G(|\xi|/\delta)\right] m\left( x, \frac{\xi}{|\xi|} \right) \log \frac{2 f(x,\xi)}{c_f \delta}.
\]
$C_G'>0$ is a constant fixed later and $\chi_G$ is a $C^\infty_c(]-1,1[)$ function, that equals $1$ in $[-1/2,1/2]$ and takes values between $0$ and $1$. It is there to ensure that $G$ is smooth at $\xi = 0$. We can check that $G\geq 0$. By the properties of $m$ from above, we directly deduce that Lemma~\ref{lemma:escape-function}(iii) holds. 

Now, we can compute
\[
\begin{split}
X_\Phi G &= - C_G' (X_\Phi \chi_G) m \log \frac{2f}{c_f \delta} \\
	&+ C_G'(1-\chi_G)\left[ (X_{\tilde\Phi} m) \log\frac{2f}{c_f\delta} + m \frac{X_\Phi f}{f}\right].
\end{split}
\]
Let us discuss the different terms: 

$X_\Phi\chi_G$ vanishes outside $\{|\xi|<\delta\}$, thus the first line  is irrelevant for the properties (i) and (ii) of Lemma~\ref{lemma:escape-function}. Let us consider the second line case by case:
\begin{itemize}
	\item \textbf{If $(x,\xi) \notin (N_0\cup N_u\cup N_s)$:} Note that $|m|$ and $|\frac{X_\Phi f}{f}|$ are globally bounded by a constant $C_0$. By property (c) above $X_{\tilde\Phi}m>1$. By the fact that $f$ is elliptic, there is a constant $R>0$ such that when $\{|\xi|>R\}$, $\log(2f/c_f)> 1 + C_0^2$. Then for $|\xi|>R\delta$, we also have $\log (2f/c_f\delta) > 1 + C_0^2$, thus $X_\Phi G > C_G'$ for $|\xi|>R\delta$
	\item \textbf{If $(x,\xi) \in N_u$:} Now we only know that $X_{\tilde\Phi}m\geq 0$, so we need a uniform lower bound for the second term. But from the choice of $f$, it is precisely there that $X_\Phi f /f > \beta/2$. Together with the property (e) of $m$ above, we deduce $X_\Phi G > \beta T C_G'/2$ for $|\xi|>\delta$.
	\item \textbf{If $(x,\xi) \in N_s$:} As in the previous case, we obtain $X_\Phi G > \beta TC_G'/2$ for $|\xi|>\delta$.
	\item \textbf{If $(x,\xi) \in N_0$:} As $f$ is a function of $p$ on $N_0$ and $X_\Phi$ is the Hamiltonian flow of $p$, we have $X_\Phi f=0$. Since $X_{\tilde \Phi} m\geq 0$ we conclude $X_\Phi G \geq 0$ for $|\xi|>\delta$.
\end{itemize}

Let $N_u'$ (resp. $N_s'$) be the conical neighbourhood corresponding to $U^{\epsilon'}_u$ (resp $U^{\epsilon'}_s$). We have $N_u' \subset N_u$ and $N_s' \subset N_s$. On $N_u'$ (resp. $N_s'$), $G = 2 C_G' T \log |\xi| + \mathcal{O}(1)$ (resp. $- 2 C_G' T \log |\xi| + \mathcal{O}(1)$). So we choose $C_G' \geq \max(\frac{2}{\beta T},1)$. This gives $C_G = 2C'_GT$, and $\mathbf{m} = C_G' m$.

At last we have to verify that $\mathbf{m}$ and $G$ are symbols in the right class in the sense of Definition~\ref{def:C-infinity-structure-compactified}. The weight was constructed as a  $\mathscr{C}^\infty$ function on $S^\ast M$, and that is the definition of being in $S^0_{cl}(M)$. For $e^G$ to be elliptic in $S^\mathbf{m}_{\log}(M)$, it suffices then to check that $(1-\chi_G(|\xi|))f$ itself is elliptic in $S^1_{cl}(M)$. By definition, this means that $f/|\xi|$ is a $\mathscr{C}^\infty$ function on $S^\ast M$. That is also a direct consequence of the construction.
\end{proof}

Actually, in our case, we can say something a little better, that will simplify the rest of the proof.
\begin{lemma}\label{lemma:invariance-escape-function}
We can assume that for $y>\mathbf{a}$ with $\mathbf{a}$ large enough, both $G$ and $\mathbf{m}$ are invariant under the local isometries $T_{\tau,\theta_0}$ defined in equation \eqref{eq:def-local-isometries}.
\end{lemma}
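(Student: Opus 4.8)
The key observation is that the whole construction of $G$ and $\mathbf{m}$ in the proof of Lemma~\ref{lemma:escape-function} was carried out using only objects that are, in the cusp region, invariant under the local isometry pseudo-group $T_{\tau,\theta_0}$. The plan is to revisit each ingredient and check that it can be chosen $T_{\tau,\theta_0}$-invariant for $y$ large, and then average to remove any residual non-invariance. Concretely: the splitting $TM = E_0\oplus E_s\oplus E_u$ (hence the dual splitting $E_0^*\oplus E_u^*\oplus E_s^*$) is isometry-invariant, since the invariant bundles are canonically attached to the hyperbolic dynamics and $T_{\tau,\theta_0}$ acts by isometries commuting with $\varphi_t$; this was already used in the proof of Proposition~\ref{prop:invariant_bundles}. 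The flow $\Phi_t$ and its generator $X_\Phi$ commute with the lifted action of $T_{\tau,\theta_0}$ on $T^*M$. So in the cusp, all the dynamical input to the construction is $T_{\tau,\theta_0}$-invariant.

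The steps, in order. First, I would choose the conical neighbourhoods $N_0,N_u,N_s$ and the sets $U^\epsilon_u, U^\epsilon_{0,s}$ etc. to be $T_{\tau,\theta_0}$-invariant for $y>\mathbf{a}$; this is possible because the bundles $E^*_\bullet$ and the metric on $S^*M$ (the product metric, which is isometry-invariant in the cusp) are invariant, so the $\epsilon$-neighbourhoods are automatically invariant there. Second, the smoothing kernel $\chi_m(x,x')$ is built from the Riemannian distance, hence is $T_{\tau,\theta_0}$-invariant in the cusp, so $m_{0,s}^u = K_m(\mathbb 1_u^{2\epsilon}-\mathbb 1_{0,s}^{2\epsilon})$ is invariant there; since $\tilde\Phi_t$ commutes with the action, $m_T^\pm$ and hence $m$ (and $\mathbf m = C_G' m$) are $T_{\tau,\theta_0}$-invariant for $y$ large. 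Third, for the symbol $f$: on $N_0$ it is a function of $p=\xi\cdot X$, which is isometry-invariant; on the cone around $E^*_u\oplus E^*_s$ it is $f_{us}(x,\xi)=\exp(\tfrac1{2T'}\int_{-T'}^{T'}\log|\Phi_t(x,\xi)|\,dt)$, and since $|\cdot|$ here is the product metric norm on $T^*M$ (invariant in the cusp) and $\Phi_t$ commutes with the action, $f_{us}$ is $T_{\tau,\theta_0}$-invariant for $y$ large; the interpolation between the two can be done with invariant cutoffs. The constants $c_f$, $R$, $C_G'$, and the cutoff $\chi_G(|\xi|/\delta)$ involve only $|\xi|$, which is invariant. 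Assembling $G = C_G'[1-\chi_G(|\xi|/\delta)]\,m(x,\xi/|\xi|)\log\frac{2f}{c_f\delta}$ then yields a $T_{\tau,\theta_0}$-invariant $G$ in $\{y>\mathbf a\}$.

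The one subtlety — and the main obstacle — is the transition region near $y=\mathbf a$: the isometry pseudo-group acts on $Z_\ell=[a,\infty[\times(\R^d/\Lambda)$ only partially (translating $y$ can push points below $a$), so "invariant for $y>\mathbf a$" must be interpreted as invariance under those $T_{\tau,\theta_0}$ that keep the relevant points inside $\{y>\mathbf a\}$, and one must make sure the various neighbourhoods and cutoffs used in the construction, restricted to $\{y>\mathbf a\}$, genuinely depend only on the isometry-invariant data there. I would handle this by fixing $\mathbf a$ large enough that, for all the finitely many flow times $T,T',\tau_{\max}$ appearing in the construction, the relevant pieces of trajectory through $\{y>\mathbf a\}$ stay in the cusp where $g$ has the explicit hyperbolic form \eqref{eq:def-cusp-metric}; then every ingredient above is manifestly a function of $T_{\tau,\theta_0}$-invariant quantities. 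Finally, if one prefers a clean statement, one can replace $G$ by its average over a fundamental domain of the (genuine) $\mathbb Z$-action $T_{\log a,0}$ composed with the torus translations — but in fact the direct construction already gives invariance, so no averaging is needed; the properties (i)--(iv) of Lemma~\ref{lemma:escape-function} are preserved because they were checked pointwise and the modified $G$ agrees with an admissible escape function everywhere. \qed
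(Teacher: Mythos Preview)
Your argument contains a genuine gap at its very first step. You assert that the Anosov splitting $E_0\oplus E_s\oplus E_u$ is $T_{\tau,\theta_0}$-invariant in the cusp because ``$T_{\tau,\theta_0}$ acts by isometries commuting with $\varphi_t$'', and you invoke Proposition~\ref{prop:invariant_bundles}. But the $T_{\tau,\theta_0}$ are only \emph{local} isometries of the cusp $Z_\ell$; they do not extend to isometries of $N$ and hence do not commute with $\varphi_t$ for all times. The bundles $E_s(x),E_u(x)$ at a point $x$ in the cusp are determined by the behaviour of $d\varphi_t|_x$ as $t\to\pm\infty$, and a generic trajectory through $x$ leaves the cusp, traverses the variable-curvature core $N_0$, and returns. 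So $E_s,E_u$ are \emph{not} exactly $T_{\tau,\theta_0}$-invariant. (Proposition~\ref{prop:invariant_bundles} concerns invariance under the global isometries of the universal cover, i.e.\ deck transformations; these are not the $T_{\tau,\theta_0}$.) Consequently the $\epsilon$-neighbourhoods $U^\epsilon_u,U^\epsilon_{0,s}$ of the true invariant bundles are not automatically $T_{\tau,\theta_0}$-invariant, and your construction of $m^u_{0,s}$ inherits this defect. Your later device of taking $\mathbf{a}$ large so that the finite propagation times $T,T',\tau_{\max}$ keep trajectories inside the cusp does not touch this issue, since the definition of $E_{s,u}$ involves infinite time.

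The paper fills precisely this gap: inside the cusp there is a \emph{second} splitting, the constant-curvature stable/unstable distribution of hyperbolic space, which \emph{is} invariant under all local isometries and which becomes exponentially close to the true $E_s,E_u$ as $y\to\infty$. One therefore builds $m^u_{0,s}$ and $m^s_{0,u}$ from the constant-curvature bundles for $y>y_0$; these functions are then genuinely $T_{\tau,\theta_0}$-invariant there and, by the exponential closeness, still equal $\pm 1$ on the required neighbourhoods of the true $E^\ast_u,E^\ast_{0,s}$. Once this correction is made, the remainder of your plan---propagating invariance through the finite-time averages to get $m$, and treating $f$ via the invariance of $p$, $|\xi|$, and $\Phi_t$ for bounded times at sufficient height---is correct and coincides with the paper's argument.
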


\begin{proof}
Recall from the discussion in Section~\ref{sec:admissible_vb} that each cusp $Z_\ell$ can been seen as a subset of the full cusp $Z_{\ell, f}=\Lambda_\ell\backslash\groupG /\groupK$. 
The geodesic flow on the hyperbolic space $\groupG\backslash\groupK$ or rather on its sphere bundle $S(\groupG/\groupK) = \groupG/\groupM$ is known to be uniformly hyperbolic with  analytic stable and unstable bundles $\tilde E^{s/u}$ which are invariant under all isometries of the hyperbolic space $\groupG/ \groupK$ i.e. under the left $\groupG$ action. 
Consequently, these bundles descend to the full cusp $SZ_{\ell, f}$ and can thus be restricted to the cusps. We call them the stable and unstable bundles corresponding to constant curvature and denote them by $E_{u/s}^{c}$. 
By the invariance under isometries of hyperbolic space, the bundles $E_{u/s}^{c}$ are invariant under the local isometries $T_{\tau,\theta_0}$ defined in equation \eqref{eq:def-local-isometries}. 

Let us now explain that $E_{u/s}^c$ and $E_{u/s}$ become $\mathcal O(1/y)$ close high in the cusp. Let us do this for the example of $E_s$: First note that $E_u\oplus E_s=E_u^c\oplus E_s^c$ (this is because the contact form of both flows coincides. Now for trajectories whose past is included in the cusp, $E^u$ and $E^c_u$ have to coincide, so at the bottom of the cusp ($y=a$) directions  that are close to the South pole (i.e incoming trajectories), $E^s$ and $E^c_u$ are transverse (by continuity of the bundles). Now high in the cusp ($y\gg a$) in an arbitrary direction (except in $\mathcal{N}$), its trajectory, when it exits the cusp has to be almost vertical, i.e. in the neighbourhood of the south poles considered above; Now the uniform hyperbolicity of both splittings implies that $E^s$ and $E^c_s$ are $\mathcal{O}(1/y)$-close as $y\to +\infty$.

As a consequence to the fact that the bundles $E^c_{u/s}$ and $E_{u/s}$ become close, when building the functions $m_{0,s}^u$ and $m_{0,u}^s$, we can actually choose them to be invariant by $T_{\tau,\theta_0}$ high in the cusp --- say $y> y_0$.

Since it takes at least a time $\sim \log y$ to go from height $y$ in the cusp to the compact part $N_0$, and since all the constructions above make use only of propagation for a global finite time under the flow, we obtain that for $y> y_0 e^T$, $m$ is also invariant under $T_{\tau,\theta_0}$.

The last thing to check is the invariance of $f$. In the cusp, the vector field $X$ is also invariant under local isometries of the hyperbolic space, so that $f$ also can be chosen to be $T_{\tau,\theta_0}$ invariant for $y> y_0 e^{T'}$. 
\end{proof}

\begin{remark}\label{remark:determination-of-mathbf-a}
We can chose $\mathbf{a}$ so that it coincides with the $\mathbf{a}$ in Definition \ref{def:admissible-bundle}. It will be smaller than the $\mathsf{a}$ of point (7) of Proposition \ref{prop:properties-quantization}.
\end{remark}

\subsection{A first parametrix}
\label{sec:first-parametrix}

Now that we have built our escape function, we focus on building an approximate inverse for $\mathbf{X}-hs$. Recall that we use semiclassical analysis: We had defined $\mathbf X=h\mathcal X$ and we will work with the semi-classical quantization $\Op_{h,L}^w$ acting on sections of $L$ , see Appendix \ref{appendix:microlocal-tools} eq. \eqref{eq:def-quantization-global}. For a simpler notation we simply write $\Op$ in the sequel.
A priori for $\Op(\sigma)$ to make sense, we need that $\sigma$ is valued in $\End(L)$. If $\sigma$ is just a function, we can consider $\Op(\sigma \otimes \mathbb{1})$. This operator will be denoted by abuse of notations just as $\Op(\sigma)$.
\begin{definition}\label{def:anisotropic-space}
Let $\delta>0$ and $G_\delta$ the corresponding escape function given by Lemma \ref{lemma:escape-function}. Let $\escapeparam>0$. We denote by $H^{\escapeparam\mathbf{m}}_\delta$ the set of distributions
\begin{equation}\label{eq:def-H-cal-r}
H^{\escapeparam\mathbf{m}}_\delta  = \Op(e^{- \escapeparam G_\delta}) \cdot L^2(M, L).
\end{equation}
It is endowed with the norm
\[
\| f \|_{H^{\escapeparam\mathbf{m}}_\delta} = \| \Op(e^{-\escapeparam G_\delta})^{-1} f \|_{L^2(M, L)}.
\]
The space actually does not depend on $h$ or on $\delta$, but the norm does. As a convention, we denote $H^{0}_\delta = L^2(M, L)$.
\end{definition}
We will drop the $\delta$ indices in the notations, to lighten a bit the presentation, 
and just write $H^{\escapeparam\mathbf{m}}(=H^{\escapeparam\mathbf{m}}_\delta$). 
Only at the end of section \ref{sec:Black-Box} in the proof of Theorem \ref{thm:full-theorem-resolvent} will 
we let $\delta$ go $0$. 
From the properties of $G$, we directly obtain the following regularity properties, which 
show that $H^{\escapeparam\mathbf{m}}_\delta$ is an \emph{anisotropic space}
\footnote{The spaces that show up here are distributions that are regular in the $E_u^*$ direction and irregular in the $E_s^*$ direction. 
This is no contradiction to precedent works like e.g. \cite{Dyatlov-Guillarmou-16} where the authors continue the resolvent of the operator $-\mathbf X$ and thus obtain the converse regularity properties.}.
\begin{lemma}\label{lem:regularity_of_H_r}
 For any $\escapeparam>0, \delta>0$ we have the continuous inclusions $H^{+ C_G\escapeparam} \subset H^{\escapeparam\mathbf{m}}_\delta \subset H^{-C_G \escapeparam}$. Furthermore near $E_u^*$, $H^{\escapeparam\mathbf{m}}_\delta$ is microlocally equivalent to $H^{ C_G \escapeparam}$ and near $E_s^*$, 
 $H^{\escapeparam\mathbf{m}}_\delta$ is microlocally equivalent to $H^{- C_G \escapeparam}$. In particular, for $A\in S^0(M,L)$
 \begin{equation}\label{eq:regularity_of_H_r}
 \begin{split}
  &\WF_h(A)\in N_s \Rightarrow \|Au\|_{H^{\escapeparam\mathbf{m}}_\delta} 
  \leq C\|A u\|_{H^{-C_G\escapeparam }}\ , \\
    \text{ and }&\WF_h(A)\in N_u \Rightarrow \|Au\|_{H^{C_G\escapeparam }} \leq C\|A u\|_{H^{\escapeparam\mathbf{m}}_\delta}\ .
  \end{split}
 \end{equation}
\end{lemma}

The differential operator $\mathbf X$, which is a priori defined on $C_c^\infty(M,L)$ has a unique closed extension\cite[Lemma A.1]{Faure-Sjostrand-10} to the 
domain $D_{\escapeparam }:=\{u\in H^{\escapeparam\mathbf{m}}: \mathbf Xu\in D_{\escapeparam }\}$. The domain $D_{\escapeparam }$ is naturally a Hilbert space w.r.t. the scalar product $\langle\cdot,\cdot\rangle_{D_{\escapeparam }} := \langle\cdot,\cdot\rangle_{H^{\escapeparam\mathbf{m}}} + \langle \mathbf X\cdot,\mathbf X\cdot\rangle_{H^{\escapeparam\mathbf{m}}}$. 
The action of $\mathbf{X} -h s$ on $H^{\escapeparam\mathbf{m}}$, is equivalent to the action on $H^0=L^2$ of
\[
\begin{split}
\Op(e^{-\escapeparam G})^{-1} (\mathbf{X} -h s) &\Op(e^{-\escapeparam G}) = \\
			&\mathbf{X} - h (\escapeparam \Op(\{ p, G \}) + s) + \mathcal{O}(h^2 \Psi^{-1^+}_{\log}).
\end{split}
\]
Since $X_\Phi$ is the hamiltonian vector field of the Hamiltonian $p$ defined in \eqref{eq:def_p}, we have $\{ p, G\} = X_\Phi G$. We will need the following observation
\begin{lemma}\label{lem:invertible-on-the-right}
There are constants $C,C'>0$ such that for $\Re(s) > C(1 + \escapeparam)$, $\mathbf{X} - h s : D_{\escapeparam }\to H^{\escapeparam\mathbf{m}}$ is invertible. We denote the inverse by $\mathscr{R}(s)$ and its operator norm is bounded: $\|\mathscr{R} (s)\|_{H^{\escapeparam\mathbf{m}}\to H^{\escapeparam\mathbf{m}}}\leq C'h^{-1}$. 
\end{lemma}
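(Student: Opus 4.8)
The plan is to prove Lemma~\ref{lem:invertible-on-the-right} by the classical energy (or \emph{positive commutator}) argument, which is the standard way to obtain invertibility of $\mathbf{X}-hs$ for large $\Re(s)$ on an anisotropic space; non-compactness of the trapped set plays no role here since we only need estimates that are uniform on $M$, and all the relevant symbols ($\mathbf{m}$, $G$, $p$, $X_\Phi G$) were built to have uniformly bounded derivatives (i.e. they lie in the $\mathscr{C}^\infty$ classes of Appendix~\ref{appendix:microlocal-tools}). First I would conjugate: working with $u\in H^{\mathscr{r}\mathbf{m}}$ is the same as working with $v=\Op(e^{-\mathscr r G})^{-1}u\in L^2(M,L)$, and by the computation displayed just before the lemma the conjugated operator is
\[
P_s := \Op(e^{-\mathscr r G})^{-1}(\mathbf{X}-hs)\Op(e^{-\mathscr r G}) = \mathbf{X} - h\mathscr{r}\,\Op(X_\Phi G) - hs + \mathcal{O}_{L^2\to L^2}(h^2).
\]
So it suffices to show that $P_s : \mathcal D \to L^2$ is invertible with $\|P_s^{-1}\|_{L^2\to L^2}\le C'h^{-1}$ for $\Re(s)>C(1+\mathscr r)$, where $\mathcal D$ is the image of $D_{\mathscr r}$ under the conjugation.

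Next I would establish the a priori estimate $\|P_s v\|_{L^2}\ge c\,h\,(\Re s)\,\|v\|_{L^2}$ for all $v\in\mathcal D$. Writing $s=\sigma+i\lambda$ and pairing, $\Re\langle P_s v, v\rangle = \Re\langle \mathbf{X}v,v\rangle - h\mathscr r\,\Re\langle\Op(X_\Phi G)v,v\rangle - h\sigma\|v\|^2 + \mathcal O(h^2)\|v\|^2$. The term $\Re\langle\mathbf{X}v,v\rangle$ is $O(1)\|v\|^2$: indeed $\mathbf{X}=h\mathcal{X}=h(\nabla_X+A_\ell)$ in the cusps, $\nabla_X$ is skew-adjoint up to a bounded zeroth-order term coming from the divergence of $X$ with respect to the Liouville measure (which vanishes, since $X$ preserves $\mu_L$) and from the Hermitian structure, and $A_\ell$ (resp.\ any compactly supported perturbation) is bounded; so $\Re\langle\mathbf{X}v,v\rangle = \mathcal O(h)\|v\|^2$. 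Crucially, by Lemma~\ref{lemma:escape-function}(ii), $X_\Phi G\ge 0$ modulo a compactly microsupported (in $\xi$) bounded term, so by the sharp Gårding inequality $\Re\langle\Op(X_\Phi G)v,v\rangle \ge -Ch\|v\|^2$ — i.e. it does not spoil the sign, at worst it contributes an $\mathcal O(h)$ error. Hence $\Re\langle P_s v,v\rangle \le -h\sigma\|v\|^2 + Ch\|v\|^2$, which gives $h(\sigma - C)\|v\|^2 \le |\langle P_s v, v\rangle| \le \|P_s v\|\,\|v\|$, and therefore, for $\sigma = \Re(s) > C(1+\mathscr r)$ (absorbing the $\mathscr r$-dependence of $C$ that enters through the $h\mathscr r\Op(X_\Phi G)$ term and through $h^2$ with $h\le h_0$),
\[
\|P_s v\|_{L^2} \ge c\,h\,\|v\|_{L^2}, \qquad v\in\mathcal D.
\]
The same estimate for the adjoint $P_s^\ast$ (which has the same structure, with $\mathbf{X}$ replaced by $-\mathbf{X}$ plus bounded terms and $s$ by $\bar s$) gives density of the range; together with injectivity and closedness of the range this yields that $P_s$ is bijective from $\mathcal D$ onto $L^2$ with $\|P_s^{-1}\|\le (ch)^{-1}$. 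Undoing the conjugation gives $\mathscr R(s)=(\mathbf{X}-hs)^{-1}$ bounded by $C'h^{-1}$ on $H^{\mathscr{r}\mathbf{m}}$, and one checks that $\mathscr R(s)$ so defined agrees with the $L^2$-resolvent for $\Re(s)\gg 0$, which (since $X$ is skew-adjoint on $L^2$) exists and maps into the right domain, identifying $D_{\mathscr r}$ as the actual domain.

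The main obstacle — though a soft one here, since we are far to the right of the spectrum — is making the Gårding step and the domain identification rigorous uniformly on the non-compact $M$: one needs the semiclassical calculus with uniform symbol bounds on a manifold of bounded geometry (the quantization and its composition/Gårding properties from Appendix~\ref{appendix:microlocal-tools}, resp.\ \cite{Bonthonneau-2}), and one needs to know that the formal adjoint computations produce only \emph{bounded} (uniformly on $M$) zeroth-order errors; this is where admissibility of $L$ and $\mathcal X$ (Definition~\ref{def:admissible_vector_bundle}) and the invariance of $G,\mathbf m$ high in the cusp (Lemma~\ref{lemma:invariance-escape-function}) are used to control everything in the cusp ends. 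The more delicate invertibility on the \emph{whole} half-plane $\{\Re(s)>-\mathscr r,\ |\Im s|<h^{-1/2}\}$ is \emph{not} claimed in this lemma — that is Proposition~\ref{prop:first-parametrix}, which requires adding the smoothing-microsupported corrector $Q$ to upgrade $X_\Phi G\ge 0$ to $X_\Phi G\ge 1$ off a small neighbourhood of the zero section — so here I only need the crude large-$\Re(s)$ statement, and the argument above suffices.
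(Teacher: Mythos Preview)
Your argument is correct and follows essentially the same route as the paper: conjugate to $L^2$, use that the real part of $\mathbf{X}$ is $\mathcal O(h)$, apply sharp G\r{a}rding to control the $h\mathscr r\,\Op(X_\Phi G)$ term, deduce the a~priori estimate $\|P_s v\|\ge ch\|v\|$, repeat for the adjoint, and conclude bijectivity. One small slip: from Lemma~\ref{lemma:escape-function}(ii) the symbol $X_\Phi G$ is only $\ge 0$ for $|\xi|>\delta$ and merely \emph{bounded below} near $\xi=0$, so sharp G\r{a}rding gives $\Re\langle\Op(X_\Phi G)v,v\rangle\ge -C_0\|v\|^2$ (order $1$, not order $h$); after multiplying by $h\mathscr r$ this is exactly the $h\mathscr r C_0$ term that forces the threshold $\Re(s)>C(1+\mathscr r)$, which you do record at the end.
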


\begin{proof}
From the sharp G\r{a}rding inequality Lemma \ref{lemma:sharp-Garding}, we conclude that there are $C,\varepsilon>0$ such that 
$\Re \langle (\mathbf{X}-hs) u ,u\rangle_{H^{\escapeparam\mathbf{m}}} < - \varepsilon h\|u\|_{H^{\escapeparam\mathbf{m}}}^2$ for $\Re(s)> C(1 + \escapeparam)$ and all $u\in C_c^\infty(M)$ ($C$ does not depend on $\escapeparam$).

We deduce that $\| (\mathbf{X}-hs)u \|_{H^{\escapeparam\mathbf{m}}} \geq \varepsilon h \|u\|_{H^{\escapeparam\mathbf{m}}}$. As a consequence, the image of $(\mathbf{X}-hs)$ is closed. We deduce that it is the orthogonal of the kernel of the adjoint. We also get that the kernel of $(\mathbf{X}-hs)$ is empty. Additionally, we observe that the adjoint of $\mathbf{X}-hs$ satisfies the same sharp G\r{a}rding estimate, so that it also is injective, and thus $(\mathbf{X}-hs)$ is surjective. We conclude that it is invertible.
\end{proof}

For each $\delta>0$, we pick $Q \ (=Q_\delta)$, a self-adjoint semi-classical pseudo-differential operator, of the form $\Op(q)$, with $q\in S^0$ equal to $1$ in $\{ |\xi| \leq 2R \delta \}$, everywhere positive, and supported in $\{ |\xi|< 3 R \delta\}$ --- the constant $R$ was given in Lemma \ref{lemma:escape-function}. This is an absorbing potential. Let us denote by
\[
\mathbf{X}_Q(s) = \mathbf{X} - Q - hs.
\]
Then we have the key estimate:
\begin{proposition}\label{prop:Inverse-up-to-smoothing-FS}
Let $\delta>0$, then there is a constant $C_\delta>0$. Assume that $s$ satisfies $\Re(s) > C_\delta - \escapeparam+1$, and $|\Im s | \leq h^{-1/2}$. Then for sufficiently small $h$, the operator $\mathbf{X}_Q(s)$ is invertible on $H^{\escapeparam\mathbf{m}}$. Denoting by $\mathscr{R}_Q(s)$ its inverse, we get $\| \mathscr{R}_Q(s)\| = \mathcal{O}(h^{-1})$.
\end{proposition}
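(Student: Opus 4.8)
The plan is to combine the microlocal positivity of $X_\Phi G$ established in Lemma \ref{lemma:escape-function} with the absorbing properties of $Q$ to run the standard sharp-G\r{a}rding/propagation argument of \cite{Faure-Sjostrand-10,Dyatlov-Zworski-16}, but now keeping track of uniform bounds in the noncompact directions. Concretely, I would first conjugate by $\Op(e^{-\mathscr r G})$ so that the statement on $H^{\mathscr r\mathbf m}$ becomes a statement on $L^2(M,L)$ for the operator
\[
P_\mathscr r(s) := \Op(e^{-\mathscr r G})^{-1}\mathbf X_Q(s)\Op(e^{-\mathscr r G}) = \mathbf X - Q - h\bigl(\mathscr r\Op(X_\Phi G) + s\bigr) + \mathcal O(h^2\Psi^{-1^+}),
\]
exactly as in the display preceding Lemma \ref{lem:invertible-on-the-right}. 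The key point is that $-\mathscr r\,X_\Phi G$, viewed as the subprincipal-type term, is $\leq 0$ outside the region $\{|\xi|<R\delta\}\cup N_0$ by parts (i)--(ii) of Lemma \ref{lemma:escape-function}, and on the remaining region $\{|\xi|<3R\delta\}\cup N_0$ the absorbing potential $Q$ contributes a strictly negative term ($q\geq 0$, $q\equiv 1$ on $\{|\xi|\leq 2R\delta\}$ which contains $N_0\cap\{|\xi|<R\delta\}$ after possibly shrinking $N_0$, and $X_\Phi G\geq 0$ where $q$ is not yet $1$). Thus on the whole of $\Re\langle P_\mathscr r(s)u,u\rangle_{L^2}$ one gets, for $\Re(s)$ large enough relative to $\mathscr r$, a bound $\leq -\varepsilon h\|u\|_{L^2}^2$; the condition takes the shape $\Re(s) > C_\delta - \mathscr r + 1$ because the $\log|\xi|$-growth of $G$ means $\mathscr r\, X_\Phi G$ beats any fixed constant once $\mathscr r$ is large, so the threshold \emph{decreases} with $\mathscr r$, and $C_\delta$ absorbs the $\delta$-dependent loss near the cut-offs and the $\mathcal O(h^2)$ remainder.

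The second step is to upgrade this a priori estimate to invertibility, following the scheme already used in the proof of Lemma \ref{lem:invertible-on-the-right}: the sharp G\r{a}rding inequality (Lemma \ref{lemma:sharp-Garding}) applied to $P_\mathscr r(s)$ gives $\|P_\mathscr r(s)u\|_{L^2}\geq \varepsilon h\|u\|_{L^2}$ for $u\in C_c^\infty(M,L)$, hence closed range and trivial kernel; the adjoint $P_\mathscr r(s)^*$ satisfies an estimate of the same type (the conjugation and the real part of $X_\Phi G$ behave symmetrically, and $Q=Q^*$, $\mathbf X^* = -\mathbf X + \text{(bounded)}$), so it is injective too, whence $P_\mathscr r(s)$ is surjective. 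This yields invertibility on $L^2$ with inverse of norm $\mathcal O(h^{-1})$, and conjugating back gives invertibility of $\mathbf X_Q(s)$ on $H^{\mathscr r\mathbf m}$ with $\|\mathscr R_Q(s)\| = \mathcal O(h^{-1})$. The constraint $|\Im s|\leq h^{-1/2}$ enters to ensure $hs$ is $\mathcal O(h^{1/2})$, so the $\Im s$ term does not destroy the sharp G\r{a}rding gap (it is purely imaginary modulo lower order, but the pseudodifferential remainders in the calculus need it to be controlled by the gap $\varepsilon h$).

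The main obstacle is \textbf{uniformity at infinity}: all of the above is routine on a compact manifold, but here $M$ is noncompact and the symbols $G$, $\mathbf m$, $q$, and the geodesic vector field $\mathbf X$ live on the cusp ends. One must check that the quantization of Appendix \ref{appendix:microlocal-tools} (cf.\ Proposition \ref{prop:properties-quantization}) gives sharp G\r{a}rding with a \emph{globally} uniform constant, that $X_\Phi G\in S^0$ with uniform seminorms — which is where the $\mathscr C^\infty$ (bounded-geometry) construction of $m$ and the controlled construction of $f$ in Lemma \ref{lemma:escape-function} pay off — and that the $\mathcal O(h^2\Psi^{-1^+})$ remainder in the conjugation has uniformly bounded symbol seminorms so that for $h\leq h_0$ small it is genuinely negligible against $\varepsilon h$. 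The invariance statement of Lemma \ref{lemma:invariance-escape-function} is what makes these uniform bounds tractable high in the cusp, since there everything reduces to a fixed model. Once these uniformities are granted, the argument is the same word for word as in the compact case.
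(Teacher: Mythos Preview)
Your plan has a genuine gap, precisely in the region you gloss over: the conic neighbourhood $N_0$ of the flow direction $E_0^\ast$ at \emph{high} frequency. You write that outside $\{|\xi|<R\delta\}\cup N_0$ the escape function gives strict decay, and that ``on the remaining region $\{|\xi|<3R\delta\}\cup N_0$ the absorbing potential $Q$ contributes a strictly negative term''. But $Q$ is microsupported in $\{|\xi|<3R\delta\}$, while $N_0$ is conic and extends to infinity in $\xi$. In $N_0\cap\{|\xi|>3R\delta\}$ you have $q=0$ and, by Lemma~\ref{lemma:escape-function}(iii), $G=0$ (hence $X_\Phi G=0$ on $E_0^\ast$ and only $\geq 0$ nearby, not $\geq 1$). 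So the real part of the full symbol of $-P_{\mathscr r}(s)$ there is just $h\Re(s)+\mathcal O(h^2)$, which is \emph{negative} in the regime $\Re(s)>C_\delta-\mathscr r+1$ with $\mathscr r$ large. A single global sharp G\r{a}rding estimate therefore cannot produce $-\Re\langle P_{\mathscr r}(s)u,u\rangle\geq \varepsilon h\|u\|^2$; at best it yields the threshold $\Re(s)>C$ independent of $\mathscr r$, which is exactly Lemma~\ref{lem:invertible-on-the-right} and not what is claimed.

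What the paper does, and what your argument is missing, is to exploit that in $N_0$ the \emph{principal} symbol $ip$ is itself elliptic (since $|p(x,\xi)|\geq c\langle\xi\rangle$ near $E_0^\ast$). The paper builds a microlocal partition $1=A_{\Ell}+A_{\text{G\r{a}rding}}$ with $A_{\Ell}$ supported in $\Omega_{\Ell}=\{|\xi|<3R\delta/2\}\cup\{|p|>\epsilon\langle\xi\rangle\}$, where $\mathbf X_Q(s)$ is elliptic and Proposition~\ref{prop:Elliptic-regularity} applies directly, and $A_{\text{G\r{a}rding}}$ supported in $\Omega_{\text{G\r{a}rding}}=\{|\xi|>R\delta,\ \xi/|\xi|\notin N_0\}$, where Lemma~\ref{lemma:escape-function}(i) gives $X_\Phi G>1$ and your positive-commutator argument goes through. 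One then recombines the two pieces, controlling the cross term again by ellipticity. Your treatment of the uniformity at infinity and of the adjoint is fine; the fix is purely the missing elliptic estimate in the flow direction.
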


\begin{proof}
We fix a tempered family of functions $u \in C^\infty_c(M,L)$. We consider the regions 
\begin{align*}
\Omega_{\Ell} &:= \Big\{(x,\xi)\ |\ |\xi| < 3 R \delta/2,\ \text{or}\ |p(x,\xi)| > \epsilon \langle\xi\rangle \Big\}.\\
\intertext{and}
\Omega_{\text{G\r{a}rding}} &:= \Big\{ (x,\xi)\ |\ |\xi| > R \delta\ \text{and}\ \xi/|\xi| \notin N_0 \Big\}.
\end{align*}
If $\epsilon>0$ is chosen small enough they overlap, so we can build a partition of unity $1 = A_{\Ell} + A_{\text{G\r{a}rding}}$, with $A_{\Ell}$ (resp. $A_{\text{G\r{a}rding}}$) microsupported in $\Omega_{\Ell}$ (resp. $\Omega_{\text{G\r{a}rding}}$), and both $A$'s in $\Psi^0$.

In the region $\Omega_{\Ell}$, the principal symbol of $\mathbf{X}_Q(s)$ is elliptic, so we deduce\footnote{Note that Proposition~\ref{prop:Elliptic-regularity}(2) is stated in terms of ordinary Sobolev spaces and not in terms of anisotropic Sobolev spaces. The statement on anisotropic spaces can however be deduced by applying Proposition~\ref{prop:Elliptic-regularity}(2) to the conjugated operators $\Op(e^{-\gamma G})^{-1} A_{\textup{ell}}\Op(e^{-\gamma G})$ and 
$\Op(e^{-\gamma G})^{-1} (\mathbf X-\mathbf Q-hs)\Op(e^{-\gamma G})$ respectively. Note therefore that the conjugation does not affect the ellipticity.} from Proposition~\ref{prop:Elliptic-regularity}.
\[
\| A_{\textup{ell}} u \|_{H^{\escapeparam\mathbf{m}}} \leq C \|\mathbf{X}_Q(s) u \|_{H^{\escapeparam\mathbf{m}}} + \mathcal{O}(h^\infty)\|u\|_{H^{\escapeparam\mathbf{m}}}.
\]
Now, we can concentrate on the region of interest $\Omega_{\text{G\r{a}rding}}$. By definition, the action of $\mathbf{X}_Q(s)$ on $H^{\escapeparam\mathbf{m}}$ is conjugated by $\Op(e^{-\escapeparam G})$ to the action on $L^2$ of 
\[
\widetilde{\mathbf{X}_Q(s)}=\mathbf{X} - Q - h( \escapeparam\Op( \{ p + i q, G\} + s)) + \mathcal{O}(h^2 \Psi^{-1^+}_{\log}).
\]
We denote by $\widetilde{A_{\text{G\r{a}rding}}}$ the operator obtained after conjugation by $\Op(e^{-\escapeparam G})$, and $\tilde{u}:= \Op(e^{-\escapeparam G})^{-1}u$ --- $\tilde{u}$ is in $L^2$.
We consider
\[\begin{split}
 -\Re \langle \widetilde{\mathbf{X}_Q(s)} \widetilde{A_{\text{G\r{a}rding}}}\tilde{u}, \widetilde{A_{\text{G\r{a}rding}}} \tilde{u} \rangle_{L^2}= \langle P A_2 \widetilde{A_{\text{G\r{a}rding}}} \tilde{u}, & \widetilde{A_{\text{G\r{a}rding}}}\tilde{u}\rangle_{L^2}\\
 &  +h(\Re(s)+\escapeparam )\|\widetilde{A_{\text{G\r{a}rding}}}\tilde{u}\|_{L^2}^2.
\end{split}
\]
where $A_2$ is a microlocal cutoff in a slightly bigger neighbourhood of $\WF_h(A_{\text{G\r{a}rding}})$ and
\[
 P := -\Re\mathbf{X} + Q + h \escapeparam\Op( \{ p, G \}- 1) + \mathcal{O}(h^2 \Psi^{-1^+}_{\log}).
\] 
(Here, $\Re \mathbf{X}$ is the real part of $\mathbf{X}$ acting on $L^2$, and it is an $\mathcal{O}(h)$ order $0$ operator). By Lemma~\ref{lemma:escape-function}(i) (recall that $\{p, G\}=X_\Phi G$) we conclude that $PA_2\in \Psi^{0+}$ has non-negative principal symbol 
and by the sharp G\r{a}rding inequality \ref{lemma:sharp-Garding}, we deduce that
\[
-\Re \langle \widetilde{\mathbf{X}_Q(s)} \widetilde{A_{\text{G\r{a}rding}}} \tilde{u}, \widetilde{A_{\text{G\r{a}rding}}} \tilde{u} \rangle_{L^2}  \geq h (-C_\delta + \Re(s) + \escapeparam)\|\widetilde{A_{\text{G\r{a}rding}}} \tilde{u}\|_{L^2}^2.
\]
The constant depends on $Q$, which depends itself on $\delta$. Using Cauchy-Schwarz and our assumption $\Re(s) > C_\delta - \escapeparam+1$ we get
\[
\|\widetilde{A_{\text{G\r{a}rding}}} \tilde{u}\|_{L^2} \leq Ch^{-1} \| \widetilde{\mathbf{X}_Q(s)} \widetilde{A_{\text{G\r{a}rding}}} \tilde{u}\|_{L^2},
\]
i.e.,
\[
\| A_{\text{G\r{a}rding}} u\|_{H^{\escapeparam\mathbf{m}}} 
		\leq Ch^{-1} \| \mathbf{X}_Q(s) A_{\text{G\r{a}rding}} u\|_{H^{\escapeparam\mathbf{m}}}.
\]
Gathering our estimates, we find that
\[
\| u\|_{H^{\escapeparam\mathbf{m}}} \leq Ch^{-1}  \| \mathbf{X}_Q(s) A_{\text{G\r{a}rding}} u \|_{H^{\escapeparam\mathbf{m}}} + C \| \mathbf{X}_Q(s) u \|_{H^{\escapeparam\mathbf{m}}}  + \mathcal{O}(h^\infty)\|u\|_{H^{\escapeparam\mathbf{m}}}.
\]
Now let us consider 
\[
\| \mathbf{X}_Q(s) A_{\text{G\r{a}rding}} u \|_{H^{\escapeparam\mathbf{m}}} \leq \| A_{\text{G\r{a}rding}} \mathbf{X}_Q(s) u\|_{H^{\escapeparam\mathbf{m}}} + \|[\mathbf X_Q(s), A_{\text{G\r{a}rding}}]u\|_{H^{\escapeparam\mathbf{m}}} 
\] 
We have $[\mathbf X_Q(s), A_{\text{G\r{a}rding}}]\in h\Psi^{0}$ and  as $\WF_h([\mathbf X_Q(s), A_{\text{G\r{a}rding}}]) \subset \Omega_\text{ell}\cap\Omega_\text{G\r{a}rding}$ we get by elliptic regularity 
$\|[\mathbf X_Q(s), A_{\text{G\r{a}rding}}]u\|_{H^{\escapeparam\mathbf{m}}} \leq C\|\mathbf X_Q(s)u\|_{H^{\escapeparam\mathbf{m}}} +\mathcal O(h^\infty)\|u\|_{H^{\escapeparam\mathbf{m}}}$. By continuity of $A_\text{G\r{a}rding}$ we deduce 
$\| A_{\text{G\r{a}rding}} \mathbf{X}_Q(s) u\|_{H^{\escapeparam\mathbf{m}}} \leq C \| \mathbf{X}_Q(s) u\|_{H^{\escapeparam\mathbf{m}}}$, so alltogether we get $\| \mathbf{X}_Q(s) A_{\text{G\r{a}rding}} u \|_{H^{\escapeparam\mathbf{m}}}\leq C \| \mathbf{X}_Q(s) u \|_{H^{\escapeparam\mathbf{m}}}$ and consequently
\[
\| u \|_{H^{\escapeparam\mathbf{m}}} \leq  \frac{C}{h} \| \mathbf{X}_Q(s) u \|_{H^{\escapeparam\mathbf{m}}} +\mathcal O(h^\infty)\|u\|_{H^{\escapeparam\mathbf{m}}}.
\]
This estimate implies that for sufficiently small $h$, the operator $X_Q(s)$ is injective and has closed range. Performing exactly the same estimates for the adjoint operator, we deduce that $X_Q(s)$ is surjective.
\end{proof}
In the case of compact manifolds, the end of the proof of the equivalent of Theorem \ref{thm:continuation-resolvent} is based on the fact that by writing
\[
(\mathbf{X} - hs)\mathscr{R}_Q(s) = \mathbb{1}  + Q \mathscr{R}_Q,
\]
we have that $\mathbf{X} -hs$ is invertible modulo a smoothing operator, and smoothing operators are compact on compact manifolds, so $\mathbf{X} - h s$ is invertible modulo \emph{compact} operator. Hence it is Fredholm, of index $0$, and its inverse is a meromorphic family of operators in the $s$ parameter.

However, in our case, smoothing operators are \emph{not} compact. We will present a special ingredient in the next section to overcome this problem. Before that, we consider wavefront sets.
\begin{proposition}\label{prop:wavefront-R_Q}
Let $\Omega_+$ be the subset of phase space
\begin{equation}\label{eq:def-Omega}
\Omega_+ := \left\{((x,\xi);\Phi_t(x,\xi))\ |\ p(x,\xi) = 0,\ t\geq 0 \right\} \subset T^\ast M \times T^\ast M.
\end{equation}
Recall that $\Delta(T^\ast M)$ is the diagonal in $T^\ast M$. The wave front set of $\mathscr{R}_Q(s)$ satisfies
\[
\WF_h'(\mathscr{R}_Q(s)) \cap (T^*M\times T^*M) \subset \Delta(T^\ast M) \cup \Omega_+.
\]
\end{proposition}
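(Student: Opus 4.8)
The plan is to establish the wavefront-set bound by a standard propagation-of-singularities argument, localized carefully so that the non-compactness of $M$ causes no trouble. First I would recall that $\mathscr{R}_Q(s)$ is a right inverse of $\mathbf{X}_Q(s) = \mathbf{X} - Q - hs$, so that we have the operator identity $\mathbf{X}_Q(s)\mathscr{R}_Q(s) = \mathbb{1}$; testing against a family of inputs $u$, the distribution $v := \mathscr{R}_Q(s)u$ solves $(\mathbf{X} - Q - hs)v = u$. To control $\WF_h'(\mathscr{R}_Q(s))$ it suffices to show that, for any $(\rho_0,\rho_1)\in T^\ast M\times T^\ast M$ not in $\Delta(T^\ast M)\cup\Omega_+$, there are microlocal cutoffs $A$ near $\rho_0$ and $B$ near $\rho_1$ with $A\mathscr{R}_Q(s)B = \mathcal{O}(h^\infty)$. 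The analysis splits according to whether $\rho_0$ lies in the elliptic set of $\mathbf{X}_Q(s)$ or on its characteristic variety $p^{-1}(0)\cap\{|\xi|>3R\delta\}$.

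The easy case is elliptic regularity: if $\rho_0\notin\{p=0\}$ and $|\xi(\rho_0)|$ is large (outside the absorbing region where $Q$ is supported), then $\mathbf{X}_Q(s)$ is microlocally elliptic near $\rho_0$, and Proposition~\ref{prop:Elliptic-regularity} gives $Av = \mathcal{O}(h^\infty)\|u\|$ microlocally, hence $\WF_h(v)\subset\{p=0\}$ near such points and such pairs contribute nothing. The substantive case is $\rho_0\in\{p=0\}$. Here I would use the sign of $-\Re\mathbf{X} + Q + h\mathscr{r}\,\mathrm{Op}(\{G,p\}-1)$, exactly as in the proof of Proposition~\ref{prop:Inverse-up-to-smoothing-FS}, which is a positive-commutator / propagation estimate: it propagates regularity \emph{forward} along the flow $\Phi_t$. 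Concretely, if $\rho_1\in\WF_h(v)$ and $\rho_0\in\{p=0\}$ with $\rho_0\notin\Omega_+\cup\Delta$, then $\rho_0$ cannot be reached from $\rho_1$ by running the flow forward for $t\ge 0$, so a suitable cutoff kills the contribution. The direction of propagation is dictated by the escape function inequality in Lemma~\ref{lemma:escape-function}(i), $X_\Phi G > 1$ off the radial set and the small ball, which is precisely the one-sided estimate that gives a parametrix controlling $\WF_h$ only in the forward direction — matching the shape of $\Omega_+$. One also needs to treat points at infinity in $\xi$ (the radial sets near $E_u^\ast$ and $E_s^\ast$): near $E_u^\ast$ the anisotropic space behaves like a very negative Sobolev space and the radial source estimate of Appendix~\ref{appendix:microlocal-tools} applies, while near $E_s^\ast$ one uses the radial sink estimate; in both cases the outcome is that $\WF_h(v)$ near the radial sets is again confined to what $\Omega_+$ allows, since $E_u^\ast = \{p=0\}\cap N_u$ is the forward limit set of the flow $\Phi_t$.

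The main obstacle I anticipate is bookkeeping the non-compactness: all the microlocal estimates invoked (elliptic regularity, sharp Gårding, propagation of singularities, radial estimates) must hold with uniform constants on the open manifold $M$, which is exactly why the quantization of Appendix~\ref{appendix:microlocal-tools} with its uniform symbol calculus $\mathscr{C}^\infty$ was set up, and why $G$ and $\mathbf{m}$ were arranged to be translation-invariant high in the cusps (Lemma~\ref{lemma:invariance-escape-function}). Provided one grants those uniform microlocal tools, the argument is the now-classical one of Dyatlov--Zworski: cover $p^{-1}(0)$ by the elliptic region, a neighbourhood of each radial set, and the region where one can propagate, then glue. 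A secondary point to handle cleanly is that the estimate should be stated for $\WF_h'$ intersected with $T^\ast M\times T^\ast M$ (i.e. away from fiber-infinity in the product), so one only needs the semiclassical wavefront set at finite frequencies plus the radial analysis at infinity, and no separate ``fiber-infinity $\times$ fiber-infinity'' discussion is required. I would therefore organize the write-up as: (1) reduce to a microlocal statement via the identity $\mathbf{X}_Q\mathscr{R}_Q = \mathbb{1}$; (2) dispose of the elliptic region; (3) do the forward-propagation estimate using $P = -\Re\mathbf{X}+Q+h\mathscr{r}\,\mathrm{Op}(\{G,p\}-1)$ and sharp Gårding, tracking that one only gains control in forward time; (4) handle the two radial sets with the source/sink estimates; (5) conclude $\WF_h(\mathscr{R}_Q(s)u)\subset \Phi_{\ge0}(\WF_h(u)\cap\{p=0\})\cup\WF_h(u)$, which is the asserted inclusion $\WF_h'(\mathscr{R}_Q(s))\subset\Delta(T^\ast M)\cup\Omega_+$.
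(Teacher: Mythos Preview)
Your overall strategy is the same as the paper's --- elliptic regularity away from the characteristic set, then real-principal-type propagation along $\Phi_t$, capped by a radial estimate --- but several details are reversed, and as written the argument would not close.

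First, the regularity at the radial sets is swapped. By Lemma~\ref{lemma:escape-function}(iii), $G\sim +C_G\log|\xi|$ on $N_u$ and $-C_G\log|\xi|$ on $N_s$; hence $H^{\mathscr{r}\mathbf{m}}$ is microlocally $H^{+\mathscr{r}C_G}$ near $E_u^\ast$ and $H^{-\mathscr{r}C_G}$ near $E_s^\ast$ (cf.\ Lemma~\ref{lem:regularity_of_H_r}). In the paper's terminology $E_u^\ast\cap\partial\overline{T^\ast}M$ is a \emph{sink} for $\Phi_t$ (Definition~\ref{def:sources-sinks}), and it is precisely the \emph{high}-regularity sink estimate (Proposition~\ref{prop:Sink-estimate}) that applies there --- not a low-regularity source estimate. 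The paper in fact never introduces a source estimate: only one radial estimate is used, at $E_u^\ast$.

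Second, no radial argument at $E_s^\ast$ is needed. The key dichotomy (used in the paper's proof) is that for $(x,\xi)$ with $p(x,\xi)=0$ and $|\xi|>2R\delta$, the \emph{forward} trajectory $\Phi_t(x,\xi)$ either enters $\mathrm{ell}_1(Q)=\{|\xi|\le R\delta\}$ in finite time (and one concludes by the elliptic estimate for $\mathbf{X}_Q(s)$ there), or converges to $E_u^\ast$ (and one concludes by the sink estimate). The absorbing potential $Q$ is what replaces any putative source estimate.

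Third, the direction in your conclusion is inverted. With $A$ on the left and $B$ on the right in $A\mathscr{R}_Q(s)B$, a pair $(\rho_0,\rho_1)\in\Omega_+$ means $\rho_1=\Phi_t\rho_0$ for some $t\ge 0$; equivalently the output point $\rho_0$ lies on the \emph{backward} orbit of the input point $\rho_1$. The correct inclusion is $\WF_h(\mathscr{R}_Q(s)u)\subset \WF_h(u)\cup\bigcup_{t\ge 0}\Phi_{-t}\big(\WF_h(u)\cap\{p=0\}\big)$, not $\Phi_{\ge 0}(\WF_h(u))$. Relatedly, in Lemma~\ref{lemma:Propagation-of-singularities} the control at $B$ lives at $\Phi_T(\WF_h A)$ (forward in time), so regularity propagates \emph{backward} along $\Phi_t$; this is why one pushes $\Phi_T(U)$ forward into either $\mathrm{ell}_1(Q)$ or a neighbourhood of $E_u^\ast$, not backward toward $E_s^\ast$.

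Once these signs are fixed, the argument you sketch is exactly the paper's: reduce by Lemma~\ref{lemma:property-WF'} to showing $A\mathscr{R}_Q(s)A'=\mathcal{O}(h^\infty)$ for cutoffs with $\Phi_t(\WF_h A)\cap\WF_h A'=\emptyset$ for all $t\ge 0$; apply Proposition~\ref{prop:Elliptic-regularity} off $\{p=0\}$ and inside $\mathrm{ell}_1(Q)$; apply Lemma~\ref{lemma:Propagation-of-singularities} to push $\WF_h A$ forward; and close with Proposition~\ref{prop:Sink-estimate} at $E_u^\ast$, using that $C_1\mathscr{R}_Q(s)A'u\in H^{\mathscr{r}C_G}$ there by Lemma~\ref{lem:regularity_of_H_r}. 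No separate positive-commutator argument with $P=-\Re\mathbf{X}+Q+\ldots$ is needed beyond what is already packaged in these lemmas.
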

\begin{proof}
First, by ellipticity in $\{ p(x,\xi) \neq 0\}\cup \{ |\xi|\leq 2R\delta\}$, the wavefront of $\mathscr{R}_Q(s)$ is contained in $\Delta(T^\ast M) \cup \{ p(x,\xi) = p(x',\xi') = 0,\ |\xi|,|\xi'|> 2R\delta \}$.

Next, note that by Lemma~\ref{lemma:property-WF'} we have to prove that for $((x,\xi),(x',\xi'))
\in T^*M \times T^*M$ fulfilling 
\[
p(x,\xi) = p(x',\xi')=0,\quad |\xi|,|\xi'|>2R\delta,\quad \text{and} \quad ((x,\xi),(x',\xi'))\notin \Omega_+,
\]
there are $A, A'\in S^0$, elliptic in $(x,\xi)$ (resp. $(x',\xi')$) such that $A\mathscr{R}_Q(s) A'$ is $\mathcal O_{H^{-\infty}\to H^{\infty}}(h^ \infty)$. 

In order to achieve this, let $((x,\xi),(x',\xi'))
\in T^*M \times T^*M$ be such a point. Recall that as $t\to+\infty$,  $|\Phi_t(x',\xi')|$ either goes to $0$ or to $\infty$. Hence, we can chose two relatively compact open sets $U, U' \subset T^*M$ such that $\Phi_t(U) \cap U' = \emptyset$ for all $t\geq 0$, and $(x,\xi)\in U$, $(x',\xi')\in U'$. Fix $A,A'\in \Psi^0$ microsupported in respectively $U$ and $U'$. 

Let us prove that $A \mathscr{R}_Q(s) A' = \mathcal{O}_{H^{-\infty}\to H^{\infty}}(h^\infty)$. Let $u$ be a tempered family of distributions. Let $T>0$, and $B, B_1$ elliptic on respectively $\Phi_{T}(U)$ and $\cup_{0\leq t \leq T} \Phi_{t}(U)$. Observe that $A\mathscr{R}_Q(s) A' u$ is in all Sobolev spaces because $A,A'$ are compactly microsupported. Then we get by Propagation of Singularities \ref{lemma:Propagation-of-singularities} that for $k\in\R$
\[
\| A \mathscr{R}_Q(s) A' u \|_{H^k} \leq C \| B  \mathscr{R}_Q(s) A' u \|_{H^k} + \frac{C}{h} \| B_1 A' u \|_{H^k} + \mathcal{O}_{k,u}(h^\infty).
\]
By the assumption on the microsupport of $A$ and $A'$, by taking the microsupport of $B_1$ small enough, we can ensure that $B_1 A' u = \mathcal{O}(h^\infty)$, hence
\begin{equation}\label{eq:wf_resolvent_prop_est}
\| A \mathscr{R}_Q(s) A' u \|_{H^k} \leq C \| B  \mathscr{R}_Q(s) A' u \|_{H^k} + \mathcal{O}_{k,u}(h^\infty).
\end{equation}
Now, we just have to consider what happens when the time $T$ becomes larger. For $(x,\xi)\in \{p=0\}\subset   T^*M$ there are only two possibilities: Either there is $T>0$, such that $\Phi_T(x,\xi) \subset \textup{ell}_1(Q)=\{|\xi|\leq R\delta\}$ or $\Phi_t(x,\xi)$ converges to $E^\ast_u\cap\partial \overline{T^\ast} M$ (see Definition~\ref{def:C-infinity-structure-compactified} for the radial compactification). 

In the first case take $U$ sufficiently small such that $\Phi_T(U)\subset \textup{ell}_1(Q)$. Thus we can assume that $B$ in the propagation estimate \eqref{eq:wf_resolvent_prop_est} is microsupported in $\textup{ell}_1(Q)$. Taking $B'\in \Psi^0$ elliptic on the microsupport of $B$, the elliptic estimate (Proposition~\ref{prop:Elliptic-regularity}) gives,
\[
\| B \mathscr{R}_Q(s) A' u \|_{H^k} \leq C\| B' A' u \|_{H^k} + \mathcal{O}_{m,u}(h^\infty).
\]
Since we can choose $B'$ such that $\WF(B') \cap \WF(A') = \emptyset$, the RHS is $\mathcal{O}(h^\infty)$.

Now, we turn to second case which we will treat using the high regularity radial estimate
(Proposition~\ref{prop:Sink-estimate}): Note that $E^*_u\cap\partial\overline{T^*}M$ is a sink in the sense of Definition~\ref{def:sources-sinks}. Next let us choose $C\in \Psi^0$ such that 
$E^*_u\cap\partial\overline{T^*}M \subset \textup{ell}_1(C)$ and such that $\WF_h(C)\cap \WF_h(A') = \emptyset$. Then Proposition~\ref{prop:Sink-estimate} provides us with an order $0$ operator $C_1$ which is elliptic in a neighbourhood of $E^*_u\cap\partial\overline{T^*}M$. Furthermore we can assume $\WF_h(C_1) \subset N_u$. 

Since $C_1 \mathscr{R}_Q A' u\in H^{\escapeparam\mathbf{m}}$ and is microsupported in $N_u$, by
Lemma~\ref{lem:regularity_of_H_r}, we know that $C_1 \mathscr{R}_Q(s) A'u \in H^{\escapeparam  C_G}$ and
taking $\escapeparam C_G>k_0$ we have the necessary regularity for the
sink estimate. We get for any $k>k_0$
\begin{equation}\label{eq:wf_resolvent_sink_estimate}
\| C_1 \mathscr{R}_Q(s) A' u \|_{H^k} \leq \frac{C}{h}\| C A' u \|_{H^k} + \mathcal{O}(h^\infty) = \mathcal O(h^\infty) .
\end{equation}
Finally for $U$ sufficiently small and by propagation of singularity for a long enough but finite time $T$ we can assume that $\Phi_T(U) \subset \textup{ell}_1(C)$. Combining \eqref{eq:wf_resolvent_prop_est} and \eqref{eq:wf_resolvent_sink_estimate} we obtain as desired
$\|A\mathscr{R}_Q(s)A'u\|_{H^k} = \mathcal O(h^\infty)$.
\end{proof}

We have a final remark for this section
\begin{defprop}\label{prop:invertibility-higher-regularity}
If $\escapeparam\geq 0$ and $N\in \R$, we say that $\mathbf{k}=\escapeparam \mathbf{m} + N$ is a \emph{weight}. Such a weight is said to be large if $\escapeparam$ is large, and $N/\escapeparam$ is small. We define 
\[
H^{\mathbf{k}}_{(\delta)}(M,L) := \Op(e^{-\escapeparam G_\delta}) H^{N}(M,L).
\]
We get that the conclusion of Proposition \ref{prop:Inverse-up-to-smoothing-FS} holds on the space $H^{\mathbf{k}}$ when $|\Im s|<h^{-1/2}$, $\Re s \geq C_\delta - \escapeparam + CN + 1$ for some constant $C$ independent of $\escapeparam,N$, and for $h>0$ small enough. 
\end{defprop}

The proof is completely analogous to the proof of proposition \ref{prop:Inverse-up-to-smoothing-FS}.

\section{Continuation of the resolvent for translation invariant operators}
\label{sec:continuation-indicial-resolvent}

In this section, we will be considering a vector bundle over some compact Riemannian manifold $\FibreL \to \FibreM$, endowed with a bundle metric and a compatible connection. We will always see the space $\R \times \FibreL$ as a fiber bundle over $(\R)_r \times (F)_{\PointM}$, endowed with the product structure. We will also use the natural measure $dr d\PointM$, and $L^2(\R\times\FibreL)$ will be understood as the space of square-integrable sections with respect to this measure.

Let us first see how bundles of this type can be naturally obtained from admissible vector bundles in the sense of Definition~\ref{def:admissible_vector_bundle}.
\begin{example}\label{exmpl:admissible_bundle_reduction}
 Let $L\to M=S^*N$ be an admissible vector bundle and fix a cusp
 $Z_\ell$. Then over this cusp the bundle takes the form
 $L= \Lambda_\ell\backslash \groupG \times_{\tau_\ell} V_\ell$. Using the Iwasawa decomposition $\groupG=\groupN\groupA\groupK$  and identifying $\groupA\cong(\R,+), \groupN\cong(\R^d,+)$ we obtain $L=(\R^d/\Lambda_\ell) \times \R \times (\groupK\times_{\tau_\ell}V_\ell)$. In Section~\ref{sec:Black-Box}
 we will study sections of these bundles that are independent 
 on the variable $\theta\in (\R^d/\Lambda_\ell)$ and these sections are naturally identified with sections of $\R\times(\groupK\times_{\tau_\ell}V_\ell)$. This shows that 
 studying $\theta$-independent sections of admissible vector bundles $L_{|S^*Z_\ell}$ leads to the study of sections of $\R\times\FibreL_\ell \to \R\times \FibreM$ with $\FibreL_\ell =\groupK\times_{\tau_\ell}V_\ell\to \groupK/\groupM\cong\mathbb S^d=\FibreM$. 
\end{example}
\begin{remark}\label{rem:general_bundle}
For the proof of Theorem \ref{thm:full-theorem-resolvent} on vector bundles one could restrict the discussion of the whole section to the special case in the example above. As all arguments, however, hold without any further complications in the general case of vector bundles $\FibreL\to\FibreM$ over general compact manifolds $\FibreM$ we announce and prove all results in this section in this setting. Additionally, we expect that this wider class is likely to show up when studying uniformly hyperbolic flows on fibred cusps.  
\end{remark}
\subsection{b-Operators}\label{sec:b-operators}
We will consider a particular class of operators on $\R\times \FibreL\to\R\times\FibreM$: Recall that by the Schwartz kernel theorem any continuous linear Operator $A:C_c^\infty(\R\times \FibreL) \to \mathcal D'(\R\times\FibreL)$ is represented by its kernel $K_A\in\mathcal D'(\R\times\R\times\FibreL\boxtimes\FibreL)$. We call such an operator $A$ a convolution operator if there is $\tilde K_A\in\mathcal D'(\R\times\FibreL\boxtimes\FibreL)$ such that $K_A=p^*\tilde K_A$ where $p:\R\times\R\times\FibreL\boxtimes \FibreL \ni(r,r',l\boxtimes l')\mapsto (r-r',l\boxtimes l')\in \R\times\FibreL\boxtimes\FibreL$.
\begin{definition}\label{def:free-b-operators}
The set of semiclassical pseudo-differential operators acting on sections of $\R\times \FibreL$ that are convolution operators in the $r$ variable will be denoted by $\Psi_b(\R\times \FibreL)$. It is the set of \emph{b-operators}.

Such operators that additionally are supported in $\{|r-r'|\leq \log C\}$ will be denoted $\Psi_{b,C}(\R\times \FibreL)$. We say that they are b-operators with \emph{precision} $C$. When $C=1$, the kernels are supported on $\{r=r'\}$.
\end{definition}
\begin{remark}
Our notion of b-operators is, as its name suggests, strongly inspired by Melrose's b-calculus (see e.g. \cite{Melrose-APS-93}). However in this article we use a much more restrictive class of operators. Let us shortly explain the relation of our b-operators to the usual class of b-differential operators in the sense of Melrose. Let $[0,\infty[_x \times \R_\zeta$ be the simplest model of a manifold with boundary. Then the b-differential operators are those in the algebra of operators generated by b-vectorfields that take the form $a(x,\zeta)x\partial_x + b(x,\zeta)\partial_\zeta$ with $a,b\in C^\infty([0,\infty[_x\times \R_\zeta)$. Using a Taylor expansion, the leading order near the boundary of these operators takes the form $a_0(\zeta)x\partial_x+b_0(\zeta)\partial_\zeta$. After a variable transformation $r=\log(x)$ these are in the form $a_0(\zeta)\partial_r + b_0(\zeta)\partial_\zeta$. Such operators are then translation invariant in the $r$ variable, i.e. are convolution operators. Their kernels take the form
\[
a_0(\zeta)\delta(r-r') + b_0(\zeta)\delta(\zeta-\zeta').
\]
In some sense our class of b-operators contains just those which are equal to their leading part in the asymptotic expansion near the boundary of the usual class of b-(pseudo)-differential operators. For our purpose this is sufficient and the restriction to this class allows us to concentrate on the difficulties that arise from the fact that we have to construct a parametrix for an operator that is not elliptic (even in a b-calculus sense).
 
\end{remark}

\begin{example}\label{exmpl:b-op-of_geodesic_flow}
The generator of the geodesic flow acting on functions supported in a cusp and not depending on $\theta$ is a differential operator acting on the trivial bundle, i.e. on $L^2(\R\times\Ss^d, e^{-rd}dr d\zeta)$ given by (cf. equation~\eqref{eq:def-X_k})
\[
 X_b^0 = \cos\varphi\partial_r +\sin\varphi\partial_\varphi.
\]
In order to make it a b-operator acting on $L^2(\R\times\Ss^d, dr d\zeta)$ we conjugate it with $e^{-rd/2}$ and get:
\begin{equation}\label{eq:def-X_b}
X_b= \cos\varphi\partial_r + \frac{d}{2}\cos\varphi+\sin\varphi\partial_\varphi.
\end{equation}
In order to work in the semiclassical calculus we write $\mathbf X_b:=hX_b$.
\end{example}
The aim of Section~\ref{sec:continuation-indicial-resolvent} is to show that the resolvent of $\mathbf{X}_b$ can be continued meromorphically from $\Re(s) > 0$ to $\C$. In fact, for the reasons discussed in Remark~\ref{rem:general_bundle}, we will treat a more general class of operators $\mathbf X_b\in \Psi_{b,0}(\R\times\FibreL)$  whose precise assumptions will be formulated in Section~\ref{sec:translation_invariant_approximate_inverse}

Next, let us introduce symbols and quantizations that lead to b-operators
\begin{definition}
Denote by $g$ the metric on $\FibreM$ and by $T(T^*\FibreM)=H\oplus V$ the splitting into vertical and horizontal directions w.r.t. the Levi-Civita connection. We endow $T^\ast (\R\times\FibreM)$ with the metric described in Definition~\ref{def:Kohn-Nirenberg-metric}. Consider its restriction $\overline{g}_b$ to $(T^\ast_0 \R)_\lambda \times (T^\ast \FibreM)_{(\PointM,\eta)}$. It can be expressed as
\[
\begin{split}
\overline{g}_{b,(\PointM;\eta,\lambda)}&(X^v + Y^h + \mu\partial_\lambda, W^v + Z^h + \mu'\partial_\lambda) \\
	&= g_{\PointM}(Y, Z) + \frac{1}{1 + g_{\PointM}(\eta,\eta) + \lambda^2}\left[ g_{\PointM}(X, W) + \mu \mu' \right].
\end{split}
\]
By Lemma~\ref{lemma:bounded-curvature-Kohn-Nirenberg}, $\overline{g}_b$ has bounded geometry.
\end{definition}

\begin{defprop}\label{defprop:free-b-symbols}
We denote by $S^0_b(\R\times \FibreL)$ the set of $\mathscr{C}^\infty$ sections $T^*(\R\times \FibreM) \to \End(\FibreL)$ with uniformly bounded derivatives with respect fo $\overline g_b$ which additionally are independent of the $r$ variable. 
They are the translation invariant elements of $S^0(\R\times \FibreM, \R\times \FibreL)$ from Definition~\ref{def:symbol-classes}. Similarly, we define $S^0_{b,\epsilon}$, $S^0_{b,\epsilon,\xi}$ and $S^0_{b,\log}$. We call them  order $0$ \emph{b-symbols}. Given $m_b\in S^0_b(\R\times \FibreM)$, we can also define $S^{m_b}_{b,\log}(\R \times \FibreL)$ as $\langle \xi \rangle^{m_b} S^0_{b,\log}(\R \times\FibreL)$. It is the set of anisotropic symbols of order $m_b$.

These symbol classes are stable by all the usual symbolic manipulations (because $\overline{g}_b$ has bounded geometry).
\end{defprop}

Consider a semi-classical Weyl quantization $\Op^w_h$ for sections of $\FibreL \to \FibreM$ (see e.g. \cite[Theorem 14.1]{Zworski-book} or Appendix~\ref{app:quantization_on_cups}): Given a finite open cover $U_k$ of $\FibreM$ and a trivialisation $t_k:\text{pr}^{-1}_{\FibreL\to\FibreM}(U) \to V\times \R^{\dim(L_x)}$ as well as a quadratic partition of unity $\sum_k\chi_k^2=1$, $\chi_k\in C_c^\infty(U_k, \R_{\geq0})$ such a quantization can be written for $\sigma\in S^m(\FibreL)$ by
\begin{equation}\label{eq:Op^w_hL}
 \Op_{h, \FibreL}^w(\sigma) := \sum_k \chi_k t_k^*\Op_{h, \R^{\dim\FibreM}}^w((t_k^{-1})^*\sigma) (t_k^{-1})^*\chi_k
\end{equation}
where $\Op_{h, \R^{\dim\FibreM}}^w$ is the usual Weyl quantization on $\R^{\dim\FibreM}$.

Now we can use $\Op_{h, \FibreL}^w$ to define a quantization of b-symbols $\sigma_b\in S^m_b(\R\times\FibreL)$ on $\R\times\FibreL$ by 
\begin{equation}\label{eq:def-free-Op}
(\Op^b( \sigma_b)f) (r,\PointM) := \frac{1}{2\pi h}\int e^{\frac{i}{h}\lambda(r-r')} \Big[\Op^w_{h,\FibreL}(\sigma_b(\cdot, \cdot;\lambda))f(r',\cdot)\Big](\PointM) d\lambda dr'.
\end{equation}
which yields an element of $\Psi_b(\R\times L)$. If additionally, we choose a smooth cutoff $\chi_C$ supported in $]-\log C,\log C[$, equal to $1$ in $]-\log C^{1/2},\log C^{1/2}[$, we can multiply the kernel of $\Op^b(\sigma_b)$ by $\chi_C(r-r')$, and obtain an operator $\Op^b(\sigma)_C$ in $\Psi_{b,C}(\R\times\FibreL)$.

It should be noted that plugging in \eqref{eq:Op^w_hL} into \eqref{eq:def-free-Op} and writing 
$\tilde t_k: \R\times \text{pr}^{-1}_{\FibreL\to\FibreM}(U)\ni (r,l) \mapsto (r,t_k(l))\in \R\times V\times \R^{\dim(L_x)}$ we get
\[
 (\Op^b( \sigma_b)f) (r,\PointM) := \sum_k \chi_k \tilde t_k^*\Op_{h, \R^{\dim\FibreM+1}}^w((\tilde t_k^{-1})^*\sigma_b) (\tilde t_k^{-1})^*\chi_k.
\]
From this expression we see that all usual properties of quantizations, such as composition formulas, $L^2$ estimates, sharp G\r{a}rding inequalities etc that hold for the quantization on $\R^{\dim\FibreM+1}$ (see e.g. \cite[Appendix E]{Dyatlov-Zworski-book}) directly transfer to $\Op^b(\sigma_b)$. The same holds for $\Op^b(\sigma_b)_C$ because the cutoff away from the diagonal modifies the operator only be an element of $h^\infty\Psi^{-\infty}$.

\begin{remark}\label{rem:Op-to-Op-b}
We will see in Proposition~\ref{prop:S_b_gives_Psi_b} that there will be a method to construct b-symbols from any symbol $\sigma\in S(L \to S^\ast Z)$ which is invariant by the local isometries of the cusp $T_{\tau,\theta}$. (Recall that e.g. the escape function had this property).
\end{remark}

\subsection{Approximate inverse}\label{sec:translation_invariant_approximate_inverse}

\begin{definition}\label{def:free-admissible-triple}
Let $\mathbf{X}_b\in \Psi^1_{b,1}(\R\times\FibreL)$, $G_b\in S^{0+}_b(\R\times\FibreM)$ and $Q_b\in \Psi^{-\infty}_{b,C}(\R\times\FibreM)$. We will say that this triple is admissible if
\begin{itemize}
	\item $-i\mathbf{X}_b$ and $Q_b$ have scalar, real principal symbols.
	\item $e^{\escapeparam G}$ is elliptic in $S^{m_b}_{b,\log}$ for some $m_b\in S^0_b$.
	\item $\mathbf{X}_b = h X_b$ where $X_b$ is a differential operator independent of $h$.
	\item Let $i p_b$ be the principal symbol of $\mathbf{X}_b$. There is a $\delta'>0$ such that
\[
|p_b| \leq \delta'|\xi|\text{ and } |\xi|> \delta' \Longrightarrow \{p_b, G_b \} > 1.
\]
	\item For the same $\delta'>0$, $Q_b$ is elliptic on $|\xi|<2\delta'$ and microsupported in $|\xi|< 3\delta'$. 
\end{itemize}
\end{definition}

\begin{example}
We will see in Section~\ref{sec:Black-Box} that $\mathbf{X}$, $G$ and $Q$ defined in the previous section \ref{sec:first-parametrix} will give rise to a an admissible triple after restricting to $\theta$-invariant sections. The constant $\delta'$ is just $R \delta$, when $\delta>0$ is small enough.
\end{example}

\begin{definition}\label{def:anisotropic_b_spaces}
As in Def-Proposition \ref{prop:invertibility-higher-regularity}, we say that $k_b\in S^0_b$ is a \emph{weight} if it is of the form $\escapeparam m_b + N$. When we say that a weight is \emph{large}, it means that $\escapeparam >0$ is large, and that $N/\escapeparam $ is arbitrarily small.

Given a weight $k_b$ and $\rho\in \R$, we will work with the space of $\FibreL$-valued distributions on $\R\times \FibreM$
\begin{equation}\label{eq:def-H-cal-b}
\mathcal{H}^{k_b}_{b,\rho} :=e^{\rho r} \Op^b( e^{-\escapeparam G_b}\langle \xi \rangle^{- N})_C L^2(\R\times L),
\end{equation}
endowed with the corresponding norm $\|u\|_{\mathcal{H}^{k_b}_{b,\rho}} := \|\Op^b( e^{-\escapeparam G_b}\langle \xi \rangle^{- N})_C^{-1}e^{-\rho r}u\|_{L^2}^2$ (Note that for $h>0$ small enough, $\Op^b( e^{-\escapeparam G_b}\langle \xi \rangle^{- N})_C^{-1}$ exists because of the ellipticity of $e^{-\escapeparam G_b}\langle \xi \rangle^{- N}$ in $S_{b,\log}^{-\escapeparam m_b-N}(\R\times\FibreM)$). 
\end{definition}

The main result in this subsection is the following:
\begin{lemma}\label{lemma:Inversion-up-to-smoothing-indicial-operator}
Assume that $(\mathbf{X}_b, G_b, Q_b)$ is an admissible triple. Then there is a constant $C>0$ such that for $\Re(s) > 1 + C_\delta +C(|\rho | + |N|)-\escapeparam $ and $|\Im s |\leq h^{-1/2}$, and for small enough $h>0$, $\mathbf{X}_b - Q_b - hs$ is invertible on 
$\mathcal{H}_{b,\rho}^{\escapeparam m_b + N}$. 
\end{lemma}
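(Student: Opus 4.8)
The plan is to mimic, in the translation-invariant b-setting, the estimate proved in Proposition~\ref{prop:Inverse-up-to-smoothing-FS}, so that the conjugated operator becomes manifestly invertible by a positivity/energy argument. First I would conjugate: the space $\mathcal H_{b,\rho}^{\mathscr r m_b+N}$ is $e^{\rho r}\Op^b(e^{-\mathscr r G_b}\langle\xi\rangle^{-N})_C L^2$, so the action of $X_b-Q_b-hs$ on this space is conjugate to the action on $L^2(\R\times\FibreL)$ of
\[
\widetilde{\mathbf X_b}(s) := \Op^b(e^{\mathscr r G_b}\langle\xi\rangle^{N})_C\, e^{-\rho r}(\mathbf X_b - Q_b - hs)\, e^{\rho r}\Op^b(e^{-\mathscr r G_b}\langle\xi\rangle^{N})_C .
\]
The conjugation by $e^{\rho r}$ shifts the $\lambda$-variable (the Fourier dual of $r$) by $-ih\rho$, hence contributes a bounded error of size $\mathcal O(h|\rho|)$ to the symbol after taking real parts (since $i\mathbf X_b$ has real scalar principal symbol, $\Re(e^{-\rho r}\mathbf X_b e^{\rho r})$ is an order-zero $\mathcal O(h|\rho|)$ operator). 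The conjugation by $\Op^b(e^{-\mathscr r G_b}\langle\xi\rangle^N)_C$ works exactly as in Section~\ref{sec:first-parametrix}: by the symbol calculus of Definition-Proposition~\ref{defprop:free-b-symbols} (valid because $\overline g_b$ has bounded geometry) it produces $\mathbf X_b - Q_b - h(\mathscr r\Op^b(\{G_b,p_b\}) + s + \text{(lower order from }N\langle\xi\rangle^{-1}\text{-type terms}) ) + \mathcal O(h^2\Psi^{-1^+})$, plus the correction coming from the $\chi_C$ cutoff which, because all symbols involved are compactly microsupported near the zero section or handled in the elliptic region, is smoothing.

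Next I would split phase space with a microlocal partition of unity $1 = A_{\Ell} + A_{\text{G\r{a}rding}}$, exactly mirroring the proof of Proposition~\ref{prop:Inverse-up-to-smoothing-FS}: on $\Omega_{\Ell} = \{|\xi| < 3\delta'/2\}\cup\{|p_b|>\epsilon\langle\xi\rangle\}$ the operator $\mathbf X_b - Q_b - hs$ is elliptic (using that $Q_b$ is elliptic on $|\xi|<2\delta'$ and $i\mathbf X_b$ has real scalar principal symbol so $\Re$ of the elliptic symbol in the characteristic-free region is controlled), so elliptic regularity gives $\|A_{\Ell}u\|\le C\|(\mathbf X_b-Q_b-hs)u\| + \mathcal O(h^\infty)\|u\|$; on $\Omega_{\text{G\r{a}rding}} = \{|\xi|>\delta',\ \xi/|\xi|\notin N_0\}$ — where here the "$N_0$" piece is $\{|p_b|\le\delta'|\xi|\}$ minus a neighbourhood of the characteristic set, and the admissibility hypothesis $\{p_b,G_b\}>1$ on $\{|p_b|\le\delta'|\xi|,\ |\xi|>\delta'\}$ is precisely Lemma~\ref{lemma:escape-function}(i)'s analogue — I would run the sharp G\r{a}rding argument. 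Writing $P := -\Re\mathbf X_b + Q_b + h\mathscr r\Op^b(\{G_b,p_b\}-1) + \mathcal O(h^2\Psi^{-1^+})$, the key point is that $P A_2$ has positive principal symbol on the G\r{a}rding region, so by the sharp G\r{a}rding inequality (Lemma~\ref{lemma:sharp-Garding}, applicable since $\overline g_b$ has bounded geometry)
\[
-\Re\big\langle \widetilde{\mathbf X_b}(s)\widetilde{A_{\text{G\r{a}rding}}}\tilde u,\ \widetilde{A_{\text{G\r{a}rding}}}\tilde u\big\rangle_{L^2} \ \ge\ h\big(-C_\delta - C(|\rho|+|N|) + \Re(s) + \mathscr r\big)\,\|\widetilde{A_{\text{G\r{a}rding}}}\tilde u\|_{L^2}^2 ,
\]
where the $-C(|\rho|+|N|)$ absorbs the conjugation errors identified above. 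Under the hypothesis $\Re(s) > 1 + C_\delta + C(|\rho|+|N|) - \mathscr r$ the bracket is $\ge h$, so Cauchy–Schwarz yields $\|A_{\text{G\r{a}rding}}u\|\le Ch^{-1}\|\mathbf X_b-Q_b-hs)A_{\text{G\r{a}rding}}u\|$. Recombining the two regions exactly as in Proposition~\ref{prop:Inverse-up-to-smoothing-FS} (inserting a slightly enlarged elliptic cutoff $A'_{\Ell}$ into the cross term to absorb it) gives $\|u\| \le Ch^{-1}\|(\mathbf X_b - Q_b - hs)u\|$, hence injectivity with closed range. Running the identical estimate for the adjoint (which satisfies the same structural hypotheses, since $(i\mathbf X_b)^*$ and $Q_b^*$ still have real scalar principal symbols and $G_b$ is real) gives surjectivity, so $X_b - Q_b - hs$ is invertible on $\mathcal H_{b,\rho}^{\mathscr r m_b+N}$.

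The main obstacle I anticipate is bookkeeping the effect of the $\chi_C$ truncation: $\Op^b(\cdot)_C$ is not exactly the conjugation-inverse of $\Op^b(\cdot^{-1})_C$, and products of two precision-$C$ operators land in precision-$2C$, so one must check that the discrepancies are smoothing (they are, being supported away from the diagonal in $\lambda$-frequency only through symbol expansions that telescope) and that these smoothing remainders, which are \emph{not} compact on $\R\times\FibreM$, are nonetheless $\mathcal O(h^\infty)$ as maps between the relevant (weighted) Sobolev spaces — this is fine here because we are only proving \emph{invertibility for $\Re s$ large}, not meromorphy, so $h^\infty$-small errors suffice and compactness is not needed at this stage. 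A secondary technical point is making the conjugation-by-$e^{\rho r}$ shift rigorous: since $e^{\rho r}$ is unbounded, one should phrase it as the statement that $f\mapsto e^{\rho r}f$ is a unitary $\mathcal H^{k_b}_{b,\rho}\to \mathcal H^{k_b}_{b,0}$ up to the weight, and that on the Fourier side it is the entire translation $\lambda\mapsto\lambda-ih\rho$ which keeps us inside the b-symbol classes with a controlled $\mathcal O(h|\rho|)$ cost — this is where the precise constant $C$ in the statement comes from, together with the $\mathcal O(|N|)$ cost of the $\langle\xi\rangle^{N}$ part of the weight.
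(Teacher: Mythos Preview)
Your proposal is correct and follows essentially the same route as the paper: conjugate to $L^2$, identify the extra $\mathcal O(h|\rho|)$ and $\mathcal O(h|N|)$ terms arising from the $e^{\rho r}$ and $\langle\xi\rangle^N$ parts of the weight (the paper records these explicitly as $-\rho e^{-r}[X_b,e^r]$ and $N\Op^b(\{\log\langle\xi\rangle,p_b\})_C$, noting the latter is bounded because $p_b\in S^1$), then rerun the elliptic/G\r{a}rding split and positive-commutator argument of Proposition~\ref{prop:Inverse-up-to-smoothing-FS} verbatim. Aside from a sign typo in your conjugation formula (the right factor should carry $\langle\xi\rangle^{-N}$), your bookkeeping of the constants and your handling of the $\chi_C$ truncation and adjoint step match the paper's intent.
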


\begin{proof}
We can apply the same arguments as in the proof of Proposition \ref{prop:Inverse-up-to-smoothing-FS}. Note that in the positive commutator part, which uses the sharp G\r{a}rding inequality, it is important that the real part of $\mathbf{X}_b-Q_b -hs$ on $\mathcal{H}_{b,\rho}^{\escapeparam m_b + N}$ is unitarily equivalent to the action on $L^2$ of 
\begin{equation}\label{eq:conjugation-action-real-part-Xb-Qb}
\begin{split}
\Re \mathbf{X}_b - Q_b& - h\Big( \Re(s) + \escapeparam \Op^b(\{ p_b, G_b\})_C \\
&+ N\Op^b(\{p_b,\log\langle\xi\rangle\})_C -  e^{-\rho r}[i\Im X_b, e^{\rho r}]\Big) + \mathcal{O}_{L^2\to L^2}(h^2).
\end{split}
\end{equation}
Here it is crucial that the absolute value of the second line in \eqref{eq:conjugation-action-real-part-Xb-Qb} is bounded by $C(|\rho|+|N|)$ --- it would not be the case a priori replacing $\log \langle\xi\rangle$ by $m'_b\log \langle\xi\rangle$ where $m'_b\in S^0_b$.
\end{proof}

\subsection{The indicial family}\label{sec:indicial_family}

For the following constructions it is useful to bear in mind the elementary method of invertion of convolution operator on $\R$. Consider some $f\in \mathcal{D}'(\R)$ compactly supported and the operator $T_f :g \mapsto f\ast g$. Obviously, the Fourier transform of $T_f g$ is just $\hat{f}\hat{g}$. To invert $T_f$, is suffices then to invert $\hat{g}\mapsto\hat{f}\hat{g}$. Our aim is to invert the b-operators introduced in section  section~\ref{sec:b-operators} which motivates us to introduce an analogon to the above appearing Fourier transform:

Let $A\in \Psi_{b,C}(\R\times\FibreL)$ and $f\in C^\infty(\FibreL)$. For $\lambda\in \C$, we consider $e^{\lambda r/h}f(\zeta)\in C^\infty(\R\times\FibreL)$.
By the support properties of the kernel, $A$ is a properly supported pseudodifferential operator and thus defines a continuous operator on $C^\infty(\R\times \FibreL)$. Moreover, by the fact that $A$ is a convolution operator,
$(r,\zeta)\mapsto e^{-\lambda r/h}(Ae^{\lambda \bullet/h}f(\bullet))(r,\zeta)$ is independent of $r$, thus $\zeta\mapsto e^{-\lambda r_0/h}(Ae^{\lambda \bullet/h}f(\bullet))(r_0,\zeta)$ is independent of $r_0$ and is a well defined smooth section of $\FibreL$.
\begin{definition}\label{def:indicial_family}
Given $A\in \Psi_{b,C}(\R\times\FibreL)$ and $\lambda\in\C$, we define the \emph{indicial family associated to A} as the family of operators
$I(A,\lambda):C^\infty(\FibreL)\to C^\infty(\FibreL)$ given by 
\[
(I(A,\lambda)f)(\PointM) := e^{-\lambda r_0/h}( Ae^{\lambda\bullet/h}f(\bullet))(r_0,\PointM)
\]
\end{definition}
Note that given a second operator $B\in \Psi_{b,C}(\R\times\FibreL)$, it follows from the definition that $I(AB,\lambda) = I(A,\lambda)I(B,\lambda)$.
\begin{example}
If $\mathbf{X}_b$ is obtained from the geodesic flow on a cusp, i.e. is the operator in equation \eqref{eq:def-X_b}, the corresponding indicial family is
\[
I(\mathbf X_b,\lambda)= \lambda \cos\varphi + h\left( \frac{d}{2}\cos\varphi + \sin\varphi\partial_\varphi\right).
\]
\end{example}
Note that an equivalent description of the indicial family is the following: Fix $\chi\in C_c^\infty(\R)$ with $\int\chi(r) dr=1$, then the indicial family is the family of operators $I(A,\lambda): C^\infty(\FibreL)\to\mathcal D'(\FibreL)$ such that for any $f_1,f_2\in C^\infty(\FibreL)$:
\begin{equation}\label{eq:indicial_alternative_def}
 \big\langle f_2,I(A,\lambda)f_1\big\rangle_{C^\infty(\FibreL),\mathcal D'(\FibreL)} = \int \chi(r)e^{-\lambda r/h}\big\langle f_2(\zeta), \big(Ae^{\lambda\bullet/h}f_1(\bullet)\big)(r,\zeta)\big\rangle_{\FibreL_\zeta}dr.
\end{equation}
This expression is helpful for two purposes: First, by taking complex derivatives of the right hand side with respect to $\lambda$ we conclude:
\begin{lemma}
 For $A\in \Psi_{b,C}(\R\times\FibreL)$, $I(A,\lambda)$ is holomorphic in $\lambda$ as a family of operators $C^\infty(\FibreL)\to\mathcal D'(\FibreL)$.
\end{lemma}
Secondly it allows to extend the definition of the indicial families to convolution operators on $\R\times\FibreL$ that fail to be in $\Psi_{b,C}$. Note that we will work with nonelliptic problems, thus the appearing inverse operators (like for example $(\mathbf X_b-Q_b-hs)^{-1}$ from Lemma~\ref{lemma:Inversion-up-to-smoothing-indicial-operator}) will not be pseudodifferential operators, so it will be crucial to have the following extended definition. 

\begin{lemma}\label{def:extension-I(A)}
Let $A$ be a convolution operator $C^\infty_c(\R\times \FibreL)\to \mathcal{D}'(\R\times\FibreL)$, such that for some $N_1,N_2\in\R$ and $\rho_0<\rho_1$
\begin{equation}\label{eq:extension-I(A)-bound}
\| A\|_{\mathcal{H}_{b,\rho_0}^{N_1} \to \mathcal{H}_{b,\rho_0}^{N_2}} < \infty \text{ and }\| A\|_{\mathcal{H}_{b,\rho_1}^{N_1} \to \mathcal{H}_{b,\rho_1}^{N_2}} < \infty.
\end{equation}
Then \eqref{eq:indicial_alternative_def} defines for $\Re(\lambda)\in]\rho_0,\rho_1[$ a holomorphic family of linear operators $C^\infty(\FibreL)\to \mathcal D'(\FibreL)$. Furthermore, for $\rho\in]\rho_0,\rho_1[$ we have $\|I(A,\rho+iw)\|_{H^{N_1}(\FibreL)\to H^{N_2}(\FibreL)}\leq C\langle w\rangle^{|N_1|+|N_2|}$.
Given a second convolution operator $B$ fulfilling $\|B\|_{\mathcal{H}_{b,\rho_{0/1}}^{N_2} \to \mathcal{H}_{b,\rho_{0/1}}^{N_3}}<\infty$ we have for any $\Re(\lambda)\in]\rho_0,\rho_1[$
\begin{equation}\label{eq:indicial_algebra_hom}
I(AB,\lambda) = I(A,\lambda)I(B,\lambda).
\end{equation}

\end{lemma}
\begin{proof}
We want to show that for $f_1\in H^{N_1}(\FibreL)$ and $\Re(\lambda)\in]\rho_0,\rho_1[$, $Ae^{\lambda \bullet/h}f_1(\bullet)$ is well defined:
let us choose a partition of unity $\Psi_1,\Psi_2\in C^\infty(\R)$, 
$\supp(\Psi_1)\in]-\infty, 1]$, $\supp(\Psi_2)\in ]-1,\infty]$, $\Psi_1+\Psi_2=1$. Choose 
$\rho\in ]\rho_0,\rho_1[$ and set $\lambda=\rho+iw$. Then the maps
\[
\begin{cases} H^{N_1}(\FibreL)\ni f_1(\zeta) \mapsto \Psi_1(r)e^{\lambda r/h}f_1(\zeta) \in \mathcal{H}_{b,\rho_0}^{N_1}, \\
H^{N_1}(\FibreL)\ni f_1(\zeta) \mapsto \Psi_2(r)e^{\lambda r/h}f_1(\zeta) \in \mathcal{H}_{b,\rho_1}^{N_1},\end{cases}
\]
are continuous with operator norm bounded by $C\langle w\rangle^{|N_1|}$. By the compact support of the cutoff function $\chi$ appearing in \eqref{eq:indicial_alternative_def}, for any $\rho'\in\R$, the linear operator
\[
 H^{-N_2}(\FibreL)\ni f_2(\zeta) \mapsto \chi(r)e^{-\lambda r/h}f_2(\zeta) \in \mathcal{H}_{b,\rho'}^{-N_2},
\]
is well defined and bounded by $C\langle w\rangle^{|N_2|}$. 
Using the continuity of $A: \mathcal H_{b,\rho_0}^{N_1}\to\mathcal H^{N_2}_{b,\rho_0}$ and $A: \mathcal H_{b,\rho_1}^{N_1}\to\mathcal H^{N_2}_{b,\rho_1}$ respectively, yields that 
\begin{align*}
 \big\langle f_2,I(A,\lambda)f_1\big\rangle_{C^\infty(\FibreL),\mathcal D'(\FibreL)} = &\Big\langle \chi(r)e^{-\lambda r/h} f_2(\zeta), \big(A\Psi_1(\bullet) e^{\lambda\bullet/h}f_1(\bullet)\big)\Big\rangle_{C_c^\infty(\R\times\FibreL),\mathcal D'(\R\times\FibreL)},\\
 &+\Big\langle \chi(r)e^{-\lambda r/h} f_2(\zeta), \big(A\Psi_2(\bullet) e^{\lambda\bullet/h}f_1(\bullet)\big)\Big\rangle_{C_c^\infty(\R\times\FibreL),\mathcal D'(\R\times\FibreL)},\\
 \leq& C\langle w\rangle^{|N_1|+|N_2|}\|f_2\|_{H^{-N_2}(\FibreL)}\|f_1\|_{H^{N_1}(\FibreL)}.
\end{align*}
This shows the well definedness of $I(A,\lambda)$ for $\Re(\lambda)\in]\rho_0,\rho_1[$ and the bounds on the operator norm. The holomorphicity is again deduced from the fact that the right hand side of \eqref{eq:indicial_alternative_def} is holomorphic in $\lambda$. The above calculations also show that
\[
 A(e^{\lambda\bullet/h}f_1(\bullet)) = e^{\lambda\bullet/h}I(A,\lambda)f_1,
\]
and from this equation the composition property \eqref{eq:indicial_algebra_hom} follows directly. 
\end{proof}

\begin{lemma}\label{lemma:Indicial-symbol}
Let $\sigma_b\in S_b(\R\times\FibreL)$. Then there is a holomorphic family $\lambda\to \sigma_{b,\lambda}\in S_b(\FibreL)$
such that
\[
I(\Op^b(\sigma_b)_C, \lambda) = \Op^w_{h\FibreL}(\sigma_{b,\lambda}).
\]
It is given by
\begin{equation}
 \label{eq:expression-sigma-b-lambda}
\sigma_{b,\lambda}(\zeta,\eta) := \frac{1}{2\pi h}\int_\R\sigma_b(\zeta,\eta,\lambda')\hat\chi_C\left(\frac{-i\lambda-\lambda'}{h}\right) d\lambda'.
\end{equation}

Furthermore if $\sigma_b\in S_{b,\log}^{m_b}(\R\times \FibreL)$ then $\sigma_{b,\lambda}\in S^{m_{b,0}}_{\log}(\FibreL)$ and the leading asymptotics in the high frequency limit is given by $\sigma_b(\cdot,\cdot,0)$, i.e. 
\begin{equation}\label{eq:leading_asym_sigma_lambda}
 \sigma_{b,\lambda} - \sigma_b(\cdot,\cdot,0) \in (1+\log\langle \xi\rangle )S^{m_{b,0} - 1}_{\log}(\FibreL).
\end{equation}
In particular the leading asymptotics of $\sigma_{b,\lambda}$ in the high frequency regime is independent of $\lambda$.
\end{lemma}

\begin{proof}
We use Definition~\ref{def:indicial_family} and choose $r_0=0$ for simplicity. Then using \eqref{eq:def-free-Op} we get 
\begin{align*}
 I(\Op^b( \sigma_b)_C,\lambda)f  &= \frac{1}{2\pi h}\int e^{\frac{i}{h}\lambda'(-r')}\chi_C(-r')e^{\lambda r'/h} \Big[\Op^w_{h,\FibreL}(\sigma_b(\cdot, \cdot;\lambda'))f\Big] d\lambda' dr'.\\
 &=\frac{1}{2\pi h}\int \hat \chi_C\left(\frac{-i\lambda'-\lambda}{h}\right)\Big[\Op^w_{h,\FibreL}(\sigma_b(\cdot, \cdot;\lambda'))f\Big]d\lambda'
\end{align*}
Now the fact that the $d\lambda'$ integral can be interchanged with $\Op^w_{h,\FibreL}$ is justified by the fact that $\Op^w_{h,\FibreL}$ is defined in a finite number of charts (see\eqref{eq:Op^w_hL}). 

It remains to study the leading asymptotics of $\sigma_{b,\lambda}$. Let $\sigma_b\in S^{m_b}_{\log}(\R\times\FibreL)$ and choose $N>0$ such that $-N\leq m_b\leq N$. By the symbol estimates we deduce
\begin{equation}\label{eq:sigma_b_la_symbol_estimate}
 |\partial_\zeta^\alpha\partial_\eta^\beta\partial_\lambda^k \sigma_b(\zeta,\eta,\lambda)| \leq C(1+\log\langle\xi\rangle)^{|\alpha|+|\beta|+k}(\langle\eta\rangle\langle\lambda\rangle)^{m_b(\zeta,\eta,\lambda)-|\beta|-k}.
\end{equation}
In particular we have $\partial_\lambda^k \sigma_b(\zeta,\eta,0)\in \log(2+\xi^2)^{k} S^{m_b(\zeta,\eta,0)-k}_{\log}(\FibreL)$. Now by remainder estimates on the Taylor series in $\lambda$, for any $(\zeta,\eta)\in T^*\FibreM$ and $\lambda\in \R$, there is $|p_{\zeta,\eta,\lambda}|\leq|\lambda|$ such that 
\[
 \sigma_b(\zeta,\eta,\lambda) = \sum_{k=0}^{2N}\frac{1}{k!}\partial_\lambda^k\sigma_b(\zeta,\eta,0)\lambda^k + 
 \frac{\lambda^{2N+1}}{(2N+1)!}(\partial_\lambda^{2N+1}\sigma_b)(\zeta,\eta, p_{\zeta,\eta,\lambda}).
\]
Plugging this into the formula for $\sigma_{b,\lambda}$ yields
\[\begin{split}
\sigma_{b,\lambda}(\zeta,\eta) := \sigma_b(\zeta,\eta,0) &+\sum_{k=1}^{2N}c_k\cdot \partial_\lambda^k\sigma_b(\zeta,\eta,0) \\
	&+ \frac{1}{2\pi h}\int_\R\frac{\lambda^{2N+1}}{(2N+1)!}(\partial_\lambda^{2N+1}\sigma_b)(\zeta,\eta, p_{\zeta,\eta,\lambda})\hat\chi_C\left(\frac{-i\lambda-\lambda'}{h}\right) d\lambda'.
\end{split}\]
Now \eqref{eq:sigma_b_la_symbol_estimate} assures that the last term is in $S^{-N-1}(\FibreL)$. Putting everything together we conclude $\sigma_{b,\lambda} - \sigma_b(\cdot,\cdot,0) \in \log(2+\xi^2)S^{m_{b,0} - 1}_{\log}(\FibreL)$.
\end{proof}
Now, we can define spaces on $\FibreL\to\FibreM$:
\begin{definition}\label{def:anisotropic_indicial_spaces}
Let $k_b = \escapeparam m_b+N$ be a weight. We denote by $\mathsf{H}^{\escapeparam m_b + N}_\lambda$ the space 
\begin{equation}\label{eq:def-indicial-spaces}
I(\Op^b(e^{-\escapeparam G_b}\langle \xi \rangle^{-N})_C, \lambda) L^2(\FibreL),
\end{equation}
endowed with the corresponding norm.
\end{definition}

\begin{remark}
The different letters are associated to functional spaces on different objects. First, $H^{\mathbf{k}}$, or $H^{\mathbf{k}}(M,L)$ is a space on the whole manifold. Then, $\mathcal{H}^{k_b}$ is the corresponding space ``restricted'' to the zeroth Fourier mode in a cusp. Finally, $\mathsf{H}^{k_b}_\lambda$ is the ``Fourier Transform'' of $\mathcal{H}^{k_b}$.
\end{remark}

Let us discuss the $\lambda$ subscript in the notation of the spaces $\mathsf H^{\escapeparam m_b + N}_\lambda$. It may seem that these spaces depend on the parameter $\lambda$, and since we want to consider analytic families of operators depending on the parameter $\lambda$, this may be problematic --- recall that for the theory in Kato \cite{Kato-80} to apply, we need that operators are of type (A), which basically means that they all act on the same domain. To address this problem, we start with
\begin{lemma}\label{lemma:equivalence-indicial-spaces}
For any weight $k_b$, the space $\mathsf{H}^{k_b}_\lambda(\FibreL)$ does 
not depend on the $\lambda$ parameter. Only the norm does, and
it varies continuously with $\lambda$.
\end{lemma}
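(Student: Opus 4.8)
The plan is to reduce the claim to a statement about the operator $\Op^b(e^{-\mathscr{r}G_b}\langle\xi\rangle^{-N})_C$ and then pass, via the indicial family, to the model quantization $\Op^w_h$ on $\FibreM$. First I would apply Lemma~\ref{lemma:Indicial-symbol} to the b-symbol $a_b := e^{-\mathscr{r}G_b}\langle\xi\rangle^{-N}\in S^{k_b}_b$, obtaining a holomorphic family $\lambda\mapsto a_{b,\lambda}\in S^{k_b}_b(\FibreM)$ with $I(\Op^b(a_b)_C,\lambda)=\Op^w_h(a_{b,\lambda})$, and the explicit formula \eqref{eq:expression-sigma-b-lambda} for $a_{b,\lambda}$. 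The key structural point is that for each fixed $\lambda$, the symbol $a_{b,\lambda}$ is an \emph{elliptic} element of $S^{m_b}_b$ of order $m_b$ (since $e^{\mathscr{r}G_b}$ is elliptic in $S^{m_b}_b$ by the admissibility hypothesis, and $\langle\xi\rangle^{-N}$ is a global elliptic factor of fixed order $N$, which is absorbed by the weight; the convolution with the entire function $\widehat{\chi}_C(\tfrac{-i\lambda-\cdot}{h})$ in \eqref{eq:expression-sigma-b-lambda} does not destroy ellipticity, only rescales it by a $\lambda$-dependent nonzero constant, because $\widehat{\chi}_C$ is an entire function equal to a Gaussian-type bump to leading order and in particular nonvanishing after integration against the principal part). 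Hence $\Op^w_h(a_{b,\lambda})$ is an elliptic semiclassical pseudodifferential operator of the anisotropic order $m_b$, and by standard elliptic parametrix construction it is invertible modulo $\mathcal O(h^\infty)$, indeed (for $h$ small) an isomorphism onto its image with inverse a pseudodifferential operator of order $-m_b$.

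Next I would use this to compare the spaces for two different parameters $\lambda_0,\lambda_1$. By definition $\mathsf H^{k_b}_{\lambda_j}(\FibreL) = \Op^w_h(a_{b,\lambda_j})L^2(\FibreL)$ with norm $\|\Op^w_h(a_{b,\lambda_j})^{-1}\cdot\|_{L^2}$. Since both $\Op^w_h(a_{b,\lambda_0})$ and $\Op^w_h(a_{b,\lambda_1})$ are elliptic of the \emph{same} anisotropic order $m_b$, the composition $T(\lambda_0,\lambda_1):=\Op^w_h(a_{b,\lambda_1})^{-1}\Op^w_h(a_{b,\lambda_0})$ is an elliptic pseudodifferential operator of order $0$, hence bounded and boundedly invertible on $L^2(\FibreL)$; therefore it identifies the two images set-theoretically and gives the norm equivalence $\|f\|_{\mathsf H^{k_b}_{\lambda_1}} \asymp \|f\|_{\mathsf H^{k_b}_{\lambda_0}}$. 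This shows the underlying vector space is independent of $\lambda$. For continuity of the norm in $\lambda$, I would observe that $\lambda\mapsto a_{b,\lambda}$ is holomorphic with values in $S^{m_b}_b(\FibreM)$ (this is immediate from \eqref{eq:expression-sigma-b-lambda} and differentiation under the integral sign, the symbol estimates being uniform on compact $\lambda$-sets because $\widehat{\chi}_C$ and its derivatives decay rapidly off the real axis in the relevant directions), hence $\lambda\mapsto\Op^w_h(a_{b,\lambda})$ and its parametrix inverse are continuous (indeed holomorphic) families of bounded operators between the fixed spaces, from which continuity of $\lambda\mapsto\|\cdot\|_{\mathsf H^{k_b}_\lambda}$ follows.

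The main obstacle I expect is the uniform symbol-class control of $a_{b,\lambda}$ and of the elliptic parametrix inverse as $\lambda$ varies: one must check that the $\lambda$-dependent convolution in \eqref{eq:expression-sigma-b-lambda} preserves the class $S^{m_b}_b$ with seminorm bounds locally uniform in $\lambda$, and that the ellipticity constant stays bounded below on compact $\lambda$-sets, so that the parametrix construction (and hence the comparison operator $T(\lambda_0,\lambda_1)$) has uniformly controlled norm. This is where the hypothesis $G_b\in S^{0+}_b$, the precise form of $\Op^b(\cdot)_C$, and the fact that $\widehat{\chi}_C$ is entire of exponential type really enter; once that bookkeeping is in place the rest is routine elliptic theory on the compact base $\FibreM$. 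A minor point worth handling carefully is that the order is the anisotropic $m_b\in S^0_b$ rather than a constant, so one should invoke the anisotropic (variable-order) elliptic calculus, which is available here because $\overline g_b$ has bounded geometry (Defprop~\ref{defprop:free-b-symbols}).
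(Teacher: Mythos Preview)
Your approach is essentially the same as the paper's: both reduce the claim to showing that the transition operators $I(\Op^b(a_b)_C,\lambda)\,I(\Op^b(a_b)_C,\lambda')^{-1}$ and its companion are bounded on $L^2(\FibreL)$, and both conclude by observing that these are pseudodifferential of order $0$ because the symbols at $\lambda$ and $\lambda'$ have the same large-$\eta$ asymptotics. The paper's proof is four lines and simply asserts this last point; you spell out the mechanism via Lemma~\ref{lemma:Indicial-symbol} and the explicit formula \eqref{eq:expression-sigma-b-lambda}, which is helpful, though your justification that the convolution preserves ellipticity (``Gaussian-type bump \dots nonvanishing after integration'') is a bit loose --- the cleaner reason is that for fixed (even complex) $\lambda$ the leading behavior of $a_b(\PointM,\eta;\lambda')$ as $|\eta|\to\infty$ is independent of $\lambda'$, so the convolution in the $\lambda'$-variable does not affect the principal symbol in $\eta$.
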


\begin{proof}
Recall from Definition~\ref{def:anisotropic_b_spaces} that $\Op^b(e^{-\escapeparam G_b}\langle \xi \rangle^{-N})_C^{-1}$ exists for small enough $h>0$ and by the fact that $I(A,\lambda)$ is an algebra homomorphism we get 
\[
I(\Op^b(e^{-\escapeparam G_b}\langle \xi \rangle^{-N})_C, \lambda)^{-1}= I(\Op^b(e^{-\escapeparam G_b}\langle \xi \rangle^{-N})_C^{-1}, \lambda').
\]
It suffices to check that the operators
\[
I(\Op^b(e^{-\escapeparam G_b}\langle \xi \rangle^{-N})_C, \lambda)I(\Op^b(e^{-\escapeparam G_b}\langle \xi \rangle^{-N})_C^{-1}, \lambda')
\]
and
\[
I(\Op^b(e^{-\escapeparam G_b}\langle \xi \rangle^{-N})_C^{-1}, \lambda)I(\Op^b(e^{-\escapeparam G_b}\langle \xi \rangle^{-N})_C, \lambda')
\]
are bounded on $L^2$ for $\lambda,\lambda'\in \C$ (and depend continuously on $\lambda$, $\lambda'$). Since they are pseudo-differential, and their symbols have the same asymptotics for large $\eta$ (see \eqref{eq:leading_asym_sigma_lambda}), this is a consequence of usual pseudo-differential arguments.
\end{proof}

Besides the indicial family we will need the following inverse construction:
\begin{defprop}\label{defprop:A(I)}
Let $\rho_0<\rho_1$ and for $\Re \lambda \in ]\rho_0, \rho_1[$ let $\lambda\mapsto I(\lambda)$ be a holomorphic family of continuous operators  $I(\lambda):C^\infty(\FibreL)\to\mathcal D'(\FibreL)$. Also assume that it is tempered, i.e $\|I(\lambda)\|_{H^N(\FibreL)\to H^{-N}(\FibreL)} \leq C\langle \Im \lambda\rangle^N$, with $C,N>0$ depending continuously on $\Re \lambda$.

Then, for $\rho \in ]\rho_0,\rho_1[$, there is a continuous operator
$A(I,\rho):C_c^\infty(\R\times\FibreL) \to\mathcal D'(\R\times\FibreL)$ with kernel given by
\[
e^{\rho (r-r')/h} \mathscr{F}_h^{-1}( I( \rho + i\cdot))(r-r').
\]
The resulting operator does not depend on $\rho$, so we denote it by $A(I)$. In the case that $I(\lambda) = I(A, \lambda)$ for $A \in \Psi_{b,C}$ or some $A$ as in Lemma~\ref{def:extension-I(A)}, we get that $A(I) = A$.

Furthermore for two families $I_1(\lambda), I_2(\lambda)$ of operators holomorphic on $\Re(\lambda)\in]\rho_1,\rho_2[$ fulfilling $\|I_1(\rho+iw)\|_{H^{k_1}(\FibreL)\to H^{k_2}(\FibreL)} \leq C\langle w\rangle^{N_1}$ and $\|I_2(\rho+iw)\|_{H^{k_2}(\FibreL)\to H^{k_3}(\FibreL)} \leq C\langle w\rangle^{N_2}$ one has
$A(I_2 I_1) = A(I_2)A(I_1)$. 
\end{defprop}

\begin{proof}
Let us first check that the given kernel defines a well defined continuous operator $A(I,\rho):C_c^\infty(\R\times\FibreL) \to\mathcal D'(\R\times\FibreL)$: The expression of the kernel means that for $f_1,f_2\in C_c^\infty(\FibreL)$, $g_1,g_2\in C_c^\infty(\R)$ one has
\begin{align}
 \big\langle f_1g_1, &A(I,\rho)f_2g_2 \big\rangle_{C_c^\infty(\R\times\FibreL),\mathcal D'(\R\times\FibreL)} \nonumber\\
 & := \frac{1}{2\pi h} \int e^{iw(r-r')/h}e^{\rho(r-r')/h}g_1(r)g_2(r') \langle f_1, I(\rho+iw)f_2 \rangle_{C_c^\infty(\FibreL),\mathcal D'(\FibreL)} drdr'dw \nonumber,\\
 &= \int \mathscr F_h^{-1}(e^{\rho\bullet/h}g_1)(w)\mathscr F_h(e^{-\rho\bullet/h} g_2)(w)
 \langle f_1, I(\rho+iw)f_2 \rangle_{C_c^\infty(\FibreL),\mathcal D'(\FibreL)}dw.
 \label{eq:A(I)-details}
\end{align}
As the Fourier transform of compactly supported functions extend holomorphically to $\C$ the independence from $\rho$ follows from Cauchy's theorem. The fact that $A(I,\rho)$ can be extended continuously to arbitrary (nonproduct) elements of $C_c^\infty(\R\times\FibreL)$ can be seen by letting any of the $f_1,f_2\in C^\infty(\FibreL)$ or $g_1, g_2\in C_c^\infty(\R)$ to zero in the corresponding topologies. Then the temperedness assumption of $I(\lambda)$ implies that \eqref{eq:A(I)-details} goes to zero.
As to why $A(I(A))=A$, this follows by Fourier inversion after plugging in the definitions \eqref{eq:indicial_alternative_def} and \eqref{eq:A(I)-details} and a few lines of straightforward calculations. Also the multiplicativity $A(I_2 I_1) = A(I_2)A(I_1)$ follows from a straightforward calculation which is completely analogous to the calculations needed to show that the Fourier transform of a product is the convolution of Fourier transforms. 
\end{proof}

Next, we have the lemma on boundedness:
\begin{lemma}\label{lemma:equivalence-boundedness}
Consider $I(\lambda)$ as in Definition-Proposition \ref{defprop:A(I)}. We have the following identities. For $h\rho \in ]\rho_0, \rho_1[$, and two weights $k_b=\escapeparam m_b+N$ and $\ell_b=\escapeparam'm_b + N$,
\begin{equation}
 \label{eq:AI_operator_formula}
\| A(I, h\rho) \|_{\mathcal{H}_{b,\rho}^{k_b} \to \mathcal{H}_{b,\rho}^{\ell_b}} = \sup_{\Re \lambda = h\rho} \| I(\lambda) \|_{ \mathsf{H}^{k_b}_\lambda \to \mathsf{H}^{\ell_b}_\lambda}.
\end{equation}
\end{lemma}

\begin{proof}
The first step is to reduce to the case $\rho=0$: Since
\[
\| A(I) \|_{\mathcal{H}_{b,\rho}^{k_b} \to \mathcal{H}_{b,\rho}^{\ell_b}} =
	 \left\|e^{-\rho r} A(I) e^{\rho r} \right\|_{\mathcal{H}_{b,0}^{k_b} \to \mathcal{H}_{b,0}^{\ell_b}},
\]
the action of $A(I)$  is equivalent to the action of $A_\rho$ on $\mathcal{H}_{b,0}^{k_b} \to \mathcal{H}_{b,0}^{\ell_b}$, where $A_\rho$ is an operator whose kernel is that of $A$ multiplied by $e^{\rho(r'-r)}$, i.e it is
\[
\mathscr{F}_h^{-1}( I( h\rho + i\cdot))(r-r').
\]
Let $I_\rho(\lambda) = I(\lambda + h\rho)$. We deduce that the action of $A(I,h\rho)$  is equivalent to the action of $A(I_\rho, 0)$ on $\mathcal{H}_{b,0}^{k_b} \to \mathcal{H}_{b,0}^{\ell_b}$. 
Next we conjugate to an operator $L^2\to L^2$. By definition,
\[
\begin{split}
\| A(I_\rho) &\|_{\mathcal{H}_{b,0}^{k_b} \to \mathcal{H}_{b,0}^{\ell_b}} = \\
	& \left\|\Op\left[e^{-\escapeparam 'G_b}\langle \xi\rangle^{-N'}\right]_C^{-1} A(I_\rho)  \Op\left[e^{-\escapeparam G_b}\langle \xi\rangle^{-N}\right]_C \right\|_{L^2 \to L^2}.
\end{split}
\]
and
\[
\begin{split}
\| I_\rho(\lambda)& \|_{ \mathsf{H}^{k_b}_\lambda \to \mathsf{H}^{\ell_b}_\lambda} = \\
&\left\|I(\Op(e^{-\escapeparam 'G_b}\langle \xi\rangle^{-N'}), \lambda)^{-1} I(\lambda) I(\Op(e^{-\escapeparam G_b}\langle \xi\rangle^{-N}), \lambda) \right\|_{L^2 \to L^2}.
\end{split}
\]
Now, both maps $A\to I(A)$ and $I \to A(I)$ are multiplicative. We deduce that it suffices to prove the lemma in the case that $k_b = \ell_b = 0$. 

After this additional reduction, we are left to prove that 
\[
\|A(I)\|_{L^2 \to L^2} = \sup_{\Re \lambda = 0} \| I(\lambda)\|_{L^2\to L^2}.
\]
This is just an avatar of the Plancherel formula: By the definition of $A(I)$ (see \eqref{eq:A(I)-details}) one has for $f_1,f_2\in C_c^\infty(\FibreL)$, $g_1,g_2\in C_c^\infty(\R)$:
\begin{align}
 \big\langle f_1g_1, &A(I,\rho)f_2g_2 \big\rangle_{L^2(\R\times\FibreL)} \nonumber\\
 &= \frac{1}{2\pi h}\int \overline{\mathscr F_h(g_1)(w)}\langle f_1, I(\rho+iw)f_2 \rangle_{L^2(\FibreL)}\mathscr F_h(g_2)(w)\,dw, \nonumber
\end{align}
and from this formula \eqref{eq:AI_operator_formula} can be read off directly.
\end{proof}

Finally, we get
\begin{proposition}\label{prop:constant-domain-order1-operators}
Let $\mathbf{X}_b\in \Psi^1_C(\R\times\FibreL)$ and let $k_b$ be a weight. Then each $I(\mathbf{X}_b, \lambda)$ has a unique extension as a closed operator on $\mathsf{H}^{k_b}_\lambda(\FibreL)$. The domain, as a subset of $\mathsf{H}^{k_b}_\lambda(\FibreL)= \mathsf{H}^{k_b}_0(\FibreL)\subset \mathcal D'(\FibreL)$ does not depend on $\lambda$.
\end{proposition}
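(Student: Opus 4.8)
The plan is to deduce the statement from the already-established machinery on the cusp end, in particular Lemma~\ref{lemma:Inversion-up-to-smoothing-indicial-operator}, Lemma~\ref{lemma:equivalence-boundedness} and Lemma~\ref{lemma:equivalence-indicial-spaces}. Fix the weight $k_b = \mathscr{r}m_b + N$ and recall that by Lemma~\ref{lemma:equivalence-indicial-spaces} the underlying vector space $\mathsf{H}^{k_b}_\lambda(\FibreL)$ is the same for all $\lambda$; only the norm varies, and continuously so. Thus it suffices to produce \emph{one} value of $\lambda$ at which $I(\mathbf{X}_b,\lambda)$ admits a closed extension with an explicitly identifiable domain, and then to transport that domain to all other $\lambda$ by a bounded-invertible intertwining map.

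First I would set up the domain at a value $\lambda_0$ with large real part. Choose $\rho$ with $h\rho = \Re\lambda_0$ large; by Lemma~\ref{lemma:Inversion-up-to-smoothing-indicial-operator} (applied with the absorbing term $Q_b$, which is compactly microsupported and hence does not affect the domain), the operator $X_b - Q_b - hs$ — and therefore $X_b$ itself, $Q_b$ being a bounded perturbation — has a natural closed realization on $\mathcal{H}^{k_b}_{b,\rho}$ with domain $D_\rho := \{ u \in \mathcal{H}^{k_b}_{b,\rho} : X_b u \in \mathcal{H}^{k_b}_{b,\rho}\}$, exactly as in the graph-norm construction of \cite[Lemma A.1]{Faure-Sjostrand-10} used for $D_{\mathscr r}$ in Section~\ref{sec:first-parametrix}. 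Passing through the unitary identifications $A \leftrightarrow I(A,\cdot)$ of Lemma~\ref{lemma:equivalence-boundedness} and Definition-Proposition~\ref{defprop:A(I)}, this realization on $\mathcal{H}^{k_b}_{b,\rho}$ corresponds fibrewise to a closed realization of $I(\mathbf{X}_b,\lambda)$ on $\mathsf{H}^{k_b}_\lambda(\FibreL)$ for $\Re\lambda = h\rho$, with domain $\mathsf{D}_\lambda := \{ v \in \mathsf{H}^{k_b}_\lambda : I(\mathbf{X}_b,\lambda)v \in \mathsf{H}^{k_b}_\lambda\}$. Uniqueness of the closed extension follows from density: $C^\infty_c(\FibreM,\FibreL)$ is dense in $\mathsf{H}^{k_b}_\lambda$ and $I(\mathbf{X}_b,\lambda)$ is a first-order differential operator, so the graph closure from $C^\infty_c$ is the maximal one, as in the cited lemma.

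Next I would show the domain is $\lambda$-independent. Fix $\lambda,\lambda'$ and set $J := I(\Op^b(e^{-\mathscr{r}G_b}\langle\xi\rangle^{-N})_C,\lambda)\, I(\Op^b(e^{-\mathscr{r}G_b}\langle\xi\rangle^{-N})_C,\lambda')^{-1}$; by the proof of Lemma~\ref{lemma:equivalence-indicial-spaces} this $J$ and its inverse are bounded $L^2\to L^2$, hence $J$ conjugates $\mathsf{H}^{k_b}_{\lambda'}$ isomorphically onto $\mathsf{H}^{k_b}_\lambda$ with equivalent norms. The point is that $\mathsf{H}^{k_b}_\lambda = \mathsf{H}^{k_b}_0$ as sets already by Lemma~\ref{lemma:equivalence-indicial-spaces}, so what remains is to check that $v \in \mathsf{D}_\lambda \iff I(\mathbf{X}_b,\lambda)v \in \mathsf{H}^{k_b}_0$ can be rephrased independently of $\lambda$. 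Writing $I(\mathbf{X}_b,\lambda) = I(\mathbf{X}_b,0) + \lambda\,(\partial_\lambda I(\mathbf{X}_b,\cdot))|_{\text{along the } \lambda\text{-segment}}$ — concretely, from the indicial-symbol formula the $\lambda$-dependence of $I(\mathbf{X}_b,\lambda)$ enters only through the term $\lambda\,\cos\varphi$ in the geodesic-flow example, and in general through a term that is \emph{order zero} in $\eta$ (multiplication by a bounded section of $\End\FibreL$, since $\mathbf X_b$ is a b-\emph{differential} operator of order one and differentiating its symbol in $\lambda$ lowers the $\eta$-order) — the difference $I(\mathbf{X}_b,\lambda) - I(\mathbf{X}_b,\lambda')$ is a zeroth-order operator, bounded on every $\mathsf{H}^{k_b}_0$. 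Hence $I(\mathbf{X}_b,\lambda)v$ and $I(\mathbf{X}_b,\lambda')v$ lie in $\mathsf{H}^{k_b}_0$ simultaneously, giving $\mathsf{D}_\lambda = \mathsf{D}_{\lambda'}$ as subsets of $\mathsf{H}^{k_b}_0 \subset \mathcal{D}'(\FibreL)$.

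The main obstacle I anticipate is the bookkeeping in the second step: making precise that the $\lambda$-derivative of the indicial symbol of a b-differential operator of order one is of order $\le 0$ in the fibre variable $\eta$, uniformly, so that the difference $I(\mathbf{X}_b,\lambda)-I(\mathbf{X}_b,\lambda')$ is genuinely $\mathsf{H}^{k_b}_0$-bounded rather than merely of order one again. This is where one has to use that $\mathbf X_b$ is \emph{differential} (so its full symbol is polynomial in $(\eta,\lambda)$ of total degree one) together with the anisotropic-symbol stability of Definition-Proposition~\ref{defprop:free-b-symbols}; once that is nailed down, everything else is Fourier inversion and the density argument. A harmless point to record along the way is that $Q_b$, being in $\Psi^{-\infty}_{b,C}$, contributes a globally bounded operator on each $\mathsf H^{k_b}_\lambda$ and so never affects the domain — this is why one may invoke Lemma~\ref{lemma:Inversion-up-to-smoothing-indicial-operator} for $X_b$ itself.
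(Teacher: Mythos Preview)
Your proposal is correct and lands on the same two ingredients the paper uses: (i) the difference $I(\mathbf{X}_b,\lambda)-I(\mathbf{X}_b,\lambda')$ is an order-zero operator, hence bounded on $\mathsf{H}^{k_b}_0$, so the maximal domain is $\lambda$-independent; and (ii) uniqueness of the closed extension via \cite[Lemma~A.1]{Faure-Sjostrand-10}. The paper's write-up is simply more direct: it reduces immediately to $\lambda=0$ by (i), then conjugates $I(\mathbf{X}_b,0)$ by $I(\Op^b(e^{-\mathscr{r}G_b}\langle\xi\rangle^{-N})_C,0)$ to obtain a first-order pseudodifferential operator on $L^2(\FibreL)$, and applies the Faure--Sj\"ostrand lemma there. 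Your first step --- going through Lemma~\ref{lemma:Inversion-up-to-smoothing-indicial-operator} on $\mathcal{H}^{k_b}_{b,\rho}$ and then ``passing fibrewise'' via the $A\leftrightarrow I(A,\cdot)$ correspondence --- is an unnecessary detour; the direct-integral passage from a closed realization on $\R\times\FibreL$ to closed realizations at individual $\lambda$ is not automatic, and in any case you do not actually use it, since you then invoke the density/graph-closure argument on $\FibreM$ itself. Finally, the ``main obstacle'' you flag is not one: you do not need $\mathbf{X}_b$ to be differential for (i), since $\sigma_b\in S^1_b$ already gives $\partial_\lambda\sigma_b\in S^0_b$, and this propagates through the indicial-symbol formula~\eqref{eq:expression-sigma-b-lambda} to show $\sigma_{b,\lambda}-\sigma_{b,\lambda'}\in S^0$ in $\eta$.
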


\begin{proof}
Since $\mathbf{X}_b$ is of order $1$, $I(\mathbf{X}_b, \lambda)$ and $I(\mathbf{X}_b,\lambda')$ differ by an order $0$ operator, which acts boundedly on each $\mathsf{H}^{k_b}_\lambda(\FibreL)$. So it suffices to check the case $\lambda = 0$. The operator $I(\mathbf{X}_b,0):C^\infty(\FibreL)\subset \mathsf{H}^{k_b}_0(\FibreL)\to \mathsf{H}^{k_b}_0(\FibreL)$
 is unitarily equivalent to the operator 
\[
\begin{split}
  W = I(\Op^b(e^{-\escapeparam G_b}\langle \xi \rangle^{-N})_C, 0) ^{-1} I(\mathbf{X}_b, 0) I(&\Op^b(e^{-\escapeparam G_b}\langle \xi \rangle^{-N})_C, 0): \\
  	& C^\infty(\FibreL)
  \subset L^2(\FibreL) \to L^2(\FibreL)
\end{split}
\]
 and $W$ is a PDO of order one in $\FibreL$. Now the uniqueness of the closed extensions
 follows from the proof of \cite[Lemma A.1]{Faure-Sjostrand-10}
\end{proof}

As a consequence, the family $I(\mathbf{X}_b, \lambda)$ is a type (A) family, so that we can apply the results from \cite{Kato-80}.

\subsection{Fredholm Indicial families}

We now come back to admissible triples and will prove that their indicial families are Fredholm
\begin{lemma}\label{lemma:Indicial-invertible-high-lambda}
Assume that $\Re(s) > 1 + C_\delta + C (h^{-1}|\Re \lambda| + |N|) - \escapeparam $ and $|\Im s|\leq h^{-1/2}$. Then $I(\mathbf{X}_b - Q_b - hs, \lambda)$ is invertible with norm $\mathcal{O}(1/h)$ on $\mathsf{H}^{\escapeparam m_b + N}_\lambda$, uniformly in $\lambda$ .

Additionally, if either 
\begin{equation}\label{eq:Indicial-invertibility}
|\Im s|\leq h^{-1/2},\ \text{and}\ \Re(s) > 1 + C_\delta + C(h^{-1}|\Re \lambda| + |N|) - \escapeparam ,\ \text{and}\ |\Im \lambda|>4\delta',
\end{equation}
or
\begin{equation*}
\Re(s) > C(1 + \escapeparam  + h^{-1}|\Re \lambda| + |N|),
\end{equation*} 
then $I(\mathbf{X}_b - hs, \lambda)$ is also invertible with norm $\mathcal{O}(1/h)$ on $\mathsf{H}^{\escapeparam m_b + N}_\lambda$, uniformly in $\lambda$.
\end{lemma}

\begin{proof}
We start with the invertibility of $I(\mathbf X_b-Q_b-hs,\lambda)$: by Lemma \ref{lemma:Inversion-up-to-smoothing-indicial-operator} we deduce that $(\mathbf{X}_b-Q_b-hs)^{-1}$ is a well defined convolution operator for $s$ in the announced domain. It furthermore fulfills all requirements of Definition~\ref{def:extension-I(A)} and thus $I((\mathbf{X}_b-Q_b-hs)^{-1} , \lambda)$ is well defined. By the multiplicativity of $I(\cdot,\lambda)$ we conclude $I((\mathbf{X}_b-Q_b-hs), \lambda)^{-1} =I((\mathbf{X}_b-Q_b-hs)^{-1} , \lambda)$ and Lemma~\ref{lemma:equivalence-boundedness} implies that it is $\mathcal{O}(1/h)$ uniformly in $\lambda$.

Now, we turn to the case of $I(\mathbf{X}_b - hs, \lambda)$, in the region that $I(\mathbf{X}_b-Q_b-hs,\lambda)$ is invertible. First we study $I(Q_b,\lambda)$. Since $Q_b$ is microsupported for $|\xi|<3\delta'$, we can use Lemma \ref{lemma:Indicial-symbol} and Equation \eqref{eq:expression-sigma-b-lambda} to deduce that when $|\Im \lambda| > 4\delta'$, $I(Q_b, \lambda) = \mathcal{O}(h^\infty)$ in $\Psi^{-\infty}$ uniformly in $\Im\lambda$ and locally uniformly in $\Re \lambda$. This implies that $I(\mathbf{X}_b-hs,\lambda)$ is invertible because
\begin{equation}\label{eq:Indicial-op-in-Fredholm-form}
\begin{split}
I(\mathbf{X}_b-hs,\lambda&)I(\mathbf{X}_b-Q_b-hs,\lambda)^{-1} = \\
	&\mathbb{1}  + I(Q_b,\lambda)I(\mathbf{X}_b-Q_b -h s,\lambda)^{-1}= \mathbb{1} + \mathcal{O}(h^\infty).
\end{split}
\end{equation}
(the remainder being bounded on the relevant spaces). 

Finally, when $\Re(s) > C( 1 + \escapeparam  + h^{-1}|\Re \lambda| + |N|)$, recall formula \eqref{eq:conjugation-action-real-part-Xb-Qb} (removing the $Q_b$ part). We deduce that the sharp G\r{a}rding inequality applies to show that $I(\mathbf{X}-hs,\lambda)$ is invertible with norm $\mathcal{O}(1/h)$ uniformly in $\lambda$, provided $C$ is large enough (as in the proof of Lemma \ref{lem:invertible-on-the-right}).
\end{proof}

Now we get to the aim of this section: Recall from the Definition~\ref{def:free-admissible-triple} of 
$\mathbf X_b$ and Definition\ref{def:indicial_family} that $I(\mathbf X_b-hs, h\lambda) = h(P_\lambda-s)$
where $P_\lambda$ is an $h$-independent holomorphic family of differential operators on $\FibreL$. 
By Lemma~\ref{lemma:Indicial-invertible-high-lambda} we know that $I(\mathbf{X}_b-hs,h\lambda)^{-1} : L^2(\FibreL)\to L^2(\FibreL)$ is well defined and holomorphic on $\{\Re(s)>C(1+|\Re \lambda|)\}\subset \C^2$:
\begin{proposition}\label{prop:meromorphic indicial_resolvent}
 We have meromorphic extension of $I(\mathbf{X}_b-hs,h\lambda)^{-1}: L^2(\FibreL)\to L^2(\FibreL)$ to $\C^2$ as operators $I(\mathbf{X}_b-hs,h\lambda)^{-1}:C^\infty(\FibreL)\to\mathcal D'(\FibreL)$
\end{proposition}

\begin{proof}
All the work has already been done in some sense, since Formula \eqref{eq:Indicial-op-in-Fredholm-form} shows that up to an invertible operator, $I(\mathbf{X}_b-hs,\lambda)$ can be written as $\mathbb{1} + K(\lambda,s)$, where $K$ is a holomorphic family of compact operators (recall that $I(Q_b,\lambda) \in \Psi^{-\infty}(\FibreM)$). The statement then follows from analytic Fredholm theory.
\end{proof}

\subsection{Effective continuation}

In this last subsection of Section~\ref{sec:continuation-indicial-resolvent}, we want to establish a meromorphic continuation of $(\mathbf X_b-hs)^{-1}$.

Before going on with the proof, let us come back to the convolution operator on the real line $T_f: g \mapsto f\ast g$, with $f\in\mathcal{D}'(\R)$ compactly supported. In the language above $I(T_f,h\lambda) = \hat f(-i\lambda)$. Since $f$ is compactly supported, $\hat{f}$ is an entire function, and acts by multiplication on the whole of $\C$. Given $s\in \C$, the function $(\hat{f}-s)^{-1}$ is a meromorphic function. So one can define for $g\in C^\infty_c(\R)$ and $\rho_0\in\R$,
\[
 R_f(\rho_0,s)g(x) := \frac{1}{2\pi i} \int_{\Re \lambda = \rho_0} e^{\lambda x} \frac{\hat{g}(-i\lambda)}{\hat{f}(-i\lambda) - s }d\lambda.
\]
Note that in the general notation from Definition-Proposition~\ref{defprop:A(I)} we can identify after setting $h=1$,  $R_f(\rho_0,s) = A((I(T_f,\lambda) - s)^{-1}, \rho)$. One finds that $(T_f -s)R_f(0, s) = \mathbb{1}$ when $s$ is not in the closure of $\hat{f}(\R)$. By Cauchy's theorem, $R_f(\rho_0,s) = R_f(\rho_1, s)$ when $\hat{f}(-i\lambda)$ does not take the value $s$ in the region $\Re \lambda \in [ \rho_0, \rho_1]$. Now, consider $\lambda_1\in \C$ such that $\hat{f}(-i\lambda_1)= s$, $\hat{f}'(-i\lambda_1)\neq 0$, and $\hat{f}(-i\cdot)$ does not take the value $s$ another time in a region $\Re \lambda \in ]\Re \lambda_1 - \epsilon, \Re\lambda_1 + \epsilon[$ for some $\epsilon>0$. Another application of Cauchy's theorem gives
\begin{equation}\label{eq:contour-deformation-resolvent-real-line-convolution}
(R_f(\Re(\lambda_1) + \epsilon, s) - R_f(\Re(\lambda_1) - \epsilon, s))g(x) = -i e^{\lambda_1 x}\frac{\hat{g}(-i\lambda_1)}{\hat{f}'(-i\lambda_1)}.
\end{equation}

Using this argument, one can hope to obtain a meromorphic continuation of the resolvent of a translation invariant operator $\mathbf X_b$ from the meromorphicity of the resolvent of its indicial family $I(\mathbf X_b,\lambda)$. This is done by replacing ``multiplication'' by ``action in the $\zeta$ variable''. This heuristics is at the core of Melrose's b-calculus and will be pursued here. As one can expect, just as it is crucial to follow the solutions of $\hat{f}(i\lambda) =s$ for the convolution by $f$, we have to follow the $(\lambda,s)$'s such that $I(\mathbf{X}_b - hs, h\lambda)$ is not invertible. 
\begin{definition}\label{def:roots-are-affine}
Given an admissible triple $\mathbf{X}_b$, $G_b$, $Q_b$, let us consider the meromorphically continued family of indical operators $I(\mathbf{X}_b-hs,h\lambda)^{-1}$ from Proposition~\ref{prop:meromorphic indicial_resolvent}. 
\begin{enumerate}
 \item For fixed $s\in\C$, the set of $\lambda\in\C$ such that $I(\mathbf{X}_b-hs,h\lambda)^{-1}$ is singular is the set of ($s$-)\emph{indicial roots} of $\mathbf{X}_b$. It will be denoted by $\specb(s)$ and by construction it is independent of $h$.
\item If there are $a_k\in \R \setminus \{0\}$ $|a_k|\leq C$ uniformly in $k$, $b_k\in\C$ such that $\specb(s) = \{a_ks+b_k\}$ then we say that the \emph{roots are affine}.
\item For affine roots say that a root $\lambda_k(s) = a_k s + b_k$ is positive if $a_k>0$ (resp. negative if $a_k<0$) and we denote the set of positive/negative roots by $\specb^\pm(s)$.
\item For any $-\infty\leq\rho<\rho'\leq\infty$ we define 
\[
\specb^{(\pm)}(s,\rho,\rho'):= \{ \lambda\in\specb^{(\pm)}(s)\ |\ \rho < \Re \lambda < \rho' \}.
\]
In particular, we call elements of $\specb^{+}(s,-\infty,0)$ (resp. $\specb^{-}(s,0,+\infty)$) the positive (resp. negative) visible roots.
\end{enumerate}
\end{definition}

By the analytic Fredholm theorem, the set 
\[
\mathfrak{C}:=\{(\lambda,s)\ |\ I(\mathbf{X}_b-hs,h\lambda) \text{ is not invertible}\},
\]
is a complex analytic submanifold of $\C^2$, possibly with algebraic singularities --- corresponding to intersection of indicial roots. 
The set $\specb(s)$ is the intersection of $\mathfrak{C}$ with $\{(\lambda,s)\ |\ \lambda\in\C\}$. 
From Proposition \ref{prop:meromorphic indicial_resolvent}, we deduce that the set of roots depends neither on the choice of $Q_b$ nor on that of $G_b$. 
From now on, we work under the assumption that all indicial roots are affine. (This implies in particular that there are no algebraic singularities in $\mathfrak{C}$).
\begin{example}\label{exmpl:indicial_roots}
In Section~\ref{sec:explicit-computations} we will be able to explicitly compute the indicial roots for the geodesic flow vector field (and even for admissible lifts in the sense of Definition \ref{def:admissible_vector_bundle}). In the scalar case we get (see Proposition~\ref{prop:indicial_roots}) 
\[
 \specb(s) = \{\pm (s+(d/2 + n)), n\in \mathbf N\}
\]
\end{example}
\begin{figure}
\centering
\def\svgwidth{1\linewidth}
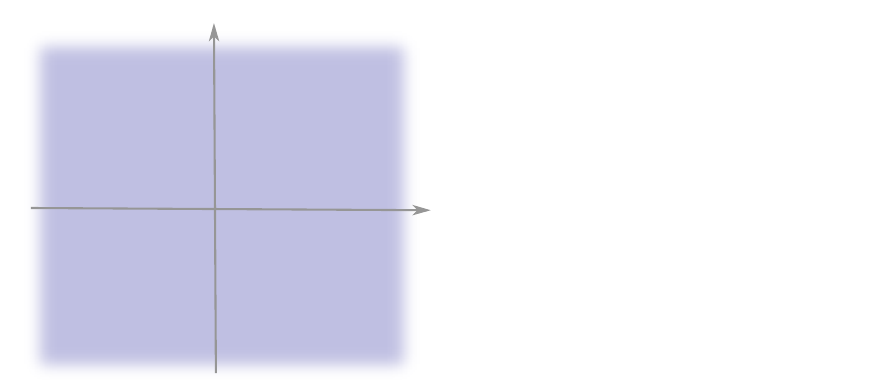
\caption{\label{fig:indicial_roots} Indicial roots for the geodesic flow on a $d+1$ dimensional cusp. On the left the Situation is depicted for $\Re(s)=0$ and on the right for negative $Re(s) = -(d/2+4.5)$. The visible positive and negative roots that have crossed the imaginary axis are marked in red.}
\end{figure}
\begin{conjecture}
The roots of an admissible triple are always affine, and for $\Re s = 0$, no root is on the imaginary axis.
\end{conjecture}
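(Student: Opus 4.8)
The plan is to reduce the invertibility of the indicial family to a regular--singular ODE problem at the fixed points of the fibre dynamics, and to read the roots off the indicial exponents there. Since $\mathbf{X}_b\in\Psi^1_{b,0}(\R\times\FibreL)$ is a first-order $b$-\emph{differential} operator with scalar real principal symbol, it must have the form $\mathbf{X}_b=h\bigl(c_1(\PointM)\partial_r+P\bigr)$, where $c_1$ is a scalar function on $\FibreM$ (the $\partial_r$-coefficient cannot carry $\PointM$-derivatives without raising the order, and it is scalar because the principal symbol is) and $P=\nabla_V+B_0$ is a first-order operator on sections of $\FibreL\to\FibreM$ with scalar principal part $\nabla_V$ and zeroth-order term $B_0\in\End(\FibreL)$. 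Its indicial family is then the polynomial substitution $I(\mathbf{X}_b,h\lambda)=h(\lambda c_1+\nabla_V+B_0)$, so that $I(\mathbf{X}_b-hs,h\lambda)=h(\lambda c_1+\nabla_V+B_0-s)$ and, by Proposition~\ref{prop:indicial-is-fredholm},
\[
\specb(s)=\bigl\{\lambda\in\C\ :\ \lambda c_1+\nabla_V+B_0-s\ \text{is not invertible on}\ \mathsf{H}^{\mathscr{r}m_b+N}_{h\lambda}(\FibreL)\bigr\}.
\]
The whole statement is thus about this analytic set: one must show it is a union of affine lines $\lambda=a_ks+b_k$ with $0\neq a_k\in\R$ uniformly bounded, and that for $\Re s=0$ no such line meets $i\R$ --- equivalently, once affineness is established, that $\Re b_k\neq0$ for every $k$, since $a_k$ is real.

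To locate this spectrum I would run the radial-point analysis for the indicial operator, which acts on $\FibreM$ alone. By propagation of singularities its resonant sections concentrate, microlocally, at the radial points of the cotangent lift of the flow of $V$ on $\FibreM$ --- for the geodesic flow these sit over the two poles $\mathcal N,\mathcal S$ of $\mathbb{S}^d$, where the $\zeta$-dynamics is a hyperbolic gradient flow. At such a radial fixed point $p$ of $V$, a normal-form change of variables straightening $V$ turns the eigenvalue equation for $\lambda c_1+\nabla_V+B_0-s$ into a regular--singular first-order ODE system transverse to $p$, whose indicial exponents are the eigenvalues of $s-\lambda c_1(p)-B_0(p)$. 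Requiring the formal solution $e^{\lambda r}v$ to lie in $\mathsf{H}^{\mathscr{r}m_b+N}_{h\lambda}$ forces one of these exponents to be a non-negative integer (the "smooth side" at the fixed point, the "rough side" being automatically admissible once $\mathscr{r}$ is large), which gives $\lambda=c_1(p)^{-1}(s-\mu-n)$ with $n\geq0$ and $\mu$ an eigenvalue of $B_0(p)$: affine in $s$. The coefficient $a_k=c_1(p)^{-1}$ is bounded because the admissibility hypothesis "$|p_b|\leq\delta'|\xi|\Rightarrow\{p_b,G_b\}>1$" cannot hold at a radial point (where $X_\Phi G_b$ vanishes) unless $|c_1(p)|>\delta'$, so $|a_k|<1/\delta'$. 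Ranging over the finitely many radial fixed points $p$ recovers, in the scalar geodesic-flow model, exactly $\specb(s)=\{s-\tfrac d2-n\}_{n\geq0}\cup\{-s-\tfrac d2-n\}_{n\geq0}$, and for admissible bundles the same picture with $B_0(p)=\tfrac d2 c_1(p)+A_\ell$; Section~\ref{sec:explicit-computations} carries out precisely this bookkeeping and checks the hypotheses case by case.

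For the second assertion I would use skew-adjointness together with the exponent computation. Built into the passage from $X$ to the $b$-operator $\mathbf{X}_b$ is a conjugation making $\mathbf{X}_b$ skew-adjoint on $L^2(\R\times\FibreL,\,dr\,d\PointM)$; hence for $\lambda=i\mu$ with $\mu\in\R$ and $s=i\tau$ with $\tau\in\R$ the pencil $h(i\mu c_1+\nabla_V+B_0-s)$ is again skew-adjoint on $L^2(\FibreL)$, so its $L^2$-spectrum lies on $i\R$. But $0$ is never an $L^2$-\emph{eigenvalue}, because the indicial exponent of any solution at a radial fixed point has nonzero real part --- for the geodesic flow, $v\sim\varphi^{\,i(\tau-\mu)-d/2}$ near $\mathcal N$, which is never square-integrable. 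The whole role of the anisotropic space $\mathsf{H}^{k_b}_\lambda$ is to remove the continuous $L^2$-spectrum and leave only the discrete indicial roots, so it remains to check that for $\Re s=0$ none of the roots $a_ks+b_k$ lies on $i\R$; since $a_k\in\R$ this is $\Re b_k\neq0$, which in the scalar model reads $\Re(-\tfrac d2-n)=-\tfrac d2-n<0$ and in general should follow by pushing the radial-fixed-point exponents strictly off $i\R$ using the skew part of $\nabla_V+B_0$.

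The hard part is the passage from this local model at the radial fixed points to the \emph{exact} global spectrum for an \emph{arbitrary} admissible triple over an abstract compact $\FibreM$. Without the homogeneous-space structure of the cusp there is no representation-theoretic identity guaranteeing that $\specb(s)$ consists of \emph{only} the roots produced near the radial points, nor that the Fredholm set $\mathfrak{C}$ has no "non-affine" branch away from them (for instance coming from a non-radial invariant set of the fibre dynamics); and the positivity $\Re b_k\neq0$ is, in that generality, a disguised absence-of-embedded-resonance statement, which even for the global flow is not known in variable curvature (cf.\ the discussion of mixing in the introduction). This is why we can only verify the admissibility hypotheses by the explicit computations of Section~\ref{sec:explicit-computations} and must leave the general statement as a conjecture.
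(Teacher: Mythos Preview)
This statement is a \emph{conjecture} in the paper and is not proved there; the paper only verifies the conclusion for the specific admissible triples arising from admissible lifts of the geodesic flow, by the explicit computations of Section~\ref{sec:explicit-computations}. So there is no ``paper's own proof'' to compare against, and your proposal is, as you yourself say in the last paragraph, a heuristic motivation rather than a proof.

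That said, your heuristic is quite close to the mechanism that makes the conjecture true in the computed case: the indicial family is a first-order operator on $\FibreM$ whose resonances are pinned down by the regular-singular behaviour at the fixed points of the fibre vector field, and the indicial exponents there are affine in $s$. This is exactly what Section~\ref{sec:explicit-computations} does for $\FibreM=\Ss^d$ with the gradient flow $\Xgr$, yielding $\lambda=\pm h(s+d/2+n)$.

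Two places where your sketch overreaches as a would-be proof of the general conjecture:
\begin{itemize}
\item You assert that ``built into the passage from $X$ to $\mathbf{X}_b$ is a conjugation making $\mathbf{X}_b$ skew-adjoint on $L^2$''. That is a feature of the geodesic-flow example, not of the abstract Definition~\ref{def:free-admissible-triple}: an admissible triple only requires $i\mathbf{X}_b$ to have scalar real \emph{principal} symbol, which says nothing about the subprincipal term $B_0$. Without skew-adjointness your argument for ``no root on $i\R$ when $\Re s=0$'' collapses (and indeed already with a potential $A_\ell$ as in Example~\ref{exmpl:potential} one can shift the roots).
\item The local-to-global step --- that \emph{all} indicial roots come from the radial fixed points of $V$ and none from elsewhere --- is the genuine obstruction, and you correctly identify it. For a general compact $\FibreM$ the fibre dynamics need not be Morse--Smale, so the Dang--Rivi\`ere type analysis you are invoking does not apply without further hypotheses.
\end{itemize}
Your final paragraph is the honest conclusion: the statement remains a conjecture, verified in the paper only for the concrete geodesic-flow bundles.
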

The next theorem is the technical heart of our article. In order to formulate it we introduce
\begin{equation}\label{eq:def-rho-max}
\begin{split}
\rho_{\max}(s):= &\max\{0\}\cup\\
	&\left\{ |\Re \lambda|\ |\ \lambda \in \specb^{+}(s,-\infty, 0)\text{ or }\lambda\in\specb^{-}(s,0,\infty) \right\}
\end{split}
\end{equation}
which encodes the maximal real part of the visible indicial roots. Note that under the assumption that the roots are affine we deduce that 
$\rho_{\max}(s)$ is continuous and depends only on $\Re(s)$. Furthermore, $\{\tau\ |\ \rho(\tau)\neq 0\}$ is a semi-bounded interval $]-\infty,\tau_0[$ on which $\rho_{\max}$ it is strictly decreasing. 

\begin{theorem}\label{thm:Continuation-Indicial-Resolvent}
Assume that for some h-differential operator $\mathbf X_b$ there exist $Q_b$ and $G_b$ such that $(\mathbf{X}_b, G_b, Q_b)$ form an admissible triple with affine roots. Then the inverse $(\mathbf{X}_b-hs)^{-1}$, defined as a bounded operator on $L^2(\R\times\FibreL)$ for $\Re(s) > C$ for some constant $C>0$, has a meromorphic extension $\IndicialRes^{\mathbf{X}_b}(s)$ to $\C$, as an operator mapping $C^\infty_c(\R\times\FibreL)$ to $\mathcal{D}'(\R\times\FibreL)$. 

Additionally, given any $\tau,N\in \R$, $\escapeparam > 1 + C_\delta +C(|\rho_{\max}(\tau)| + |N|)- \tau$ (with the constants of Lemma \ref{lemma:Inversion-up-to-smoothing-indicial-operator})  then $\IndicialRes^{\mathbf{X}_b}(s)$ is a meromorphic family of bounded operators 
\begin{equation}\label{eq:bounded_indicial_resolvent}
\IndicialRes^{\mathbf{X}_b}(s):e^{-\rho_{\max}(\tau) \langle r \rangle}\mathcal{H}^{\escapeparam  m_b + N}_{b,0} \to e^{\rho_{\max} (\tau) \langle r \rangle}\mathcal{H}^{\escapeparam  m_b + N}_{b,0}.
\end{equation}
on the domain $\Re(s)>\tau$  and $|\Im s |\leq h^{-1/2}$. At the eventual poles, the order is finite, and the rank of the Laurent expansion is also finite.
\end{theorem}

The remainder of this section is devoted to the proof of Theorem \ref{thm:Continuation-Indicial-Resolvent}. We start with some observations. As a direct consequence of Lemma~\ref{lemma:Indicial-invertible-high-lambda}, we get:
\begin{lemma}
 Let $\specb^\pm(s)=\{a_{k,\pm} s + b_{k,\pm}\}$, then there is a constant $C\in \R$ such that $\pm \Re(b_{k,\pm})>C$. Furthermore for all $R>0$:
 \[
  \sup\{|\Im b_k|: |\Re(b_k)|<R\} <\infty
 \]
\end{lemma}
\begin{proof}
 For the first statement we recall from Lemma~\ref{lemma:Indicial-invertible-high-lambda} that there is a constant $C$ such that for $\Re(s)>C$ there are no indicial roots with $\Re(\lambda)=0$. Consequently
 all positive indicial roots satisfy $\Re(a_k s + b_k)>0$ if $\Re(s)>C$. By the assumption that $a_k$ are uniformly bounded the assertion follows for positive roots and is completely analogous for negative ones. 
 
The second statement follows directly from considering \eqref{eq:Indicial-invertibility} in the case $s=0$.
\end{proof}
We call $\rho\in\R$ s-regular if $\specb(s) \cap (\rho+i\R) = \emptyset$. The above bounds on $a_k$ and $b_k$ imply that for any $s\in \C$, the set of $s$-regular $\rho\in\R$ 
is open and dense. Furthermore for a $s$-regular $\rho$, Lemma~ \ref{lemma:Indicial-invertible-high-lambda} implies that 
\[
I(\mathbf{X}_b-hs,h\rho+i w)^{-1}:\mathsf{H}^{\escapeparam m_b + N}_{h\rho+i w} \to \mathsf{H}^{\escapeparam m_b + N}_{h\rho+i w} 
\]
is uniformly bounded in $w\in\R$ with norm $O(1/h)$ provided that 
$\escapeparam>1+C_\delta+C(\rho+|N|)-\Re(s)$. Thus we 
can define
\[
\IndicialRes^{\mathbf{X}_b}_{\rho}(s) := A( I(\mathbf{X}_b-hs,\lambda)^{-1},h\rho): 
\mathcal H^{\escapeparam m_b + N}_{b,\rho}\to \mathcal H^{\escapeparam m_b + N}_{b,\rho}
\]
which is again bounded with norm $O(1/h)$. Indeed one directly checks that 
\[
(\mathbf X_b-hs)\IndicialRes^{\mathbf{X}_b}_{\rho}(s) = 
\IndicialRes^{\mathbf{X}_b}_{\rho}(s)(\mathbf X_b-hs) = \mathbb{1}.
\]
However, $\IndicialRes^{\mathbf{X}_b}_{\rho}(s)$ depends strongly on the choice of $\rho$ due to the fact that $I(\mathbf{X}_b-hs,\lambda)^{-1}$ has singularities, i.e. that there exist indicial roots. In order to understand the meromorphic continuation one has examine what happens if indicial roots cross the integration contours. 

In order to shorten the notation in the sequel it is convenient to define 
\[
F:(s,\lambda)\mapsto  h e^{\lambda (r-r')}I(\mathbf{X}_b-hs, h\lambda)^{-1},
\]
seen as a meromorphic function on $\C^2$ taking values in convolution 
operators on $\R\times\FibreL$. The $h$ factor is actually chosen such 
that it becomes $h$-independent and using the Definition of $A(I)$ (Definition-Proposition\ref{defprop:A(I)}) we write
\begin{equation}\label{eq:R_as_F_integral}
 \mathbf R_{\rho}^{\mathbf X_b}(s)=
 \frac{1}{2\pi i h} \int_{\Re(\lambda)=\rho} F(s,\lambda)d\lambda.
\end{equation}
When freezing the $s$ variable, the 
poles of $F(s,\cdot)$ are precisely $\specb(s)$. We can integrate $F$ over a small closed 
curve $\gamma$ around a pole $\lambda_0$, enclosing only $\lambda_0$, and 
obtain its residue $\Res(F(s,\cdot),\lambda_0)$ in the $\lambda$ variable.
With this notation, we can state an equivalent to 
equation~\eqref{eq:contour-deformation-resolvent-real-line-convolution}:
\begin{lemma}\label{lemma:contour-deformation-indicial-resolvent}
Let $-\infty<\rho<\rho'<\infty$ be $s$-regular for some $s\in \C$. 
Then we have $\specb(s,\rho,\rho')$ is finite and
\[
\IndicialRes^{\mathbf{X}_b}_{\rho'}(s) - \IndicialRes^{\mathbf{X}_b}_{\rho}(s) = h^{-1}\sum_{\lambda\in\specb(s,\rho,\rho')} \Res(F(s,\cdot),\lambda)
\]
\end{lemma}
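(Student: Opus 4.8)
The plan is to derive this from the contour-shifting argument sketched for convolution operators on $\R$, using that $\IndicialRes^{\mathbf{X}_b}_{h\rho}(s)$ and $\IndicialRes^{\mathbf{X}_b}_{h\rho'}(s)$ are both given by the integral representation \eqref{eq:R_as_F_integral}, over contours $\rho+i\R$ and $\rho'+i\R$ respectively. First I would observe that $\specb(s,\rho,\rho')$ is finite: by Definition~\ref{def:roots-are-affine} the roots are affine, $\lambda_k(s) = a_k s + b_k$ with $|a_k|\le C$ and the uniform bounds on $\Re b_k$, $\Im b_k$ coming from Lemma~\ref{lemma:Indicial-invertible-high-lambda}, so only finitely many roots lie in the vertical strip $\rho < \Re\lambda < \rho'$. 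Next I would justify that the difference of the two contour integrals picks up exactly the enclosed residues: since $F(s,\cdot)$ is meromorphic in $\lambda$ with poles precisely at $\specb(s)$ and at all other points $I(\mathbf{X}_b-hs,h\lambda)^{-1}$ is uniformly bounded (with $O(1/h)$ norm on the relevant anisotropic spaces, by Lemma~\ref{lemma:Indicial-invertible-high-lambda}) — uniformly in $\Im\lambda$ for fixed $\Re\lambda$ — we may apply the residue theorem to the closed rectangular contour with vertical sides $\rho+i\R$, $\rho'+i\R$ and horizontal sides at $\Im\lambda = \pm T$. The horizontal pieces contribute $o(1)$ as $T\to\infty$ because of the uniform operator bounds together with the rapid decay of $\hat g$ (or equivalently the fact that $F(s,\lambda)$ applied to a fixed $C^\infty_c$ section decays in $\Im\lambda$), so the limit gives
\[
\frac{1}{2\pi h}\int_{\rho'+i\R} F(s,\lambda)\,d\lambda - \frac{1}{2\pi h}\int_{\rho+i\R} F(s,\lambda)\,d\lambda = h^{-1}\sum_{\lambda\in\specb(s,\rho,\rho')}\Res(F(s,\cdot),\lambda),
\]
which is the claimed identity after recalling $\IndicialRes^{\mathbf{X}_b}_{h\rho}(s) = \mathbf R^{\mathbf X_b}_{h\rho}(s)$ and the normalization factor $2\pi h$ in \eqref{eq:R_as_F_integral}.

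The key points to make rigorous, in order, are: (1) finiteness of $\specb(s,\rho,\rho')$, immediate from affineness and the a priori bounds; (2) that the horizontal segments of the rectangle contribute nothing in the limit — here I would test against a fixed $f\in C^\infty_c(\R\times\FibreL)$, write $F(s,\lambda)f$ in terms of $I(\mathbf{X}_b-hs,h\lambda)^{-1}$ acting on $\hat f(\cdot,h\lambda)$, and use that $\hat f$ is Schwartz in the $\R$-Fourier variable while the inverse is polynomially bounded in $\Im\lambda$ on the closed $s$-regular strip (by the uniform $O(1/h)$ bound, which in fact gives a uniform, not just polynomial, bound); (3) the residue theorem itself, applied in the Banach-space-valued holomorphic setting (standard, since $F(s,\cdot)$ is a meromorphic family of bounded operators between the fixed spaces $e^{-(\rho_{\max}+\epsilon)\langle r\rangle}\mathcal{H}^{\mathscr{r}m_b+N}_{b,0}$ and its weighted counterpart, or more elementarily as kernels).

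The main obstacle is step (2): controlling the decay of the horizontal contour pieces uniformly. This requires knowing that along $\Im\lambda \to \pm\infty$ with $\Re\lambda$ fixed and $s$-regular, $I(\mathbf{X}_b-hs,\lambda)^{-1}$ does not blow up. This is exactly furnished by the last assertion of Lemma~\ref{lemma:Indicial-invertible-high-lambda}: for $|\Im\lambda|$ large (beyond $4\delta'$) and $s$ in the stated half-plane, $I(\mathbf{X}_b-hs,\lambda)$ is invertible with norm $O(1/h)$ \emph{uniformly in $\lambda$}, so the tails of the vertical contours are under control and the horizontal connecting segments — being of bounded length in $\Re\lambda$ and traversed at height $\pm T$ with $T\to\infty$ — vanish once paired against the Schwartz decay of the data. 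One mild technical care is that $\rho, \rho'$ must be chosen $s$-regular so that the vertical contours avoid all of $\specb(s)$, which is possible since the $s$-regular set is open and dense; the finitely many roots strictly between them are then the only poles enclosed, and each contributes its residue $\Res(F(s,\cdot),\lambda)$, giving the stated formula.
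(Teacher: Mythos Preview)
Your proposal is correct and follows essentially the same approach as the paper: finiteness of $\specb(s,\rho,\rho')$ from the uniform bounds on $a_k,b_k$, and the identity from Cauchy's theorem applied to the integral representation \eqref{eq:R_as_F_integral}. The paper's own proof is in fact just the two-line statement ``That the sets are finite follows from the uniform estimates on $a_k, b_k$. The identity is a consequence of Cauchy's theorem and \eqref{eq:R_as_F_integral}'', so your write-up simply supplies the standard details (vanishing of horizontal contour pieces via the uniform $O(1/h)$ bound from Lemma~\ref{lemma:Indicial-invertible-high-lambda} paired against Schwartz decay of test data) that the paper leaves implicit.
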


\begin{proof}
That the sets are finite follows from the uniform estimates on $a_k, b_k$. The identity is a consequence of Cauchy's theorem and \eqref{eq:R_as_F_integral}.
\end{proof}

The following lemma is crucial in the proof:
\begin{lemma}\label{lemma:perturbation-residues}
Define $B(s,\lambda):=h^{-1}\Res( F(s, \cdot), \lambda)$ which is by definition a convolution operator on $\R\times L$. Consider a parametrized indicial root $\lambda_k(s)=a_k s + b_k$ and  Then the map
\[
s \mapsto B(s,\lambda_k(s))
\]
is a meromorphic function of $s$, and the set of poles is contained in the set of $s$'s such that $\lambda_k(s)$ crosses another root.
\end{lemma}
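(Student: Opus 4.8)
\textbf{Plan for the proof of Lemma~\ref{lemma:perturbation-residues}.}
The plan is to reduce the statement to an abstract fact about residues of a meromorphic family of operators at a moving simple (or finite-order) pole, and to control uniformity in $s$ using the affine-roots structure together with the bounds of Lemma~\ref{lemma:Indicial-invertible-high-lambda}. First I would fix a parametrized root $\lambda_k(s)=a_ks+b_k$ and work locally near a value $s_0\in\C$ at which $\lambda_k(s_0)$ does \emph{not} coincide with any other root of $\specb(s_0)$; by the uniform estimates on $a_k,b_k$ only finitely many roots can come within a fixed distance of $\lambda_k(s_0)$ as $s$ ranges over a bounded neighbourhood of $s_0$, so there is $\epsilon>0$ and a neighbourhood $U\ni s_0$ such that for $s\in U$ the only element of $\specb(s)$ in the closed disc $\overline{D(\lambda_k(s),\epsilon)}$ is $\lambda_k(s)$ itself (this uses $a_k\neq 0$ so that $\lambda_k$ is a genuine affine function, and that distinct affine roots meet only on a discrete set of $s$). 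Then I would write
\[
B(s,\lambda_k(s)) \;=\; \frac{1}{2\pi i h}\oint_{|\lambda-\lambda_k(s)|=\epsilon} F(s,\lambda)\,d\lambda,
\]
and, since the contour can be kept fixed (independent of $s$) on a possibly smaller neighbourhood after re-centering, or simply parametrized smoothly with $s$, differentiability/holomorphy in $s$ of the right-hand side follows from holomorphy of $(s,\lambda)\mapsto F(s,\lambda)$ away from $\mathfrak{C}$ together with dominated convergence / Morera on the compact contour. The boundedness of the resulting operator between the relevant weighted spaces is inherited from the uniform operator bounds on $I(\mathbf{X}_b-hs,h\lambda)^{-1}$ on the contour, which hold by Lemma~\ref{lemma:Indicial-invertible-high-lambda} once $\mathscr r$ is large enough, exactly as in the discussion preceding \eqref{eq:R_as_F_integral}.

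Next I would address the global statement: the function $s\mapsto B(s,\lambda_k(s))$ extends meromorphically to all of $\C$, with poles only where $\lambda_k(s)$ meets another root. Away from such crossing values the local argument above already gives holomorphy, so it remains to analyse a crossing point $s_\ast$, where $\lambda_k(s_\ast)=\lambda_j(s_\ast)$ for one or more $j\neq k$. Near $s_\ast$ I would enlarge the contour to a fixed circle $\gamma$ enclosing all roots that collide at $s_\ast$ (finitely many, again by the uniform bounds) and none others, for $s$ in a punctured neighbourhood of $s_\ast$; then
\[
\frac{1}{2\pi i h}\oint_{\gamma} F(s,\lambda)\,d\lambda \;=\; \sum_{\ell:\ \lambda_\ell(s_\ast)=\lambda_k(s_\ast)} B(s,\lambda_\ell(s))
\]
is holomorphic in the full neighbourhood of $s_\ast$ (the integrand has no pole on $\gamma$ and $F$ is jointly meromorphic with pole set $\mathfrak C$, which meets $\{\lambda\in\gamma\}\times(\text{nbhd of }s_\ast)$ in the empty set). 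Hence each individual term $B(s,\lambda_k(s))$ differs from a holomorphic function by the sum of the other terms, which are holomorphic off the lower-dimensional crossing locus; by the analytic Fredholm description of $\mathfrak C$ as a complex-analytic curve with at worst algebraic singularities, each $B(s,\lambda_\ell(s))$ is meromorphic in $s$ with poles confined to crossing values, and the decomposition identity then forces $B(s,\lambda_k(s))$ to be meromorphic on $\C$ with pole set contained in $\{s:\ \lambda_k(s)\in\specb(s)\setminus\{\lambda_k(s)\}\ \text{with multiplicity}\}$, i.e. exactly the set of $s$ where $\lambda_k(s)$ crosses another root. The finiteness of pole order and of the rank of the Laurent coefficients follows because $F(s,\cdot)$ has, at each fixed $s$, a pole of finite order whose Laurent coefficients are finite-rank operators (this is the content of Proposition~\ref{prop:indicial-is-fredholm}: $I(\mathbf X_b-hs,\lambda)=\mathbb 1+K$ with $K$ compact, so the inverse is meromorphic in $\lambda$ with finite-rank principal parts), and these finiteness properties are stable under the contour-integral manipulations above.

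I expect the main obstacle to be the bookkeeping at crossing points: controlling how the residue $\Res(F(s,\cdot),\lambda_k(s))$ behaves when $\lambda_k(s)-\lambda_j(s)\to 0$, in particular showing the singularity of $s\mapsto B(s,\lambda_k(s))$ there is a genuine pole (finite order) rather than an essential singularity, and that the rank stays finite uniformly. The clean way around this is precisely the ``enlarge the contour to enclose the whole colliding cluster'' trick above: the \emph{cluster} residue $\frac{1}{2\pi i h}\oint_\gamma F(s,\lambda)d\lambda$ is holomorphic through $s_\ast$, so all singular behaviour of the individual summands is mutually cancelling and of the same finite-rank, finite-order type; combined with the affineness of the roots (which guarantees that the crossing locus is discrete and that a collision involves only finitely many roots meeting with bounded multiplicity), this yields meromorphy with the asserted pole set. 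A minor technical point to handle carefully is keeping the integration contour $s$-independent — done by shrinking the neighbourhood of $s_0$ (resp. $s_\ast$) and invoking the Lipschitz/linear dependence of $\lambda_k$ on $s$ — so that differentiation under the integral sign and Morera's theorem apply without fuss.
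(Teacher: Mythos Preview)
The paper's proof is a single sentence: since the roots are affine there is no branching point, and the conclusion is then a direct consequence of Kato's Theorem~1.8 \cite[p.~70]{Kato-80}. That theorem (stated in finite dimensions, but applicable here via the type-(A) structure of Proposition~\ref{prop:constant-domain-order1-operators} and the Fredholm form of Proposition~\ref{prop:indicial-is-fredholm}) says precisely that for a holomorphic family with a holomorphically parametrized eigenvalue, the associated eigenprojection is single-valued and meromorphic, with poles only where that eigenvalue meets another. Your plan is essentially an attempt to reprove this theorem by hand with contour integrals; the holomorphy argument away from crossings is correct and standard.

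The gap is at a crossing $s_\ast$. You correctly note that the integral of $F(s,\cdot)$ over a fixed contour $\gamma$ enclosing the whole colliding cluster is holomorphic through $s_\ast$, so the sum $\sum_\ell B(s,\lambda_\ell(s))$ over the cluster is holomorphic there. But this does \emph{not} force each summand to be meromorphic: a finite sum of functions holomorphic on a punctured disc can extend holomorphically across the puncture while every individual term has an essential singularity. Your sentence ``by the analytic Fredholm description of $\mathfrak C$ \ldots\ each $B(s,\lambda_\ell(s))$ is meromorphic'' is exactly the conclusion to be established, so as written the reasoning is circular. What is missing is the finite-rank reduction that underlies both analytic Fredholm theory and Kato's theorem: the cluster Riesz projection $P(s)=\frac{1}{2\pi i}\oint_\gamma I(\mathbf X_b-hs,h\lambda)^{-1}\,d(h\lambda)$ is a holomorphic family of projections of constant finite rank near $s_\ast$; restricting to $\operatorname{ran}P(s)$ (with a holomorphic frame) reduces the problem to a holomorphic family of matrices, where the resolvent is a ratio of holomorphic functions by Cramer's rule, and the residue along each affine branch $\lambda_\ell(s)$ is then visibly meromorphic in $s$ with poles only at the crossings. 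Inserting this step would complete your argument --- and is exactly what the paper's citation of Kato encapsulates.
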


\begin{proof}
Since we already know that we can parametrize the roots without algebraic singularities --- in the words of Kato, there is no branching point --- this is a direct consequence of Theorem 1.8 in \cite[p.70]{Kato-80}
\end{proof}

Now we can come back to the proof of our theorem.
\begin{proof}[Proof of Theorem \ref{thm:Continuation-Indicial-Resolvent}]
First, we focus on the meromorphic continuation of the Schwartz kernel of the resolvent. Recall from Lemma~\ref{lemma:Indicial-invertible-high-lambda} that there is $C$ such that $I(\mathbf X_b - hs,i\xi)$ is invertible for $\Re(s)>C$
and in this half plane, we define
\[
\IndicialRes^{\mathbf{X}_b}(s) = A( I(\mathbf{X}_b - hs, \lambda)^{-1}, 0).
\]
If $\rho_1<0<\rho_2$ are such that $\{\Re \lambda\in[\rho_1,\rho_2]\}$ does not intersect $\specb(s)$ then we deduce that $\IndicialRes^{\mathbf{X}_b}(s)$ is bounded on all spaces $\mathcal{H}_{b,\rho}^{k_b}$ for $\rho\in [\rho_1,\rho_2]$, given that the weight $k_b$ is large enough. 

We want to construct a meromorphic continuation of $\IndicialRes^{\mathbf X_b}(s)$ to all $\C$ and therefore we have to take care of the indicial roots that cross the contour at $\Re(\lambda)=0$. We define the set of \emph{positive (resp. negative) visible roots at $s$} as
 $\specb^{+}(s,-\infty,0) $ and $\specb^{-}(s,0,\infty)$, respectively (see Figure~\ref{fig:indicial_roots} for the case of the geodesic flow for cusps). 

By the uniform bounds on $a_k, b_k$, we deduce that for any $s\in\C$, there are finitely many visible roots. 

Let $\mathscr{U}$ be the set of $s\in \C$ such that $0$ is $s$-regular, i.e $\specb(s) \cap i\R=\emptyset$. For $s\in\mathscr{U}$, we set
\begin{equation}\label{eq:form-continuation-indicial-resolvent}
\IndicialRes_{\mathscr U}^{\mathbf{X}_b} (s) := \IndicialRes^{\mathbf{X}_b}_0(s) - \sum_{\lambda \in \specb^{+}(s,-\infty,0)} B(s,\lambda) + \sum_{\lambda \in \specb^{-}(s,0,\infty)} B(s,\lambda).
\end{equation}
As $\IndicialRes^{\mathbf X_b}_0(s)$ is holomorphic on any connected component of $\mathscr U$ and as $B(s,\lambda_k(s))$ are meromorphic by Lemma~\ref{lemma:perturbation-residues} this defines a meromorphic family on $\mathscr U$. It remains to prove that we can patch the different connected components of $\mathscr U$ (which are vertical strips because the roots are affine) together:

Therefore take $s_0$ such that $0$ is not $s_0$-regular, we consider $\rho< 0 < \rho'$ small enough such that $\specb(s_0,\rho,\rho') \subset i\R$. Then, for $s$ in a small vertical strip $D$ around $s_0$, $\rho$ and $\rho'$ are still $s$-regular. For $s\in D$ we define
\begin{eqnarray*}
 \IndicialRes_D^{\mathbf X_b}(s) &:= &\IndicialRes^{\mathbf X_b}_{\rho}(s) + \sum_{\lambda\in\specb^-(s,\rho,\infty)}B(s,\lambda)  - \sum_{\lambda \in \specb^{+}(s,-\infty,\rho)}B(s,\lambda)\\
 &=&\IndicialRes^{\mathbf X_b}_{\rho'}(s) - \sum_{\lambda\in\specb^{+}(s,-\infty,\rho')}B(s,\lambda)+  \sum_{\lambda \in \specb^{-}(s,\rho',\infty)}B(s,\lambda)
\end{eqnarray*}
The equality between the two expressions follows from Lemma~\ref{lemma:contour-deformation-indicial-resolvent}. By construction and by Lemma~\ref{lemma:perturbation-residues} $\IndicialRes_D^{\mathbf X_b}(s)$ defines a meromorphic operator on the strip $D$. It only remains to check that on $\mathscr U\cap D$ both definitions of $\IndicialRes_{\mathscr U}^{\mathbf X_b}(s)$ and $\IndicialRes_{\mathscr D}^{\mathbf X_b}(s)$ coincide. But this is again a direct consequence of Lemma~\ref{lemma:contour-deformation-indicial-resolvent}. We can thus patch the definitions to a globally meromorphic operator which we denote by $\IndicialRes^{\mathbf X_b}(s)$.

Now we will determine on which functional spaces this meromorphic continuation acts.  Let us focus on the structure of the residues $B$ of $F$. 
If we assume that $\lambda_0$ is an $s$-indicial root, and that for $\epsilon>0$, there are no other indicial roots in $\{ \lambda,\ |\lambda-\lambda_0|\leq \epsilon\}$. In that case, 
\[
B(s,\lambda_0) = \frac{1}{2i\pi h}\int_{|\lambda-\lambda_0|=\epsilon} F(s,\lambda)d\lambda.
\]
We will need the lemma:
\begin{lemma}
For $\epsilon>0$ and $\rho \in \R$, we have the equality of spaces
\[
e^{\rho r +\epsilon \langle r \rangle} \Op^b(e^{-\escapeparam G}) H^N(\R\times\FibreL) =  \Op^b(e^{-\escapeparam G}) e^{\rho r +\epsilon \langle r \rangle} H^N(\R\times\FibreL).
\]
The corresponding norms are equivalent with $\mathcal{O}(1)$ constants as $h\to 0$.
\end{lemma}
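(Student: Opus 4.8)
The plan is to reduce the asserted equality of spaces, together with the $\mathcal{O}(1)$ control of the equivalence constants, to the invertibility on $H^N$ of a single conjugated operator. Abbreviate $E:=\Op^b(e^{-\mathscr{r}G})$ (it is immaterial whether one takes the precision‑$C$ cutoff $\Op^b(\cdot)_C$ or not, since the two differ by an $\mathcal{O}(h^\infty)$ smoothing operator) and let $w:=e^{\rho r+\epsilon\langle r\rangle}$, viewed as a multiplication operator. An element $wEu$ of the left‑hand space lies in the right‑hand one precisely when $wEu=Ewv$ for some $v\in H^N$, i.e. $v=Cu$ with $C:=w^{-1}E^{-1}wE$, and then its norm in the right‑hand space is $\|Cu\|_{H^N}$ while its norm in the left‑hand space is $\|u\|_{H^N}$. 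Hence it suffices to show that $C\colon H^N(\R\times\FibreL)\to H^N(\R\times\FibreL)$ is bounded and invertible with $\|C^{\pm1}\|_{H^N\to H^N}=\mathcal{O}(1)$ as $h\to0$; the reverse inclusion is then witnessed by $C^{-1}$, and the norm equivalence follows.

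The core of the argument is the conjugation $w^{-1}Ew$. Writing $\varphi:=\rho r+\epsilon\langle r\rangle$, all derivatives of $\varphi$ are bounded (e.g. $|\varphi'|=|\rho+\epsilon r\langle r\rangle^{-1}|\le|\rho|+\epsilon$), so the standard semiclassical conjugation formula applies; this is legitimate here because, the metric $\overline{g}_b$ having bounded geometry, the anisotropic classes $S^{m_b}_b$ are stable under the symbolic calculus (Definition--Proposition~\ref{defprop:free-b-symbols}). It gives $w^{-1}Ew$ as the operator with symbol $(\PointM;\eta,\lambda)\mapsto e^{-\mathscr{r}G_b}(\PointM;\eta,\lambda-ih\varphi'(r))$ up to a remainder which is $\mathcal{O}(h^2)$ and of lower order. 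Since $G_b\in S^{0+}_b$ one has $\partial_\lambda^k e^{-\mathscr{r}G_b}\in S^{m_b-k+}_b$ for every $k\ge1$, so Taylor expanding the $\mathcal{O}(h)$‑small shift of the $\lambda$‑variable yields $w^{-1}Ew=E+hR$, where $R$ is a pseudodifferential operator such that $E^{-1}R$ has order $\le-1$. Consequently $E^{-1}(w^{-1}Ew)=\mathbb{1}+h\widetilde R$ with $\widetilde R:=E^{-1}R$ of order $\le-1$, so $\widetilde R$ is bounded on $H^N$ with $\|\widetilde R\|_{H^N\to H^N}=\mathcal{O}(1)$.

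It remains to conclude:
\[
C=w^{-1}E^{-1}wE=\bigl(w^{-1}Ew\bigr)^{-1}E=\bigl(E(\mathbb{1}+h\widetilde R)\bigr)^{-1}E=(\mathbb{1}+h\widetilde R)^{-1}=\mathbb{1}+\mathcal{O}(h)
\]
on $H^N$. For $h$ small enough $\mathbb{1}+h\widetilde R$ is invertible with $\|(\mathbb{1}+h\widetilde R)^{\pm1}\|_{H^N\to H^N}\le2$, which is precisely the claim, with equivalence constants $1+\mathcal{O}(h)=\mathcal{O}(1)$; the argument is unchanged with any fixed real Sobolev order in place of $N$.

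The only step requiring genuine care is the conjugation identity and the attendant bookkeeping of orders: one must use that $e^{-\mathscr{r}G_b}$, although unbounded in $\xi$, lies in the well‑behaved class $S^{m_b}_b$ attached to $\overline{g}_b$, and --- crucially --- that differentiating the merely logarithmically‑growing weight $G_b$ in the $\lambda$ (i.e. $r$‑cotangent) direction produces a strictly decaying symbol, so that the conjugation error $hR$ is not only $\mathcal{O}(h)$ in size but of lower order, hence genuinely bounded on $H^N$. Everything else is a direct unwinding of the definitions of the weighted spaces.
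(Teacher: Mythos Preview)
Your proof is correct and follows essentially the same route as the paper's: both reduce the claim to showing that the conjugated operators $w^{-1}E^{-1}wE$ and $E^{-1}w^{-1}Ew$ are pseudo-differential with symbol in $1+\mathcal{O}(hS^{-1^+})$, hence bounded on $H^N$ with $\mathcal{O}(1)$ norm. The paper is terser --- it simply notes that the quantization is properly supported, so conjugation by $w$ multiplies the kernel by the bounded smooth factor $w(r')/w(r)$ and one reads off the symbol class directly --- whereas you make the symbolic conjugation and order-bookkeeping explicit; the paper also bounds $C$ and $C^{-1}$ separately rather than inverting one via a Neumann series, but this is cosmetic.
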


\begin{proof}
It suffices to prove that both
\[
e^{-\rho \langle r \rangle} \Op^b(e^{-\escapeparam G})^{-1} e^{\rho \langle r \rangle} \Op^b(e^{-\escapeparam G}),\ \Op^b(e^{-\escapeparam G})^{-1} e^{-\rho \langle r \rangle} \Op^b(e^{-\escapeparam G}) e^{\rho \langle r \rangle} 
\]
are bounded on $L^2(\R \times \FibreL)$. However since the quantization is properly supported, these operators are pseudo-differential with symbols in $1 + \mathcal{O}(h S^{-1^+})$. Hence they give rise to bounded operators on $L^2(\R \times \FibreL)$.
\end{proof}

With $\lambda_0,\epsilon$ as above we deduce
\[
\begin{split}
\| B(s,\lambda_0) & \|_{e^{-2\epsilon \langle r \rangle} \mathcal{H}^{k_b}_{b,\Re \lambda_0} \to e^{2\epsilon \langle r \rangle} \mathcal{H}^{k_b}_{b,\Re \lambda_0} } \leq C_\epsilon/h\sup_{|\lambda- \lambda_0|=\epsilon} \| e^{- \Re\lambda_0 r -2\epsilon \langle r\rangle} \\
& \Op( e^{-\escapeparam G-N\log\langle \xi\rangle})^{-1} F(s,\lambda) \Op( e^{-\escapeparam G-N\log\langle \xi\rangle}) e^{\Re \lambda_0 r -2\epsilon \langle r \rangle} \|_{L^2\to L^2}.
\end{split}
\]
If $W$ is the multiplication by $e^{-2\epsilon\langle r\rangle}$, the operator in the norm is the composition $W S_\lambda W$, so that $S_\lambda$ is a convolution operator whose kernel takes the form
\[
h e^{(\lambda-\Re \lambda_0)(r-r')} I(\Op( e^{-\escapeparam G-N\log\langle \xi\rangle})^{-1} (\mathbf{X}_b - hs) \Op( e^{-\escapeparam G-N\log\langle \xi\rangle}),h\lambda)^{-1}.
\]
Recall that $\Re s > 1 + C_\delta +C(|\rho_{\max}(s)| + |N|)-\escapeparam $ and $|\Im s |\leq h^{-1/2}$, so we can apply Lemma \ref{lemma:Indicial-invertible-high-lambda}. In particular, the indicial operator in the last line is bounded on $L^2$ with norm $C(\epsilon)$. Since the kernel of $S_\lambda$ decomposes as a product, we see directly that it is bounded from $e^{\Re(\lambda-\lambda_0)r - \epsilon\langle r\rangle} L^2$ to $e^{\Re(\lambda-\lambda_0)r + \epsilon\langle r\rangle} L^2$. But since $|\lambda-\lambda_0|=\epsilon$, it is thus bounded from $e^{-2\epsilon\langle r\rangle} L^2$ to $e^{2\epsilon\langle r\rangle} L^2$ uniformly in $\lambda$. Finally, since $W$ maps $L^2$ to $e^{-2\epsilon\langle r\rangle} L^2$ and $e^{2\epsilon\langle r\rangle} L^2$ to $L^2$, we obtain the desired result
\begin{equation}\label{eq:boundedness-residue}
\| B(s,\lambda_0) \|_{e^{-2\epsilon \langle r \rangle} \mathcal{H}^{k_b}_{b,\Re \lambda_0} \to e^{2\epsilon \langle r \rangle} \mathcal{H}^{k_b}_{b,\Re \lambda_0} } \leq C(s,\epsilon)/h,
\end{equation}
for some $C(s,\epsilon)>0$ locally uniform. On the other hand, using Lemma \ref{lemma:equivalence-boundedness}, we obtain that when $\rho$ is $s$-regular,
\begin{equation}\label{eq:boundedness-imaginary-axis}
\left\| \IndicialRes^{\mathbf{X}_b}_{\rho}(s) \right\|_{\mathcal{H}^{\escapeparam m_b}_{b,\rho} \to \mathcal{H}^{\escapeparam m_b}_{b,\rho}} \leq C_{s,\rho}.
\end{equation}
If $\Re(s)$ is such that there are no visible roots (i.e. $\specb^+(s,-\infty, 0)\cup\specb^-(s,0,\infty) =\emptyset$), then the boundedness estimate \eqref{eq:bounded_indicial_resolvent} follows directly from \eqref{eq:form-continuation-indicial-resolvent} and \eqref{eq:boundedness-imaginary-axis}.

Else, if $s\in\C$ with $\Re(s)>\tau$ such that there are visible roots, let us choose $\varepsilon >0$ such that 
\[
\max_{\lambda\in \specb^+(s,-\infty, 0)\cup\specb^-(s,0,\infty)}  |\Re(\lambda)| + 2\varepsilon <\rho_{\max}(\tau).
\]
Note that this is possible because we are in the case $\rho(\tau)>0$ and thus, as was discussed after \eqref{eq:def-rho-max}, $\rho$  is strictly monotonous. Now, combining Equations \eqref{eq:form-continuation-indicial-resolvent}, \eqref{eq:boundedness-residue} and \eqref{eq:boundedness-imaginary-axis}, we deduce that 
\begin{equation}\label{eq:boundedness-indicial-res}
\left\| \IndicialRes^{\mathbf{X}_b}(s) \right\|_{e^{- \rho_{\max}(\tau)\langle r\rangle}\mathcal{H}^{k_b}_{b,-\rho_{\max}} \to e^{\rho_{\max}(\tau)\langle r\rangle}\mathcal{H}^{k_b}_{b,\rho_{\max}}} \leq C_{s,\rho}
\end{equation}
To obtain the boundedness for $s\in \C\setminus \mathscr{U}$, one can use similar arguments.

Consider a pole $s$ of $\IndicialRes^{\mathbf{X}_b}(s)$ corresponding to an indicial root crossing $\lambda_0$. From the considerations above, it follows that the Laurent expansion has its image contained in the direct sum of 
\begin{equation}\label{eq:image-Laurent-expansion-pole-indicial-resolvent}
e^{\lambda_0 r}H_0 \oplus \dots \oplus r^k e^{\lambda_0 r}H_k,
\end{equation}
where $H_0, \dots,H_k$ are finite dimensional subspaces of $\mathsf{H}^{k_b}_{\lambda_0}(\FibreL)$, related to the images of the Laurent expansion of $I(\mathbf{X}_b - hs,\lambda)^{-1}$ around $\lambda_0$. In particular, this is finite dimensional.
\end{proof}
Note that in the case of a geodesic flow we will see in Section~\ref{sec:explicit-computations} that the resonant states of $\mathbf X_b$ coming from the indicial resolvent can be explicitely expressed by dirac distributions and homogeneous distributions on the North and South pole of $\FibreM=\mathbb S^d$.

\section{Black box formalism and main theorem}
\label{sec:Black-Box}

In this section, we introduce a black box formalism in the spirit of 
\cite{Sjostrand-Zworski-91}. For the same reason as in Section~\ref{sec:continuation-indicial-resolvent} we work in a geometric setting that is more general then the admissible bundles $L\to S^*N$ from Definition~\ref{def:admissible_vector_bundle}. Again, this bigger generality comes without any additional effort in the proofs. Let us define the geometric setting of this section.

\begin{definition}\label{def:fibred_cusp}
Let us consider a cusp $Z=[a,+\infty)\times\R^d/\Lambda$ and a product $Z\times\FibreM$ with $(\FibreM, g_{\FibreM})$ a compact connected Riemannian manifold. The product $(Z\times\FibreM, g_Z + g_{\FibreM})$ is a \emph{trivial fibred cusp}.
\end{definition}
\begin{definition}\label{def:admissible-extension}
Let $(M,g')$ be a complete connected Riemannian manifold. Assume that it can be decomposed as the union of a compact manifold $M_0$, and several ends $M_1, \dots,M_\kappa$ that are trivial fibred cusps. Then we say that $M$ is an \emph{admissible manifold}. 
\end{definition}
Observe that if $M$ is admissible, then its curvature tensor is $\mathscr{C}^\infty$ bounded.

\begin{definition}\label{def:admissible-bundle}
Let $(M,g')$ be an admissible manifold. Let $L\to M$ be a vector bundle with Riemannian bundle metric $\|\cdot\|_L$ and compatible connection $\nabla$.
We say that $L$ is a \emph{general admissible bundle} if over each cusp $Z_\ell \times \FibreM$, for $y>\mathbf{a}$, $L$ has a product structure $L_{|Z_\ell} \simeq Z_\ell\times \FibreL_\ell$, where $\FibreL_\ell$ is a Riemannian bundle over $\FibreM_\ell$.
\end{definition}
Again, if $L$ is general admissible, its curvature and derivatives are bounded.

\begin{example}
Let $(N,g)$ be an admissible cusp manifold, and let $L\to M=SN\to N$ be an admissible bundle. Then $L\to M$ is a general admissible bundle and the fibre $\FibreM$ is just the sphere $\mathbb S^d$.
\end{example}

Let $L\to M$ be a general admissible bundle, with $\kappa$ 
cusps $Z_1,\dots,Z_\kappa$. Take $\mathsf{a}>\mathbf{a}$, and let 
\begin{equation}\label{eq:def-L2-a}
L^2_\mathsf{a}(M,L) = \left\{f\in L^2(M,L)\ \middle|\ \int f|_{y>\mathsf{a}} d\theta = 0 \right\}.
\end{equation}
We have the orthogonal decomposition
\begin{equation}\label{eq:Black-Box-decomposition}
L^2(M,L) = L^2_\mathsf{a}(M,L) \oplus_{\ell=1}^\kappa L^2\Big(]\log \mathsf{a}, + \infty[\times \FibreL_\ell, e^{-r d} dr d\PointM\Big).
\end{equation}
In Section \ref{sec:continuation-indicial-resolvent}, we used the 
measure $dr d\PointM$ instead of $e^{-r d}drd\PointM$. In particular, 
\[
L^2( e^{-r d}dr d\PointM) = e^{rd/2}L^2(drd\PointM).
\]
In Equation \eqref{eq:Black-Box-decomposition}, the first term will be 
regarded as a \emph{black box} and the second one as the \emph{free 
space}. In the black box, we will use the variable $y$ (more appropriate for geometric purposes), and in the free space the $r$ variable (more appropriate for analysis). 
In the case of elliptic operators, one can really isolate the 
black box, because it can be embedded in another space where the 
relevant operator --- mostly the Laplacian --- has compact resolvent. 
However in our case, since being uniformly hyperbolic is a global property, 
such surgery cannot be performed a priori. It is the fact that the flow is \emph{exactly}
translation invariant that will save us.

We can define extension and restriction operators. Let $\phi \in 
C^\infty(M,L)$. We let $\mathscr{P}_\ell^\mathsf{a} \phi$ be the 
function in $C^\infty([\log \mathsf{a},+\infty[_r\times \FibreM_\PointM,\FibreL)$ 
obtained by restriction to the cusp $Z_\ell$ and averaging in the 
$\theta$ variable. Conversely, let $\phi \in C^\infty_c(]\log\mathsf{a},
+\infty[_r\times\FibreM_\PointM, \FibreL)$. We consider it as a function 
$\mathscr{E}_\ell^\mathsf{a}\phi$ supported in cusp $Z_\ell$, not 
depending on $\theta$. We have $\mathscr{P}_\ell^\mathsf{a} 
\mathscr{E}_\ell^\mathsf{a} = \mathbb{1}$. We extend these definitions 
to distributions by duality: for distributions $v\in\mathcal{D}'(M,L)$ and $u\in 
\mathcal{D}'([\log\mathsf{a}, +\infty[\times\FibreM,\FibreL)$,
\[
\langle \mathscr{E}_\ell^\mathsf{a} u, \phi\rangle : = \langle u, \mathscr{P}_\ell^\mathsf{a} \phi \rangle, \text{ and } \langle \mathscr{P}_\ell^\mathsf{a} v, \phi\rangle : = \langle v, \mathscr{E}_\ell^\mathsf{a} \phi \rangle.
\]
Note that after this extension we can apply $\mathscr E^\mathsf{a}_\ell$
equally to $C^\infty([\log\mathsf{a},\infty[)$ and the compostion $\mathscr{E}_\ell^\mathsf{a}\mathscr{P}_\ell^\mathsf{a}$ is well defined. Given a function 
$\chi \in C^\infty([\log \mathsf{a}, + \infty[)$ that is constant near $\mathsf{a}$, we can define the associated \emph{black box multiplication operator} as the operator
\begin{equation}\label{eq:def-chi-multiplication}
\mathbf{B}(\chi) = \chi(\log \mathsf{a}) +  \sum_{\ell} \mathscr{E}_\ell^\mathsf{a}( \chi(r) - \chi(\log\mathsf{a})) \mathscr{P}_\ell^\mathsf{a}.
\end{equation}
(here $\chi(r)$ is the multiplication operator.) Note that with this definition the operator $\mathbf B(\chi)$ acts on $L^2_{\mathsf a}(M,L)$ simply by multiplication with the constant $\chi(\log\mathsf a)$ and on the free spaces $L^2(]\log \mathsf a,\infty[ \times \FibreL_\ell)$ as a multiplication operator with $\chi(r)$.

In this section, we will define a class of operators that preserve
this structure, and review some of their properties. Then, we will 
conclude on the meromorphic extension of the resolvent of admissible 
such operators.

\subsection{The class of cusp-b-pseudors}

Now that we have added some structure to our space $L^2(M,L)$, we need to determine a reasonnable class of operators that will preserve the structure. First consider a differential operator  $P$ that commutes with $y\partial_\theta$ and $y\partial_y$ in each cusp, for $y>\mathsf{a}$. It thus acts on the space of smooth functions supported in a cusp that do not depend $\theta$. We denote by $P^0_{b,\ell}$ that restriction for each cusp $Z_\ell$. Then we find that for $i=1,\dots ,\kappa$, acting on $\mathcal{D}'(M,L)$,
\[
\mathscr{P}_\ell^\mathsf{a}  P  = P^0_{b,\ell} \mathscr{P}_\ell^\mathsf{a} .
\]
We also have the dual statement, acting on $C^\infty_c(]\mathsf{a}, +\infty[\times\FibreM_\ell, \FibreL_\ell)$:
\[
\mathscr{E}_\ell^\mathsf{a} P^0_{b,\ell} = P \mathscr{E}_\ell^\mathsf{a}.
\]
Since we want to use anistropic spaces that can only be defined using \emph{pseudo-dif\-fer\-en\-tial} operators, we have to accept slightly different relations. Indeed, pseudo-differential operators cannot be \emph{exactly} supported on the diagonal. 
\begin{definition}[cusp-b-operators]\label{def:b-operator}
Let $A$ be an operator $C^\infty_c(M,L)\to \mathcal{D}'(M,L)$. We say that $A$ is a \emph{black box operator} with precision $C\geq 1$ at height $\mathsf{a}$ if acting on $C^\infty_c(M,L)$,
\begin{equation}\label{eq:compact-stays-compact}
\mathscr{P}_\ell^{C\mathsf{a}} A ( \mathbb{1} - \mathscr{E}_\ell^{\mathsf{a}} \mathscr{P}_\ell^{\mathsf{a}}) = 0\text{, for all }\ell=1,\ldots,\kappa
\end{equation}
and acting on $C^\infty_c( ] \log(C \mathsf{a}), +\infty[\times \FibreM,\FibreL)$,
\begin{equation}\label{eq:incusp-stays-incusp}
(\mathbb 1- \mathscr{E}_\ell^\mathsf{a} \mathscr{P}_\ell^{\mathsf{a}}) A \mathscr{E}_\ell^{C\mathsf{a}} = 0\text{, for all }\ell=1,\ldots,\kappa.
\end{equation}
If additionally for each $\ell=1,\dots ,\kappa$, $\mathscr{P}_\ell^{\mathsf{a}} A \mathscr{E}_\ell^{C\mathsf{a}}$ acts on $C^\infty_c(]\log(C\mathsf{a}),+\infty[\times\FibreM,\FibreL)$ as the restriction of a translation invariant operator $A_{b,\ell}^0$ on sections of $\R \times \FibreL$, that is supported for $|r-r'|\leq \log C$, we say that $A$ is a \emph{cusp-b-operator}. 

We define
\[
A_{b,\ell}:= e^{-rd/2}A_{b,\ell}^0 e^{rd/2},
\]
which is again translation invariant. In this way, while $A_{b,\ell}^0$ acts
naturally on $L^2(\R\times\FibreL, e^{-rd}drd\PointM)$, $A_{b,\ell}$ acts on $L^2(\R\times\FibreL, drd\PointM)$. 

Finally, if $A\in \Psi(M,L)$ is also a pseudo-differential operator, we say that $A$ is
a \emph{cusp-b-pseudor}, and write $A\in \Psi_{b,C}(M,L)$.
\end{definition}

\begin{example}
In the case of the geodesic flow $M=S^\ast N$, the vector field of the geodesic flow $X$ is a cusp-b-operator with precision $1$. We also have
\[
X_{b,\ell}^0 = \cos\varphi\partial_r + \sin\varphi\partial_\varphi,\text{ and } X_{b,\ell} = \cos\varphi\partial_r + \frac{d}{2}\cos\varphi + \sin\varphi\partial_\varphi.
\]
\end{example}

In what follows the constant $\mathsf{a}$ will be fixed a priori, it is a geometric data of the problem, and we will mostly not mention it. Let us give a word of explanation. Condition \eqref{eq:compact-stays-compact} implies that if $f$ has zero mean value in the $\theta$ variable in each cusp for $y> \mathsf{a}$, then the mean value of $Af$ in the $\theta$ variable vanishes when $y>C\mathsf{a}$. The condition \eqref{eq:incusp-stays-incusp} is the dual version of the assumption: it means that if $f$ was supported only in cusps for $y>C\mathsf{a}$, and did not depend on $\theta$, then $Af$ would be supported in $y>\mathsf{a}$, and also not depend on $\theta$.

\begin{proposition}
Let $A\in\Psi_{b,C}(M,L)$. Then for $\ell=1\dots\kappa$, the operator $A_{b,\ell}$ defined from $A$ by Definition~\ref{def:b-operator} is an element in $\Psi_{b,C}(\R\times\FibreL_\ell)$ --- see Definition \ref{def:free-b-operators}.
\end{proposition}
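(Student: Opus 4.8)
The plan is to obtain the Schwartz kernel of $A^0_{b,\ell}$ directly from that of $A$, recognise it as the kernel of a b-pseudor, and then transfer the conclusion to $A_{b,\ell}$ via the conjugation by $e^{rd/2}$. Observe first that translation invariance in $r$ and the support condition $|r-r'|\le\log C$ are already part of the hypothesis that $A$ is a cusp-b-pseudor with precision $C$, so the only thing left to establish is that $A^0_{b,\ell}$ is \emph{pseudo-differential} on $\R\times\FibreM$.

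Over the cusp $Z_\ell$ and for $y>\mathsf a$, the cosphere bundle is trivialised as $(\R^d/\Lambda_\ell)_\theta\times\R_r\times(\FibreM)_\zeta$ and $L|_{Z_\ell}$ as $(\R^d/\Lambda_\ell)_\theta\times\R_r\times\FibreL$ (Example~\ref{exmpl:admissible_bundle_reduction}), with the local isometries acting by $(r,\theta,\zeta)\mapsto(r+\tau,e^\tau\theta+\theta_0,\zeta)$; hence $\theta$-independent sections of $L|_{Z_\ell}$ are precisely sections of $\R\times\FibreL$, on which $T_{\tau,0}$ acts as $r$-translation. By the black box relations \eqref{eq:compact-stays-compact}--\eqref{eq:incusp-stays-incusp}, for $\phi\in C_c^\infty(]\log(C\mathsf a),+\infty[\times\FibreM,\FibreL)$ the section $A\mathscr E^{C\mathsf a}_\ell\phi$ is supported in $Z_\ell$, is $\theta$-independent for $y>\mathsf a$, and agrees there with $\mathscr E^{\mathsf a}_\ell A^0_{b,\ell}\phi$, so that $A^0_{b,\ell}$ is simply the restriction of $A$ to the zeroth Fourier mode in $\theta$. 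I would then invoke the quantization on $M$ of Appendix~\ref{appendix:microlocal-tools}, which inside the cusp is adapted to this product structure, and write $A=\Op(a)+R$ near the cusp with $R$ smoothing and $a$ a full symbol in the class of Definition~\ref{def:symbol-classes}. Acting on a $\theta$-independent section retains, up to $\mathcal O(h^\infty)$, only the value $\xi_\theta=0$ of $a$, so that $A^0_{b,\ell}=\Op^b(\tilde a)_{\log C}+\mathcal O(h^\infty\Psi^{-\infty}_{b,\log C})$ with $\tilde a(\zeta;\xi_r,\eta):=a(r,\theta,\zeta;\xi_r,0,\eta)$, where the translation invariance of $A^0_{b,\ell}$ lets us average over $r$ and $\theta$ and take $\tilde a$ independent of those variables.

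It then remains to see that $\tilde a$ lies in the right class: since the metric $\overline{g}_b$ of Definition~\ref{defprop:free-b-symbols} is exactly the restriction of the Kohn--Nirenberg metric of Definition~\ref{def:Kohn-Nirenberg-metric} to $\{\xi_\theta=0\}$, restriction of symbols to $\xi_\theta=0$ sends the symbol class of $A$ into the corresponding b-class $S^{m_b}_b(\R\times\FibreL)$, whence $\Op^b(\tilde a)_{\log C}\in\Psi_{b,\log C}(\R\times\FibreL)$; the $\mathcal O(h^\infty)$ term is translation invariant and supported in $\{|r-r'|\le\log C\}$, so it lies in $\Psi^{-\infty}_{b,\log C}$, and altogether $A^0_{b,\ell}\in\Psi_{b,\log C}(\R\times\FibreL)$. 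Finally $A_{b,\ell}=e^{-rd/2}A^0_{b,\ell}e^{rd/2}$ has kernel $e^{-(r-r')d/2}$ times that of $A^0_{b,\ell}$; on the support $|r-r'|\le\log C$ this factor is smooth and bounded, does not enlarge the $r-r'$-support, and on the symbol side only effects the shift $\lambda\mapsto\lambda+ihd/2$, which is harmless because compact support in $r-r'$ makes the b-symbol entire in $\lambda$; hence $A_{b,\ell}\in\Psi_{b,\log C}(\R\times\FibreL)\subset\Psi_{b,C}(\R\times\FibreL)$. The main obstacle in a complete write-up is the middle paragraph: carefully matching the quantization on $M$ near the cusp with the one on $\R\times\FibreM$, checking that passing to the zeroth $\theta$-mode is the same as setting $\xi_\theta=0$ in the full symbol up to $\mathcal O(h^\infty)$, and that this restriction respects the anisotropic b-symbol classes through the compatibility of $\overline{g}_b$ with the Kohn--Nirenberg metric.
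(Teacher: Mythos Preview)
Your proposal is correct and essentially expands what the paper leaves implicit: the paper's own proof is the single line ``This follows directly from the definition,'' since translation invariance and the support bound are already part of Definition~\ref{def:b-operator}, and the pseudo-differential nature of the restriction to the zeroth $\theta$-mode is taken for granted. Your careful treatment of that last point --- writing $A$ via the cusp quantization, setting $J=0$, and checking compatibility of $\overline{g}_b$ with the Kohn--Nirenberg metric --- is in fact precisely the computation the paper carries out immediately afterwards in Proposition~\ref{prop:S_b_gives_Psi_b} for symbols $\sigma\in S_b(M,L)$, so you are not taking a different route but rather anticipating the next step.
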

This follows directly from the definition. Recall from section~\ref{sec:geodesic_flow_on_cusps} that covectors decompose as $\xi= Ydy+Jd\theta+\eta d\PointM$.

\begin{definition}
Let $\sigma\in S^0(M,L)$. Assume that in each cusp, $\partial_\theta \sigma = 0$ for $y>\mathsf{a}$, and for $r>\log \mathsf{a}$, $\ell=1\dots \kappa$, let
\begin{equation}\label{eq:def-sigma-b}
\sigma_{b,\ell}(r, \PointM;\lambda, \eta) := \sigma|_{Z_\ell}( e^r, \theta, \PointM; e^{-r}\lambda, J=0, \eta).
\end{equation}
Assume that $\sigma_{b,\ell}$ does not depend on $r$ for $\ell=1\dots \kappa$. Then we say that $\sigma$ is a \emph{b-symbol} of order $0$ and write $\sigma\in S^0_b(M,L)$. Given a cusp-b-symbol $m$ of order $0$, we correspondingly define $S^m_b(M,L)$ the set of cusp-b-symbols of order $m$.
\end{definition}

By a direct computation, one gets:
\begin{proposition}
For a general admissible bundle $L\to M\to N$ any $\sigma\in S(M,L)$ that is invariant under the action of local isometries $T_{\tau,\theta}$ (see equation \eqref{eq:local-isometries-r-theta}) in each cusp is a cusp-b-symbol. 
\end{proposition}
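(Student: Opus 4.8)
I plan to unwind the definitions of $S^0_b(M,L)$ and $S^m_b(M,L)$ in terms of the action of the local isometries $T_{\tau,\theta_0}$ on phase space, and to read off the two required properties — that $\partial_\theta\sigma=0$ in each cusp and that $\sigma_{b,\ell}$ does not depend on $r$ — from the two one-parameter subgroups $\{T_{0,\theta_0}\}_{\theta_0}$ and $\{T_{\tau,0}\}_{\tau}$ separately.

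First I would describe the lift of $T_{\tau,\theta_0}$ to the relevant phase space. Since $T_{\tau,\theta_0}$ is an isometry of the cusp, it lifts canonically to a diffeomorphism of $S^\ast Z_\ell=M|_{Z_\ell}$, which in the trivialization $S^\ast Z_\ell\cong Z_\ell\times\Ss^d$ is the product map $(y,\theta,\zeta)\mapsto(e^\tau y,\,e^\tau\theta+\theta_0,\,\zeta)$ — the $\Ss^d$-coordinate $\zeta=y(Y,J)$ being fixed because $y$ is scaled by $e^\tau$ while the cotangent lift scales $(Y,J)$ by $e^{-\tau}$. It then lifts symplectically to $T^\ast M|_{S^\ast Z_\ell}$ and, being an element of $\groupN\groupA\subset\groupG$, acts on the homogeneous bundle $L|_{Z_\ell}$ and hence on $\End(L)$. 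In the base coordinates $(y,\theta,\zeta)$ and the dual fibre coordinates $(Y,J,\eta)$ on $T^\ast M$ — $Y$ dual to $dy$, $J$ dual to $d\theta$, $\eta\in T^\ast_\zeta\Ss^d$ — the cotangent lift of a product map being a product, one finds
\[
(y,\theta,\zeta;Y,J,\eta)\ \longmapsto\ \big(e^\tau y,\ e^\tau\theta+\theta_0,\ \zeta;\ e^{-\tau}Y,\ e^{-\tau}J,\ \eta\big),
\]
the scaling $(Y,J)\mapsto e^{-\tau}(Y,J)$ being the one that keeps the geodesic Hamiltonian $\mathscr h=\tfrac12 y^2(Y^2+|J|^2)$ invariant. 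On the bundle, in the Iwasawa trivialization $L=(\R^d/\Lambda_\ell)\times\R\times\FibreL_\ell$ of Example~\ref{exmpl:admissible_bundle_reduction}, $T_{\tau,0}$ translates the $\R_r$-factor, scales the $\R^d/\Lambda_\ell$-factor and fixes $\FibreL_\ell=\groupK\times_{\tau_\ell}V_\ell$, so invariance of $\sigma$ is an identity of $\End(\FibreL_\ell)$-valued functions with no bundle twist. This invariance is assumed for $y>\mathbf a$, hence holds on $\{y>\mathsf a\}$ since $\mathsf a>\mathbf a$ (Remark~\ref{remark:determination-of-mathbf-a}).

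Next I would extract the two properties. Taking $\tau=0$ and letting $\theta_0$ vary gives $\sigma(y,\theta+\theta_0,\zeta;Y,J,\eta)=\sigma(y,\theta,\zeta;Y,J,\eta)$ for all $\theta_0$, hence $\partial_\theta\sigma=0$ in each cusp for $y>\mathsf a$. Taking $\theta_0=0$ and using this $\theta$-independence, invariance under the scaling subgroup reads
\[
\sigma(e^\tau y,\zeta;e^{-\tau}Y,e^{-\tau}J,\eta)=\sigma(y,\zeta;Y,J,\eta)\qquad\text{for }y,e^\tau y>\mathbf a.
\]
Substituting $y=e^r$, $Y=e^{-r}\lambda$, $J=0$ as in Equation~\eqref{eq:def-sigma-b} turns the left-hand side into $\sigma_{b,\ell}(r+\tau,\zeta;\lambda,\eta)$ and the right-hand side into $\sigma_{b,\ell}(r,\zeta;\lambda,\eta)$, so $\sigma_{b,\ell}(r+\tau,\cdot)=\sigma_{b,\ell}(r,\cdot)$ for all $r,r+\tau>\log\mathsf a$; i.e., $\sigma_{b,\ell}$ does not depend on $r$. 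Both conditions in the definition of $S^0_b(M,L)$ are then satisfied, so $\sigma$ is a cusp-b-symbol, and the identical substitution-and-scaling argument applied to $\sigma\in S^m(M,L)$ for an arbitrary b-order $m$ gives $\sigma\in S^m_b(M,L)$, since this step is insensitive to the order.

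The only genuine work, and the step I expect to be the main obstacle, is the bookkeeping of the lift of $T_{\tau,\theta_0}$ to $T^\ast M$ together with the homogeneous bundle $L|_{Z_\ell}$: one must verify that the cotangent lift scales $(Y,J)$ by $e^{-\tau}$ (consistently with invariance of $\mathscr h$), that $\zeta$ and the cotangent-to-$\Ss^d$ variable $\eta$ are genuinely fixed, and that the fibre identification implicit in the phrase ``$\sigma_{b,\ell}$ does not depend on $r$'' is exactly the one induced by the $\groupA$-action — so that the displayed equalities are equalities of $\End(\FibreL_\ell)$-valued symbols, not equalities only up to conjugation. Everything else is a direct substitution.
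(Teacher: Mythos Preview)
Your proposal is correct and is precisely the ``direct computation'' the paper alludes to without spelling out. The paper gives no actual proof beyond that phrase, so your unwinding of the two one-parameter subgroups $\{T_{0,\theta_0}\}$ and $\{T_{\tau,0}\}$ to obtain $\partial_\theta\sigma=0$ and $r$-independence of $\sigma_{b,\ell}$ respectively, together with the check that the cotangent lift scales $(Y,J)$ by $e^{-\tau}$ and fixes $(\zeta,\eta)$, is exactly what is meant.
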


We also get
\begin{lemma}\label{lemma:global-b-symbols-give-free-b-symbols}
Let $\sigma\in S_b(M,L)$. Then for $\ell=1\dots \kappa$, $\sigma_{b,\ell} \in S_b(\R\times\FibreL)$ --- see Definition-Proposition \ref{defprop:free-b-symbols}.
\end{lemma}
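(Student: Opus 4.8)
The plan is to transfer the symbol estimates for $\sigma$ to $\sigma_{b,\ell}$ by writing both classes in the explicit cusp chart $(r,\theta,\zeta)=(\log y,\theta,\zeta)$ on $M|_{Z_\ell}\cong[\log\mathsf a,\infty)\times(\R^d/\Lambda_\ell)\times\FibreM$. First I would record the effect of the change $r=\log y$ on the defining formula \eqref{eq:def-sigma-b}: if $(\lambda,J,\eta)$ denote the fibre coordinates on $T^*(M|_{Z_\ell})$ dual to $(r,\theta,\zeta)$, then the coordinate $\lambda$ dual to $r$ is $y$ times the coordinate dual to $y$, which by \eqref{eq:def-sigma-b} is set to $e^{-r}\lambda$; hence \eqref{eq:def-sigma-b} reads
\[
\sigma_{b,\ell}(\zeta;\lambda,\eta)=\sigma(r,\theta,\zeta;\lambda,0,\eta)\big|_{Z_\ell},
\]
the right-hand side being independent of $r$ and of $\theta$ by hypothesis. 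If $\sigma\in S^m_b(M,L)$ then $m_b$ is the restriction of $m$ to $\{J=0\}$, and dividing by $\langle\xi\rangle^m$ reduces everything to the case $m=0$, so I may assume $\sigma\in S^0_b(M,L)$.

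The key step is to identify, along the slice $\{J=0\}$, the Kohn--Nirenberg metric of Definition~\ref{def:Kohn-Nirenberg-metric} with $\overline g_b$. With the product metric $g_M|_{Z_\ell}=dr^2+e^{-2r}d\theta^2+g_{\FibreM}$ of Section~\ref{sec:geodesic_flow_on_cusps}, the dual of a covector $\lambda\,dr+J\cdot d\theta+\eta$ has squared length $\lambda^2+e^{2r}|J|^2+g^*_\zeta(\eta,\eta)$, so on $\{J=0\}$ the symbol weight $\langle\cdot\rangle_M$ restricts to $\langle(\lambda,\eta)\rangle=(1+\lambda^2+g^*_\zeta(\eta,\eta))^{1/2}$, exactly the quantity $1+g_\zeta(\eta,\eta)+\lambda^2$ entering $\overline g_b$. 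Moreover the Levi--Civita connection of $g_M|_{Z_\ell}$ is block-diagonal: its only non-vanishing Christoffel symbols are $\Gamma^r_{\theta\theta},\ \Gamma^\theta_{r\theta}$ of the warped factor $dr^2+e^{-2r}d\theta^2$ together with those of $g_{\FibreM}$, and nothing couples the $(r,\theta)$-block with the $\FibreM$-block. Consequently the horizontal lift of $\partial_\zeta$ to $T^*M$ has no $\partial_r$, $\partial_\theta$ or $\partial_J$ component and agrees, on $\{J=0\}$, with the $\FibreM$-horizontal lift, while the vertical directions $\partial_\lambda,\partial_\eta$ are themselves tangent to $\{J=0\}$. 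Hence the restriction of the Kohn--Nirenberg metric to the sub-bundle of $T\{J=0\}$ spanned by the horizontal $\partial_\zeta$ and the vertical $\partial_\lambda,\partial_\eta$ is precisely $\overline g_b$.

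It then remains to transfer the estimates. By definition $\sigma\in S^0(M,L)$ means that all of its iterated covariant derivatives (for the Kohn--Nirenberg metric and the connection on $L$) are bounded by a constant times $\langle\xi\rangle_M^{0}$. Restricting these bounds to $\{J=0\}$ and using the previous paragraph --- together with $\partial_r\sigma=\partial_\theta\sigma=0$, so that covariant $r$- and $\theta$-derivatives of $\sigma$ do not enter --- every iterated $\overline g_b$-covariant derivative of $\sigma_{b,\ell}$ in the $\partial_\zeta,\partial_\lambda,\partial_\eta$ directions is bounded. Since $\overline g_b$ has bounded geometry (Lemma~\ref{lemma:bounded-curvature-Kohn-Nirenberg}), this is exactly the assertion $\sigma_{b,\ell}\in S^{m_b}_b(\R\times\FibreL)$; the general-order case follows by multiplying back by $\langle\xi\rangle^m$.

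The step I expect to be the main obstacle is the metric identification in the second paragraph: one must check that $\overline g_b$ is \emph{literally} the restriction of the ambient Kohn--Nirenberg metric to the $\{J=0\}$-slice, and the delicate point there is the behaviour of the connection --- namely that the Levi--Civita connection of the warped product does not mix the cusp's $(r,\theta)$-directions with the $\FibreM$-directions, so that horizontal lifts remain tangent to the slice and coincide with the intrinsic ones. The weight computation and the passage from ambient to restricted covariant derivatives are then routine bookkeeping.
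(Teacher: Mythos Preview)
Your proposal is correct and amounts to the same verification the paper makes, only phrased in the invariant language of the Kohn--Nirenberg metric rather than in coordinates. The paper's own proof is two lines: it simply invokes the explicit cusp symbol estimates of Section~\ref{sec:symbols-on-cusps} (written in the $(y,\theta)$, then $(r,\theta)$ chart) and observes that setting $J=0$ and dropping the $r,\theta$-derivatives yields exactly the usual symbol estimates on $\R\times\FibreM$; the $r$-invariance is part of the definition of $S_b(M,L)$. Your metric identification on the slice $\{J=0\}$ is the coordinate-free version of the same computation, and the block-diagonality of the Levi--Civita connection for the product metric $g_Z\oplus g_{\FibreM}$ is precisely what makes the horizontal lifts match. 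The only place to be slightly careful is the passage from bounded $\overline g$-covariant derivatives to bounded $\overline g_b$-covariant derivatives: the metrics agree on the relevant subbundle, but their Levi--Civita connections need not literally restrict; the clean way to close this (and what the paper implicitly does) is to drop to coordinates of bounded geometry, where both conditions become the same coordinate estimates.
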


\begin{proof}
From the considerations in Section \ref{sec:symbols-on-cusps}, we deduce that $\sigma_{b,\ell}$ satisfies usual symbol estimates on $\R \times \FibreL$. The $r$-invariance follows from the definition.
\end{proof}

Let us consider $\sigma\in S_b(M,L)$ and the corresponding operator $\Op(\sigma)$. According to Proposition \ref{prop:properties-quantization}, by adjusting the parameter $\mathsf{a}\geq \mathbf{a}$, we get that $\Op(\sigma)$ satisfies Equation \eqref{eq:incusp-stays-incusp}. 
It is not difficult to check that it also satisfies Equation \eqref{eq:compact-stays-compact} for similar reasons. We now consider its restriction to functions $f$ supported in the cusp $Z_\ell$ and not depending on $\theta$. 
Actually, we want to compute directly $\{\Op(\sigma)\}_{b,\ell}$ instead of $\{\Op(\sigma)\}_{b,\ell}^0$. 
Thus we take a function of the form $e^{rd/2}f(r,\PointM)$, so that the action of $\Op(\sigma)$ on $L^2(L)$ --- with the measure $e^{-r d}drd\PointM$ --- will correspond to the action on $L^2(\R\times\FibreL,drd\PointM)$. 
By definition of the quantization --- see equations \eqref{eq:def-loc-quantization} and \eqref{eq:def-loc-quantization-cutoff} --- and already replacing $(2\pi h)^{-d}\int e^{i\langle\theta-\theta',J\rangle/h} d\theta'$ by $\delta_{J=0}$, we get for $e^{-rd/2} \Op(\sigma) e^{rd/2}f$:
\[
\begin{split}
\frac{1}{(2\pi h)^{1+k}}&\int \chi^{\Op}\left(\log\frac{y}{y'}\right)e^{\frac{i}{h}(\langle y- y',Y\rangle + \langle \zeta - \zeta', \eta \rangle)}  \\
	\sigma&\bigg|_{Z_\ell}\left( \frac{y+y'}{2}, \frac{\zeta+\zeta'}{2}, Y, J =0, \eta\right) f(y',\zeta') \sqrt{\frac{y}{y'}}dy'd\zeta' dY d\eta.
\end{split}
\]
(recall $k$ is the dimension of $\FibreM$). We take the coordinate change $r=\log y$ and $\lambda = (y+y')Y/2$. The volume form becomes
\[
\frac{2 e^{ r' +(r-r')/2}}{e^r + e^{r'}} dr' d\lambda d\zeta' d\eta.
\]
The phase:
\[
\Phi(r,\lambda, \zeta,\eta) = \langle \zeta - \zeta', \eta \rangle + 2\lambda \tanh\frac{r-r'}{2}
\]
The symbol under the integral giving $\Op(\sigma)f$ is now in the form
\[
\chi^{\Op}(r-r') \tilde{f}(r',\zeta')\sigma_{b,\ell}\left(r + \log\frac{1 + e^{r'-r}}{2}, \frac{\zeta + \zeta'}{2}, \lambda, \eta\right).
\]
where $\tilde{f} = \mathscr{P}_\ell f$. Since $\sigma_{b,\ell}$ does not depend on $r$, we deduce that
\[%\label{eq:def-quantization-indicial}
\begin{split}
\{\Op(\sigma)\}_{b,\ell} \tilde{f}(r, \zeta) := & \int e^{\frac{i}{h}\Phi(r,\lambda, \zeta,\eta)}\chi^{\Op}(r-r') \tilde{f}(r',\zeta') \\
&\sigma_{b,\ell}\left(\frac{\zeta + \zeta'}{2}, \lambda, \eta\right)\frac{2 e^{ (r+r')/2}}{e^r + e^{r'}} \frac{dr' d\lambda d\zeta' d\eta}{(2\pi h)^{1 + k}}
\end{split}
\]
Provided that the support of the cutoff $\chi_C$ chosen after Equation \eqref{eq:def-free-Op} is slightly larger than the support of $\chi^{\Op}$, we can find a symbol $\widetilde{\sigma_{b,\ell}}\in S_b(\R\times \FibreL)$ such that 
\[
\{\Op(\sigma)\}_{b,\ell} = \Op^b( \widetilde{\sigma_{b,\ell}} )_C.
\]
We have proved
\begin{proposition}\label{prop:S_b_gives_Psi_b}
Let $\sigma\in S_b(M,L)$. Then, $\Op(\sigma)\in \Psi_{b,C}(M,L)$, where $C>1$ is a constant chosen in the construction of the quantization and there is a symbol $\widetilde \sigma_{b,\ell}\in S_b(M,L)$ such that $\{\Op(\sigma)\}_{b,\ell} = \Op^b( \widetilde{\sigma_{b,\ell}} )_C$. When the height $\mathsf{a}$ at which $\sigma$ starts being invariant varies, we can change the quantization and keep the same constant $C$.
\end{proposition}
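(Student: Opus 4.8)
The plan is to unwind the explicit integral formula for the quantization $\Op$ recalled in Appendix~\ref{appendix:microlocal-tools} (Proposition~\ref{prop:properties-quantization}), which in a cusp carries the cutoff $\chi^{\Op}(\log(y/y'))$, and to trace through what it does to a $\theta$-invariant symbol once one passes to the logarithmic coordinate $r=\log y$.

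First I would check the black-box axioms of Definition~\ref{def:b-operator}. The cutoff $\chi^{\Op}$ makes $\Op(\sigma)$ properly supported, with $|r-r'|$ bounded by $\log C$ high in the cusp; choosing $\mathsf{a}\geq\mathbf{a}$ large enough (as in point (7) of Proposition~\ref{prop:properties-quantization}) this gives \eqref{eq:incusp-stays-incusp}, and the dual statement \eqref{eq:compact-stays-compact} follows in the same way, since the operators $\mathscr{E}_\ell^{\mathsf{a}}\mathscr{P}_\ell^{\mathsf{a}}$ are exactly the projections onto the zeroth $\theta$-Fourier mode and $\sigma$ is $\theta$-independent for $y>\mathsf{a}$, so $\Op(\sigma)$ commutes with these projections up to a piece supported where $|r-r'|\le\log C$. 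Hence $\Op(\sigma)$ is a cusp-b-operator, and, being pseudo-differential, $\Op(\sigma)\in\Psi_{b,C}(M,L)$ once the translation invariance of the cusp model is verified, which is the content of the next step.

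The heart of the argument is the explicit computation of $\{\Op(\sigma)\}_{b,\ell}$. Conjugating by $e^{rd/2}$ to replace the measure $e^{-rd}dr\,d\PointM$ by $dr\,d\PointM$ and applying $\Op(\sigma)$ to a $\theta$-independent section $e^{rd/2}f(r,\PointM)$, one performs the change of variables $r=\log y$, $\lambda=\tfrac{y+y'}{2}Y$. The volume form becomes $\tfrac{2e^{(r+r')/2}}{e^r+e^{r'}}\,dr'\,d\lambda\,d\zeta\,d\eta$, the phase becomes $\langle\zeta-\zeta',\eta\rangle+2\lambda\tanh\tfrac{r-r'}{2}$, and the symbol $\sigma|_{Z_\ell}$, written in the form $\sigma_{b,\ell}$, loses all dependence on $r$ --- this is precisely the hypothesis $\sigma\in S_b(M,L)$. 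Since the kernel then depends on $r,r'$ only through $r-r'$, it is a convolution kernel, so $\{\Op(\sigma)\}_{b,\ell}$ is translation invariant; by Lemma~\ref{lemma:global-b-symbols-give-free-b-symbols} its amplitude satisfies the symbol estimates of Definition-Proposition~\ref{defprop:free-b-symbols}. To recognize it as $\Op^b(\widetilde\sigma_{b,\ell})_C$, one substitutes in the oscillatory integral so that $2\tanh\tfrac{r-r'}{2}$ is straightened to the linear momentum variable appearing in \eqref{eq:def-free-Op}, absorbing the Jacobian of this substitution together with the factor $\tfrac{2e^{(r+r')/2}}{e^r+e^{r'}}$ and the shift $\log\tfrac{1+e^{r'-r}}{2}$ in the first slot of $\sigma_{b,\ell}$ into a new amplitude; because $\chi^{\Op}$ confines $r-r'$ to a fixed compact set, all these corrections and their derivatives are uniformly controlled, and the standard reduction of the amplitude to a left/right symbol in the bounded-geometry calculus yields $\widetilde\sigma_{b,\ell}\in S_b(\R\times\FibreL)$ with the same principal symbol as $\sigma_{b,\ell}$. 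Enlarging $\chi_C$ in \eqref{eq:def-free-Op} slightly beyond the support of $\chi^{\Op}$ then gives $\{\Op(\sigma)\}_{b,\ell}=\Op^b(\widetilde\sigma_{b,\ell})_C$. Finally, if the height at which $\sigma$ becomes $T_{\tau,\theta}$-invariant changes, one moves the threshold $\mathsf{a}$ and the associated cutoffs in the construction of $\Op$ accordingly; this affects none of the estimates and, crucially, not the precision constant $C$, which only records the fixed size of the $r$-support of $\chi^{\Op}$.

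The step I expect to be the main obstacle is the last part of the third paragraph: justifying that, after straightening $\tanh\tfrac{r-r'}{2}$ and absorbing the Jacobian factors, the resulting amplitude reduces to a genuine b-symbol in the anisotropic class $S_b(\R\times\FibreL)$ with estimates uniform in $h$. This is routine in spirit --- it is the usual stationary-phase / amplitude-to-symbol reduction --- but it must be done with care, since the estimates have to be uniform in $r$ and compatible with the anisotropic order $m_b$; the point that makes it go through is that $\chi^{\Op}$ keeps $r-r'$ in a fixed compact set, so the correction factors are smooth bounded perturbations of the identity.
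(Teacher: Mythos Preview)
Your proposal is correct and follows essentially the same route as the paper: the paper's proof (given in the paragraphs immediately preceding the proposition statement) performs exactly the coordinate change $r=\log y$, $\lambda=\tfrac{y+y'}{2}Y$, obtains the same phase $\langle\zeta-\zeta',\eta\rangle+2\lambda\tanh\tfrac{r-r'}{2}$ and Jacobian factor, observes that the $r$-independence of $\sigma_{b,\ell}$ makes the kernel a convolution, and then asserts (without further detail) the existence of $\widetilde{\sigma_{b,\ell}}$ provided $\chi_C$ has slightly larger support than $\chi^{\Op}$. If anything, your last paragraph is more explicit than the paper about the amplitude-to-symbol reduction step, which the paper leaves implicit.
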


\subsection{Meromorphic continuation of resolvents of admissible b-operators}

In this section, we will need the crucial compactness lemma:
\begin{lemma}\label{lemma:compact-injection-H^1}
Let $L\to M$ be a general admissible bundle as in Definition \ref{def:admissible-bundle}. Let $\mathsf{a}> \mathbf{a}$ and
\[
H^1_\mathsf{a}(M, L):= \left\{ f \in H^1(M, L)\ \middle|\ \mathscr{P}^\mathsf{a} f = 0 \right\}.
\]
This is a closed subspace of $H^1(M,L)$ and the injection $H^1_\mathsf{a}(M,L)\hookrightarrow L^2(M,L)$ is compact.
\end{lemma}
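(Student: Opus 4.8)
The plan is to decompose $M$ into the relatively compact region $\{y\le Y\}$ (for a large height $Y$) and the cuspidal ends $\{y>Y\}$, and to combine the Rellich--Kondrachov theorem on the former with a Poincaré-type estimate on the latter. Since $\mathscr{P}^\mathsf{a}$ is bounded, $H^1_\mathsf{a}(M,L)$ is closed in $H^1$, so it suffices to show that every bounded sequence $(f_n)$ in $H^1_\mathsf{a}(M,L)$ has a subsequence converging in $L^2(M,L)$.

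The core of the argument is the following \emph{tail estimate}: there are $Y_0\ge\mathsf{a}$ and $C>0$ so that
\[
\|f\|_{L^2(\{y>Y\})}^2 \;\le\; \frac{C}{Y^2}\,\|f\|_{H^1(M,L)}^2 \qquad\text{for all }f\in H^1_\mathsf{a}(M,L),\ Y\ge Y_0.
\]
I would prove it in one cusp $Z=[a,\infty)_y\times(\R^d/\Lambda)_\theta$, recalling that $S^*Z\cong Z\times\mathbb{S}^d_\zeta$ carries (up to equivalence) the product metric and the measure $\mu_L\propto y^{-(d+1)}\,dy\,d\theta\,d\zeta$, and that $\{y\partial_y,\ y\partial_{\theta_1},\dots,y\partial_{\theta_d}\}$ together with an orthonormal frame of $\mathbb{S}^d$ forms an orthonormal frame of $T(S^*Z)$. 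Expand a section over the cusp into $\theta$-Fourier modes $f=\sum_{k\in\Lambda^*}e^{ik\theta}v_k(y,\zeta)$; this is legitimate since by Example~\ref{exmpl:admissible_bundle_reduction} the bundle $L|_Z$ and (being a homogeneous bundle) its connection are invariant under the $\theta$-translations $T_{0,\theta_0}$, so $\nabla_{\partial_\theta}$ preserves Fourier modes and the hypothesis $\mathscr{P}^\mathsf{a}f=0$ reads exactly $v_0\equiv 0$ for $y>\mathsf{a}$. Writing $\nabla=d+\omega$ in the $\theta$-trivialization one has $\nabla_{y\partial_{\theta_j}}(e^{ik\theta}v_k)=e^{ik\theta}\big(iy\,k_j\,v_k+\omega(y\partial_{\theta_j})v_k\big)$, and bounded geometry of $(M,L,\nabla)$ provides a constant $C_0$ with $\sum_j\|\omega(y\partial_{\theta_j})\|_{\mathrm{op}}^2\le C_0$. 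Hence for $k\ne 0$ (so $|k|\ge k_{\min}:=\min_{0\ne k\in\Lambda^*}|k|>0$) and $y\ge Y_0:=\sqrt{2C_0}/k_{\min}$ we get $\sum_j|iy\,k_j\,v_k+\omega(y\partial_{\theta_j})v_k|^2\ge\tfrac12 y^2k_{\min}^2|v_k|^2$. Summing over $k\ne 0$ via Parseval on the torus, integrating in $\zeta$ and in $y>Y$ against $\mu_L$, and discarding the non-torus directions yields $\|\nabla f\|_{L^2(\{y>Y\})}^2\ge\tfrac12 k_{\min}^2 Y^2\|f\|_{L^2(\{y>Y\})}^2$, which is the tail estimate.

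Granting the tail estimate, one concludes in the usual way. Given $\epsilon>0$, choose $Y_\epsilon\ge Y_0$ with $C Y_\epsilon^{-2}\sup_n\|f_n\|_{H^1}^2<\epsilon^2$, so that $\|f_n\|_{L^2(\{y>Y_\epsilon\})}<\epsilon$ for all $n$ and all cusps (uniform tightness). On the compact piece $M_{Y_\epsilon}:=M\setminus\bigcup_\ell\{y>Y_\epsilon\}$ the Rellich--Kondrachov embedding $H^1\hookrightarrow L^2$ is compact, applied to $\chi f_n$ with $\chi$ a fixed cutoff supported in $\{y<2Y_\epsilon\}$ and equal to $1$ on $\{y<Y_\epsilon\}$. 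Taking $\epsilon_j\downarrow 0$ with $Y_{\epsilon_j}\uparrow\infty$, extracting an $L^2(M_{Y_{\epsilon_j}})$-convergent subsequence at each step and diagonalizing produces a subsequence of $(f_n)$ converging in $L^2_{\mathrm{loc}}(M,L)$; combined with the uniform tightness this subsequence is Cauchy in $L^2(M,L)$.

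The main obstacle is the tail estimate, and specifically the vector-bundle bookkeeping inside it: checking that $\theta$-Fourier decomposition is compatible with $\nabla$ over the cusp, and --- the decisive point --- that $\omega$ evaluated on the \emph{unit} torus directions $y\partial_{\theta_j}$ is bounded (equivalently $\omega(\partial_{\theta_j})=O(y^{-1})$), which is what allows the spectral gap of the flat torus to dominate the zeroth-order contribution of the connection once $y$ is large. In the scalar case ($L$ trivial, $\nabla=d$) everything is transparent: the estimate is just the flat-torus Poincaré inequality $\int_{\R^d/\Lambda}|f|^2\,d\theta\le k_{\min}^{-2}\int_{\R^d/\Lambda}|\partial_\theta f|^2\,d\theta$ for mean-zero $f$, together with $|\partial_{\theta_j}|_g=y^{-1}$ and $\mu_L\propto y^{-(d+1)}\,dy\,d\theta\,d\zeta$.
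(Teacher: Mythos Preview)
Your proof is correct and follows essentially the same approach as the paper: both rest on the tail estimate $\|f\|_{L^2(\{y>Y\})}^2\le C Y^{-2}\|f\|_{H^1}^2$ for sections with vanishing zeroth $\theta$-Fourier mode, obtained from the Poincar\'e/Wirtinger inequality on the torus together with the scaling $|\partial_{\theta_j}|_g=y^{-1}$, combined with Rellich compactness on $\{y\le Y\}$. The only presentational difference is that the paper packages the conclusion as a Lax--Phillips norm-limit argument (the multiplication operators $\chi(y/n)$ are compact and converge in operator norm to the inclusion), whereas you do the equivalent diagonal extraction; also, you are more explicit about the connection term $\omega(y\partial_{\theta_j})$ in the bundle case, which the paper absorbs into the product structure $L|_{Z_\ell}\simeq Z_\ell\times\FibreL_\ell$ that makes $\nabla_{\partial_\theta}=\partial_\theta$ on the nose.
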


\begin{proof}
We can adapt the argument of Lax-Phillips \cite{Lax-Phillips-Scattering-67}. Let $\chi\in C_c^\infty(\R, [0,1])$ be equal to one in a neighbourhood of $0$ and
set $\chi_n(y):=\chi(y/n)$. Consider the multiplication operator $\chi_n(y)$ of in each cusp $Z_\ell$ of $M$. From Rellich's theorem, the multiplication operator $\chi_n$ is compact for all $n$ from $H^1_\mathsf{a}(M,L)$ to $L^2(M,L)$. Now, assume that as $n\to +\infty$, $\chi_n$ restricted to $H^1_\mathsf{a}(M,L)$ has the injection $H^1_\mathsf{a}(M,L)\hookrightarrow L^2(M,L)$ as norm limit. Then that injection has to be compact also. 

To show that it is a norm limit we have to show that for $f \in H^1_\mathsf{a}(M,L)$,
\[
\| f \|_{L^2(M,L),y>n} \leq C_n \|f \|_{H^1(M,L)},
\]
with a constant $C_n\to 0$ as $n\to +\infty$. We use the Poincar\'e inequality: consider a unimodular lattice $\Lambda\subset \R^d$ and $\mathbb{T}_\Lambda = \R^d/\Lambda$. For $\tilde{f}\in H^1( \mathbb{T}_\Lambda)$ with $\int \tilde{f}= 0$, we have
\[
\| \tilde{f} \|_{L^2}  \leq C_\Lambda \| \nabla \tilde{f} \|_{L^2}.
\]
Now, with $\kappa$ the number of cusps,
\begin{align*}
\| f \|_{L^2(M,L),y>n}^2 	&= \sum_{\ell = 1}^\kappa\int_{y>n} \frac{d y d \PointM}{y^{d+1}} \| f(y,\PointM,\cdot) \|^2_{L^2\big(\mathbb{T}_{\Lambda_\ell}\big)} \\
							& \leq C \sum_{\ell = 1}^\kappa\int_{y>n} \frac{d y d \PointM}{y^{d+1}} \| \partial_\theta f(y,\PointM,\cdot) \|^2_{L^2\big(\mathbb{T}_{\Lambda_\ell}\big)}\\
							&\leq C \frac{1}{n^2} \| f \|_{H^1(M,L)}^2.
\end{align*}
\end{proof}
We have a statement for general weights. 
\begin{defprop}\label{lemma:compact-injection-general}
We pick some smooth function $r'(r)$ equal to $\log\mathsf{a}$ for $r \leq \log C\mathsf{a}$, and equal to $r$ when $r> \log C^2\mathsf{a}$. Then, given $\escapeparam,N,\rho \in \R$ and the corresponding black box multiplication operator $\mathbf B(e^{\rho r'})$ from \eqref{eq:def-chi-multiplication}, we define
\[
H^{\escapeparam\mathbf{m}+N}_{\rho}(M,L) := \mathbf{B}(e^{\rho r'}) H^{\escapeparam\mathbf{m}+N}(M,L).
\]
Let $\rho < \rho'$. Then the injection $H^{\escapeparam\mathbf{m} + N + 1}_{\rho}(M,L) \hookrightarrow H^{\escapeparam\mathbf{m} + N}_{\rho'}(M,L)$ is compact.
\end{defprop}
\begin{proof}
From the choice of $r'$, and Pseudodifferential operator symbol calculus we can reduce directly to the case of $\escapeparam=0$ and $N=0$. Applying $\mathbf{B}(e^{(\rho -\rho')r'})$, we can also reduce to the case $\rho <0=\rho'$. Then we can adapt the argument from before, adding a contribution from the 0-th Fourier mode that decays as $e^{\rho \log n}\|f\|_{L^2}$.
\end{proof}

\begin{definition}\label{def:general_admissible_op}
Let $L\to M$ be a general admissible bundle. Let $\mathcal{X}$ be a derivation on sections of $L$ extending a vector field $X$ on $M$. 
Also assume that $\mathbf{X} : = h\mathcal{X} \in \Psi_b(M,L)$. Assume that the flow generated by $X$ is uniformly hyperbolic, and that we can construct escape functions $G\in S_b(M,L)$ for any $\delta>0$ satisfying the conclusions (i)-(iv) of Lemma~\ref{lemma:escape-function} as well as the invariance properties from Lemma~\ref{lemma:invariance-escape-function}. Then we say that $\mathbf{X}$ is a \emph{general admissible operator}. We denote by $E^{u,s}$ and $E^\ast_{u,s}$ the corresponding stable and unstable bundles.
\end{definition}
Given a general admissible operator the proof of Propositions \ref{prop:Inverse-up-to-smoothing-FS} and \ref{prop:wavefront-R_Q} apply, so we get a first parametrix $\mathscr{R}_Q(s):= (\mathbf{X} - Q - hs)^{-1}$ with norm $\mathcal{O}(h^{-1})$. Furthermore from Definition~\ref{def:b-operator} and Proposition\ref{prop:S_b_gives_Psi_b} we deduce straightforwardly:
\begin{proposition}
Let $L\to M$ be a general admissible bundle and let $\mathbf{X}$ a general admissible operator. Let $\delta>0$. Let $G$ be a corresponding escape function. Let $Q\in \Psi^{-\infty}_b(M,L)$ be microsupported in $|\xi|<3R\delta$, and elliptic in $|\xi|<2R\delta$ --- as the $Q$ used in Proposition \ref{prop:Inverse-up-to-smoothing-FS}. Then for each $\ell$, $(\mathbf{X}_{b,\ell}, G_{b,\ell}, Q_{b,\ell})$ is an admissible triple in the sense of Definition \ref{def:free-admissible-triple}. 
\end{proposition}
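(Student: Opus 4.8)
The plan is to verify the two assertions separately. For the first assertion—that the triple $(\mathbf{X}_{b,\ell}, G_{b,\ell}, Q_{b,\ell})$ is admissible in the sense of Definition~\ref{def:free-admissible-triple}—I would proceed by going down the list of conditions in that definition and checking each one against what has already been established. First, $\mathbf{X}_{b,\ell}\in\Psi^1_{b,0}(\R\times\FibreL)$ because $\mathbf{X}$ is a general admissible operator hence a cusp-b-differential operator, and restricting it to $\theta$-independent sections produces a translation-invariant differential operator by the reduction discussed in Example~\ref{exmpl:b-op-of_geodesic_flow}; conjugating by $e^{rd/2}$ keeps it a differential operator of order one. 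Next, $G_{b,\ell}\in S^{0+}_b$ and $Q_{b,\ell}\in\Psi^{-\infty}_{b,C}$ follow from Lemma~\ref{lemma:global-b-symbols-give-free-b-symbols} (applied to $G_\delta\in S_b(M,L)$) and from the fact that $Q\in\Psi^{-\infty}_b(M,L)$ restricts to a smoothing b-operator with the prescribed precision. The principal-symbol conditions ($i\mathbf{X}_{b,\ell}$ and $Q_{b,\ell}$ scalar with real principal symbol; $e^{\mathscr r G}$ elliptic in $S^{m_b}_b$; $\mathbf{X}_{b,\ell}=hX_{b,\ell}$ with $X_{b,\ell}$ $h$-independent) are inherited verbatim from the corresponding properties of $\mathbf{X}$, $Q$ and $G$ stated in Section~\ref{sec:first-parametrix}, because the reduction to the $b$-calculus is compatible with principal symbols—this uses the explicit symbol correspondence $\sigma \mapsto \widetilde{\sigma_{b,\ell}}$ from Proposition~\ref{prop:S_b_gives_Psi_b}. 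Finally the two estimates involving $\delta'$—namely $|p_b|\le\delta'|\xi|$ together with $|\xi|>\delta'\Rightarrow\{p_b,G_b\}>1$, and the microsupport/ellipticity properties of $Q_b$—are the $b$-calculus shadows of Lemma~\ref{lemma:escape-function}(i) and the defining properties of $Q$; as the accompanying Example notes, one takes $\delta'=R\delta$ with $\delta$ small. This first part is essentially bookkeeping: translate each hypothesis through the restriction map.

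\textbf{Affineness of the roots.} The substantive claim is that the roots are affine in the sense of Definition~\ref{def:roots-are-affine}. The plan is to compute the indicial family explicitly. By Example~\ref{exmpl:b-op-of_geodesic_flow} and the computation of $X_{b,\ell}$, the relevant operator is $\mathbf{X}_{b,\ell}=h\left(\cos\varphi\,\partial_r+\tfrac{d}{2}\cos\varphi+\sin\varphi\,\partial_\varphi\right)$ (or its obvious analogue twisted by $A_\ell\in\End(V_\ell)^{\groupM}$ in the bundle case), and the indicial family, computed as in the Example following Definition~\ref{def:indicial_family}, is
\[
I(\mathbf{X}_{b,\ell}-hs,h\lambda)=\lambda\cos\varphi+h\left(\tfrac{d}{2}\cos\varphi+\sin\varphi\,\partial_\varphi+A_\ell\right)-hs.
\]
The point is that $s$ and $\lambda$ enter only through the combination $\lambda\cos\varphi-hs$, equivalently: $I(\mathbf{X}_{b,\ell}-hs,h\lambda)$ is invertible iff $I(\mathbf{X}_{b,\ell}-h(s-\sigma),h(\lambda+\sigma\sec\varphi))$—no, more precisely, one observes that $\lambda$ appears multiplied by $\cos\varphi$ which is \emph{not} constant, so the naive rescaling does not literally work; instead the correct statement is that on the unit sphere $\Ss^d$ the operator $\sin\varphi\,\partial_\varphi+\tfrac d2\cos\varphi$ is a fixed model operator whose spectral data one computes directly. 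I would diagonalize this model operator: using the gradient-flow structure on $\Ss^d$ (properties (A),(B) of the geodesic flow), the relevant ordinary differential operator in the inclination variable $\varphi\in[0,\pi]$ can be solved, and its "generalized eigenvalues" relative to the cut-off resolvent are of the form $s\mapsto a_k s+b_k$ with $a_k=\pm1$ (coming from the two fixed points $\mathcal N,\mathcal S$ where $\cos\varphi=\pm1$) and $b_k$ arising from the integer indices of the spherical harmonics/Jacobi-type expansion together with the eigenvalues of $A_\ell$. This is precisely the content promised for Section~\ref{sec:explicit-computations}, so in the present proof I would simply invoke that forthcoming computation, or reproduce its key output: near $\mathcal N$ the model is $(1-\varphi^2/2+\dots)\partial_r+\varphi\partial_\varphi+\dots$, and the indicial roots are $\lambda=s+\tfrac d2+j$, $j\in\N_0$ (plus $\End(V_\ell)$-eigenvalue shifts), with the mirror-image family near $\mathcal S$ giving $\lambda=-s-\tfrac d2-j$.

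\textbf{Main obstacle.} The routine parts—checking the items of Definition~\ref{def:free-admissible-triple}—carry no real difficulty. The genuine work is the explicit determination of $\specb(s)$ and the proof that all roots lie on the two lines $\lambda=\pm s+b_k$, together with the uniform bound $|a_k|\le C$ (here trivially $|a_k|=1$) and the boundedness of $|\Im b_k|$ as $\Re b_k$ stays bounded. The heart of this is a Sturm–Liouville / hypergeometric analysis of the vector field $\sin\varphi\,\partial_\varphi$ on the sphere degenerating at its two poles, which are exactly the non-compact directions $\mathcal N,\mathcal S$ singled out in Section~\ref{sec:geodesic_flow_on_cusps}; one must show that, after conjugating by the escape weight $e^{-\mathscr r G_b}\langle\xi\rangle^{-N}$ and inverting modulo $Q_b$, the only obstructions to invertibility of the indicial family come from these poles and occur along affine loci in $(s,\lambda)$. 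I would therefore structure the proof as: (1) item-by-item verification of admissibility (short); (2) explicit formula for $I(\mathbf{X}_{b,\ell}-hs,h\lambda)$; (3) reduction to the one-dimensional model at each pole of $\Ss^d$ via the $\groupM=SO(d)$-isotypic decomposition of $\FibreL_\ell=\groupK\times_{\tau_\ell}V_\ell$; (4) solve the model ODE and read off the roots, deferring the detailed special-function computation to Section~\ref{sec:explicit-computations}. The expected main obstacle is step (4)—ensuring that the degeneracy of $\cos\varphi$ at the poles does not produce a continuum of roots or roots with unbounded imaginary part, which is where the precise negatively-curved cusp geometry (constant curvature $-1$, hence the clean $SO(d+1,1)$ homogeneous structure) is essential.
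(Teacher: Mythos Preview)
Your proposal is correct and takes essentially the same approach as the paper: verify the items of Definition~\ref{def:free-admissible-triple} by translating the properties of $\mathbf{X}$, $G_\delta$, $Q$ through the restriction-to-$\theta$-invariant-sections map (citing the same supporting results you list), and defer the affineness of the roots to the explicit computations in Section~\ref{sec:explicit-computations}. The paper's own proof is literally the one-liner ``This follows directly from the definition, and the computations made in Section~\ref{sec:explicit-computations},'' so your more detailed item-by-item sketch and your outline of the indicial-family computation (including the correct roots $\lambda=\pm(s+d/2+n)$, shifted by eigenvalues of $A_\ell$ in the bundle case) are a faithful expansion of exactly that.
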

Consequently from Lemma~\ref{lemma:Inversion-up-to-smoothing-indicial-operator} and Theorem~\ref{thm:Continuation-Indicial-Resolvent} we deduce that $\IndicialRes_{Q,\ell}(s) := (\mathbf{X}_{b,l}  - Q_{b,\ell} -hs)^{-1}$ and that $\IndicialRes^{\mathbf{X}_{b,\ell}}(s)$ are analytic, respectively meromorphic families of operators on the appropriate anisotropic spaces. 
We now choose $\chi \in C^\infty(\R)$ such that $\chi(r) = 0$ for $r< \log(C\mathsf{a})$, and $\chi(r) = 1$ for $r> \log(C^2 \mathsf{a})$ and define
\[
\mathscr{R}_{Q}'(s): = \mathscr{R}_Q(s) + \sum_\ell \mathscr{E}_\ell \chi\left[ \IndicialRes^{\mathbf{X}_{b,\ell}}(s) - \IndicialRes_{Q,\ell}(s) \right] \chi \mathscr{P}_\ell.
\]
Next, let us define for $\tau\in\R$
\begin{equation}\label{eq:def-rho-max'}
\rho_{\max}(\tau) :=  \sup_\ell \rho_{\max,\ell}(\tau).
\end{equation}
Recall that $\rho_{\max,\ell}$ was defined in Equation \eqref{eq:def-rho-max}. Also keep in mind that weights are functions of the form $\mathbf{k}=\escapeparam \mathbf{m} + N$, and they are large when $\escapeparam$ is large and so is $\escapeparam/|N|$.
\begin{lemma}\label{lemma:parametrix-Fredholm-form}
Let $\tau<0$, and let $\mathbf{k}$ be sufficiently large. Then for $\Re (s)> \tau$ and $|\Im(s)|\leq h^{-1/2}$ the operator family $\mathscr{R}_{Q}'(s)$ is a meromorphic family of bounded operators from $H^{\mathbf{k}}_{-\rho_{\max}'(\tau)}$ to $H^{\mathbf{k}}_{\rho_{\max}'(\tau)}$. Additionally, we can write
\[
(\mathbf{X} - h s)\mathscr{R}_{Q}'(s) = \mathbb{1} + K(s),
\]
where $K(s)$ is a meromorphic family of compact bounded operators on $H^{\mathbf{k}}_{-\rho_{\max}'(\tau)}$. Additionally, $\mathbb{1}+K(s)$ is invertible for 
$\Re(s)$ large enough.
\end{lemma}

As a consequence, we get the main theorem of this article:
\begin{theorem}\label{thm:full-theorem-resolvent}
Let $\mathbf{X}=h\mathcal{X}$ be a general admissible operator (see Definition~\ref{def:general_admissible_op}) on a general admissible bundle $L\to M$ (see Definition~\ref{def:admissible-bundle}) and assume that the indicial roots are affine in the sense of Definition~\ref{def:roots-are-affine}. The Schwartz kernel of $\mathscr{R}(s):=(\mathcal{X}-s)^{-1}$ has a meromorphic continuation to $\C$. 
The corresponding poles are finite order, finite rank. We also have the wavefront set statements:
\begin{equation}\label{eq:wavefront-set-resolvent}
\WF'(\mathscr{R}(s))=\WF'_h(\mathscr{R}(s)) \cap T^\ast (M\times M)  \subset \Delta(T^\ast M) \cup \Omega_+ \cup E^\ast_s \times E^\ast_u.
\end{equation}
Furthermore, if $s_0$ is a pole and
\[
 \mathscr{R}(s) = \sum_{j=1}^J \frac{A_j}{(s-s_0)^j} + \mathscr R_H(s)
\]
is the Laurent expansion, with holomorphic part $
\mathscr R_H(s)$ then,
\begin{equation}\label{eq:wavefront-set-Laurent-expansion}
\WF'(A_j) \subset E^\ast_s \times E^\ast_u
\text{ and } \WF'(\mathscr{R}_H(s_0)) \subset 
\Delta(T^\ast M) \cup \Omega_+ \cup E^\ast_s \times E^\ast_u.
\end{equation}
\end{theorem}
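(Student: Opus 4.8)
The plan is to upgrade the corrected parametrix $\mathscr{R}_Q'(s)$ of Lemma~\ref{lemma:parametrix-Fredholm-form} to a genuine meromorphic inverse by analytic Fredholm theory, and then to read off the wavefront set from the transport equation that the inverse satisfies, exactly along the lines of \cite{Dyatlov-Zworski-16} but with the cusp contributions controlled through Section~\ref{sec:continuation-indicial-resolvent}.

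\emph{Meromorphic continuation.} First I would fix $\tau<0$, pick a large weight $\mathbf{k}=\mathscr{r}\mathbf{m}+N$ and an $\epsilon>0$ so that the hypothesis $\rho_{\max}'(\tau)+|N|<C(\mathscr{r}+\tau)$ of Lemma~\ref{lemma:parametrix-Fredholm-form} holds, and work on the weighted anisotropic space $\mathcal{H}:=H^{\mathscr{r}\mathbf{m}+N}_{\rho_{\max}'(\tau)+\epsilon}(M,L)$. Then $(\mathbf{X}-hs)\mathscr{R}_Q'(s)=\mathbb{1}+K(s)$ with $K(s)$ a meromorphic family of \emph{compact} operators — this is where the weighted compact embeddings of Lemma~\ref{lemma:compact-injection-H^1} and Definition-Proposition~\ref{lemma:compact-injection-general} enter — and $\mathbb{1}+K(s)$ invertible for $\Re s$ large. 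The analytic Fredholm theorem then makes $(\mathbb{1}+K(s))^{-1}$ meromorphic on $\{\Re s>\tau\}$ with finite-order poles whose Laurent coefficients are finite rank, so $\mathscr{R}'(s):=\mathscr{R}_Q'(s)(\mathbb{1}+K(s))^{-1}$ is a meromorphic right inverse of $\mathbf{X}-hs$; running the same construction on the adjoint (equivalently, the time-reversed flow) produces a left parametrix modulo compact operators, so $\mathbf{X}-hs$ is Fredholm of index $0$ on $\mathcal{H}$ and $\mathscr{R}'(s)$ is a two-sided inverse away from its poles. For $\Re s\gg0$ it equals $h(\mathcal{X}-s)^{-1}$ on $C^\infty_c(M,L)$; by uniqueness of meromorphic continuation it is independent of $\mathscr{r},N,\tau,\epsilon$, and letting $\tau\to-\infty$, $\epsilon\to0$ and finally $\delta\to0$ (as announced after Definition~\ref{def:anisotropic-space}) yields the meromorphic continuation $\mathscr{R}(s)=h\,\mathscr{R}'(s)$ of $(\mathcal{X}-s)^{-1}$ to all of $\C$ as a family $C^\infty_c(M,L)\to\mathcal{D}'(M,L)$.

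\emph{Finite order and finite rank.} The poles of $\mathscr{R}(s)$ can only come from poles of $(\mathbb{1}+K(s))^{-1}$ — already known to be finite order, finite rank — or from poles of $\mathscr{R}_Q'(s)$. Since $\mathscr{R}_Q(s)$ and $\IndicialRes_{Q,\ell}(s)$ are holomorphic, the only singular part of $\mathscr{R}_Q'(s)$ is $\sum_\ell\mathscr{E}_\ell\,\chi\,\IndicialRes^{\mathbf{X}_{b,\ell}}(s)\,\chi\,\mathscr{P}_\ell$, whose poles are finite order with finite-rank Laurent coefficients by Theorem~\ref{thm:Continuation-Indicial-Resolvent} (the principal parts having the explicit shape of~\eqref{eq:image-Laurent-expansion-pole-indicial-resolvent}), pushed forward by $\mathscr{E}_\ell$ and $\mathscr{P}_\ell$. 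A product of two meromorphic families of this type is again of this type — orders add, and each Laurent coefficient of the product factors through a finite-dimensional space — which gives the claim.

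\emph{Wavefront set, and the main obstacle.} Away from its poles $\mathscr{R}(s)$ satisfies $(\mathcal{X}-s)\mathscr{R}(s)=\mathbb{1}=\mathscr{R}(s)(\mathcal{X}-s)$, where the symbol $p$ of $-i\mathcal{X}$ is of real principal type with radial source $E^\ast_s\cap\partial\overline{T^\ast}M$ and radial sink $E^\ast_u\cap\partial\overline{T^\ast}M$. I would then argue as in \cite{Dyatlov-Zworski-16}: elliptic regularity (Proposition~\ref{prop:Elliptic-regularity}) confines $\WF'_h(\mathscr{R}(s))$ over $\{p=0\}$ off $\Delta(T^\ast M)$; propagation of singularities (Lemma~\ref{lemma:Propagation-of-singularities}) propagates it along the flow, giving $\Delta(T^\ast M)\cup\Omega_+$; and, since $\mathscr{R}(s)$ takes values in $H^{\mathscr{r}\mathbf{m}+N}$, which is microlocally of high regularity near $E^\ast_s$ and of low regularity near $E^\ast_u$ (Lemma~\ref{lem:regularity_of_H_r}), the radial estimates of Proposition~\ref{prop:Sink-estimate} (the source version in the first variable, the sink version in the second) confine the remainder to $E^\ast_s\times E^\ast_u$. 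This gives~\eqref{eq:wavefront-set-resolvent}; the statement~\eqref{eq:wavefront-set-Laurent-expansion} then follows because the Laurent coefficients $A_j$ are finite-rank operators whose range (resp.\ the range of the adjoint) consists of generalized resonant (resp.\ coresonant) states, which the same radial argument kills microlocally away from the sink (resp.\ the source). The main obstacle, and the only genuinely new point, is to run this microlocal argument \emph{inside the cusp}: one must check that the meromorphically continued b-resolvent $\IndicialRes^{\mathbf{X}_{b,\ell}}(s)$ — defined only through contour deformation and residues, and acting between weighted spaces — has a Schwartz kernel whose wavefront set is contained in the diagonal, the forward flow-out of the model cusp flow, and the smooth contributions of~\eqref{eq:image-Laurent-expansion-pole-indicial-resolvent} (via the Fourier-in-$r$ representation of Definition-Proposition~\ref{defprop:A(I)}), and that conjugating by the non-pseudodifferential operators $\mathscr{E}_\ell,\mathscr{P}_\ell$, which act only on $\theta$-independent covectors, does not produce wavefront outside $\Delta(T^\ast M)\cup\Omega_+\cup E^\ast_s\times E^\ast_u$.
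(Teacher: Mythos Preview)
Your meromorphic continuation argument is essentially the paper's: Lemma~\ref{lemma:parametrix-Fredholm-form} plus analytic Fredholm theory (Gohberg--Sigal) gives a meromorphic right inverse, which agrees with $(\mathbf{X}-hs)^{-1}$ on $\Re s\gg0$ and is therefore the meromorphic continuation. The detour through a Fredholm index-zero argument via the adjoint is not needed; once you have a meromorphic right inverse that coincides with the honest inverse on a half-plane, you are done. Your finite-rank/finite-order reasoning is also in line with the paper.

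Where you diverge is in the wavefront set argument. You propose to run elliptic regularity, propagation and radial estimates directly on $\mathscr{R}(s)$, and you correctly sense that this forces you to analyze the wavefront of the continued b-resolvent $\IndicialRes^{\mathbf{X}_{b,\ell}}(s)$ and its interaction with $\mathscr{E}_\ell,\mathscr{P}_\ell$. The paper bypasses this entirely with the second resolvent identity
\[
\mathscr{R}(s)\;=\;h\,\mathscr{R}_Q(s)\;-\;h\,\mathscr{R}_Q(s)\,Q\,\mathscr{R}_Q(s)\;+\;\mathscr{R}_Q(s)\,Q\,\mathscr{R}(s)\,Q\,\mathscr{R}_Q(s).
\]
The wavefront of the first term is already $\subset\Delta(T^\ast M)\cup\Omega_+$ by Proposition~\ref{prop:wavefront-R_Q}. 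The remaining two terms are sandwiched by $\mathscr{R}_Q(s)Q$ on the left and $Q\mathscr{R}_Q(s)$ on the right; since $Q$ is microsupported in $\{|\xi|<3R\delta\}$, their $\WF'_h$ is contained in $E^+_\delta\times E^-_\delta$ with $E^\pm_\delta=\{(x,\xi):\exists\,T>0,\ |\Phi_{\pm T}(x,\xi)|\le 3R\delta\}$, \emph{regardless} of the structure of $\mathscr{R}(s)$ inside. Now you let $\delta\to0$: this is precisely the step announced after Definition~\ref{def:anisotropic-space}, and it collapses $E^+_\delta\times E^-_\delta$ to $E^\ast_s\times E^\ast_u$. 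For the Laurent part, comparing coefficients in the identity gives $A_J=\mathscr{R}_Q(s_0)\,Q\,A_J\,Q\,\mathscr{R}_Q(s_0)$, so the same sandwich argument yields $\WF'(A_J)\subset E^\ast_s\times E^\ast_u$; the lower-order $A_j$ and $\mathscr{R}_H(s_0)$ follow inductively, using that $\partial_s\mathscr{R}_Q(s)=-\mathscr{R}_Q(s)^2$ has the same wavefront bound.

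In short: your ``main obstacle'' is an artifact of the route you chose. The paper never touches the microlocal structure of $\IndicialRes^{\mathbf{X}_{b,\ell}}(s)$ for the wavefront statement; all microlocal work is already done in Proposition~\ref{prop:wavefront-R_Q} for $\mathscr{R}_Q(s)$, and the resolvent identity transports it to $\mathscr{R}(s)$.
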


\begin{proof}[Proof of Lemma \ref{lemma:parametrix-Fredholm-form}]
Recall that $K(s)= (\mathbf{X} - h s)\mathscr{R}_{Q}'(s) - \mathbb{1}$. The meromorphy of $\mathscr{R}'_Q(s)$ and $K$ has already been proved, and so has the invertibility for large $\Re s>0$ of $1+K(s)$. It suffices now to show that $K(s)$ is compact on the appropriate space. We will use the fact that if $\mathbf{k}$ is large, so it $\mathbf{k}\pm 1$.

The first observation is that from a standard resolvent identity, we have for 
$\ell = 1 \dots \kappa$,
\begin{equation}\label{eq:Resolvent-formula}
\IndicialRes^{\mathbf{X}_{b,\ell}}(s) - \IndicialRes_{Q,\ell}(s) = \IndicialRes^{\mathbf{X}_{b,\ell}}(s) Q_{b,\ell} \IndicialRes_{Q,\ell}(s).
\end{equation}
This is a bounded operator from $e^{-\rho_{\max,\ell}(\tau)\langle r \rangle} \mathcal{H}^{\mathbf{k}-1}_b$ to $e^{\rho_{\max,\ell}(\tau)\langle r \rangle} \mathcal{H}^{\mathbf{k}+1}_b$ (it is smoothing).

Now, we compute $(\mathbf{X}-hs)\mathscr{R}_{Q}'(s)$, and we find that the operator $K(s)=K_1(s)+K_2(s)$ writes as the sum of two terms. The first one is
\begin{equation}\label{eq:first-term-K(s)}
K_1(s):=\sum_{\ell} \mathscr{E}_\ell [\mathbf{X}_{b,\ell}, \chi] \IndicialRes^{\mathbf{X}_{b,\ell}}(s) Q_{b,\ell} \IndicialRes_{Q,\ell}(s) \chi \mathscr{P}_\ell.
\end{equation}
This operator is compact on $H^{\mathbf{k}}_{-\rho_{\max}(\tau)}$ since it maps it continuously to $\mathbb{1}_{y< C} H^{\mathbf{k}+1}$ --- here we are applying Theorem~\ref{thm:Continuation-Indicial-Resolvent} crucially.

The other term in $K(s)$ is
\begin{equation}\label{eq:second-term-K(s)}
K_2(s):=Q \mathscr{R}_Q(s) - \sum_{\ell} \mathscr{E}_\ell \chi Q_{b,\ell} \IndicialRes_{Q,\ell}(s) \chi \mathscr{P}_\ell.
\end{equation}
Applying $(\mathbf{X} - Q - hs )$ on the right, we obtain
\[
\underset{:=K_3}{\underbrace{Q - \sum_{\ell} \mathscr{E}_\ell \chi Q_{b,\ell} \chi \mathscr{P}_\ell}} + \underset{:=K_4}{\underbrace{\sum_\ell \mathscr{E}_\ell \chi Q_{b,\ell} \IndicialRes_{Q,\ell}(s) [\chi, \mathbf{X}_{b,\ell} - Q_{b,\ell}] \mathscr{P}_\ell}}.
\]
Since $\chi(y) = 1$ when $y> C^2 \mathsf{a}$, we get that
\[
\mathscr{P}^{C^3\mathsf{a}} \left[ Q - \sum_{\ell} \mathscr{E}_\ell \chi Q_{b,\ell} \chi \mathscr{P}_\ell \right] = 0
\]
Using that $Q$ is smoothing together with Lemma~\ref{lemma:compact-injection-H^1}, we deduce that $K_3(\mathbf{X} - Q - hs )^{-1}$ is a compact operator on $H^{\mathbf{k}}_{-\rho_{\max}'(\tau)}$. According to Lemma~\ref{lemma:Inversion-up-to-smoothing-indicial-operator}, provided $\mathbf{k}$ is large enough, $\IndicialRes_{Q,\ell}(s)$ is bounded on spaces $\mathcal H^{\mathbf{k_b}}_{b,\rho}$ with $\rho < -\rho_{\max}(\tau)$. Recall that $\chi$ was chosen to be constant outside a compact set so $[\chi, \mathbf{X}_{b,\ell} - Q_{b,\ell}]: \mathcal H^{\mathbf{k_b}}_{b,\rho_1} \to \mathcal H^{\mathbf{k_b}}_{b,\rho_2}$ is bounded for arbitrary $\rho_1,\rho_2$. We deduce that $K_4(\mathbf{X} - Q - hs )^{-1}$  maps $H^{\mathbf{k}}_{-\rho'_{\max}(\tau)}$ to $H^{\mathbf{k}+1}_{\rho}$ for some $\rho < -\rho_{\max}(\tau)$ and by Lemma~\ref{lemma:compact-injection-general} it is compact. This concludes the proof.
\end{proof}

\begin{proof}[Proof of Theorem \ref{thm:full-theorem-resolvent}]
From Lemma \ref{lemma:parametrix-Fredholm-form}, using the Gohberg Sigal theorem \cite{Gohberg-Sigal-70} --- see theorem C.7 in \cite{Dyatlov-Zworski-book} for a version in english --- we deduce that $\mathscr R'_Q(s)(1+K(s))^{-1}$ is a meromorphic
right inverse to $(\mathbf X-hs)$, bounded on $H^{\mathbf{k}}_{-\rho_{\max}(\tau)}\to H^{\mathbf{k}}_{\rho_{\max}(\tau)}$ for $\Re s > \tau$ and $\mathbf{k}$ large enough. As for $\Re(s)>0$, $\mathbf X - hs$ is invertible,
it has to coincide with the inverse there and we deduce it is a meromorphic continuation of $(\mathbf X-hs)^{-1}$. Since $C^\infty_c(M,L)$ is contained and dense in all spaces $H^{\escapeparam\mathbf{m} + N}_{\rho}$, we deduce the meromorphic extension of the Schwartz kernel. In particular, the poles do not depend on the choice of space. 

It remains to show the announced property on the wavefront set. We can use the arguments from \cite[page 18 of the ArXiV version]{Dyatlov-Zworski-16} again as in the end of the proof of theorem \ref{thm:Continuation-Indicial-Resolvent}. We reproduce the argument here. We have
by the second resolvent identity
\begin{equation}\label{eq:Resolvent-formula-twice-again}
\mathscr{R}(s) = h\mathscr{R}_Q(s) - h\mathscr{R}_Q(s)Q\mathscr{R}_Q(s) + \mathscr{R}_Q(s)Q \mathscr{R}(s) Q \mathscr{R}_Q(s).
\end{equation}
(one can check that all the terms in the equation are well defined). The wave front set of the first term in the RHS is contained in the announced wavefront set for $\mathscr{R}(s)$, so we concentrate in the second and third term. For both of them, their $\WF'_h\cap T^\ast(M\times M)$ is a subset of 
\[
\left\{ (x,\xi, x',\xi')\ \middle|\ \exists (x_1,\xi_1,x_1',\xi_1'),\ \begin{array}{ll} (x,\xi,x_1,\xi_1) & \in \WF'_h(\mathscr{R}_Q(s)Q),\\ (x_1',\xi_1',x',\xi')&\in \WF'_h(Q \mathscr{R}_Q(s))\end{array}\right\}.
\]
This is contained in $E^+_\delta\times E^-_\delta$, where
\[
E^\pm_\delta = \{ (x,\xi)\in T^\ast M\ |\ \exists\ T>0,\ |\Phi_{\pm T}(x,\xi)| \leq 3R\delta \}.
\]
Since the wavefront set of $\mathscr{R}(s)$ does not depend on $\delta$, we can let it go to $0$. The intersection of the $E^+_\delta \times E^-_\delta$ for $\delta \geq 0$, is exactly $E^\ast_s \times E^\ast_u$.

For the Wavefront set at a pole $s_0$ we consider 
\eqref{eq:Resolvent-formula-twice-again}. Comparing the Laurent coefficients we obtain 
\[
 A_J = \mathscr{R}_Q(s)QA_JQ\mathscr{R}_Q(s).
\]
We can apply the same argument as above and obtain
$\WF'(A_J) \subset E^\ast_s \times E^\ast_u$. For the other coefficients as well as $\mathscr R_H(s_0)$ we can argue inductively. Indeed, formula \eqref{eq:Resolvent-formula-twice-again} will provide us with a formula for the Laurent coefficients that will involve other
Laurent coefficients $A_j$ of higher order and derivatives of $\mathscr R_Q(s)$ in the $s$ parameter. But as 
$\partial_s \mathscr R_Q(s)= - \mathscr{R}_Q(s)^2$, the wavefront set of its derivatives is contained in the same set. \end{proof}

\section{Explicit computations for the geodesic flow}
\label{sec:explicit-computations}

In this section, we come back to the case of admissible bundles over $S^\ast N$ with $N$ an admissible cusp manifold. Let us denote by $A_{\max}$ the maximum of $\Re(\lambda)$ when $\lambda$ ranges in the eigenvalues of the endomorphisms $A_\ell$. Then we define
\[
 \rho_{\max,L}(\tau)=\max(0,A_{\max} - \tau - d/2)
\]
(Note that for functions i.e. $L$ being the trivial bundle, we have $A_{\max}=0$.
We prove 
\begin{theorem}\label{thm:admissible-bundles}
Let $N$ be an admissible cusp manifold, 
$L\to M= S^\ast N$ be an admissible bundle and
$\mathcal X$ an admissible lift of the geodesic flow vectorfield (see Definition~\ref{def:admissible_cusp_manifold} and \ref{def:admissible_vector_bundle}).
 
Then the resolvent $\mathscr R(s):=(\mathcal{X}-s)^{-1}$ which is definde on $L^2(M,L)$ for $\Re s\gg 0$ has a meromorphic continuation to $\C$ as a family of continuous operatos $\mathscr R(s):C^\infty_c(M,L)\to \mathcal{D}'(M,L)$. 

More precisely for any $\tau<0$, $N\in \R$ there is a suficiently large $\escapeparam$ such that on
$\Re(s)>\tau$, $|\Im(s)|\leq h^{1/2}$ the resolvent is a meromorphic family of bounded operators
\[
 \mathscr R(s): H^{\gamma \mathbf m+N}_{-\rho_{\max,L}(\tau)} \to  H^{\gamma \mathbf m+N}_{\rho_{\max,L}(\tau)}.
\]
Finally, the wavefront set of $\mathscr R(s)$ satisfies estimate \eqref{eq:wavefront-set-resolvent} and its polar part satisfies \eqref{eq:wavefront-set-Laurent-expansion} as in Theorem \ref{thm:full-theorem-resolvent}.
\end{theorem}

According to the proof of Theorem \ref{thm:full-theorem-resolvent} it suffices to show that the roots are affine in the sense of Definition~\ref{def:roots-are-affine}. This will be shown in Lemma~\ref{cor:indicial_roots_holom_vectorbundle}.

We will explicitly calculate the indicial 
roots for an admissible lift of the geodesic flow in the sense
of Definition~\ref{def:admissible_vector_bundle}. We do 
this in three steps: First we compute the family of 
indicial operators for admissible lifts. Then we determine the
indicial roots for the scalar case, and finally deduce the
precise formula for the indicial roots of an admissible 
vector bundle.

\subsection{The Indicial operator for admissible lifts}

From now on let $M=S^*N$ be the sphere bundle over an admissible
cusp manifold, $L\to M$ an admissible vector bundle and 
$\mathcal{X}$ an admissible lift in the sense of 
Definition~\ref{def:admissible_vector_bundle}. Set $\mathbf X
=h\mathcal X$ and fix a cusp $Z_\ell$. Then, as a first step towards the indicial
family, we want to calculate the b-operator $\mathbf X_{b,\ell}$
acting on sections of $\R\times \FibreL_\ell\to \R\times \FibreM$. Recall that in 
Example~\ref{exmpl:admissible_bundle_reduction} we have already determined
that $\FibreL_\ell = \groupK\times_{\tau_\ell}V_\ell \to \FibreM=
\groupK/\groupM=\Ss^d$. In order to give an explicit expression of the 
operator we use the coordinates $r\in \R$ and and spherical coordinates
$(\varphi, u)\in [0,\pi]\times \Ss^{d-1}$ on $\Ss^d$ as introduced in 
Section~\ref{sec:geodesic_flow_on_cusps}.
\begin{lemma}\label{lem:X_bell}
$\mathbf X \in \Psi_{b, 1}(M,L)$ is a cusp-b-operator and its associated
b-operator $\mathbf X_{b,\ell}$ in $\Psi_{b,1}(\R\times \FibreL_\ell)$ as
defined in Definition~\ref{def:b-operator} is 
given by
\begin{equation}\label{eq:X_b_ell}
\mathbf X_{b,\ell} = h\left[\cos(\varphi)\partial_r +\frac{d}{2}\cos(\varphi)+
\nabla^{(\ell)}_{\Xgr}+ A_\ell\right],
\end{equation}
where $\nabla^{(\ell)}$ is the canonical connection on $\FibreL_\ell$, $A_\ell \in
\textup{End}(V_\ell)^{\groupM}$  is given by 
Definition~\ref{def:admissible_vector_bundle} and acts as a zero-th order
operator on $\FibreL_\ell$ and $\Xgr = \sin\varphi\partial_\varphi$ is the vector field
of the gradient flow on $\Ss^d$.
\end{lemma}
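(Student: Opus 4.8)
The plan is to reduce to the explicit form of the admissible lift in the cusp and then translate the geodesic flow, written as the right $\groupA$-action, into the coordinates $(r,\varphi,u)$. First I would invoke Definition~\ref{def:admissible_vector_bundle}: over $Z_\ell$ one has $L|_{Z_\ell}=L_{\ell,\tau_\ell}=\Lambda_\ell\backslash\groupG\times_{\tau_\ell}V_\ell$, and $\mathcal X$ acts there as $\mathcal X_\ell=\nabla_X+A_\ell$, with $\nabla$ the canonical connection of the homogeneous bundle and $A_\ell\in\End(V_\ell)^{\groupM}$. Since $\mathcal X$ is a first order differential operator that, by construction, commutes with the generators $y\partial_y,\,y\partial_\theta$ of the local isometry pseudo-group (the geodesic vector field does, the canonical connection is isometry invariant, and $A_\ell$ is a constant $\groupM$-intertwiner), its Schwartz kernel is supported on the diagonal and it preserves both $\theta$-independence and translation invariance in $r$. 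Hence $\mathbf X=h\mathcal X\in\Psi_{b,1}(M,L)$ in the sense of Definition~\ref{def:b-operator}, and the associated $r$-translation invariant operator $\mathbf X^0_{b,\ell}:=\mathscr{P}_\ell\mathbf X\mathscr{E}_\ell$ on sections of $\R\times\FibreL_\ell\to\R\times\Ss^d$ is well defined (acting on $L^2(\R\times\FibreL_\ell,e^{-rd}dr\,d\zeta)$), where $\FibreL_\ell=\groupK\times_{\tau_\ell}V_\ell\to\groupK/\groupM=\Ss^d$ as in Example~\ref{exmpl:admissible_bundle_reduction}.

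Next I would compute $\mathbf X^0_{b,\ell}$. The crucial algebraic point is that the Bruhat complement $\mathfrak a\oplus\mathfrak n_+\oplus\mathfrak n_-$ of $\mathfrak m$ in \eqref{eq:Bruhat-decomposition}, which defines the canonical connection, is Killing-orthogonal to $\mathfrak m$ and contains $\mathfrak a$; therefore the $\groupA$-direction is horizontal, so that on equivariant functions $\nabla_X f=\tfrac{d}{dt}\big|_0 f(\cdot\,a_t)$ with no connection correction. Writing a $\theta$-independent section as such an equivariant function and using the Iwasawa decomposition $\groupG=\groupN\groupA\groupK$ to pass to the coordinates $(r,\zeta)$, the right $\groupA$-action becomes exactly the transient dynamics recalled in property~(B): to first order it shifts $r$ by $\cos\varphi\cdot t$ and drags $\zeta\in\Ss^d$ along the gradient flow $\Xgr=\sin\varphi\,\partial_\varphi$. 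Applying the same horizontality remark to $\mathfrak q:=\mathfrak m^{\perp_{\mathfrak k}}\subset\mathfrak m^{\perp_{\mathfrak g}}$ identifies the differentiation in the $\zeta$ variable with the canonical connection $\nabla^{(\ell)}$ on $\FibreL_\ell$, so that $\mathbf X^0_{b,\ell}=h\big(\cos\varphi\,\partial_r+\nabla^{(\ell)}_{\Xgr}+A_\ell\big)$.

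Finally I would pass from the measure $e^{-rd}dr\,d\zeta$ to $dr\,d\zeta$ by the conjugation $\mathbf X_{b,\ell}=e^{-rd/2}\mathbf X^0_{b,\ell}e^{rd/2}$. Only the $r$-derivative feels this, $e^{-rd/2}(\cos\varphi\,\partial_r)e^{rd/2}=\cos\varphi\,\partial_r+\tfrac{d}{2}\cos\varphi$, while $\nabla^{(\ell)}_{\Xgr}$ (differentiating only along $\Ss^d$) and $A_\ell$ (acting fiberwise) commute with $e^{rd/2}$; together these give \eqref{eq:X_b_ell}.

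I expect the main obstacle to be the middle step: verifying that, on $\theta$-independent sections, $\nabla_X$ decouples into $\cos\varphi$ times the trivial $r$-derivative plus the canonical sphere connection along $\Xgr$. This amounts to writing out the right $\groupA$-action in the Iwasawa coordinates of a single full cusp and checking the two horizontality facts ($\mathfrak a\subset\mathfrak m^{\perp_{\mathfrak g}}$ and $\mathfrak q\subset\mathfrak m^{\perp_{\mathfrak g}}$), both of which follow from the Cartan decomposition $\mathfrak g=\mathfrak k\oplus\mathfrak p$ together with the $\textup{Ad}\groupM$-invariance of the Bruhat summands; note that the $\theta$-derivative term of the full vector field $X$ plays no role here, since it acts trivially on $\theta$-independent data. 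Everything else is routine bookkeeping with Definitions~\ref{def:b-operator} and~\ref{def:admissible_vector_bundle}.
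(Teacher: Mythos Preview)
Your proposal is correct and follows essentially the same route as the paper: both identify sections with $\groupM$-equivariant functions on $\Lambda_\ell\backslash\groupG$, write $\nabla_X$ as right differentiation by $\groupA$ (the paper's equation for $\mathbf X\tilde f$ via $\tfrac{d}{dt}|_0\tilde f(\Lambda_\ell g e^{Ht})$ is exactly your horizontality observation), pass to the $\groupA\times\groupK$ picture on $\theta$-independent sections, and then identify the $r$- and $\zeta$-components by comparison with the scalar computation already done in Section~\ref{sec:geodesic_flow_on_cusps}. The only cosmetic difference is that the paper writes out the Iwasawa projections $a_{NAK},k_{NAK}$ explicitly before invoking the scalar formula, whereas you phrase the same reduction in terms of the horizontality of $\mathfrak a$ and of $\mathfrak q=\mathfrak m^{\perp_{\mathfrak k}}$; both arrive at $\mathbf X^0_{b,\ell}=h(\cos\varphi\,\partial_r+\nabla^{(\ell)}_{\Xgr}+A_\ell)$ and then conjugate by $e^{-rd/2}$.
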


\begin{proof}
 Let us fix a cusp $Z_\ell$ and consider a section $f\in C^\infty (S^*Z_\ell, L)$
 supported in $\{y>\mathsf{a}\}$. Recall that $L_{|S^*Z_\ell} = \Lambda_\ell\backslash 
 \groupG\times_{\tau_\ell}V_\ell$, thus we can identify $f$ with a function $\tilde f:\Lambda_\ell\backslash\groupG\to
 V_\ell$, that is
 right $\groupM$-equivariant, i.e. $\tilde f(\Lambda_\ell gm)= \tau_\ell(m^{-1})\tilde f(\Lambda_\ell g)$. Note that the geodesic flow on $S^*Z_{\ell,f}\cong \Lambda_\ell\backslash
 \groupG/\groupM$ is given by the right $\groupA$-action and we can write\footnote{
 The identification of the canonical connection on reductive homogenous spaces can be found
 in many geometry textbooks. For a short exposition in the context of geodesic flows on 
 vector bundles over locally symmetric spaces we refer to \cite[Section 1.1.5]{KW17}.}
 \begin{equation}\label{eq:action_of_X_NAK}
  (\mathbf X\tilde f)(\Lambda_\ell g) = h\left[\frac{d}{dt}_{|t=0}\tilde f(\Lambda_\ell g e^{Ht})
  +A_\ell\tilde f(\Lambda_\ell g)\right]
 \end{equation}
 for a suitably normalized $H\in \mathfrak a=\textup{Lie}(\groupA)$.
 Let us check that $\mathbf X$ preserves sections that are independent of the $\theta$ variable. Note that w.r.t. the $\groupN\groupA\groupK$ decomposition, this means that $\tilde f(\Lambda_\ell ng) = \tilde f(\Lambda_\ell g)$ (cf. Section~\ref{sec:admissible_vb}). That such functions are preserved under $\mathbf X$ is obvious by \eqref{eq:action_of_X_NAK}.
 Consequently $\mathbf X$ is a black-box operator according to Definition~\ref{def:b-operator}.

Let us thus remove the dependencies in $\theta\in \Lambda_\ell\backslash\groupN$ and 
consider the operator $\mathbf X_{b,\ell}^0$ acting on sections  $f\in C^\infty(\R\times\FibreM,\R\times\FibreL_\ell)$. Further identify these sections with right $\groupM$-invariant functions
$\tilde f:\groupA\times\groupK\to V_\ell$. By the $\groupN\groupA\groupK$-Iwasawa decomposition we can write any $g\in \groupG$ in a unique way as $g=n_{NAK}(g)a_{NAK}(g)k_{NAK}(g)$. With this notation we can write
\[
\begin{split}
 \mathbf{X}^0_{b,\ell}\tilde f(a,k) &= h\left[\frac{d}{dt}_{|t=0} \tilde f (a_{NAK}(ake^{Ht}), k_{NAK}(ake^{Ht}))+A_\ell\tilde f(a,k)\right]\\
 &=h\left[\frac{d}{dt}_{|t=0} \tilde f (aa_{NAK}(ke^{Ht}), k_{NAK}(ke^{Ht}))+A_\ell\tilde f(a,k)\right]\\
 \end{split}
\]
where we used the identities $a_{NAK}(ag)=a\cdot a_{NAK}(g)$ and $k_{NAK}(ag)=k_{NAK}(g)$.
This formula shows directly that $\mathbf{X}^0_{b,\ell}$ commutes with translations in the $\groupA$ direction
and we have thus shown that $\mathbf X$ is a cusp-b-operator according to Definition~\ref{def:b-operator}. It finally remains to express $\mathbf{X}_{b,\ell}^0$ in the coordinates $r,\varphi,u$ of $\R\times\Ss^d
\cong\groupA\times\groupK/\groupM$ as introduced above. In particular we have to identify the 
differential operators 
\[
\frac{d}{dt}_{|t=0}aa_{NAK}(ke^{Ht}) \text{ on } \groupA\cong\R,\ \text{ and } \frac{d}{dt}_{|t=0}k_{NAK}(ke^{Ht}) \text{ on } \groupK/\groupM\cong\Ss^d.
\]
As these differential operators
are independent of the choice of the vector bundle we can simply restrict to the scalar case 
and compare to the expression of the geodesic flow vector field in coordinates that have been
calculated in Example~\ref{exmpl:b-op-of_geodesic_flow} (cf. also Equation \eqref{eq:def-X_k}). This yields $\frac{d}{dt}_{|t=0}aa_{NAK}(ke^{Ht})\cong \cos\varphi\partial_r$ and $\frac{d}{dt}_{|t=0}k_{NAK}(ke^{Ht}) \cong\sin\varphi\partial_\varphi =\Xgr$. Taking into account the definition of the canonical connection 
on $\FibreL_\ell=\groupK\times_{\tau_\ell}V_\ell$ we get
\[
 \mathbf X_{b,\ell}^0 = h\left[\cos\varphi\partial_r + \nabla^{\FibreL_\ell}_{\Xgr} + A_\ell \right]
\]
In order to pass from $\mathbf X_{b,\ell}^0$ to $\mathbf X_{b\ell}$  one simply has to conjugate the differential 
operator by $e^{-rd/2}$ which creates the additional $d/2\cos\varphi$ term in (\ref{eq:X_b_ell})
\end{proof}

Now from Equation \eqref{eq:X_b_ell} and the definition~\ref{def:indicial_family} of the indicial family
we directly obtain:
\begin{corollary}
For $\mathbf X_{b\ell}$ as in Lemma~\ref{lem:X_bell} one has
\begin{equation}\label{eq:indicical_op_for_admissible_lift}
 I(\mathbf X_{b,\ell},\lambda) = \lambda\cos\varphi + h\left[\frac{d}{2}\cos\varphi + \nabla^{\FibreL_\ell}_{\Xgr} + A_\ell\right].
\end{equation}
\end{corollary}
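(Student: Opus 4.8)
The plan is to obtain \eqref{eq:indicical_op_for_admissible_lift} as an immediate consequence of the explicit formula \eqref{eq:X_b_ell} for $\mathbf X_{b,\ell}$ by feeding it into Definition~\ref{def:indicial_family}. By that definition, for a section $f$ of $\FibreL_\ell\to\FibreM$,
\[
I(\mathbf X_{b,\ell},\lambda)f(\PointM) = e^{-\lambda r/h}\Big[\mathbf X_{b,\ell}\big((r',\PointM')\mapsto e^{\lambda r'/h}f(\PointM')\big)\Big](\PointM),
\]
so the whole argument amounts to conjugating each of the four terms in $\mathbf X_{b,\ell}=h\big[\cos\varphi\,\partial_r+\tfrac d2\cos\varphi+\nabla^{(\ell)}_{\Xgr}+A_\ell\big]$ by multiplication by $e^{\lambda r/h}$.

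First I would note that $\Xgr=\sin\varphi\,\partial_\varphi$ is tangent to the fibre $\FibreM=\Ss^d$, hence $\nabla^{(\ell)}_{\Xgr}$ differentiates only in the sphere variables; likewise $A_\ell\in\End(V_\ell)^{\groupM}$ is a constant (zero-th order, $r$-independent) endomorphism of $\FibreL_\ell$, and $\tfrac d2\cos\varphi$ is a plain multiplication operator in the fibre variable. None of these three terms sees the variable $r$, so they commute with multiplication by $e^{\lambda r/h}$ and are unchanged under the conjugation. The only term carrying $\partial_r$ is $h\cos\varphi\,\partial_r$; since $e^{-\lambda r/h}\,(h\partial_r)\,e^{\lambda r/h}=\lambda+h\partial_r$ and the argument $e^{\lambda r'/h}f(\PointM')$ is constant in $r'$, this term produces exactly $\lambda\cos\varphi$. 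Collecting everything yields
\[
I(\mathbf X_{b,\ell},\lambda)f=\lambda\cos\varphi\,f+h\big[\tfrac d2\cos\varphi+\nabla^{(\ell)}_{\Xgr}+A_\ell\big]f,
\]
which is \eqref{eq:indicical_op_for_admissible_lift}; holomorphic (indeed affine) dependence on $\lambda\in\C$ is then manifest.

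There is essentially no obstacle here: the statement is pure bookkeeping once Lemma~\ref{lem:X_bell} is in hand, the only point worth a word of care being the fact --- already part of the conclusion of that lemma, via the structure of $\FibreL_\ell=\groupK\times_{\tau_\ell}V_\ell$ as a bundle over $\Ss^d$ and the $\groupM$-invariance of $A_\ell$ --- that $\nabla^{(\ell)}_{\Xgr}$ and $A_\ell$ act purely in the fibre variables and carry no $\partial_r$. (Alternatively one could quote Lemma~\ref{lemma:Indicial-symbol}, but since $\mathbf X_{b,\ell}$ is a b-\emph{differential} operator the direct substitution $h\partial_r\mapsto\lambda$ above is the most transparent route.)
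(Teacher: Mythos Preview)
Your proposal is correct and follows exactly the same route as the paper, which simply states that the corollary is obtained directly from Equation~\eqref{eq:X_b_ell} and Definition~\ref{def:indicial_family}; you have merely spelled out the one-line conjugation computation that the paper leaves implicit. (One tiny wording quibble: it is not $e^{\lambda r'/h}f(\PointM')$ that is constant in $r'$, but rather $f(\PointM')$ itself, so that after the conjugation the residual $h\partial_r$ annihilates it --- your conclusion is of course unaffected.)
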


\subsection{Finding the indicial roots for functions}

In this section, we focus on the action on functions. In that case, $\mathcal{X}=X$ and $\mathbf{X}=hX$. Since the flow is the same for each cusp, we can safely drop the dependence in the index $\ell$. We compute the indicial roots of $I(\mathbf{X}_b,\lambda) - hs$.
As this operator will frequently show up in the sequel we introduce the shorter notation 
\begin{equation}\label{eq:def-P-lambda}
P_\lambda := I(\mathbf{X}_b,\lambda)=h\sin\varphi\partial_\varphi +[\lambda +hd/2]\cos\varphi.
\end{equation}

Le us introduce some notations which we will need to formulate the spectral properties of $P_\lambda$.
Recall that we have introduced the coordinates $(\varphi, u)\in [0,\pi]\times \Ss^{d-1}$ on 
$\Ss^d$. Consider the projection of $\Ss^d$ to the equatorial plane. It is a smooth chart on both strict hemispheres. We denote these smooth restrictions by
\begin{align}
 \kappa_{\mathcal N}:\{(\varphi,u) \in \Ss^d|\varphi<\pi/2\}&\to \{x\in\R^d:\|x\|<1\}\\
\intertext{ and }
  \kappa_{\mathcal S}:\{(\varphi,u) \in \Ss^d|\varphi>\pi/2\}&\to \{x\in\R^d:\|x\|<1\}
\end{align}
Note that $(\rho,u):=(\sin\varphi, u)\in [0,1]\times \Ss^{d-1}$ 
are exactly the radial coordinates in both charts.

For further reference we recall that the Taylor expansion in radial coordinates at $0$ for $f\in C^n(\R^d)$ can be written in the following fashion
\begin{equation}\label{eq:taylor_radial}
 f(\rho, u) = \sum_{|\mu|\leq n}\frac{\partial_x^\mu f(0)}{\mu!}\cdot \rho^{|\mu|}\cdot \Upsilon_\mu(u) + o(\rho^n) \textup{, as }\rho\to 0.
\end{equation}
Here $\mu\in \N^d$ is a multindex, $\Upsilon_\mu\in C^\infty(\Ss^{d-1})$ is the monomial 
$x^\mu$, $x\in \R^d$, of degree $|\mu|$ restricted to the unit sphere $\Ss^{d-1}\subset \R^d$.

Let us come back to $P_\lambda$. According to Lemma \ref{lemma:equivalence-indicial-spaces}, to determine the indicial roots, it suffices to consider the action of $P_\lambda$ on $\mathsf{H}^{\escapeparam m_b}_0(\Ss^d)$, that we denote just $\mathsf{H}^{\escapeparam m_b}(\Ss^d)$. Inspecting the Formula \eqref{eq:def-P-lambda}, we see that is a gradient vector field plus a complex potential. Pollicott-Ruelle resonances for Morse-Smale gradient flows were studied in detail by Dang and Rivière \cite{Dang-Riviere-16}. In particular, the spaces they defined are quite similar to $\mathsf{H}^{\escapeparam m_b}(\Ss^d)$. Recall that $C_G$ was defined in Lemma \ref{lemma:escape-function}
\begin{lemma}\label{lemma:Identifying-the-indicial-space}
There is an $\epsilon>0$ such that the following holds.
\begin{itemize}
	\item Let $f\in \mathcal{D}'(\Ss^d)$ be supported in the $\epsilon$-neighbourhood of the North Pole. Then $f\in \mathsf{H}^{\escapeparam m_b}(\Ss^d)$ if and only if $f \in H^{-C_G \escapeparam}(\Ss^d)$.
	\item Let $f\in \mathcal{D}'(\Ss^d)$ be supported in the $\epsilon$-neighbourhood of the South Pole. Then $f\in \mathsf{H}^{\escapeparam m_b}(\Ss^d)$ if and only if $f \in H^{C_G \escapeparam}(\Ss^d)$.
\end{itemize}
\end{lemma}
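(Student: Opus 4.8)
The plan is to identify $\mathsf{H}^{\mathscr{r}m_b}(\Ss^d)=\mathsf{H}^{\mathscr{r}m_b}_0(\Ss^d)$ with a genuine anisotropic Sobolev space on $\Ss^d$ and then to pin down its order function near the two poles. By Lemma~\ref{lemma:Indicial-symbol} and formula \eqref{eq:expression-sigma-b-lambda}, $I(\Op^b(e^{-\mathscr{r}G_b})_C,0)=\Op^w_h(\sigma_{b,0})$ for a symbol $\sigma_{b,0}$ on $T^\ast\Ss^d$ which is elliptic in the anisotropic class whose order function is $\mathscr{r}\,m_{b,0}$, where $m_{b,0}(\zeta,\eta/|\eta|)$ is the value of the $0$-homogeneous weight $\mathbf m$ on the covector ray generated over $\zeta\in\Ss^d$ by the direction $(Y=0,\,J=0,\,\eta)$ — this is exactly the $J=0$ slice of $\mathbf m$ that the b-reduction \eqref{eq:def-sigma-b} produces, and it is $\lambda$-independent by Lemma~\ref{lemma:equivalence-indicial-spaces}. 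Thus $\mathsf{H}^{\mathscr{r}m_b}(\Ss^d)$ is the anisotropic Sobolev space on $\Ss^d$ with order function $\mathscr{r}\,m_{b,0}$, and it suffices to show that $m_{b,0}\equiv -C_G$ on a full conic neighbourhood of the cosphere fibre $S^\ast_{\mathcal N}\Ss^d$ and $m_{b,0}\equiv +C_G$ on a full conic neighbourhood of $S^\ast_{\mathcal S}\Ss^d$; the assertion of the lemma then follows from the standard fact that an anisotropic Sobolev space whose order function is \emph{constant} on a neighbourhood of the fibres over a region agrees, on distributions supported in that region, with the corresponding uniform Sobolev space (exactly the microlocal equivalence recorded in Lemma~\ref{lem:regularity_of_H_r}, specialized to a region where $\mathbf m$ is constant).

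The geometric heart of the matter is the computation of the invariant bundles $E^0,E^s,E^u$ over the poles, high in the cusp, where the metric is exactly hyperbolic so that these are the constant-curvature bundles used in the construction of $G$ (cf.\ Lemma~\ref{lemma:invariance-escape-function}). From the explicit field $X=\cos\varphi\,\partial_r+e^r\sin\varphi\,u\cdot\partial_\theta+\sin\varphi\,\partial_\varphi$ one reads off, at $\zeta=\mathcal N$ (i.e.\ $\varphi=0$), that $X=\partial_r$, so $E^0|_{\mathcal N}=\R\partial_r$; a nearby vertical geodesic $\{(y',\theta+\delta\theta,\mathcal N)\}$ is contracted in forward time, so $E^s|_{\mathcal N}=\Span\{\partial_{\theta_1},\dots,\partial_{\theta_d}\}$; and since the $\zeta$-dynamics $\dot\varphi=\sin\varphi$ expands near $\mathcal N$, the sphere fibre is unstable, $E^u|_{\mathcal N}=T_{\mathcal N}\Ss^d$. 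Therefore, reading $(\,\cdot\,)^\perp$ as the annihilator, $E^\ast_s|_{\mathcal N}=(E^s\oplus E^0)^\perp=T^\ast_{\mathcal N}\Ss^d$ while $E^\ast_u|_{\mathcal N}=(E^u\oplus E^0)^\perp=\{J\,d\theta:J\in\R^d\}$. The same computation at $\mathcal S$ — where $X=-\partial_r$ and the sphere fibre is now stable — gives $E^\ast_u|_{\mathcal S}=T^\ast_{\mathcal S}\Ss^d$ and $E^\ast_s|_{\mathcal S}=\{J\,d\theta\}$.

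Combining the two ingredients: over $\mathcal N$, the covectors surviving the b-reduction (which sets $J=0$) are those of the form $(Y,\,J=0,\,\eta)$, whose directions, as $|\eta|\to\infty$, converge into $T^\ast_{\mathcal N}\Ss^d=E^\ast_s|_{\mathcal N}$, which lies in the open conic neighbourhood $N_s$ of $E^\ast_s$ on which $\mathbf m\equiv -C_G$. By continuity of the bundle $E^\ast_s$ and compactness of the cosphere, there is $\epsilon>0$ such that for every $\zeta$ within $\epsilon$ of $\mathcal N$ the whole fibre $S^\ast_\zeta\Ss^d$ (in the $J=0$ slice) is contained in $N_s$; hence $m_{b,0}\equiv -C_G$ over the $\epsilon$-ball around $\mathcal N$. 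Symmetrically, shrinking $\epsilon$ if needed, $m_{b,0}\equiv +C_G$ over the $\epsilon$-ball around $\mathcal S$, since there the surviving covectors land in $E^\ast_u|_{\mathcal S}\subset N_u$ where $\mathbf m\equiv +C_G$. Feeding this into the first paragraph: a distribution $f$ supported in the $\epsilon$-ball around $\mathcal N$ has $\WF(f)$ contained in the region where $m_{b,0}\equiv-C_G$, so $\Op^w_h(\sigma_{b,0})^{-1}f\in L^2$ if and only if $f\in H^{-C_G\mathscr{r}}(\Ss^d)$; likewise at $\mathcal S$ with $H^{+C_G\mathscr{r}}(\Ss^d)$. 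This is the claim.

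The step I expect to be the main obstacle is making the identification in the first paragraph fully rigorous: one has to verify that the restriction of $e^{-\mathscr{r}G_b}$ to $J=0$ is genuinely an elliptic anisotropic symbol of the stated order on $\Ss^d$ (using Lemma~\ref{lemma:global-b-symbols-give-free-b-symbols} and the bounded geometry of $\overline g_b$), and that the passage ``anisotropic space with locally constant order function $=$ uniform Sobolev space on compactly supported distributions'' is valid — routine pseudodifferential calculus, but one which genuinely needs $m_{b,0}$ to be constant on a \emph{full} conic neighbourhood of $S^\ast_{\mathcal N}\Ss^d$ and $S^\ast_{\mathcal S}\Ss^d$, and this is precisely why the exact identification $E^\ast_s|_{\mathcal N}=T^\ast_{\mathcal N}\Ss^d$, $E^\ast_u|_{\mathcal S}=T^\ast_{\mathcal S}\Ss^d$ (rather than merely containment of a sub-cone) is the crucial geometric input.
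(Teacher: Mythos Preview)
Your proposal is correct and follows essentially the same route as the paper: identify the invariant bundles at the poles, show that $E^\ast_s|_{\mathcal N}=(E^s\oplus E^0)^\perp=T^\ast_{\mathcal N}\Ss^d$ (and symmetrically $E^\ast_u|_{\mathcal S}=T^\ast_{\mathcal S}\Ss^d$), and conclude that the weight $\mathbf m$ is constant on a full conic neighbourhood of the fibre, so the anisotropic space reduces to the standard Sobolev space there. One small remark: your identification $E^u|_{\mathcal N}=T_{\mathcal N}\Ss^d$ is not quite exact---the unstable direction at $\mathcal N$ mixes $\partial_\varphi$ with $\partial_\theta$ (the paper records this as $d\theta/y=u\cdot d\varphi/2$)---but this is immaterial, since only $E^s\oplus E^0$ enters the computation of $E^\ast_s$, and you have that right.
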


\begin{proof}
Let us prove the first assertion: By Definition~\ref{def:anisotropic_indicial_spaces} of $\mathsf{H}^{\escapeparam m_b}$ and standard Pseudodifferential operator arguments it is enough to prove that 
$m_{b,0}\in S^0(\FibreL)$ is constant equal to $-C_G$ in a neighbourhood of $\mathcal N$ modulo some lower order terms $S^{-1+\varepsilon}(\FibreL)$. By Lemma~\ref{lemma:Indicial-symbol} and Proposition~\ref{prop:S_b_gives_Psi_b} the leading term is given by $m_{b,0}(\zeta,\eta) = m(r,\theta, \zeta, \lambda=0,J=0 \eta) \mod S^{-1+\varepsilon}(\FibreL)$ where $m\in S^0(M, L)$ is the order function constructed in Lemma~\ref{lemma:escape-function}. By construction of $m$ we know that high enough in the cusp $m = -C_G$ in a neighbourhood of $E^*_{c,s} := (E^s_c\oplus E^0_c)^\perp$. Here $E^s_c$ and $E^0_c$ are the stable and neutral bundles corresponding to constant curvature (see discussions in the proof of Lemma~\ref{lemma:invariance-escape-function}). Now if we consider some point $(r, \theta, \mathcal N)\in SZ$ and write $T(r, \theta, \mathcal N)(SZ) \cong  T_r\R\oplus T_\theta(\R^d/\Lambda)\oplus T_{\mathcal N}\mathbb S^d$ then we have by standard hyperbolic geometry $(E^0_c)_{(r, \theta, \mathcal N)} = T_r\R$ and $(E^s_c)= T_\theta(\R^d/\Lambda)$. Consequently $E^*_{c,s}$ is precisely given by $\lambda=0, J=0$. Putting everything together we know that at leading order $m_{b,0}$ is constant equal to $-C_G$ around $\mathcal N$ which implies the first assertion. 

The second statement follows from similar arguments. 
\end{proof}

The following lemma shows that in the charts $\kappa_{\mathcal{N},\mathcal{S}}$, the operator $P_\lambda$ takes a 
particularly simple form --- recall that here $\rho = \sin\varphi$
\begin{lemma}\label{lem:conjugate_P_lambda}
On the Northern Hemisphere, the function $2\tan(\varphi/2)/\sin\varphi$ is an analytic, 
 nonzero function and expressed in the $(\rho,u)$-charts, defined above we have
 \[
\begin{split}
  &\left(\frac{2\tan(\varphi/2)}{\sin\varphi}\right)^{-s}(P_\lambda - hs)\left(\frac{2\tan(\varphi/2)}{\sin\varphi}\right)^{s} \\
   &\qquad\qquad\qquad\qquad= \sqrt{1-\rho^2}\left(h\rho\partial_\rho -hs+\lambda+ h\frac{d}{2}\right).
\end{split}
 \]
 On the Southern Hemisphere, the function $2\tan(\varphi/2)\sin\varphi$ is an analytic, 
 nonzero function and expressed in the $(\rho,u)$-charts, defined above we have
 \[
\begin{split}
  &\left(2\tan(\varphi/2)\sin\varphi\right)^{-s}(P_\lambda - hs)\left(2\tan(\varphi/2)\sin\varphi\right)^{s} \\
  &\qquad\qquad\qquad\qquad= \sqrt{1-\rho^2}\left(-h\rho\partial_\rho -hs-\lambda-h\frac{d}{2}\right).
\end{split}
 \]
\end{lemma}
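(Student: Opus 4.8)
The statement is an identity of differential operators, and the plan is to prove it by a direct computation in the coordinate $\varphi$ followed by the substitution $\rho=\sin\varphi$; everything is elementary trigonometry together with careful sign bookkeeping. First I would rewrite the two conjugating weights by means of the half-angle formulas $\sin\varphi = 2\sin(\varphi/2)\cos(\varphi/2)$ and $\tan(\varphi/2)=\sin(\varphi/2)/\cos(\varphi/2)$, obtaining
\[
\frac{2\tan(\varphi/2)}{\sin\varphi}=\frac{1}{\cos^2(\varphi/2)}=\frac{2}{1+\cos\varphi},\qquad 2\tan(\varphi/2)\sin\varphi = 4\sin^2(\varphi/2)=2(1-\cos\varphi).
\]
On the Northern Hemisphere $\{\varphi<\pi/2\}$ one has $\cos\varphi=\sqrt{1-\rho^2}$, so in the chart $\kappa_{\mathcal N}$ the first weight equals $2/(1+\sqrt{1-\|x\|^2})$, a function of $\|x\|^2$ only, hence real-analytic, and it is bounded and bounded away from $0$; on the Southern Hemisphere $\{\varphi>\pi/2\}$ one has $\cos\varphi=-\sqrt{1-\rho^2}$ and the second weight equals $2(1+\sqrt{1-\|x\|^2})$, with the same properties. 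In particular both weights are real, positive and smooth (analytic) on the relevant hemisphere, so their complex powers $g^{s}:=e^{s\log g}$ are well-defined smooth multipliers for any $s\in\C$, which disposes of the first assertions of the lemma.

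Next I would carry out the conjugation. For a positive smooth multiplier $g=g(\varphi)$ the Leibniz rule applied to $P_\lambda = h\sin\varphi\,\partial_\varphi + [\lambda+hd/2]\cos\varphi$ (see \eqref{eq:def-P-lambda}) gives
\[
g^{-s}(P_\lambda-hs)\,g^{s} = h\sin\varphi\,\partial_\varphi + \Bigl(hs\,\sin\varphi\,(\log g)' + [\lambda+hd/2]\cos\varphi - hs\Bigr).
\]
Using $\frac{d}{d\varphi}\log\tan(\varphi/2)=\frac{1}{\sin\varphi}$ and $\frac{d}{d\varphi}\log\sin\varphi=\cot\varphi$, I would compute $(\log g)'=\frac{1-\cos\varphi}{\sin\varphi}$ for the Northern weight and $(\log g)'=\frac{1+\cos\varphi}{\sin\varphi}$ for the Southern one; then $hs\sin\varphi(\log g)'-hs$ equals $-hs\cos\varphi$ respectively $+hs\cos\varphi$, so that the zeroth-order term collapses to $[\lambda+hd/2-hs]\cos\varphi$ respectively $[\lambda+hd/2+hs]\cos\varphi$.

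Finally I would pass to the radial coordinate $\rho=\sin\varphi$. On the Northern Hemisphere $\partial_\varphi=\cos\varphi\,\partial_\rho$ with $\cos\varphi=\sqrt{1-\rho^2}$, hence $h\sin\varphi\,\partial_\varphi=\sqrt{1-\rho^2}\,h\rho\partial_\rho$; factoring the common $\cos\varphi=\sqrt{1-\rho^2}$ out of the previous display gives exactly $\sqrt{1-\rho^2}\bigl(h\rho\partial_\rho-hs+\lambda+hd/2\bigr)$. On the Southern Hemisphere $\cos\varphi=-\sqrt{1-\rho^2}$, so $\partial_\varphi=-\sqrt{1-\rho^2}\,\partial_\rho$ and $h\sin\varphi\,\partial_\varphi=-\sqrt{1-\rho^2}\,h\rho\partial_\rho$; factoring out $\cos\varphi=-\sqrt{1-\rho^2}$ from the zeroth-order term then yields $\sqrt{1-\rho^2}\bigl(-h\rho\partial_\rho-hs-\lambda-hd/2\bigr)$, as claimed. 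The only point requiring real attention — and hence the only place the argument could go wrong — is the sign of $\cos\varphi$ on the Southern Hemisphere together with the fact that there $\rho=\sin\varphi$ decreases as $\varphi$ increases toward $\mathcal S$; this is also exactly the reason the weights are written in the $\tan(\varphi/2)$ form, since on each hemisphere one must use the representative ($2/(1+\cos\varphi)$ near $\mathcal N$, $2(1-\cos\varphi)$ near $\mathcal S$) that stays analytic and nonvanishing up to the pole.
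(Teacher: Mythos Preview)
Your proof is correct and is precisely the ``straightforward calculation using standard trigonometric identities'' that the paper invokes without detail; you have simply written it out in full, with the half-angle rewriting of the weights, the Leibniz-rule conjugation, and the sign-careful change of variable $\rho=\sin\varphi$ on each hemisphere.
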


\begin{proof}
 The results follows from a straightforward calculation using standard trigonometric identities.
\end{proof}
Let $\mu\in \N^d$ be a multindex, then the we define the standard dirac distributions on $\R^d$
by $\delta_0^{(\mu)}: C_c^\infty(\R^d)\ni f\mapsto (\partial_x^\mu f)(0)$. Recall that 
all distributions on $\R^d$, supported in $0$ are linear combinations of finitely many 
$\delta_0^{(\mu)}$. Furthermore
\begin{equation}\label{eq:dirac_zero_ev}
\rho\partial_\rho\delta_0^{(\mu)} = -(|\mu|+d)\delta_0^{(\mu)} .
\end{equation}
If $\kappa_{\mathcal N}$ is the chart of the Northern Hemisphere, then we define for any $\lambda\in\C$ 
the distribution
\[
 \delta_{\mathcal N,\lambda}^{(\mu)}:=\left(\frac{2\tan(\varphi/2)}{\sin\varphi}\right)^{\lambda/h - (|\mu|+d/2)}
 \kappa_{\mathcal N}^* \delta_0^{(\mu)} \in\mathcal D'(\Ss^d),
\]
and for $\kappa_{\mathcal S}$ the chart of the Southern Hemisphere, we define the distribution
\[
 \delta_{\mathcal S,\lambda}^{(\mu)}:=\left(2\tan(\varphi/2)\sin\varphi\right)^
 {-\lambda/h + (|\mu|+d/2)}
 \kappa_{\mathcal S}^* \delta_0^{(\mu)} \in\mathcal D'(\Ss^d).
\]
Combining \eqref{eq:dirac_zero_ev} with Lemma~\ref{lem:conjugate_P_lambda} we obtain
\begin{equation}\label{eq:delta_eigenvalues}
\begin{split}
 [P_\lambda - (\lambda - h(|\mu| +d/2))]&\delta_{\mathcal N,\lambda}^{(\mu)}=0 \\
 [P_\lambda + (\lambda - h(|\mu| +d/2))]&\delta_{\mathcal S,\lambda}^{(\mu)}=0
\end{split}
\end{equation}
and up to linear combinations these are the only eigendistributions of $P_\lambda$ supported
in the North or South Pole. 

We next want to study the kernels of the operators $P_\lambda$ on $\mathsf{H}^{\escapeparam m_b}(\Ss^d)$. According to 
Proposition~\ref{prop:constant-domain-order1-operators}, each $P_\lambda$ has a unique closed extension, and the domain 
$D^{\escapeparam m_b}(\Ss^d)$ does not depend on $\lambda$, so that $\lambda\mapsto P_\lambda$ is a type (A) family as in 
Kato \cite{Kato-80}. Further, to prove Proposition \ref{prop:meromorphic indicial_resolvent}, we proved that $P_\lambda - hs$ is Fredholm of index $0$ when $\Re(s)>-\escapeparam C_G +d/2 + |\Re(\lambda/h)|$. The rest of this section is devoted to the proof of
\begin{proposition}\label{prop:indicial_roots}
The indicial roots of $\mathbf X_b$ acting on functions are affine and they are given by 
\[
 \specb(s) = \{\pm (s+(d/2 + n)), n\in \mathbf N\}.
\]
\end{proposition}

We start with
\begin{lemma}\label{lem:P_lambda_finite_kernel}
 Let $\escapeparam>0$, $\lambda\in\C$, then for $\Re(s)>-\escapeparam C_G +d/2 + |\Re(\lambda/h)|$ the operator
 $(P_\lambda -hs):D^{\escapeparam m_b}(\Ss^d)\to \mathsf{H}^{\escapeparam m_b}(\Ss^d)$ is injective unless
\begin{equation}\label{eq:scalar-indicial-roots}
\lambda = \pm h[s + (d/2+n)].
\end{equation}
for some $n \in \N$.
\end{lemma}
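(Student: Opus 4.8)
The plan is to exploit the explicit form \eqref{eq:def-P-lambda} of $P_\lambda$ together with the description of $\mathsf{H}^{\mathscr{r}m_b}(\Ss^d)$ near the poles. Away from the poles $\mathcal N$ and $\mathcal S$ of the gradient flow $\Xgr$, the operator $P_\lambda-hs$ equals $h\sin\varphi$ times the transport operator $\partial_\varphi+b(\varphi)$ with $b(\varphi)=((\lambda/h+d/2)\cos\varphi-s)/\sin\varphi$, which carries no derivative in the azimuthal variable $u\in\Ss^{d-1}$. First I would observe that a solution $v\in D^{\mathscr{r}m_b}(\Ss^d)$ of $(P_\lambda-hs)v=0$, restricted to $\Ss^d\setminus\{\mathcal N,\mathcal S\}$, is forced into product form $v=g\otimes w$ with $w\in\mathcal D'(\Ss^{d-1})$ and $g(\varphi)=(\sin\varphi)^{-\lambda/h-d/2}(\tan(\varphi/2))^s$: indeed $\exp(\int b)\cdot v$ has vanishing $\varphi$-derivative, and $g$ is precisely the integrating factor appearing in the conjugation identities of Lemma~\ref{lem:conjugate_P_lambda}. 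Hence a kernel element is determined by the angular distribution $w$ together with a possible contribution supported in $\{\mathcal N,\mathcal S\}$, and the whole question is which of these remain in $\mathsf{H}^{\mathscr{r}m_b}$.

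Next I would analyse the behaviour at the poles. By Lemma~\ref{lemma:Identifying-the-indicial-space}, $\mathsf{H}^{\mathscr{r}m_b}$ coincides microlocally with $H^{-C_G\mathscr{r}}$ near $\mathcal N$ and with $H^{+C_G\mathscr{r}}$ near $\mathcal S$. In the chart $\kappa_{\mathcal S}$, Lemma~\ref{lem:conjugate_P_lambda} shows $g=(\text{smooth, nonvanishing})\cdot\rho^{\nu_{\mathcal S}}$ with $\rho$ the radial coordinate and $\nu_{\mathcal S}=-\lambda/h-d/2-s$, so $v\sim\rho^{\nu_{\mathcal S}}w(u)$ near $\mathcal S$. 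A one-line computation with the hypothesis $\Re(s)>-\mathscr{r}C_G+d/2+|\Re(\lambda/h)|$ gives $\Re\nu_{\mathcal S}<C_G\mathscr{r}-d$. Because a homogeneous-type distribution $\rho^{\nu}w(u)$ that is genuinely singular at $0\in\R^d$ has Sobolev regularity at most $\Re\nu+d/2<C_G\mathscr{r}-d/2$ there, such a $v$ cannot lie in $H^{+C_G\mathscr{r}}$ near $\mathcal S$ unless it is in fact smooth at $\mathcal S$; and $g\otimes w$ is smooth at $\mathcal S$ only when $\nu_{\mathcal S}\in\N$ and $w$ is the restriction of a homogeneous polynomial of degree $\nu_{\mathcal S}$ — a finite-dimensional constraint, and precisely the condition $\lambda=-h[s+(d/2+n)]$. (The analogous computation at $\mathcal N$ gives $\Re\nu_{\mathcal N}>-C_G\mathscr{r}$, so that side never obstructs.)

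I would then conclude. If $\lambda$ is not of the form $-h[s+(d/2+n)]$, the product part is absent, so $v$ is supported in $\{\mathcal N,\mathcal S\}$; its part at $\mathcal S$ vanishes since $H^{+C_G\mathscr{r}}$ with $\mathscr{r}>0$ contains no point-supported distribution, and by \eqref{eq:delta_eigenvalues}, together with the fact that the $\delta^{(\mu)}_{\mathcal N,\lambda}$, $\delta^{(\mu)}_{\mathcal S,\lambda}$ exhaust the pole-supported eigendistributions of $P_\lambda$, its part at $\mathcal N$ can lie in $\ker(P_\lambda-hs)$ only when $\lambda-h(|\mu|+d/2)=hs$, i.e. $\lambda=h[s+(d/2+n)]$; as this too is excluded, $v=0$, which is injectivity off the set \eqref{eq:scalar-indicial-roots}. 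When $\lambda=\pm h[s+(d/2+n)]$ in the stated range, the kernel is contained in $\Span\{\delta^{(\mu)}_{\mathcal N,\lambda}:|\mu|=n\}$ together with the finite-dimensional family of polynomial-type product solutions $g\otimes w$; the $H^{-C_G\mathscr{r}}$-regularity forced at $\mathcal N$ (equivalently $n<C_G\mathscr{r}-d/2$, which is implied by the $s$-range) admits only finitely many $\mu$, so the kernel is finite dimensional.

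The main obstacle is the local regularity bookkeeping at the poles in the second step: one has to make precise exactly when $\rho^{\nu}w(u)$ — including the coordinate-degeneracy factors of Lemma~\ref{lem:conjugate_P_lambda} (the radial variable is $\sin\varphi$, not $\varphi$, and there is the $\sqrt{1-\rho^2}$ prefactor) and the non-unique homogeneous extensions, hence possible Dirac corrections as in \eqref{eq:dirac_zero_ev}, that occur when $\nu$ is a negative integer — belongs to $H^{\pm C_G\mathscr{r}}_{\mathrm{loc}}$. This is the Mellin-transform / conormal analysis at the heart of the b-calculus, parallel to the treatment of Morse–Smale gradient flows in \cite{Dang-Riviere-16}; once it is in place the rest of the argument is soft.
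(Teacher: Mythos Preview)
Your proposal is correct and follows essentially the same route as the paper: the same dichotomy between solutions supported at $\{\mathcal N,\mathcal S\}$ and solutions with nontrivial restriction to the punctured sphere, the same reduction to the product form $g(\varphi)\otimes w(u)$ via the integrating factor, and the same regularity analysis at the poles using Lemma~\ref{lemma:Identifying-the-indicial-space}. The only real difference is in the bookkeeping at $\mathcal S$: where you invoke the Sobolev threshold $\Re\nu_{\mathcal S}+d/2$ for homogeneous-type distributions directly, the paper instead passes through the Sobolev embedding $H^{C_G\mathscr r}\hookrightarrow C^n$ and the Taylor expansion~\eqref{eq:taylor_radial} to force $\nu_{\mathcal S}\in\N$ and $w\in\R_{\nu_{\mathcal S}}(\Ss^{d-1})$; both arguments yield the same constraint, and your version sidesteps a small amount of integer-range juggling in the embedding step.
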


\begin{proof}
 Assume that $w\in \mathcal D'(\Ss^d)$ is a distribution that fulfills $(P_\lambda-hs)w=0$.  Then 
 we can distinguish two cases: Either $w_{|\Ss^d\setminus\{\mathcal N, \mathcal S\}} =0$
 or not. 
 
\textbf{In the first case}, $w$ must be a linear combination of 
$\delta_{\mathcal N/\mathcal S}^{(\mu)}$ and from \eqref{eq:delta_eigenvalues} 
we deduce that the possible solutions are either $\lambda = h(s + n+d/2)$ and $w$ is a 
linear combination of $\delta_{\mathcal N,\lambda}^{(\mu)}$ with $|\mu|=n$ or  $\lambda = - h(s + n+d/2)$
and $w$ is a linear combination of $\delta_{\mathcal S,\lambda}^{(\mu)}$, again with $|\mu|=n$. 
 
Whether these distributional solutions belong to $\mathsf{H}^{\escapeparam m_b}(\Ss^d)$, or not,
depends on $\escapeparam$. Suppose that $\escapeparam C_G > n+d/2$, $n\in\N$, then locally around the South Pole, according to Lemma \ref{lemma:Identifying-the-indicial-space},
distributions in $\mathsf{H}^{\escapeparam m_b}(\Ss^d)$ have to be of positive Sobolev order, so none of the Dirac distributions 
$\delta_{\mathcal S,\lambda}^{(\mu)}$ is allowed. Near the North Pole, distributions are 
allowed to be in $H^{-n-d/2-\varepsilon}(\Ss^d)$, again from Lemma \ref{lemma:Identifying-the-indicial-space}
and consequently, all the distributions $\delta_{\mathcal N,\lambda}^{(\mu)}$ with $|\mu|\leq n$ 
are contained in $\mathsf{H}^{\escapeparam m_b}(\Ss^d)$.

\textbf{In the second case}, i.e. $w_{|\Ss^d\setminus\{\mathcal N, \mathcal S\}} \neq0$ 
we work in $\Ss^d\setminus\{\mathcal N,\mathcal S\}$  with the coordinates $(\varphi, u)\in]0,\pi[\times \Ss^{d-1}$. As $P_\lambda$ is independent of 
$u$ we can choose a product form $w_{|\Ss^d\setminus\{\mathcal N,\mathcal S\}} = f\otimes g$ with
$f\in\mathcal D'(]0,\pi[)$ and $g\in\mathcal D'(\Ss^{d-1})$. Thus the 
PDE reduces to the (ordinary) differential equation $(P_\lambda-hs)f=0$. By
ellipticity, $f$ has to be a smooth function on $]0,\pi[$ and for every $\lambda, s$ there is
a unique solution 
\[
f(\varphi) := (\sin\varphi)^{-d/2-\lambda/h}(2\tan(\varphi/2))^{s}.
\] 
 
 We now have to discuss under what conditions $f\otimes g$ can be extended to a distribution in 
 $\mathsf{H}^{\escapeparam m_b}(\Ss^d)$. For $d/2>\epsilon>0$, let $\alpha=\escapeparam C_G -d/2 - \epsilon$, then from Lemma \ref{lemma:Identifying-the-indicial-space} and 
 the Sobolev embedding theorem, distributions have to be $C^\alpha(\Ss^d)$ in a neighbourhood 
 of the South Pole. Going to the charts $\kappa_{\mathcal S}$ we obtain 
 \[
 \left(2\tan(\varphi/2)\sin\varphi\right)^{-s}w=  \rho^{-d/2-\lambda/h - s}\otimes g.
 \]
As in a neighbourhood of the South Pole, $\left(2\tan(\varphi/2)\sin\varphi\right)^{-s}$
is a smooth nonvanishing $C^\infty(\Ss^d)$ function we have to extend 
$\rho^{-d/2-\lambda/h - s}\otimes g$ to a $C^\alpha$-function on $\R^d$. According to \eqref{eq:taylor_radial}
 this is possible if either $\Re(-d/2-\lambda/h - s) \geq \alpha$ or if
$-d/2-\lambda/h - s = n$ for some $n\in \N$ and $g$ is a linear combination of $\Upsilon_\mu$
with $|\mu|=n$ (or in other words, $g$ is a homogeneous polynomial of degree $n$). Note that the 
first case is ruled out since we assumed that $-\Re( s + \lambda/h) < - d/2 + C_G\escapeparam$ so that we would have $\alpha <  C_G \escapeparam - d$, and $\epsilon>d/2$, contrary to our assumption.
\end{proof}

Now, to complete the proof of Proposition \ref{prop:indicial_roots}, we have to check that for $\lambda = \pm h(s + d/2 +n)$, the kernel is not empty. We have already done this in the proof for $\lambda = h(s+d/2+n)$ for $n\in\N$, so we concentrate on the case that $\lambda = - h(s+d/2+n)$.

The question is whether the functions $f \otimes \Upsilon_\mu$, $|\mu|=n$, can be extended to distributions over the whole sphere $\Ss^d$. We have $hs  =-\lambda -h (n+d/2)$. Let $\Upsilon\in \R_k(\mathbb S^{d-1})$ where $\R_k(\mathbb S^{d-1})$ denotes the space of homogeneous 
polynomial of degree $n$ restricted to the unit sphere. We introduce the notation 
\begin{equation}\label{eq:f_S_def}
f^0_{\mathcal S,\Upsilon,\lambda} = (\sin\varphi)^{-d/2-\lambda/h}(2\tan(\varphi/2))^{-(n+d/2+\lambda/h)}\otimes \Upsilon(u).
\end{equation}
If we express these functions in the $(\rho,u)$ coordinates in a neighbourhood of the North Pole
we get
\[
 \left(\frac{2\tan(\varphi/2)}{\sin\varphi}\right)^{n+d/2+\lambda/h} f^0_{\mathcal S,\Upsilon,\lambda}= \rho^{-d-2\lambda/h-n} \Upsilon(u).
\]
Since $2\tan(\varphi/2)/\sin\varphi$ is a nonvanishing $C^\infty(\Ss^d)$ 
function near $\mathcal N$, we conclude that $f^0_{\mathcal S, \Upsilon,\lambda}$ is in $L^1_{loc}(\Ss^d)$ and a legitimate distribution whenever $n+2\Re\lambda/h< 0$. Now, using the ideas of Hadamard regularization as in \cite[Theorem 3.2.4]{Hormander-1}, we can show that $f^0_{\mathcal{S}, \Upsilon, \lambda}$ extends from $\Re \lambda < - hn/2$ to the whole of $\C$ as meromorphic family of distributions $F_{\mathcal{S},\Upsilon, \lambda}$. When $\psi \in C^\infty(\Ss^d)$ is not supported around $\mathcal{N}$, the value of $F_{\mathcal{S},\Upsilon, \lambda}(\psi)$ is given by $f^0_{\mathcal{S}, \Upsilon, \lambda}(\psi)$, so we can concentrate on the case of $\psi$ supported around $\mathcal{N}$, and consider a smooth function $\psi$ supported in $\{\rho<\epsilon\}$ in $\R^d$ such that when $\Re \lambda < - hn/2$, 
\[
\left(\frac{2\tan(\varphi/2)}{\sin\varphi}\right)^{n+d/2+\lambda/h}f^0_{\mathcal{S}, \Upsilon, \lambda}(\psi) = \int_{0}^\epsilon\int_{\mathbb S^{d-1}} \rho^{-2\lambda/h - n -1} {\psi}(\rho u)\Upsilon(u)dud\rho.
\]
Integrating by parts in the $\rho$ variable, $N$ times when $\Re \lambda < - hn/2$, we obtain that
\[
\rho^{-2\lambda/h - n -1}\Upsilon(\psi) = \prod_{j=0}^{N-1} \frac{1}{2\lambda /h  +n - j}\int_{0}^\epsilon \int_{\mathbb S^{d-1}} \rho^{-2\lambda/h - n + N -1} \Upsilon(u) (\partial_{\rho}^{N}{\psi}(\rho u))dud\rho.
\]
The expression in the RHS is obviously meromorphic for $\Re \lambda < h(N-n)/2$. The poles are situated at $\lambda = h(j-n)/2$, with $j= 0,\dots, N-1$, and they are of order $1$. At such a point, we find $s= - (n+d+j)/2$, and $\lambda = h( s + d/2 + j )$. In other words, the poles of $F_{\mathcal{S},\Upsilon,\lambda}$ correspond to root crossings. This is sufficient to ensure that the indicial roots are exactly the $\pm h(s+d/2+n)$ with $n\in\N$, and finishes the proof of Proposition \ref{prop:indicial_roots}. \qed

Now, while not necessary for the proof of the main theorem, we want here to describe the Jordan block structure at the root crossings. Since the residue of $F_{\mathcal{S},\Upsilon,\lambda}$ at a pole does not depend on the level of regularization $N$ (as long as $N\geq j+1$), we can choose $N = j+1$. Then the residue is given by
\[
\frac{ h }{ 2 j !}\int_{0}^\epsilon \int_{\mathbb S^{d-1}} \Upsilon(u) (\partial_{\rho}^{j+1}{\psi}(\rho u)) d\rho du.
\]
But as $\psi$ is supported in $\{\rho<\epsilon\}$ this is just
\[
\frac{ h }{ 2 j !} \int_{\Ss^{d-1}} \Upsilon(u) \left(\left(\frac{\partial}{\partial\rho}\right)^{j}_{|\rho=0}{\psi}(\rho u)\right) du = \frac{h}{2}\sum_{|\mu|=j}  \frac{1}{\mu!} \left(\int_{\mathbb S^d} \Upsilon_\mu(u)\Upsilon(u)du \right)\delta_0^{(\mu)}(\psi).
\]
where the equality can be read off \eqref{eq:taylor_radial}. Writing $a_\mu = 1/\mu! \int_{\Ss^{d-1}} \Upsilon(u) \Upsilon_\mu(u) du$, the residue of $F_{\mathcal{S},\Upsilon, \lambda}$ at $\lambda=h(j-n)/2$ is thus given by 
\[
\frac{h}{2}\sum_{|\mu|=j}  a_\mu \delta_{\mathcal N,\lambda}^{(\mu)}.
\] 
The finite part of $F_{\mathcal{S},\Upsilon, \lambda}$ at such a point is a distribution $A_j$ such that $A_j$ coincides with $f_{\mathcal{S},\Upsilon, h(j-n)/2}$ in $\Ss^d\setminus{\mathcal{N}}$. Additionally, since we have for all $\lambda$
\[
(P_\lambda +\lambda + h(n+d/2) ) F_{\mathcal{S},\Upsilon,\lambda} = 0,
\]
Differentiating in the parameter $\lambda$, we deduce that 
\[
\Big(P_{h(j-n)/2} + h \frac{n+j+d}{2}\Big)A_j =- (1+\cos \varphi)\frac{ h }{ 2}\sum_{|\mu|=j} a_\mu \delta^{(\mu)}_{\mathcal{N},h(j-n)/2}.
\]
Consequently the finite part of $F_{\mathcal S,\Upsilon,\lambda}$ is an eigendistribution of $P_\lambda$ if all the $a_\mu$ vanish. If $\Upsilon$ is chosen such that this is not the case, we can however modify $A_j$ in order to get a generalized eigendistribution: Consider $E_j$ the space of distributions supported in $\{\mathcal{N}\}$, of order $< j$. Choosing a basis of such distributions of decreasing order, we find that $P_{h(j-n)/2}$ acts on $E_j$ in an upper triangular fashion, and the diagonal coefficients are non singular. We deduce that $P_{h(j-n)/2}$ is invertible on $E_j$. In particular, since $(P_{h(j-n)/2} + h \frac{n+j+d}{2})^2 A_j \in E_j$, we can find $e_j\in E_j$ such that $(P_{h(j-n)/2} + h \frac{n+j+d}{2})^2 (A_j+ e_j) = 0$. In particular, the kernel is non empty, and there is an order 2 Jordan block.

\begin{definition}\label{def:fSUlambda}
When $\lambda \neq h(j-n)/2$ with $j$, $n$ some integers, and $\Upsilon \in \R_n(\Ss^d)$, we denote by $f_{\mathcal{S}, \Upsilon,\lambda}$ the continuation $F_{\mathcal{S},\Upsilon, \lambda}$. When $\lambda = h(j-n)/2$, $f_{\mathcal{S},\Upsilon, \lambda}$ will instead refer to the distribution $A_j+ e_j$ thus defined.
\end{definition}

Before we proceed, it will be useful to introduce some notations. Given $g,f\in C^\infty(\Ss^{d-1})$ real valued, we let
\[
\langle f, g \rangle = \int_{\Ss^{d-1}} f g.
\]
We recall that $\R_n(\Ss^{d-1})$ is the set of functions on the sphere that are restrictions of real homogeneous polynomials of order $n$ on $\R^{d}$.

As a consequence of the proof of Lemma \ref{lem:P_lambda_finite_kernel}, we get the following explicit description of the
generalized eigenstates of $P_\lambda$:

\begin{lemma}
Let $\lambda\in\C$, $n\in \N$ and $\escapeparam C_G>d+n+2|\Re(\lambda/h)|$,
consider the operator $P_\lambda :D^{\escapeparam m_b}(\Ss^d)\to \mathsf{H}^{\escapeparam m_b}(\Ss^d)$, and the kernel of $P_\lambda + h(d/2+n) \pm \lambda$:
 \begin{itemize}
	\item If $\lambda \notin h\mathbb Z/2$, then for any $n\in \N$, there are no Jordan Blocks, and
\begin{align*}
&\ker(P_\lambda+h(d/2+n)+\lambda) = \Span\left\{ f_{\mathcal S, \Upsilon_\mu,\lambda}\ \middle|\  |\mu| = n \right\}\\
&\ker(P_\lambda+h(d/2+n)-\lambda) = \Span\left\{\delta^{(\mu)}_{\mathcal N,\lambda}\ \middle|\ |\mu|=n\right\}
\end{align*}
	\item If $\lambda= h k/2$, $k\in\N$, and $n=0,\ldots k-1$, there are no Jordan Blocks for $P_\lambda + h(d/2+n)-\lambda$, and one has
\begin{equation*}
\ker(P_\lambda+ h(d+2n-k)/2) = \Span\left\{\delta^ {(\mu)}_{\mathcal N, \lambda}\ \middle|\  |\mu|= n\right\}.
\end{equation*}
	\item For $\lambda = h k/2,k\in \N$ and $n=k,k+1,\ldots$ one has Jordan Blocks of index $2$ for $P_\lambda + h(d/2+n)-\lambda$, and
\begin{align*}
\ker(P_\lambda+h(d+2n-k)/2)^2 	&= \Span\left\{\delta_{\mathcal N,\lambda}^{(\mu)}, f_{\mathcal S, \Upsilon_\nu,\lambda}\ \middle|\ |\mu|=n, |\nu|=n-k \right\}.\\
\ker(P_\lambda+h(d+2n-k)/2)		&= \Span \Big\{\delta_{\mathcal N,\lambda}^{(\mu)}\ \Big|\ |\mu|=n\Big\}  \\
					&\cup \Big\{ f_{\mathcal S,\Upsilon, \lambda}\ \Big|\ \Upsilon\in \R_{n-k}(\Ss^{d-1}),  \langle \Upsilon, \Upsilon_\nu\rangle = 0,\ \forall |\nu| = n\Big\}.
\end{align*}
	\item If $\lambda= -h k/2$, $k\in\N$, and $n=0,\ldots k-1$, there are no Jordan Blocks for $P_\lambda + h(d/2+n) + \lambda$, and one has
\begin{equation*}
\ker(P_\lambda+ h(d + 2n-k)/2) = \Span\left\{f_{\mathcal S,\Upsilon_\mu,\lambda}\ \middle|\ |\mu|=n\right\}.
\end{equation*}
\item For $\lambda = -h k/2$, $k\in\N$ and $n=k,k+1,\ldots$ one has Jordan Blocks of index $2$ for $P_\lambda + h(d/2+n)+\lambda$, and
\begin{align*}
\ker(P_\lambda+h(d+2n-k)/2)^2 &= \Span\left\{f_{\mathcal S, \Upsilon_\mu,\lambda},\delta_{\mathcal N,\lambda}^{(\nu)}\ \middle|\ |\mu|=n, |\nu|=n-k\right\}.\\
   \ker(P_\lambda+h(d+2n-k)/2)&=\Span \Big\{\delta_{\mathcal{N},\lambda}^{(\nu)}\ \Big| \  |\nu|=n-k\Big\}\\
					\cup & \Big\{f_{\mathcal S, \Upsilon,\lambda}\ \Big|\ \Upsilon\in \R_{n}(\Ss^{d-1}),\ \langle \Upsilon, \Upsilon_\mu\rangle = 0,\ \forall |\mu| = n-k\Big\}.
\end{align*}
\end{itemize}

\end{lemma}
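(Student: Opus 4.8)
The plan is to refine the argument already used to prove Lemma~\ref{lem:P_lambda_finite_kernel}, which split the analysis of a distribution $w\in\mathsf{H}^{\mathscr{r}m_b}(\Ss^d)$ lying in a generalized eigenspace of $P_\lambda$ according to whether $w$ is supported in $\{\mathcal N,\mathcal S\}$ or not, and to carry that dichotomy far enough to read off both spanning sets of the kernels and the occurrence of Jordan blocks. Throughout, the two regularity statements of Lemma~\ref{lemma:Identifying-the-indicial-space} — that $\mathsf{H}^{\mathscr{r}m_b}(\Ss^d)$ is locally $H^{-C_G\mathscr{r}}$ near $\mathcal N$ and $H^{+C_G\mathscr{r}}$ near $\mathcal S$ — together with the standing hypothesis $\mathscr{r}C_G>d+n+2|\Re(\lambda/h)|$ are exactly what decides the admissibility of each candidate distribution.

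First I would treat the distributions supported in $\{\mathcal N,\mathcal S\}$. By \eqref{eq:dirac_zero_ev} and \eqref{eq:delta_eigenvalues} these are spanned by the $\delta^{(\mu)}_{\mathcal N,\lambda}$, of eigenvalue $\lambda-h(|\mu|+d/2)$, and the $\delta^{(\mu)}_{\mathcal S,\lambda}$, of eigenvalue $-\lambda+h(|\mu|+d/2)$. Since $\mathsf{H}^{\mathscr{r}m_b}$ has strictly positive local Sobolev order near $\mathcal S$, every $\delta^{(\mu)}_{\mathcal S,\lambda}$ is inadmissible, whereas near $\mathcal N$ the distribution $\delta^{(\mu)}_{\mathcal N,\lambda}$ — a smooth nonvanishing factor times $\kappa_{\mathcal N}^\ast\delta_0^{(\mu)}$, hence of order roughly $-d/2-|\mu|$ — is admissible precisely when $|\mu|<C_G\mathscr{r}-d/2$, in particular for $|\mu|=n$. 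Matching $\lambda-h(|\mu|+d/2)$ with the prescribed eigenvalue $-h(d/2+n)\mp\lambda$ of the kernel under consideration pins down $|\mu|$ (and in the ``$-\lambda$'' family forces $\lambda\in h\Z/2$), giving the $\delta$-parts of all the displayed kernels. Since for a common eigenvalue all the $\delta^{(\mu)}_{\mathcal N,\lambda}$ are genuine — not merely generalized — eigenvectors, they cannot generate a Jordan block by themselves; a Jordan block can arise only through coupling with a distribution of the second type.

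For distributions not supported in $\{\mathcal N,\mathcal S\}$, separation of variables on $\Ss^d\setminus\{\mathcal N,\mathcal S\}$ reduces the equation to an ODE whose solution, up to scalars, is $f(\varphi)=(\sin\varphi)^{-d/2-\lambda/h}(2\tan(\varphi/2))^{s}$ tensored with some $g$ on $\Ss^{d-1}$, where $hs$ is the prescribed eigenvalue. The $C^{n}$-regularity required near $\mathcal S$, combined with the radial Taylor expansion \eqref{eq:taylor_radial} — the strictly-less-singular alternative being excluded by the lower bound on $\Re s$ — forces $m:=-d/2-\lambda/h-s$ to be a non-negative integer with $g\in\R_m(\Ss^{d-1})$; comparing $m$ with the eigenvalue gives $m=n$ in the ``$+\lambda$'' family and $m=n-2\lambda/h$ in the ``$-\lambda$'' family, the latter being a non-negative integer exactly when $\lambda\in h\N/2$ and $2\lambda/h\le n$. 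Near $\mathcal N$ the solution then equals a smooth nonvanishing factor times $\rho^{-d-m-2\lambda/h}\Upsilon(u)$, which by the Hadamard regularization carried out just before Definition~\ref{def:fSUlambda} extends to the meromorphic family $F_{\mathcal S,\Upsilon,\lambda}$ with only simple poles, located at $\lambda\in h(\N-m)/2$. Away from a pole, $F_{\mathcal S,\Upsilon,\lambda}$ is a genuine kernel element of $\mathsf{H}^{\mathscr{r}m_b}$ (this is where $\mathscr{r}C_G>d+n+2|\Re(\lambda/h)|$ enters, to accommodate its negative local order near $\mathcal N$), which yields the asserted spans of $f_{\mathcal S,\Upsilon_\mu,\lambda}$ and no Jordan block whenever $\lambda\notin h\Z/2$. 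At a pole $\lambda_0$, the finite part $A_j$ satisfies the eigenequation only modulo a fixed combination of the $\delta^{(\mu)}_{\mathcal N,\lambda_0}$ of a definite order $j$, with coefficients the pairings $\langle\Upsilon,\Upsilon_\mu\rangle$; since $P_{\lambda_0}$ acts invertibly on the space $E_j$ of Diracs at $\mathcal N$ of order $<j$ (upper-triangular with nonzero diagonal, as in the excerpt), one corrects $A_j$ by some $e_j\in E_j$ to get $f_{\mathcal S,\Upsilon,\lambda_0}=A_j+e_j$, which spans an index-$2$ Jordan block unless all the pairings $\langle\Upsilon,\Upsilon_\mu\rangle$ vanish, in which case it already lies in the kernel.

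Assembling these facts case by case, using the relabelling $h(d/2+n)+\lambda=h(d/2+(n+k))-\lambda$ that identifies the ``$\pm\lambda$'' families when $\lambda=hk/2$, produces exactly the displayed kernels and second generalized kernels; completeness follows from the finite-dimensionality in Lemma~\ref{lem:P_lambda_finite_kernel} and the fact that the ODE reduction leaves at most this one family per eigenvalue. The step I expect to be the real obstacle is precisely this last bookkeeping at the coincidences $\lambda\in h\Z/2$: keeping straight which pole of which $F_{\mathcal S,\Upsilon,\lambda}$ a given pair $(\lambda,n)$ corresponds to, the degree $m=n$ versus $n-k$ of the surviving $\Upsilon$'s against the order of the accompanying Diracs, and in particular the exact description of $\ker$ (as opposed to its square) through the orthogonality relations $\langle\Upsilon,\Upsilon_\mu\rangle=0$ and the attendant parity constraints between $\R_{n-k}$ and $\R_n$. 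None of this is deep, but it is where errors are easiest to make.
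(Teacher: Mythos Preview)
Your proposal is correct and follows essentially the same route as the paper. In fact the paper does not give a separate proof of this lemma at all: it is stated immediately after Definition~\ref{def:fSUlambda} with the remark that it is ``a consequence of the proof of Lemma~\ref{lem:P_lambda_finite_kernel}'', together with the Hadamard regularization discussion preceding that definition (the residue computation, the construction of $A_j+e_j$, and the invertibility of $P_{h(j-k)/2}$ on $E_j$). Your write-up spells out precisely these ingredients --- the supported-at-poles versus not-supported-at-poles dichotomy, the admissibility via Lemma~\ref{lemma:Identifying-the-indicial-space}, and the pairing criterion $\langle\Upsilon,\Upsilon_\mu\rangle=0$ distinguishing kernel from index-$2$ Jordan block --- so there is nothing to add.
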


\subsection{Indicial roots for fiber bundles}

After this study of the action on functions, we come back to the 
action on admissible vector bundles 
$\FibreL=\groupK\times_{\tau_\ell}V_\ell \to \groupK/\groupM\cong 
\Ss^d$. For the moment let us fix a cusp and drop the index $\ell$.
Note that Definition~\ref{def:admissible_vector_bundle} does 
not assume that $\tau$ is an
irreducible $\groupM$ representation. However, we can reduce the 
problem to the irreducible case: Consider the complexified representaion
$(\tau, V_\C)$ which decomposes into irreducible unitary representations 
$(\sigma_i,W_i)$. We then get
\begin{equation}\label{eq:irreducible splitting}
L^2(\Ss^d, \groupK\times_\tau V) = \bigoplus_{i=1}^n
L^2(\Ss^d, \groupK\times_{\sigma_i} W_i).
\end{equation}
Furthermore, using the explicit form of $I(\mathbf{X}_b,\lambda)$
from \eqref{eq:indicical_op_for_admissible_lift} and the fact that
according to Definition~\ref{def:admissible_vector_bundle}
the nonscalar zero order term $A$ is $\groupM$ equivariant, we conclude 
that $I(\mathbf{X}_b,\lambda)$ preserves this splitting. Finally, we have to 
take into account that we do not want to study the operator acting 
on $L^2$ but rather on the anisotropic spaces $\mathsf{H}^{\escapeparam m_b}(\Ss^d,\FibreL)$. 
Recall that the escape function is a purely scalar symbol. We can pick the quantization so that scalar symbols are mapped to operators that preserve the decomposition \eqref{eq:irreducible splitting} --- that is a lower order term condition. In particular, 
$I(\Op^b(e^{-rG_b}), \lambda)$ acts on $L^2(\Ss^d, \groupK\times_{\sigma_i} W_i)$ as a principally scalar operator. Thus we assume from now on
that we have fixed a cusp and that $(\tau, V)$ is unitary and irreducible.
Since it is irreducible, the $\groupM$ equivariant 
term $A$ has to be scalar by Schur's Lemma and the indicial operator
\[
I(\mathbf{X}_b,\lambda) = \lambda\cos\varphi + h\left[\frac{d}{2}\cos\varphi + \nabla_{X_{\textup{gr}}}^ {\FibreL} +A \right]
\]
becomes the sum of a covariant derivative and a scalar term. It will thus be convenient
to study its action on local trivializations by orthogonal parallel frames:
Let $b^{\mathcal N}_1,\ldots, b^{\mathcal N}_{\dim V}$ be an orthonormal basis 
of the fibre $\FibreL_{\mathcal N}$ over the North Pole $\mathcal N\in \Ss^ d$. 
Any point $(\varphi,u)\in \Ss^d \setminus
\{\mathcal S\}$ can be connected to $\mathcal N$ by a path $[0,1]\owns t\mapsto (t\varphi, u)$ 
in a unique way and via parallel transport along these paths we can can define the 
orthonormal basis $b^{\mathcal N}_i(\zeta)$ of the fibre over $\zeta \in \Ss^d\setminus \{\mathcal S\}$. By definition, this means
\[
\nabla_{X_{\textup{gr}}}^{\FibreL} b^{\mathcal{N}}_i = 0.
\]
Similarly we chose a orthonormal parallel frame $b^{\mathcal S}_i(\zeta)$
on $\Ss^d\setminus\{\mathcal N\}$. Comparing these two orthonormal frames on the 
equator $\varphi=\pi/2, u\in \Ss^{d-1}$ we get a smooth gluing function $\mathscr g: \Ss^{d-1} \to U(V)$
such that 
\[
\mathscr g(u) b_i^{\mathcal N}(\pi/2, u) = b_i^ {\mathcal S}(\pi/2, u),\textup{ for }i= 1,\ldots,{\dim V}.
\]
With this gluing function we can express the transformation under the change of trivialisation
for $w\in \mathcal D'(\Ss^d\setminus\{\mathcal N, \mathcal S\}, \FibreL)$ as follows
\[
\begin{split}
 w &= \sum_{k=1}^{\dim V} w_k^ {\mathcal S}b_k^{\mathcal S}(\varphi, u) \\
 &= \sum_{l=1}^{\dim V} \left(\sum_{k=1}^{\dim V} 
 \underbrace{ \langle \mathscr g(u) b_k^{\mathcal N}(\pi/2,u),b_l^{\mathcal N}(\pi/2,u)\rangle_V}_{=:\mathscr g_{l,k}(u)} w^ {\mathcal S}_k \right)b_l^{\mathcal N}(u,\varphi).
 \end{split}
\]
Having introduced these orthonormal frames we can prove.
\begin{lemma}
 If we fix a cusp and consider $\FibreL=\groupK\times_{\tau}V$ for an
 irreducible unitary $\groupM$ representation $(\tau, V)$, then the
 operator $I(X_b -hs,\lambda):D^{\escapeparam m_b}(\Ss^d, \FibreL)\to 
 \mathsf{H}^{\escapeparam m_b}(\Ss^d,\FibreL)$ is injective unless
 \[
  \lambda= \pm h[s-A +(d/2+n)],
 \]
 where $A\in \textup{End}(V)^{\groupM}$ has been identified with a scalar by Schur's lemma
 and $n\in\N$.
\end{lemma}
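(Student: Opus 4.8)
The plan is to reduce the vector bundle case to the scalar case already treated in Lemma~\ref{lem:P_lambda_finite_kernel} and its refinements. Recall that, after the reductions performed at the start of this subsection (splitting into irreducibles via \eqref{eq:irreducible splitting}, which is preserved both by $I(\mathbf{X}_b,\lambda)$ and, to leading order, by the quantization of the scalar escape function), we may assume $(\tau,V)$ is irreducible. By Schur's lemma, $A\in\End(V)^{\groupM}$ is then a scalar, so the indicial operator is
\[
I(\mathbf{X}_b,\lambda) = \lambda\cos\varphi + h\left[\tfrac{d}{2}\cos\varphi + \nabla^{\FibreL}_{\Xgr} + A\right],
\]
i.e.\ the \emph{same} first-order operator $P_\lambda$ from \eqref{eq:def-P-lambda}, but with the scalar derivative $\sin\varphi\,\partial_\varphi$ replaced by the covariant derivative $\nabla^{\FibreL}_{\Xgr}$, and with the extra scalar shift $hA$.

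The first step is to absorb the scalar shift: $I(\mathbf{X}_b-hs,\lambda)$ agrees with the operator obtained from the potential-free indicial operator by replacing $s$ with $s-A$. Thus it suffices to show that $\nabla^{\FibreL}_{\Xgr} + (\lambda/h+d/2)\cos\varphi - (s-A)$-type operators behave exactly like $P_\lambda - h(s-A)$ in the scalar case. The second step is to handle the covariant derivative along $\Xgr$. The gradient flow of $\Xgr=\sin\varphi\,\partial_\varphi$ on $\Ss^d$ has its two fixed points exactly at $\mathcal N$ and $\mathcal S$, and away from these two points the flow lines $t\mapsto(t\varphi,u)$ are precisely the radial paths used to define the parallel orthonormal frames $b_i^{\mathcal N}$ (resp.\ $b_i^{\mathcal S}$) on $\Ss^d\setminus\{\mathcal S\}$ (resp.\ $\Ss^d\setminus\{\mathcal N\}$). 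In either of these two trivializations, $\nabla^{\FibreL}_{\Xgr}$ acts componentwise as the \emph{scalar} operator $\sin\varphi\,\partial_\varphi$ — this is the defining property of a parallel frame along the integral curves of $\Xgr$. Hence on $\Ss^d\setminus\{\mathcal N,\mathcal S\}$ the equation $I(\mathbf{X}_b-hs,\lambda)w=0$ decouples into $\dim V$ independent copies of the scalar ODE $(P_\lambda-h(s-A))f=0$ studied in Lemma~\ref{lem:P_lambda_finite_kernel}, one for each frame component $w_i^{\mathcal N}$ (and similarly for $w_i^{\mathcal S}$).

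The third step is to repeat the injectivity dichotomy of the proof of Lemma~\ref{lem:P_lambda_finite_kernel}. As there, any $w\in\ker I(\mathbf{X}_b-hs,\lambda)$ either vanishes off $\{\mathcal N,\mathcal S\}$, in which case $w$ is a finite linear combination of bundle-valued Dirac distributions supported at the poles — and, exactly as in \eqref{eq:delta_eigenvalues} with $s$ replaced by $s-A$, such eigendistributions force $\lambda=\pm h[s-A+(|\mu|+d/2)]$ — or $w$ is nontrivial on $\Ss^d\setminus\{\mathcal N,\mathcal S\}$. In the latter case, each component function is, by ellipticity, the explicit smooth solution $f(\varphi)=(\sin\varphi)^{-d/2-(\lambda/h-A)}(2\tan(\varphi/2))^{s-A}$ (obtained from \eqref{eq:def-P-lambda} with the shift), tensored with a distribution on the equatorial $\Ss^{d-1}$. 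The only subtlety is that extendability across the poles must be checked in a \emph{parallel} frame, where the gluing function $\mathscr g$ relates the two trivializations smoothly; since $\mathscr g\in C^\infty(\Ss^{d-1},O(V))$ is a smooth nonvanishing change of frame, it does not affect the Sobolev/regularity order, so the extendability criterion near each pole is the scalar one of Lemma~\ref{lem:P_lambda_finite_kernel} applied component-by-component, again with $s\rightsquigarrow s-A$. Collecting the two cases yields that $I(\mathbf{X}_b-hs,\lambda)$ is injective unless $\lambda=\pm h[s-A+(d/2+n)]$ for some $n\in\N$; finite-dimensionality of the kernel in the allowed range follows from the Fredholm statement of Proposition~\ref{prop:indicial-is-fredholm}.

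\emph{Main obstacle.} The one point that needs genuine care — rather than quotation of the scalar lemma — is the behaviour at the poles in the vector bundle setting: the parallel frames $b_i^{\mathcal N}$ and $b_i^{\mathcal S}$ are each singular at the \emph{opposite} pole (a parallel frame along radial geodesics emanating from $\mathcal N$ cannot extend continuously through $\mathcal S$ unless $\tau$ is trivial), so one must argue extendability of $w$ at $\mathcal N$ in the $b^{\mathcal N}$-frame and at $\mathcal S$ in the $b^{\mathcal S}$-frame, and then check compatibility on the equator via $\mathscr g$. Because $\mathscr g$ is smooth and orthogonal, this compatibility is automatic once the scalar criterion is met in each frame, but writing this cleanly is where the bookkeeping lies; everything else is a direct transcription of the scalar computation with $s$ replaced by $s-A$.
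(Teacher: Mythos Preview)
Your proposal is correct and follows essentially the same route as the paper: trivialise $\FibreL$ by radial parallel frames $b_i^{\mathcal N}$, $b_i^{\mathcal S}$ so that $\nabla^{\FibreL}_{\Xgr}$ acts componentwise as the scalar $\sin\varphi\,\partial_\varphi$, absorb the Schur scalar $A$ into a shift $s\rightsquigarrow s-A$, and then run the dichotomy of Lemma~\ref{lem:P_lambda_finite_kernel} component by component. The paper organises the cases slightly differently (it separates $\supp(w)=\{\mathcal S\}$ as its own case and dismisses it immediately by positive Sobolev regularity at $\mathcal S$, then uses the $b^{\mathcal S}$-frame for the nontrivial-off-poles case and the $b^{\mathcal N}$-frame for the $\supp(w)=\{\mathcal N\}$ case), but the substance is identical.

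One small slip: in your explicit scalar solution you wrote $(\sin\varphi)^{-d/2-(\lambda/h-A)}$, but the shift affects only $s$, not $\lambda$; the correct formula is $(\sin\varphi)^{-d/2-\lambda/h}(2\tan(\varphi/2))^{s-A}$. This does not affect the argument. Your ``main obstacle'' paragraph about the gluing $\mathscr g$ is accurate but is really preparation for the \emph{next} lemma (the precise description of kernels and Jordan blocks); for the bare injectivity statement here the paper, like you, only needs that each frame is smooth and nonvanishing near its own pole.
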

\begin{proof}
Let us reduce the problem to the case of functions, dealt with by Lemma \ref{lem:P_lambda_finite_kernel}:
 Suppose that $w\in D^{\escapeparam m_b}(\Ss^d, \FibreL)\setminus\{0\}$
 with $I(\mathbf{X}_b-hs,\lambda)w=0$. Then one of the following cases holds:
 
 \textbf{First case: $\supp(w)\setminus\{\mathcal N,\mathcal S\}\neq\emptyset$}. Then 
 we can expand the restriction of $w$ to $\Ss^d\setminus\{\mathcal N\}$ in the orthonormal 
 trivialisation $b_k^{\mathcal S}$ and get
 \[
w_{\Ss^d\setminus\{\mathcal N\}} = \sum w_k^{\mathcal S} b_k^{\mathcal S}(\varphi,u)
 \]
 for scalar distributions $w_k^{\mathcal S}\in \mathcal D'(\Ss^d\setminus\{\mathcal N\})$.
 From the fact that $\nabla_{\Xgr} b_k^{\mathcal S}=0$ we deduce that 
  \[
 \left[h(\Xgr +d/2\cos\varphi) + \lambda\cos\varphi - h(s-A)\right]w^{\mathcal S}_k=0
 \]
 Next, using that $w\in \mathsf{H}^{\escapeparam m_b}(\Ss^d,\FibreL)$ and Lemma~\ref{lemma:Identifying-the-indicial-space} we conclude that $w_k^{\mathcal S}
 \in H^{C_G\escapeparam}(\Ss^d)$, in a small neighbourhood around $\mathcal S$. 
 Furthermore at least one $w_k^{\mathcal S}$ must be nonvanishing on $\Ss^d\setminus\{\mathcal N,\mathcal S\}$. We are thus precisely in the setting of the second case in Lemma~\ref{lem:P_lambda_finite_kernel} and we deduce with the same arguments that such a distribution only exists if $s-A= -d/2-\lambda/h - n$ for some $n\in\N$ and the eigendistributions are precisely given by a linear combination of $f_{\mathcal S,\lambda,\Upsilon_\mu}$ with $|\mu|=n$.
 
 \textbf{Second case: $\supp(w)=\{\mathcal S\}$}. Then 
 we use the same trivialisation as above. This would require distributions 
 $w^{\mathcal S}_k\in \mathsf{H}^{\escapeparam m_b}(\Ss^d)$ with $\supp{w^{\mathcal S}_k}=\mathcal S$.
 But as Lemma~\ref{lemma:Identifying-the-indicial-space} requires these distributions to have positive Sobolev regularity, they have to be zero.
 
 \textbf{Third case: $\supp(w)=\mathcal N$}. Then 
 using the trivialisation on $\Ss^d\setminus\{S\}$ we write:
 \[
  w=\sum w_k^ {\mathcal N} b_k^ {\mathcal N}(\varphi, u), \textup{ with } w_k^ {\mathcal N}\in \mathsf{H}^{\escapeparam m_b}(\Ss^d),~\supp(w_k^{\mathcal N}) = \mathcal N
 \]
 and 
 \[
  \left[h(\Xgr +d/2\cos\varphi) + \lambda\cos\varphi - h(s-A)\right]w^{\mathcal N}_k=0.
 \]
We are thus precisely in the setting of the first case in Lemma~\ref{lem:P_lambda_finite_kernel} and
we deduce that such distributions only exist if $h(s-A) = \lambda - h(n+d/2)$ for some $n\in\N$ and they are
precisely given by linear combinations of $\delta^ {(\mu)}_{\mathcal N,\lambda}$ with $|\mu|=n$. 
\end{proof}
As in the case of functions, we have to care about the extension of those distributions coming from $f_{\mathcal S, \Upsilon_\mu,\lambda}$ and check which still remain in the kernel of the indicial operator. Therefore the following notation is convenient: Given $\underline \Upsilon = (
\Upsilon^{(1)}, \ldots,\Upsilon^{(\dim V)})\in (\R_n(\Ss^{d-1}))^{\dim V}$, define the section
$F_{\mathcal S,\underline \Upsilon, \lambda}
:=\sum_{l=1}^{\dim V} f_{\mathcal S,\Upsilon^{(l)}, \lambda}b_l^{\mathcal S}$.
In order to understand the extension in the sense of homogenous distributions at the North Pole we use the definition of $f_{\mathcal S,\Upsilon,\lambda}$ (Eq. \eqref{eq:f_S_def}) and pass to the trivialisation $b_l^{\mathcal N}$:
\[
 F_{\mathcal S,\underline \Upsilon, \lambda} = (\sin\varphi)^{-\frac{d}{2}-\frac{\lambda}{h}}(2\tan(\varphi/2))^{-(n                                                                +\frac{d}{2}+\frac{\lambda}{h})} \sum_{l=1}^{\dim V} \left(\sum_{i=1}^{\dim V} \mathscr g_{l,i}(u) \Upsilon^{(i)}(u) \right)b_l^{\mathcal N}
\]
We see that each coefficient in front of $b_l^{\mathcal N}$ is again a homogenous distribution around $\mathcal N$ of degree $\rho^ {-d-2\lambda/h - n}$ and we can apply the discussion before Definition~\ref{def:fSUlambda} to extend each of the coefficient distributions. We conclude that the extension remains in the kernel of the indicial operator if and only if one of the following condition holds:
\begin{itemize}
 \item $\lambda \notin h\Z/2$
 \item $(2\Re\lambda/h+n)<0$
 \item $\lambda = -hk/2$, $k\in \Z$, $k\leq n$ and 
 \begin{equation}\label{eq:no_Jordan_condition}
   \int_{\Ss^{d-1}}\left(\Upsilon_\mu(u) \sum_{i=1}^{\dim V} \mathscr g_{l,i}(u)\Upsilon^{(i)}(u)\right) du = 0 
 \end{equation}
for all $l=1,\ldots,\dim V$, $|\mu|=n-k$, and $\Upsilon_\mu \in \R_{n-k}(\Ss^{d-1})$.
\end{itemize}
We will denote the set of all $\underline{\Upsilon} \in (\R_n(\Ss^{d-1}))^{\dim V}$ that fulfill \eqref{eq:no_Jordan_condition} by $\mathscr N_{n,n-k}$. Obviously $\mathscr N_{n,n-k}\subset 
(\R_n(\Ss^{d-1}))^{\dim V}$ is a subvectorspace.
\begin{lemma}\label{lem:indicial_root_spaces_vector}
Fix a cusp and a unitary irreducible representation $(\tau,V)$. Consider $\FibreL =\groupK\times_\tau V \to \Ss^d$. Let $\lambda\in\C$, $n\in \N$ and $\escapeparam C_G>d+n+2|\Re(\lambda/h)|$,
consider the operator $I(\mathbf{X}_b,\lambda) :D^{\escapeparam m_b}(\Ss^d, \FibreL)\to \mathsf{H}^{\escapeparam m_b}(\Ss^d, \FibreL)$ 
and identify $A\in \textup{End}(V)^{\groupM}$ with a complex number by Schur's Lemma.
We give the following description of the generalized eigenspaces 
\[
\mathscr K^j_{\lambda, n,\pm} :=\ker(I(\mathbf{X}_b,\lambda) + h(d/2+n-A) \pm \lambda)^j
\]
by distinguishing the following cases:
 \begin{itemize}
	\item If $\lambda \notin h\mathbb Z/2$ then for all $n\in \N$
	there are no Jordan Blocks, i.e. $\mathscr K^2_{\lambda, n,\pm} = \mathscr K^1_{\lambda, n,\pm}$, and
\begin{align*}
&\mathscr K^1_{\lambda, n,+} = 
\Span\left\{ f_{\mathcal S, \Upsilon_\mu,\lambda}b_l^ {\mathcal S}~\middle|\  l=1,\ldots,\dim V,|\mu| = n \right\}\\
&\mathscr K^1_{\lambda, n,-} = \Span\left\{\delta^{(\mu)}_{\mathcal N,\lambda}b_l^ {\mathcal N}\ \middle|\  l=1,\ldots,\dim V,|\mu|=n\right\}
\end{align*}
	\item If $\lambda= h k/2$, $k\in\N$, and $n=0,\ldots, k-1$, there are no Jordan Blocks, i.e.  $\mathscr K^2_{\lambda, n,-} = \mathscr K^1_{\lambda, n,-}$ and one has
\begin{equation*}
\mathscr K^1_{\lambda, n,-} = \Span\left\{ \delta^ {(\mu)}_{\mathcal N,\lambda}b_l^ {\mathcal N}~\middle|\  l=1,\ldots,\dim V,|\mu| = n \right\}.
\end{equation*}
	\item For $\lambda = h k/2$ and $n=k,k+1,\ldots$ one has Jordan Blocks of index $2$ i.e. 
	$\mathscr K^3_{\lambda, n,-} = \mathscr K^2_{\lambda, n,-}$
  \begin{align*}
\mathscr K^2_{\lambda, n,-} &= \Span\left\{\delta_{\mathcal N,\lambda}^{(\mu)} b_l^{\mathcal N}, F_{\mathcal S, \underline{\Upsilon},\lambda} \ \middle|\ |\mu|=n,\ l\leq\dim V, \underline{\Upsilon} \in\R_{n-k}(\Ss^{d-1})^{\dim V} \right\}.\\
\mathscr K^1_{\lambda, n,-} &= \Span \left\{\delta_{\mathcal N,\lambda}^{(\mu)} b_l^{\mathcal N}, F_{\mathcal S, \underline{\Upsilon},\lambda} \ \middle|\ |\mu|=n,\ l\leq \dim V, \underline{\Upsilon} \in\mathscr N_{n-k,n}  \right\}. 
  \end{align*}
	\item If $\lambda= -h k/2$, $k\in\N$, and $n=0,\ldots k-1$, there are no Jordan Blocks $\mathscr K^2_{\lambda, n,+} = \mathscr K^1_{\lambda, n,+}$, and one has
\begin{equation*}
\mathscr K^1_{\lambda, n,+} = \Span\left\{F_{\mathcal S,\underline{\Upsilon}, \lambda}\ \middle|\ \underline{\Upsilon} \in\R_{n}(\Ss^d)^{\dim V}\right\}.
\end{equation*}
	\item For $\lambda = -h k/2$ and $n=k,k+1,\ldots$ one has Jordan Blocks of index $2$ i.e. $\mathscr K^3_{\lambda, n,+} = \mathscr K^2_{\lambda, n,+}$, and
\begin{align*}
\mathscr K^2_{\lambda, n,+}\!\!&= \Span
\left\{ \delta_{\mathcal N,\lambda}^{(\mu)}b_l^{\mathcal N}, 
F_{\mathcal S,\underline{\Upsilon}, \lambda}\ 
\middle|\ |\mu|=n-k, l\leq \dim V, 
\underline{\Upsilon} \in\R_{n}(\Ss^{d-1})^{\dim V}\right\}.\\
   \mathscr K^1_{\lambda, n,+}\!\!&= \Span
\left\{ \delta_{\mathcal N,\lambda}^{(\mu)}b_l^{\mathcal N}, 
F_{\mathcal S,\underline{\Upsilon}, \lambda}\ 
\middle|\ |\mu|=n-k, l\leq \dim V, 
\underline{\Upsilon} \in\mathscr N_{n,n-k}\right\}.\\
\end{align*}
\end{itemize}
\end{lemma}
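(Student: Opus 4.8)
The plan is to transcribe the scalar analysis of Lemma~\ref{lem:P_lambda_finite_kernel} and of the preceding lemma on generalized eigenstates of $P_\lambda$ to the bundle setting, using the two parallel orthonormal trivializations $(b_l^{\mathcal N})_l$ on $\Ss^d\setminus\{\mathcal S\}$ and $(b_l^{\mathcal S})_l$ on $\Ss^d\setminus\{\mathcal N\}$ introduced above. First I would reduce to the irreducible case: the splitting \eqref{eq:irreducible splitting} is preserved by $I(\mathbf X_b,\lambda)$ (since the zero-order term $A$ is $\groupM$-equivariant and $\nabla^{\FibreL}_{\Xgr}$ respects the decomposition), and, for a suitable choice of quantization, $I(\Op^b(e^{-\mathscr{r}G_b}),\lambda)$ preserves it up to lower order because $G_b$ is scalar; hence the generalized eigenspaces split accordingly and it suffices to treat $(\tau,V)$ irreducible. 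Then Schur's lemma identifies $A$ with a scalar and $I(\mathbf X_b,\lambda)$ becomes a covariant derivative along $\Xgr$ plus the scalar function $\lambda\cos\varphi + h(\tfrac d2\cos\varphi + A)$.

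Next, let $w$ be a candidate generalized eigensection, i.e. $(I(\mathbf X_b,\lambda) + h(d/2+n-A)\pm\lambda)^j w = 0$. Expanding $w|_{\Ss^d\setminus\{\mathcal S\}} = \sum_l w_l^{\mathcal N}b_l^{\mathcal N}$ and using $\nabla^{\FibreL}_{\Xgr}b_l^{\mathcal N} = 0$, each component satisfies the scalar equation $(P_\lambda + h(d/2+n)\pm\lambda)^j w_l^{\mathcal N} = 0$, and similarly in the $\mathcal S$-frame. By Lemma~\ref{lemma:Identifying-the-indicial-space}, membership of $w$ in $\mathsf H^{\mathscr{r}m_b}(\Ss^d,\FibreL)$ is equivalent, component by component, to membership in $H^{-C_G\mathscr{r}}$ near $\mathcal N$ and in $H^{C_G\mathscr{r}}$ near $\mathcal S$ --- exactly the hypotheses under which the scalar analysis runs. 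Splitting on whether $\supp w$ meets the open hemispheres or is contained in $\{\mathcal N\}$ or $\{\mathcal S\}$, the same three-case argument as in Lemma~\ref{lem:P_lambda_finite_kernel} shows that a nonzero solution forces $\lambda = \pm h[s - A + (d/2+n)]$, that the distributions supported at a pole are spanned by the $\delta_{\mathcal N,\lambda}^{(\mu)}b_l^{\mathcal N}$ resp. $\delta_{\mathcal S,\lambda}^{(\mu)}b_l^{\mathcal S}$, and that the remaining solutions are built, on the open part, from the functions $f_{\mathcal S,\Upsilon,\lambda}$ tensored with frame vectors, i.e. from the sections $F_{\mathcal S,\underline\Upsilon,\lambda}$.

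The one genuinely new ingredient, and the step I expect to be the main obstacle, is the gluing across the equator. Written in the $b_l^{\mathcal N}$-frame, $F_{\mathcal S,\underline\Upsilon,\lambda}$ has components equal to the homogeneous weight $\rho^{-d-2\lambda/h-n}$ times $\sum_i \mathscr g_{l,i}(u)\,\Upsilon^{(i)}(u)$, so one extends it across $\mathcal N$ coefficient by coefficient via the Hadamard-regularization argument that precedes Definition~\ref{def:fSUlambda}. The extension remains annihilated by the indicial operator precisely when none of these coefficient distributions develops a pole in $\lambda$, which --- for $\lambda = hk/2$, $n\geq -k$ --- is exactly the orthogonality system \eqref{eq:no_Jordan_condition}, i.e. $\underline\Upsilon\in\mathscr N_{n,k}$; when it fails, differentiating the meromorphic family $\lambda\mapsto F_{\mathcal S,\underline\Upsilon,\lambda}$ in $\lambda$ produces, exactly as in the scalar computation, a rank-one Jordan block supported at $\mathcal N$ at each of the finitely many offending values. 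Collecting these facts yields the case-by-case descriptions of $\mathscr K^1_{\lambda,n,\pm}$, $\mathscr K^2_{\lambda,n,\pm}$ and $\mathscr K^3_{\lambda,n,\pm} = \mathscr K^2_{\lambda,n,\pm}$; the dimension count is automatically consistent because $I(\mathbf X_b,\lambda)$ acting on $\mathsf H^{\mathscr{r}m_b}$ is Fredholm of index $0$ in the relevant range by Proposition~\ref{prop:indicial-is-fredholm}. Apart from the equator-gluing bookkeeping and keeping track of the frame indices $l$, everything is a componentwise replay of the scalar lemmas.
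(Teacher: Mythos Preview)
Your proposal is correct and follows essentially the same approach as the paper: reduction to the irreducible case via Schur, componentwise reduction to the scalar operator $P_\lambda$ through the parallel frames $b_l^{\mathcal N}$, $b_l^{\mathcal S}$, the three-case support analysis from Lemma~\ref{lem:P_lambda_finite_kernel}, and then the extension of $F_{\mathcal S,\underline\Upsilon,\lambda}$ across $\mathcal N$ via the gluing function $\mathscr g$ and Hadamard regularization, yielding the orthogonality condition \eqref{eq:no_Jordan_condition} and the spaces $\mathscr N_{n,k}$. The paper in fact presents exactly this material in the discussion immediately preceding the lemma (so the lemma is stated without a separate proof environment), and your sketch matches it step for step.
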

Taking into account that an admissible cusp manifolds has only finitely many cusps and
that over each cusp, the finite dimensional unitary representation $\tau_\ell,V_\ell$ that describes
the admissible vector bundle over the cusp, splits into finitely many irreducible subrepresentations, 
we obtain the following
\begin{corollary}\label{cor:indicial_roots_holom_vectorbundle}
For an admissible cusp manifold and an admissible vector bundle in the sense of 
Definition~\ref{def:admissible_vector_bundle}, the indicial roots are affine. Their multiplicities are finite and can be calculated by 
Lemma~\ref{lem:indicial_root_spaces_vector}.
\end{corollary}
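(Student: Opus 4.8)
The plan is to obtain Corollary~\ref{cor:indicial_roots_holom_vectorbundle} as a bookkeeping consequence of Lemma~\ref{lem:indicial_root_spaces_vector}, in which essentially all of the analytic work has already been done, together with the reduction steps collected in the discussion preceding that lemma. Since (by the proof of Theorem~\ref{thm:admissible-bundles}) it suffices to check that the indicial roots are affine in the sense of Definition~\ref{def:roots-are-affine}, the real content is to read off the explicit list of roots and their multiplicities and to verify the required uniformity of the constants.

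First I would reduce to a single cusp and a single irreducible summand. An admissible cusp manifold has only finitely many cusps $Z_1,\dots,Z_\kappa$, and over $Z_\ell$ the unitary representation $(\tau_\ell,V_\ell)$ decomposes as a finite orthogonal sum of irreducibles. By the splitting \eqref{eq:irreducible splitting} and the fact --- recalled before Lemma~\ref{lem:indicial_root_spaces_vector} --- that the indicial operator $I(\mathbf{X}_{b,\ell},\lambda)$ preserves this splitting (the zeroth order term $A_\ell$ being $\groupM$-equivariant, and the purely scalar escape-function factor preserving the decomposition up to a lower order term for a suitable choice of quantization), the set $\specb(s)$ is the union, over the finitely many pairs $(\text{cusp},\text{irreducible summand})$, of the corresponding indicial root sets, and the algebraic multiplicities add. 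So it suffices to treat one irreducible $(\tau,V)$ over one fixed cusp, where Schur's lemma identifies $A$ with a scalar.

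Second, I would read off affineness. By Proposition~\ref{prop:indicial-is-fredholm}, in the relevant range of $s$ the operator $I(\mathbf{X}_b-hs,\lambda)=I(\mathbf{X}_b,\lambda)-hs$ is Fredholm of index $0$ on the anisotropic spaces, so $\lambda$ is an $s$-indicial root exactly when $I(\mathbf{X}_b,h\lambda)-hs$ has a nontrivial generalized kernel. Lemma~\ref{lem:indicial_root_spaces_vector} says this happens precisely when $-hs=h(d/2+n-A)\pm h\lambda$ for some $n\in\N$, i.e. when $\lambda=\mp\bigl(s+d/2+n-A\bigr)$. These are affine functions of $s$ with slopes $\pm1$, hence in particular globally defined and holomorphic with no branching, and $|a_k|=1\leq C$ uniformly; the intercepts $b_k=\mp(d/2+n-A_\ell)$ have imaginary parts $\mp\Im A_\ell$, which range over the finitely many eigenvalues of the finitely many $A_\ell$'s and are therefore globally bounded (consistently with the sign conditions on $\Re b_k$ required after Lemma~\ref{lemma:Indicial-invertible-high-lambda}). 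Thus the roots are affine in the sense of Definition~\ref{def:roots-are-affine}.

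Third, for the multiplicities: at a root $\lambda_k(s)$ the algebraic multiplicity is the dimension of the stabilised generalized eigenspace $\mathscr K^j_{\lambda_k(s),n,\pm}$ (equal to $\mathscr K^1$ or $\mathscr K^2$ according to whether a Jordan block occurs), and Lemma~\ref{lem:indicial_root_spaces_vector} gives all of these dimensions explicitly in terms of $\dim V$, $d$, $n$, $k$ and the codimension of the subspace $\mathscr N_{n,k}$ cut out by the gluing conditions \eqref{eq:no_Jordan_condition}; in particular they are finite. One small point to dispatch is that Lemma~\ref{lem:indicial_root_spaces_vector} requires $\mathscr{r}C_G$ large relative to $d+n+2|\Re(\lambda/h)|$; since $\specb(s)$ is independent of $\mathscr{r}$ and $\mathscr{r}$ may be taken arbitrarily large, this is harmless once one works in any fixed bounded region of $(s,\lambda)$ and then exhausts $\C^2$. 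I expect the main obstacle to be not a new estimate but precisely this uniformity bookkeeping: one must make sure the reduction to irreducibles is compatible with the anisotropic norm (the force of the remark that the escape function is purely scalar) and that the constants $a_k,b_k$ are uniform over the infinitely many $n$ and the finitely many $(\text{cusp},\text{summand})$ pairs, so that Definition~\ref{def:roots-are-affine} and the bounds of Lemma~\ref{lemma:Indicial-invertible-high-lambda} genuinely apply.
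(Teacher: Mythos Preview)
Your proposal is correct and follows essentially the same approach as the paper: the paper's proof is just the one-sentence remark immediately preceding the corollary, namely that there are finitely many cusps and over each cusp the representation splits into finitely many irreducibles, whence Lemma~\ref{lem:indicial_root_spaces_vector} applies. You have simply spelled out in more detail the bookkeeping (affineness of the roots, uniform bounds on $a_k,b_k$, and independence of $\mathscr{r}$) that the paper leaves implicit.
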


\newpage

\appendix

\section{Quantization on manifolds with cusps and propagation of singularities}
\label{appendix:microlocal-tools}

\subsection{Symbols on non-compact spaces}

Since we are working with pseudo-differential operators acting on fiber-bundles over non-compact manifolds, it is important to clarify what notion of symbols we are using. We want to use symbols in the usual Kohn-Nirenberg class, but we have to be slightly careful to take into account the lack of compactness of the manifold. Throughout our arguments, we refer to $C^k$ functions as functions with $C^k$ regularity, and $\mathscr{C}^k$ functions as elements of the corresponding Banach space. The notation $\mathscr{C}^k$ implies the use of a metric to measure the size of the derivatives. Given a Riemannian or Hermitian vector bundle $L\to M$ over a Riemannian manifold, endowed with a compatible connection, we can also define $\mathscr{C}^k(M,L)$ spaces as well as Sobolov spaces $H^s(M,L)$. We introduce
\begin{definition}[Kohn-Nirenberg metric]\label{def:Kohn-Nirenberg-metric}
Assume that $(M,g)$ is a Riemannian manifold. Then its cotangent bundle decomposes as
\[
T(T^\ast M) = H \oplus V,
\]
where $V = \ker d\pi$, with $\pi : T^\ast M \to M$ the usual projection. The so-called \emph{horizontal} space $H$ is given by the Levi-Civita connection. We have natural identifications $V \simeq H \simeq T M$, so we can define horizontal and vertical lifts --- see \cite{Gudmundsson-Kappos}. We define the metric $\overline{g}$ on $T^\ast M$ by
\[
\overline{g}_{(x,\xi)}(X^v + Y^h, W^v + Z^h) = g_x(Y,Z) + \frac{1}{1+g(\xi,\xi)}g_x(X,W)
\]
\end{definition}

\begin{lemma}\label{lemma:bounded-curvature-Kohn-Nirenberg}
Assume that the curvature tensor of $(M,g)$ is bounded and so are all its covariant derivatives.
Then the same holds for $(T^\ast M, \overline{g})$.
\end{lemma}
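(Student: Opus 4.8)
The statement is a differential-geometric fact: if $(M,g)$ has bounded curvature tensor together with all covariant derivatives, then the Kohn--Nirenberg metric $\overline g$ on $T^\ast M$ has the same property. The plan is to work entirely in terms of the splitting $T(T^\ast M)=H\oplus V$ afforded by the Levi-Civita connection, and to express the curvature of $\overline g$ through O'Neill-type structure equations for a Riemannian submersion combined with the warping factor $\omega:=(1+g(\xi,\xi))^{-1}$. First I would recall that $\overline g$ is, up to the conformal-in-the-fibers rescaling by $\omega$, the classical Sasaki metric; the Sasaki metric on $T^\ast M$ (identified with $TM$ via $g$) has a well-known curvature formula (due to Kowalski, see the references to \cite{Gudmundsson-Kappos}) in which every term is a polynomial, with universal coefficients, in the lifts of $R$ and $\nabla R$ evaluated at the base point, with coefficients that are smooth bounded functions of $\xi$ on the unit cosphere but grow polynomially in $|\xi|$ on the full fibers. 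The warping factor $\omega$ is precisely designed to kill that polynomial growth: $\omega$, and each of its $\overline g$-covariant derivatives, is bounded on $T^\ast M$ because $d\omega$ involves $\xi$ divided by $(1+|\xi|^2)^2$ and so on. So the scheme is: (i) write $\overline g = \omega\, g^V \oplus g^H$ in the splitting (more precisely, $\overline g$ restricted to $V$ is $\omega$ times the fiber metric, and restricted to $H$ is the pullback metric); (ii) compute the Levi-Civita connection of $\overline g$ in terms of that of the Sasaki metric plus correction terms built from $\nabla\log\omega$; (iii) iterate to get the curvature and its covariant derivatives.

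Concretely I would proceed as follows. Fix the moving frame given by horizontal and vertical lifts $\{E_i^h, E_i^v\}$ of a local $g$-orthonormal frame $\{E_i\}$ on $M$; with respect to this frame the Christoffel symbols of the Sasaki metric are given by explicit formulas involving only the structure constants of $\{E_i\}$ (hence $\nabla^g$) and the components of $R^g$ contracted with $\xi$. Rescaling the vertical part by $\omega$ changes these Christoffel symbols by terms of the form $\tfrac12(\partial\log\omega)$ paired against vertical directions; since $\log\omega=-\log(1+|\xi|^2)$ and $|\xi|$-derivatives of $\log(1+|\xi|^2)$ are bounded (each derivative gains a power of $(1+|\xi|^2)^{-1}$ against the at most linear growth of $\xi$), these correction terms and all their further $\overline g$-covariant derivatives are $\mathscr{C}^\infty$-bounded on $T^\ast M$. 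Combining: the connection coefficients of $\overline g$ in this frame are, by construction, universal smooth functions of $(\xi,\, R^g,\, \nabla^g R^g,\,\dots)$ whose $\xi$-dependence is controlled by $\omega$ and its derivatives; differentiating once more and using the bounded-geometry hypothesis on $(M,g)$ (finitely many derivatives of $R^g$ suffice at each order), one sees every component of $R^{\overline g}$ and each iterated covariant derivative $(\nabla^{\overline g})^k R^{\overline g}$ is bounded. It is important here that one works with respect to the adapted frame and measures norms with $\overline g$ itself, so that no hidden $\xi$-growth from the frame normalization creeps back in.

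The main obstacle, and the only place requiring genuine care rather than bookkeeping, is keeping precise track of the $|\xi|\to\infty$ behaviour. In the Sasaki metric the curvature tensor has entries that are genuinely unbounded (they grow like $|\xi|$ or $|\xi|^2$ when $\xi$ is large), so the whole point is to verify that the conformal factor $\omega$ exactly compensates — and that it continues to compensate after taking arbitrarily many covariant derivatives, where mixed terms coupling derivatives of $\omega$ with contractions of $R^g$ by $\xi$ appear. The clean way to handle this is to introduce, once and for all, the weight function $\langle\xi\rangle = (1+g(\xi,\xi))^{1/2}$, prove by induction that $(\nabla^{\overline g})^k\omega = \mathcal{O}(\langle\xi\rangle^{-2})$ and $(\nabla^{\overline g})^k\langle\xi\rangle = \mathcal{O}(1)$ in $\overline g$-norm, and then observe that every term in the structure equations for $R^{\overline g}$ carries enough negative powers of $\langle\xi\rangle$ to absorb the at-most-quadratic growth inherited from the Sasaki curvature. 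Once these weight estimates are in place the result follows by a routine induction on the order of the covariant derivative, using the bounded-geometry assumption on $(M,g)$ at each step.
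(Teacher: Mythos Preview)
Your approach is correct and is essentially what the paper does: it simply refers to the explicit curvature formulas for such metrics in \cite{Gudmundsson-Kappos} and leaves the verification implicit, whereas you have spelled out the bookkeeping with the warping factor $\omega=\langle\xi\rangle^{-2}$ and the weight-tracking induction. The core point in both cases is exactly the one you identify, namely that the Sasaki curvature grows at most polynomially in $|\xi|$ and the conformal factor on the vertical directions compensates this growth uniformly, including after iterated covariant differentiation.
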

This can be proved using the expressions for the curvature tensor of such a metric presented in \cite{Gudmundsson-Kappos}. From now on, whenever $M$ is a Riemannian manifold, its cotangent bundle will be endowed with $\overline{g}$.
\begin{definition}[Symbol classes]\label{def:symbol-classes}
Let $(L,\|\cdot\|)\to (M,g)$ be a Riemannian or Hermitian vector bundle over $M$ with compatible
connection $\nabla$. Assume that both the curvatures of $L$ and $M$ are bounded, as are all their covariant derivatives. Then, the \emph{semiclassical Kohn-Nirenberg symbols} $S^n(M,L)$ on $L$ of order $n$ are family of sections $\sigma_h : T^\ast M \mapsto \mathscr{L}(L,L)$ parametrized by a parameter $0<h\leq h_0$ such that for all $k\in \N$, there is $C_k$ independent of $h$ such that
\[
\| \nabla^k \sigma_{h}(x,\xi) \| \leq C_k \langle \xi \rangle^n.
\]
Note that $\nabla^k \sigma_{h}$ is a section of the bundle $(T^*(T^*M))^{\otimes k}\otimes \mathcal L(L,L)\to T^*M$ and the norm $\|\bullet \|$ is constructed by the operator norm on 
$\mathcal L(L,L)$ and the Kohn-Nirenberg metric $\overline g$ on $T^*(T^*M)$.
\end{definition}

For the definition of the anisotropic Sobolev spaces we need the following class of 
anisotropic symbol classes.
\begin{definition}
Let $m\in S^0(M)$ be an order zero Kohn Nirenberg symbol which we call an \emph{order function}. The space of \emph{anisotropic symbols} $S^m_{\log}(M,L)$ consist of those sections $\sigma_h : T^\ast M \mapsto \mathscr{L}(L,L)$ parametrized by a parameter $0<h\leq h_0$ such that for all $k\in \N$, there is $C_k$ independent of $h$ such that
\[
\| \nabla^k \sigma_h\| \leq C_k |\log(1+\langle\xi\rangle)|^k \langle\xi\rangle^{m(x,\xi)},
\]
\end{definition}
Note that the loss of $\log \langle\xi\rangle$ is necessary for the space of anisotropic symbols to contain sufficiently interesting elements such as for example $\langle\xi\rangle^{m(x,\xi)}$. 

In the sequel we will usually drop the parameters $h$ to simplify the notation unless we want to emphasize dependence on $h$

Consider two symbols $\sigma,\varsigma\in S(M,L)$. We say that $\sigma$ is \emph{scalar} if it takes the form $\sigma' \mathbb{1}$ with $\sigma'\in S(M,\R)$. In this case, we define the Poisson bracket:
\[
\{ \sigma, \varsigma \} := \nabla_{H_{\sigma'}} \varsigma,
\]
where $H_{\sigma'}$ is the Hamiltonian vector field of $\sigma'\in S(M,L)\subset C^\infty(T^*M)$. 
\begin{proposition}
Under the assumptions of the definition, the sets of symbols, i.e the union $S(M,L):= \cup_n S^n(M,L)$ and $S_{\log}(M,L):=\cup_n S^n_{\log}(M,L)$  satisfy all the usual properties. They are stable by product, sum, division by elliptic symbols, and graded by the order in the usual sense. They are also stable under the Poisson bracket provided one of the symbols is scalar.
\end{proposition}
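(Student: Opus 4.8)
The plan is to deduce every assertion from the pointwise symbol estimates of Definition~\ref{def:symbol-classes} together with three elementary ingredients: the Leibniz rule for the covariant derivative on $(T^\ast(T^\ast M))^{\otimes k}\otimes\mathscr L(L,L)$, submultiplicativity of the operator norm on $\mathscr L(L,L)$, and the arithmetic $\langle\xi\rangle^{a}\langle\xi\rangle^{b}=\langle\xi\rangle^{a+b}$, valid since $\langle\xi\rangle\geq 1$. The hypotheses on the curvatures --- of $g$, of $L$, and, through Lemma~\ref{lemma:bounded-curvature-Kohn-Nirenberg}, of $(T^\ast M,\overline{g})$ --- serve only to ensure that the intrinsic operations involved (raising and lowering indices, commuting covariant derivatives, differentiating the horizontal/vertical splitting and the symplectic form) cost constants uniform in $(x,\xi)$ and in $h$. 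Once this uniformity is granted the proof is formal; the statement for $S_\epsilon$ is the same as for $S$, the only change being that the gain factors $(\epsilon/h)^{k/2}$ appear and combine exactly as the powers of $\langle\xi\rangle$ do.

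Sum and grading are immediate: $S^n+S^m\subset S^{\max(n,m)}$ by the triangle inequality, and $S^n\subset S^m$ for $n\le m$ (pointwise for anisotropic orders) because $\langle\xi\rangle^n\le\langle\xi\rangle^m$. For the product one expands, using that the $T^\ast(T^\ast M)$ factors are tensored and not contracted,
\[
\nabla^k(\sigma\varsigma)=\sum_{j=0}^k\binom{k}{j}\,\nabla^j\sigma\,\otimes\,\nabla^{k-j}\varsigma ,
\]
and bounds each summand by $C\,\langle\xi\rangle^{n}\langle\xi\rangle^{m}=C\langle\xi\rangle^{n+m}$; hence $S^n\cdot S^m\subset S^{n+m}$. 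If $\sigma\in S^n$ is non-singular, i.e.\ pointwise invertible with $\|\sigma(x,\xi)^{-1}\|\le C\langle\xi\rangle^{-n}$ uniformly, then differentiating $\sigma^{-1}\sigma=\mathbb 1$ gives $\nabla\sigma^{-1}=-\sigma^{-1}(\nabla\sigma)\sigma^{-1}$, and induction shows that $\nabla^k\sigma^{-1}$ is a finite sum of words $\pm\,\sigma^{-1}(\nabla^{j_1}\sigma)\sigma^{-1}\cdots(\nabla^{j_p}\sigma)\sigma^{-1}$ with $j_1+\dots+j_p=k$, $j_i\ge 1$, so $p\le k$; such a word has $p+1$ factors of order $-n$ and $p$ factors of order $n$, hence is $O(\langle\xi\rangle^{-n})$, proving $\sigma^{-1}\in S^{-n}$. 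Division of non-singular symbols then follows by composing with the product rule.

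The one assertion requiring genuine bookkeeping is stability under the Poisson bracket $\{\sigma,\varsigma\}=\nabla_{H_{\sigma'}}\varsigma$, with $\sigma=\sigma'\mathbb 1$ scalar of order $n$ and $\varsigma$ of order $m$; the claim is $\{\sigma,\varsigma\}\in S^{n+m-1}$ (with the extra gain $(\epsilon/h)$ in the exotic class). The key point is that the Kohn--Nirenberg metric is built so that \emph{vertical} differentiation improves the order by one: if $\varsigma\in S^m$ then $\|\nabla\varsigma\|_{\overline{g}^\ast}\le C\langle\xi\rangle^m$, and since $\overline{g}^\ast$ weights the vertical cofactor by $\langle\xi\rangle^2$, the $g$-size of the vertical part of $d\varsigma$ is $\le C\langle\xi\rangle^{m-1}$ (likewise for $\sigma'$). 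Decomposing $H_{\sigma'}$ through the symplectic form $\omega=d\alpha$ and the splitting $T(T^\ast M)=H\oplus V$, its horizontal component is paired with the vertical part of $d\sigma'$ (of $\overline{g}$-size $\le C\langle\xi\rangle^{n-1}$) and its vertical component with the horizontal part of $d\sigma'$ (of $g$-size $\le C\langle\xi\rangle^{n}$, hence of $\overline{g}$-size $\le C\langle\xi\rangle^{n-1}$ after the vertical rescaling). Thus $H_{\sigma'}$ has $\overline{g}$-size $\le C\langle\xi\rangle^{n-1}$, and evaluating $d\varsigma$ on it gives $\le C\langle\xi\rangle^{n+m-1}$ (the vertical/vertical contribution being $\le C\langle\xi\rangle^{m-1}\langle\xi\rangle^n$, the horizontal/horizontal one $\le C\langle\xi\rangle^m\langle\xi\rangle^{n-1}$). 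For the higher derivatives one applies $\nabla^k$ to $\nabla_{H_{\sigma'}}\varsigma$: by Leibniz this is a sum of contractions of $\nabla^j H_{\sigma'}$ with $\nabla^{k-j+1}\varsigma$, plus curvature corrections from commuting covariant derivatives. Here Lemma~\ref{lemma:bounded-curvature-Kohn-Nirenberg} is invoked to conclude that each $\nabla^j H_{\sigma'}$ is still of $\overline{g}$-size $\le C_j\langle\xi\rangle^{n-1}$ --- equivalently, that $\omega$ is parallel and the second fundamental form of the horizontal distribution, with all its covariant derivatives, is bounded --- and that the curvature corrections are themselves $O(\langle\xi\rangle^{n+m-1})$.

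I expect this last uniform control --- of $\nabla^j H_{\sigma'}$ and of the curvature remainders in the iterated Leibniz expansion --- to be the principal, though essentially computational, obstacle; it is exactly where the bounded geometry of $(T^\ast M,\overline{g})$ from Lemma~\ref{lemma:bounded-curvature-Kohn-Nirenberg} is used. The remaining ``usual properties'' not spelled out above (asymptotic summation of symbols, stability of the classes under a single covariant derivative, and so on) follow by the same combination of Leibniz rule, submultiplicativity, and $\langle\xi\rangle$-arithmetic.
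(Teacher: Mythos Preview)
The paper does not give a proof of this proposition at all; it is stated as a routine consequence of bounded geometry and left to the reader, with the remark that the symbol class coincides with the one in \cite{Bonthonneau-2}. Your sketch supplies exactly the kind of argument one would give, and the treatments of sum, grading, product, and inversion are clean and correct.

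For the Poisson bracket, your zeroth-order estimate is right and nicely phrased: the key structural fact is that $\omega^{-1}:T^\ast(T^\ast M)\to T(T^\ast M)$ has $\overline g$-operator norm $\sim\langle\xi\rangle^{-1}$, so $\|H_{\sigma'}\|_{\overline g}\le C\langle\xi\rangle^{n-1}$ and the pairing with $d\varsigma$ gives order $n+m-1$. The higher-order step is also essentially right, but one parenthetical claim is inaccurate: $\omega$ is \emph{not} parallel for the Levi--Civita connection of $\overline g$ in general. What you actually need, and what does follow from Lemma~\ref{lemma:bounded-curvature-Kohn-Nirenberg}, is the weaker statement that the tensor $\langle\xi\rangle\,\omega^{-1}$ has all its $\overline g$-covariant derivatives bounded (equivalently, that the Christoffel symbols of $\overline g$ and their derivatives are bounded in $\overline g$-norm, together with the fact that $\log\langle\xi\rangle$ has bounded $\overline g$-derivatives). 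With that in hand, the Leibniz expansion of $\nabla^k(\omega^{-1}(d\sigma'))$ gives $\|\nabla^j H_{\sigma'}\|_{\overline g}\le C_j\langle\xi\rangle^{n-1}$ as you claim, and the rest goes through. So: correct approach, with a harmless overstatement in the justification of the inductive step.
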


It is important to notice that the set of symbols $S(N,\R)$, where $N$ is an admissible cusp manifold is exactly the same class of symbols as was described in the paper \cite{Bonthonneau-2}. It is straight forward to show that the proofs therein apply to $S_{\log}$ (the largest of all classes here). 

To close this section, we consider the radial compactification of the cotangent space.
\begin{definition}\label{def:C-infinity-structure-compactified}
Let $\overline{T^\ast} M$ be the radial compactification of the cotangent space. It has a structure of continuous manifold, but not of $C^\infty$ manifold a priori. We consider the map 
\[
\comp:(x,\xi) \to \left(x,\frac{\xi}{1+ \langle \xi\rangle}\right) = (x,\xi').
\]
This is a homeomorphism of $\overline{T^\ast} M$ to $\overline{B(0,1)}$ in $T^\ast M$ and it endows $\overline{T^\ast} M$ with a the structure of a smooth manifold with boundary. 
Let $\widetilde{g} = \comp^\ast \overline{g}$ and define $\mathscr{C}_{\tilde g}^k$ norms on $\overline{T^\ast} M$ using $\widetilde{g}$. Then we define the \emph{classical symbols} as
\[
S^0_{cl}(M):= \mathscr{C}_{\tilde g}^\infty(\overline{T^\ast}M)
\]
and for $k\in \Z$ we set
$S^k_{cl}(M):=\langle\xi\rangle^k\mathscr{C}_{\tilde g}^\infty(\overline {T^*}M)$.

\end{definition}
Note that for the prescribed smooth structure on $\overline{T^*}M$, $\langle\xi\rangle^{-1}$ is a boundary defining function. In particular, because classical symbols are smooth up to the boundary, they have a homogeneous expansion as $\xi\to \infty$.
\begin{proposition}
 We have the inclusion
 \[
  S^0_{cl}(M)\subset S^0(M).
 \]
 \end{proposition}
\begin{proof}
 We set $\overline{g}'= \comp_\ast \overline{g}$ --- it is a metric on the open ball 
 $B(0,1)$. Then close to $|\xi'| = 1$, 
\[
\overline{g}' = \frac{1}{1-|\xi'|_x} (\xi' d\xi')^2 + g',
\]
where $g'$ is a smooth symmetric $2$-form, and $\overline{g} \leq C \overline{g}'$. Passing back to $\overline{T}^*M$ we deduce $\tilde g\leq C \overline g$.  This is sufficient to deduce that the $\mathscr{C}^k$ norms of $\tilde{g}$ control those of $\overline{g}$. 

Since $\tilde{g}$ has bounded curvature, and derivatives thereof, one can estimate its $\mathscr{C}^k$ norms using flat derivatives in exponential coordinates in balls of size $\sim 1$. In other words, we can restrict our attention to the open unit ball in $\R^n$, and assume that $\overline{g}\geq 1$. We then have to show that the exponential map $\exp^{\overline{g}}_0$ has uniformly bounded derivatives (on the unit ball for $\overline{g}$).

We start by observing that it maps the unit ball for $\overline{g}$ inside the standard unit ball (because geodesics for $\overline{g}$ travel at speed $\sim 1/||g||^{1/2} \leq 1$). The derivative of the exponential map can be expressed in terms of Jacobi fields along geodesics through $0$, and these fields satisfy equations involving only the curvature tensor of $\overline{g}$. For the higher derivatives, we also have a description using fields satisfying a Jacobi equation, with a forced term this time. The forcing term is itself a covariant derivative of the curvature tensor along Jacobi fields. In this way, one sees that the derivatives of the exponential map are all controled using only the curvature of $\overline{g}$ and its covariant derivatives. Since those are bounded with respect to $\overline{g}$, they are also bounded with respect to $1$, and we are done.  

This proves that $C^\infty_{\tilde{g}} \subset C^\infty_{\overline{g}}$, and thus the inclusion 
\[
S^0_{cl}(M)=\mathscr{C}_{\tilde g}^\infty(\overline{T^\ast}M) \subset \mathscr{C}_{\overline g}^\infty(T^\ast M)=S^0(M).
\]
\end{proof}

\subsubsection{Symbols on cusps}\label{sec:symbols-on-cusps}

Symbols on cusps have a particular structure that is central to all the arguments of the article. Consider a cusp $Z$ and a symbol $\sigma \in S^n(Z)$. For the extension to trivially fibred cusps, nothing different happens, so we concentrate on $S^n(Z)$. The symbol estimates take the form (recall that $\xi=Ydy+Jd\theta$)
\begin{equation*}
\left|(y\partial_y)^\alpha (y\partial_\theta)^\beta (y^{-1}\partial_\xi)^{\alpha'} (y^{-1} \partial_J)^{\beta'} \sigma \right| \leq C (1+y^2(Y^2 + J^2))^{\frac{n- \alpha' -|\beta'|}{2}}.
\end{equation*}
We change variables to $r= \log y$, $yY = \lambda$. We get that
\begin{equation*}
\left|(\partial_r)^\alpha (e^r\partial_\theta)^\beta (\partial_\lambda)^{\alpha'} (e^{-r} \partial_J)^{\beta'} \sigma \right| \leq C (1+\lambda^2 + e^{2r}J^2)^{\frac{n- \alpha' -|\beta'|}{2}}.
\end{equation*}
Now, a case of special importance will be symbols that do not depend on $\theta$. When that is the case, we deduce from the estimate above that
\begin{equation}\label{eq:structure-symbols-cusp}
\sigma = \widetilde{\sigma}(r; \lambda, e^r J)
\end{equation}
where $\widetilde{\sigma}$ is a symbol on $\R_r \times \R^2_\xi$ in the usual sense that 
\[
\left| \partial_r^\alpha \partial_\xi^\beta \widetilde{\sigma} \right| \leq C \langle \xi\rangle^{n-|\beta|}.
\]

\subsection{Quantization on fibred cusps}\label{app:quantization_on_cups}

We will use a quantization procedure similar to that presented in \cite{Bonthonneau-2}. For most of the technical details, we refer to that article; We will only clarify a few points.

We want to obtain operators on trivially fibred cusp $Z\times\FibreM$ (see Definition~\ref{def:fibred_cusp}). The Schwartz kernels will be understood as taken with reference to the \emph{Euclidean} volume form on the cusp, $dy d\theta d\vol_{\FibreM}(\zeta)$.

First off, let us write $k=\dim \FibreM$, given an open set $U\subset \R^k$, and a symbol 
$\sigma$ on $T^\ast (Z\times U)$, we define an operator $\Op^{w,0}_{h,Z\times U}(\sigma)$ on $\mathbb{H}^{d+1}_{(y,\theta)}\times U_\zeta$ by the kernel
\begin{equation}\label{eq:def-loc-quantization}
\frac{1}{(2\pi h)^{d+1+k}}\int e^{\frac{i}{h}\Phi} \sigma\left(\frac{y+y'}{2},\frac{\theta+\theta'}{2}, \frac{\zeta+\zeta'}{2}, Y,J,\eta \right) d\xi \left(\frac{y}{y'}\right)^{(d+1)/2},
\end{equation}
where $\Phi$ is the usual phase function $\langle y-y',Y\rangle + \langle \theta - \theta',J\rangle + \langle \zeta - \zeta', \eta \rangle$. Here, we have identified the symbol $\sigma$ with the corresponding $\Lambda_Z$-periodic function on $T^\ast (\Hh^{d+1} \times U)$. The operator we obtain is well defined on $\Lambda_Z$-periodic functions, preserves them, and so we obtain an operator acting on $Z\times U$ --- as was explained on page 319 of \cite{Bonthonneau-2}.

Actually, there is the slight inconvenience that we do not know exactly how the kernel of the quantization $\Op^{w,0}_{h,Z\times U}$ decays far from the diagonal. To avoid this discussion altogether, we take a function $\chi^{\Op} \in C^\infty_c(]-C,C[)$ equal to $1$ around $0$, and we let $\Op^w_{h,Z\times U}(\sigma)$ be the operator on $Z\times U$ whose kernel is
\begin{equation}\label{eq:def-loc-quantization-cutoff}
K_{\Op^{w,0}_{h,Z\times U}(\sigma)} \chi^{\Op}\left( \log \frac{y}{y'} \right).
\end{equation}
Taking local charts on $\FibreM$, we can thus build a quantization $\Op^{w}_{h,Z\times\FibreM}$ on a trivially fibred cusp. 

We next want to define a quantization on general admissible vector bundles $L\to M$ in the sense of Definition~\ref{def:admissible-bundle}:  
Given a relatively compact open set $U\subset M$, we can take a 
coordinate patch to $\R^{d+1}\times \R^k$, that maps the volume form to the 
standard volume form of $\R^{d+1+k}$. Such a chart will be called a \emph{compact
chart} and we will use  the ordinary Weyl quantization on these compact charts 
(see e.g. \cite[\S 4.1.1 and Theorem 14.1]{Zworski-book}). Now, we have another type of charts: 
they are supported on open sets of the form $U^Z_{\mathsf a}\times \mathsf{U}$ with $U^Z_\mathsf{a}=\{ z\in Z\ |\ y(z)> \mathsf{a}\}$ and $\mathsf{U}\subset \FibreM$ with $\mathsf{a}\geq \mathbf{a}$. They can be mapped to open sets of the form $U^Z_{\mathsf a} \times U$ with $U$ relatively compact in $\R^k$. We will also impose that the volume form on the fibres $\FibreM$ is sent to the standard volume of $\R^k$, which is possible because the metric takes a product form. Such a chart will be called a
\emph{cusp chart}.

On any such open chart we can define a quantization for sections of $L$ by tensorizing a quantization on functions with a local orthogonal frame for $L$. This can be done over cusp charts because of the product structure of $L$.

In particular, we can choose $\mathsf{a}\geq \mathbf{a}$, and find a corresponding finite cover $U_\ell$ of $M$ by compact charts or cusp charts and a corresponding partition of unity $\sum \chi_\ell^2 = 1$. Then we define for $\sigma \in S(M,L)$ its quantization $\Op^w_{h,L}(\sigma): C^\infty_c(M,L) \to C^\infty(M,L)$ by
\begin{equation}\label{eq:def-quantization-global}
\Op^w_{h,L}(\sigma) f := \sum_\ell \chi_\ell \Op^w_{h,U_\ell,L}(\sigma) \chi_\ell f.
\end{equation}
The notation will soon be shortened to just $\Op$, and we obtain:
\begin{proposition}\label{prop:properties-quantization}
Let $L$ be an admissible bundle and consider (for $j=1,2$), $\sigma_{j} \in S^{n_j}(M,L)$ for $n_j\in\R$ (respectively in $S^{n_j}_{\log}$ with $n_j\in S^0(M,L)$) We have several properties, valid in the limit $h \to 0$:
\begin{enumerate}
\item \label{itm:product-formula} There exists a third symbol $\varpi\in S^{n_1+n_2}(M,L)$ (respectively $S_{\log}^{n_1+n_2}(M,L)$) such that
\[
\Op(\sigma_1)\Op(\sigma_2) = \Op(\varpi) + R,
\]
where the remainder $\|R\|_{H^{-N} \to H^{N}} = \mathcal O(h^\infty)$  for all $N\in \N$ and $R$ is uniformly properly supported. Furthermore we have $\varpi - \sigma_1 \sigma_2 \in hS^{n_1+n_2-1}$ (resp.  $h\log (1+\langle\xi\rangle) S^{n_1+n_2-1}_{\log}$). 
\item Assume that $\sigma_1$ is scalar. Then we have the commutator formula:
\[
[\Op(\sigma_1),\Op(\sigma_2)] = \frac{h}{i} \Op(\{ \sigma_1, \sigma_2\}) + \mathcal{O}(h^2S^{n_1+n_2-2}).
\]
The remainder worsens to $h^2 \log(1+\langle\xi\rangle)^2S^{n_1+n_2-2}_{\log}$ in the case of exotic symbols.

\item If $\sigma$ is hermitian valued, $\Op(\sigma)$ is symmetric.

\item \label{itm:L2_estimate} For $\sigma \in S^0_{\log}(M,L)$, $s\in\R$, $\Op(\sigma):H^s(M,L) \to H^{s}(M,L)$ is bounded uniformly in $h>0$.

\item For $n\in S^0(M,L)$, if $\sigma\in S^n_{\log}(M,L)$ is elliptic, i.e. if $\sigma^{-1} \in S^{-n}_{\log}(M,L)$, then $\Op(\sigma)$ is invertible on Sobolev spaces for $h$ small enough, so that we can let
\[
H^n(M,L) := \Op(\sigma)^{-1}L^2(M,L). 
\]

\item \label{itm:fourier_invariance} Assume that $\partial_\theta \sigma = 0$ for $y\geq \mathsf{a}$. Consider $f$ supported in some fibred cusp end $M_\ell=Z_\ell\times \FibreM$ and assume that $f$ takes the form $e^{i k \theta} g(y, \zeta)$ where $k\in \Lambda'_\ell$ and $g$ supported in $\{y > C \mathsf{a}\}$. Taking $C>1$ large enough depending only on the partition of unity appearing in \eqref{eq:def-quantization-global}, $\Op(\sigma)f$ has the same form except that it is now supported in $\{ y > \mathsf{a}\}$. In particular, $\Op(\sigma)$ preserves Fourier modes exactly.
\end{enumerate}
\end{proposition}

The stabilization of Fourier modes is a nice feature from which we profit because we have assumed that the curvature is constant $-1$ in the cusps. In a more general case of curvature tending to $-1$, one would have to look for more subtle estimates. 
\begin{remark}\label{rem:reminder_pdo}
The remainder $R$ in the product formula can actually be written as a $\Op'(r)$, with $r$ a $\mathcal{O}(h^\infty S^{-\infty})$ symbol, if $\Op'$ is another quantization built in the same fashion, but where the cutoff away from the diagonal has been changed to another one with sufficiently larger support.
\end{remark}

We will also need a sharp G\r{a}rding lemma:
\begin{lemma}[Sharp G\r{a}rding]\label{lemma:sharp-Garding}
Let $\sigma\in S^1(M,L)$. Assume that $\Re (\sigma) \geq 0$. Then
\[
\Re(\langle \Op(\sigma)u,u \rangle) \geq - C h \| u \|_{L^2}^2.
\]
\end{lemma}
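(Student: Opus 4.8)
The plan is to reduce the sharp Gårding inequality on the admissible bundle $L\to M$ to local versions on the charts underlying the quantization \eqref{eq:def-quantization-global}, exactly as the rest of Proposition~\ref{prop:properties-quantization} is proved. First I would recall the localization formula $\Op(\sigma)=\sum_\ell \chi_\ell\,\Op^w_{h,U_\ell,L}(\sigma)\,\chi_\ell$ and note that, since $\sum\chi_\ell^2=1$ and the $\chi_\ell$ are uniformly bounded in every $\mathscr{C}^k$, it suffices to establish
\[
\Re\langle \Op^w_{h,U_\ell,L}(\sigma)v,v\rangle \geq -Ch\|v\|_{L^2}^2
\]
with a constant $C$ uniform in $\ell$, together with a control of the cross terms coming from $[\Op,\chi_\ell]$, which by item (2) of Proposition~\ref{prop:properties-quantization} are $\mathcal{O}(h)$ in $S^0$ and hence contribute $\mathcal{O}(h)\|u\|_{L^2}^2$ after Cauchy--Schwarz. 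On a compact chart this is the classical sharp Gårding inequality for the Weyl quantization on $\R^{d+1+k}$ with hermitian-matrix-valued symbols (see \cite{Zworski-book}, §4.7, or the Friedrichs-mollifier argument); on a cusp chart one uses the corresponding statement from \cite{Bonthonneau-2}, whose local quantization \eqref{eq:def-loc-quantization} is a conjugate of the Euclidean Weyl quantization by the weight $(y/y')^{(d+1)/2}$ and a diffeomorphism preserving volume, so the argument transfers verbatim once one checks that the Kohn--Nirenberg symbol estimates of Definition~\ref{def:symbol-classes} give exactly the flat symbol bounds \eqref{eq:structure-symbols-cusp} needed there.

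The one genuinely new ingredient compared with the scalar case is that $\sigma$ is $\End(L)$-valued with $\Re\sigma=\tfrac12(\sigma+\sigma^\ast)\geq 0$ rather than a nonnegative scalar. Here I would invoke the square-root lemma already proved inside Proposition~\ref{prop:properties-quantization}: apply it with $\epsilon=h$ and $\epsilon'=1$ to the symbol $\Re\sigma\in S^1(M,L)$ (after dividing by $\langle\xi\rangle$ to land in $S^0$, then multiplying back), obtaining $\tau:=\sqrt{\langle\xi\rangle(\Re\sigma)/\langle\xi\rangle + h}\,\langle\xi\rangle^{1/2}\in S^{1}$ hermitian with $\tau\#\tau = \Re\sigma + h\langle\xi\rangle + \mathcal{O}(hS^{-1})$. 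Then
\[
\Re\langle\Op(\sigma)u,u\rangle=\langle\Op(\Re\sigma)u,u\rangle=\langle\Op(\tau)^\ast\Op(\tau)u,u\rangle + \mathcal O(h)\|u\|_{L^2}^2 \geq -Ch\|u\|_{L^2}^2,
\]
where I use item (3) (hermitian symbols give symmetric operators) so $\Op(\tau)^\ast\Op(\tau)\geq 0$, item (1) for the composition, and the $L^2$-boundedness of $\Op$ on $S^0$ from item (4) to absorb the remainder $\mathcal{O}(hS^{-1})$ into $\mathcal{O}(h)\|u\|^2$. The term $\Op(h\langle\xi\rangle)$ is discarded because it is nonnegative modulo $\mathcal O(h^2)$ — actually one should be slightly careful and instead work from the start with $\tau=\sqrt{\Re(\sigma)/\langle\xi\rangle+h}$ as a symbol in $S^{0}_h$ so that $\langle\xi\rangle^{1/2}\#\tau\#\langle\xi\rangle^{1/2}$ reproduces $\Re\sigma$ up to $\mathcal O(h)$ in $S^1$; this is the analogue of the classical trick of Gårding-ing $\sigma/\langle\xi\rangle$.

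I expect the main obstacle to be bookkeeping rather than conceptual: making sure the square-root lemma is applied in the correct symbol class (it is stated for $S^0_\epsilon$, so one must first renormalize the order-$1$ symbol $\Re\sigma$ to order $0$, track the $\langle\xi\rangle$ weights through the composition, and confirm the remainder really is $\mathcal{O}(hS^{-1})$ and thus $L^2$-bounded), and checking the uniformity of all constants across the finitely many charts — in particular that the $\mathscr{C}^k$ norms of the $\chi_\ell$, the partition-of-unity cross terms, and the cutoff $\chi^{\Op}$ in \eqref{eq:def-loc-quantization-cutoff} contribute only $\mathcal{O}(h^\infty)$ or $\mathcal{O}(h)$ errors. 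None of these is deep; they are exactly the estimates assembled in \cite{Bonthonneau-2} for the scalar cusp calculus, now tensored with a local orthonormal frame of $L$, which is licit precisely because of the product structure of $L$ in the cusps assumed in Definition~\ref{def:admissible-bundle}.
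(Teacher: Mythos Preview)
Your square-root argument correctly handles the case $\sigma\in S^0$, and this is exactly the paper's Lemma~\ref{lem:not_so_sharp_Garding}. The gap is in the passage from $S^1$ to $S^0$. When you renormalize by $\langle\xi\rangle$ and set $\tau=\langle\xi\rangle^{1/2}\sqrt{\Re\sigma/\langle\xi\rangle+h}$, you obtain
\[
\Op(\tau)^\ast\Op(\tau)=\Op(\Re\sigma)+h\,\Op(\langle\xi\rangle)+\mathcal O(h\Psi^0),
\]
so that
\[
\langle\Op(\Re\sigma)u,u\rangle=\|\Op(\tau)u\|^2-h\langle\Op(\langle\xi\rangle)u,u\rangle+\mathcal O(h)\|u\|_{L^2}^2.
\]
The term $-h\langle\Op(\langle\xi\rangle)u,u\rangle$ is \emph{minus} a nonnegative quantity comparable to $h\|u\|_{H^{1/2}}^2$; it is not bounded below by $-Ch\|u\|_{L^2}^2$, so it cannot be ``discarded because it is nonnegative.'' Your proposed fix (conjugating by $\langle\xi\rangle^{1/2}$ on both sides) runs into the same obstruction: any regularization $\sqrt{\,\cdot\,+h}$ applied to a genuinely order-$1$ nonnegative symbol produces an extra $h\langle\xi\rangle$ that you must subtract, and there is no way to absorb an order-$1$ operator into $\mathcal O(h)\|u\|_{L^2}^2$.

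The paper's proof supplies the missing idea: a Littlewood--Paley decomposition $\sigma=\sum_j 2^j\sigma_j(x,2^{-j}\xi)$ with $\sigma_j\in S^0$ uniformly in $j$. On each dyadic shell one applies the $S^0$ G\r{a}rding inequality at the rescaled parameter $2^{-j}h$, yielding a lower bound $-C\,2^{-j}h\|u\|^2$; the factor $2^j$ in front cancels, and almost-orthogonality of the frequency cutoffs lets one sum. This dyadic rescaling is precisely what turns the uncontrolled $h\langle\xi\rangle$ into a summable sequence of $L^2$-bounded errors, and it is the step your outline is missing. Your localization-to-charts paragraph does not circumvent this either: the issue is the symbol order, not the geometry, and the cusp reference \cite{Bonthonneau-2} does not furnish sharp G\r{a}rding for $S^1$ in this setting---that is what is being proved here.
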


We will prove together Proposition \ref{prop:properties-quantization} and Lemma \ref{lemma:sharp-Garding}.
\begin{proof}
Proofs for 1.-5. can be found in \cite{Bonthonneau-2} for non-exotic symbols. The arguments, however all transfer to exotic symbols. Note that the key argument in \cite{Bonthonneau-2} is that for a symbol on a cusp, in an interval at height $y_0$, the cusps can be rescaled such that one transfers the problem to an euclidean cylinder. The crucial point is that the symbols transform uniformly in $y_0$ under this rescaling (see \cite[Section 1.3]{Bonthonneau-2}). Furthermore, since we introduced a cutoff away from the diagonal in our quantization \eqref{eq:def-loc-quantization-cutoff}, we can rescale the whole operator from a neighbourhood of $y=y_0$ to a fixed Euclidean cylinder with uniform estimates. Much as in \cite{Bonthonneau-2}, the proof of boundedness and other estimates follow from the estimates holding on $\R^n$. It is also the case for the sharp G\r{a}rding estimate.

Let us say a word on the property 6. in Proposition~\ref{prop:properties-quantization}. Inspecting the formula \eqref{eq:def-loc-quantization}, we observe that in the $\theta$ variable, the kernel is just a Fourier transform of $\sigma$ in the $J$ variable. Such an operator commutes with $\partial_\theta$ and thus preserves Fourier modes. To be able to use this formula, we just need that the support of $f$ does not intersect the support of the cutoff function $\chi_\ell$ corresponding to compact charts, hence the condition that $g$ is supported in 
$\{y\geq C \mathsf{a}\}$.
\end{proof}

Following Lemma 1.8 in \cite{Bonthonneau-2}, we can prove that our operators actually act as pseudo-differential operators, and that our quantization is a quantization in the usual sense:
\begin{defprop}
Take $m\in S^0(M,\R)$ scalar. We let $\Psi^m(M,L)$ be the algebra of operators generated by operators of the form $\Op(\sigma)$ with $\sigma\in S_{\log}^m(M,L)$. We call them the algebra of \emph{semiclassical pseudodifferential operators} (or also just \emph{pseudodifferential operators} in short).

On $\Psi^m$, we have a principal symbol map $\sigma_m^0$ which is defined independently of the choice of quantization $\Op$ as a map $\sigma_m^0 : \Psi^m \to S_{\log}^m/ h S_{\log}^{m-1}$, with $\sigma^0(\Op(\sigma)) = [\sigma]$.

Once we have fixed a a quantization, we obtain by iterations a full symbol map $\sigma:\Psi^m \to S^m/h^\infty S^{-\infty}$.
\end{defprop}

\subsection{Semiclassical ellipticity and wavefront sets}
Let us recall the notions of wavefront set and ellipticity:
\begin{definition}\label{def:ell-WF}
Let $A\in \Psi^m(M,L)$. We say that $(x,\xi)\in \overline{T^\ast} M$ is not in the wavefront set $\WF_h(A)$ of $A$ if and only if $\|\sigma(A)(x',\xi')\| 
= \mathcal{O}( h^\infty \langle \xi' \rangle^{-\infty})$ in an open neighbourhood of $(x,\xi)$. 
We say that $A$ is \emph{microsupported} in a set $S\subset \overline{T^*}M$ iff 
$\WF_h(A)\subset S$.

For an order $m$ pseudor $A\in \Psi^m(M,L)$, we also define the $\delta$-elliptic set for some $\delta>0$:
\[
\Ell_{\delta}(A) = \{ (x,\xi) \ |\ \| \langle\xi\rangle^m \sigma(A)^{-1}\| < \delta^{-1}\},
\]
and we define the set of elliptic points by $\Ell(A) = \cup_{\delta>0} \Ell_\delta(A)$.

A family of distributions $u_h\in \mathcal D'(M,L)$ parametrized by $0<h\leq h_0$ is
called $h$-tempered if there is $N\in \N$ such that $\|u_h\|_{H^{-N}}=\mathcal O(h^{-N})$. For any $h$-tempered family of distributions $u_h$ we say that $(x,\xi)$ is \emph{not} in $\WF_h(u)$ if and only if there is a $A\in\Psi^0(M,L)$, $\delta>0$ such that $(x,\xi)\in \Ell_\delta(A)$ and 
\[
A u= \mathcal{O}_{H^\infty}(h^\infty).
\]
We let $\WF(u) = \WF_h(u) \cap \partial \overline{T^\ast} M$. As usual, we call $\WF$ the \emph{classical wavefront set}, and $\WF_h$ the \emph{semi-classical wavefront set}. The first measures the regularity in terms of $C^k$ spaces, while the second additionally measures a finer regularity as $h\to 0$.

Finally, we also have a concept of wavefront set for operators. Given an operator $K$ with kernel $K(x,x')$, we let
\[
\WF_h' K := \left\{(x,\xi;x',-\xi')\ |\ (x,\xi;x',\xi')\in \WF_h (K(\cdot,\cdot))\right\}\subset \overline{T^\ast}(M\times M).
\]
\end{definition}

When $A\in \Psi(M,L)$, $\WF'_{(h)}(A)$ is the image of $\WF_{(h)}(A)$ under the diagonal embedding $\overline{T^\ast} M \to \overline{T^\ast} (M\times M)$.

\begin{lemma}\label{lemma:property-WF'}
Let $K$ be an operator on sections of $L\to M$. Then $(x,\xi;x',\xi')\in T^\ast(M\times M)$ is \emph{not} in $\WF_h'(K)$ if and only if there are pseudors $A$ and $B$, $1$-elliptic respectively at $(x,\xi)$ and $(x',\xi')$ so that 
\[
A K B = \mathcal{O}_{H^{-\infty} \to H^\infty}(h^\infty).
\]
\end{lemma}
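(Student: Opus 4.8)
The plan is to prove Lemma~\ref{lemma:property-WF'} by a standard microlocal argument, reducing the characterization of $\WF_h'(K)$ in terms of conjugating cutoffs to the analogous statement for the ordinary wavefront set of the Schwartz kernel $K(x,x')$, which is built into Definition~\ref{def:ell-WF}. First I would recall the definitions: $(x,\xi;x',\xi')\notin\WF_h'(K)$ means, by construction, that $(x,\xi;x',-\xi')\notin\WF_h(K(\cdot,\cdot))$, and the latter is the semiclassical wavefront set of the distributional kernel $K$, viewed as a tempered family of distributions on $M\times M$ (with values in $L\boxtimes L^\ast$). So the assertion to prove is that $(x,\xi;x',-\xi')\notin\WF_h(K(\cdot,\cdot))$ iff there exist $A$, $B$ pseudors on $M$, $1$-elliptic at $(x,\xi)$ and $(x',\xi')$ respectively, with $AKB=\mathcal O_{H^{-\infty}\to H^\infty}(h^\infty)$.

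For the ``if'' direction I would argue as follows. Suppose such $A$, $B$ exist. The operator $AKB$ has Schwartz kernel $(AKB)(x,x')$, and I claim that $AKB=\mathcal O_{H^{-\infty}\to H^\infty}(h^\infty)$ forces the kernel of $AKB$ to be $\mathcal O(h^\infty\langle\cdot\rangle^{-\infty})$ in all Sobolev-type norms on $M\times M$, hence $\WF_h$ of that kernel is empty; then, since $A$ is elliptic at $(x,\xi)$ and $B^\ast$ (or the transpose of $B$) is elliptic at $(x',-\xi')$ when acting on the second factor, and since $AKB$'s kernel is $A$ applied in the first variable and $B$'s transpose applied in the second variable to $K(\cdot,\cdot)$, microlocal elliptic regularity on $M\times M$ gives that $K(\cdot,\cdot)$ itself is smoothing and $\mathcal O(h^\infty)$ microlocally near $(x,\xi;x',-\xi')$, i.e. $(x,\xi;x',-\xi')\notin\WF_h(K)$. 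I would make the bookkeeping precise by noting that the operator ``$A$ in the first variable, $B^t$ in the second'' is an element of $\Psi^0(M\times M,L\boxtimes L^\ast)$ with principal symbol $\sigma(A)(x,\xi)\otimes\sigma(B)(x',-\xi')$, which is elliptic at $(x,\xi;x',-\xi')$ by hypothesis; applying it to $K(\cdot,\cdot)$ gives a $\mathcal O(h^\infty)$-smoothing kernel, so by Definition~\ref{def:ell-WF} that point is not in $\WF_h(K(\cdot,\cdot))$.

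For the ``only if'' direction, suppose $(x,\xi;x',-\xi')\notin\WF_h(K(\cdot,\cdot))$. By the definition of the semiclassical wavefront set of a tempered family of distributions, there is an operator $\mathbb A\in\Psi^0(M\times M,L\boxtimes L^\ast)$, $1$-elliptic (after rescaling $\delta$) at $(x,\xi;x',-\xi')$, with $\mathbb A K(\cdot,\cdot)=\mathcal O_{H^\infty}(h^\infty)$. I would then take $\mathbb A$ of product form, $\mathbb A=A\otimes C$ with $A\in\Psi^0(M,L)$ $1$-elliptic at $(x,\xi)$ and $C\in\Psi^0(M,L^\ast)$ $1$-elliptic at $(x',-\xi')$, both with suitably small microsupport — this is possible because ellipticity at a point is an open condition and products of elliptic symbols stay elliptic. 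Setting $B:=C^t$ (the transpose, which is then $1$-elliptic at $(x',\xi')$ on sections of $L$), the operator $AKB$ has kernel $(A\otimes C)K(\cdot,\cdot)$, which is $\mathcal O_{H^\infty}(h^\infty)$ on $M\times M$. Finally I would invoke the Schwartz kernel theorem quantitatively: a kernel that is $\mathcal O(h^\infty)$ in every $H^N(M\times M)$-norm, with appropriate decay, defines an operator $\mathcal O_{H^{-\infty}\to H^\infty}(h^\infty)$; here the compact microsupports of $A$ and $B$ (which can be arranged, shrinking neighbourhoods) give the needed uniformity and make the decay-at-infinity harmless. This yields $AKB=\mathcal O_{H^{-\infty}\to H^\infty}(h^\infty)$.

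The main obstacle, and the only point requiring real care rather than routine citation, is the passage between ``$AKB$ is $\mathcal O(h^\infty)$ as an operator $H^{-\infty}\to H^\infty$'' and ``its Schwartz kernel is $\mathcal O(h^\infty)$ in $H^N(M\times M)$ for all $N$'' in both directions, on a non-compact manifold: one must check that the quantization-dependent decay of kernels away from the diagonal (which on the cusps is only controlled up to the cutoff $\chi^{\Op}$, cf. the discussion around~\eqref{eq:def-loc-quantization-cutoff}) does not spoil the equivalence. I would handle this by exploiting that $A$ and $B$ can be taken compactly microsupported, so all operators in sight are properly supported modulo $\mathcal O(h^\infty)$-smoothing remainders, and the relevant estimates localize to a relatively compact region of $M\times M$ where the standard equivalence of the two notions of $\mathcal O(h^\infty)$ applies; the uniform bounds from Proposition~\ref{prop:properties-quantization} then close the argument.
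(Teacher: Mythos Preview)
Your proposal is correct and follows the standard microlocal argument; the paper itself does not give a proof but simply cites \cite[Lemma 2.3]{Dyatlov-Zworski-16}, which contains essentially the argument you sketch. The one point worth tightening is the passage ``take $\mathbb A$ of product form'': you should spell out that once some $\mathbb A$ elliptic at $(x,\xi;x',-\xi')$ gives $\mathbb A K=\mathcal O(h^\infty)$, a parametrix construction lets you replace $\mathbb A$ by any operator microsupported in its elliptic set, in particular a tensor product $A\otimes C$ with small microsupport near that point --- but this is routine, and your localization remark (that $x,x'$ are fixed points of $M$, so $A,B$ can be taken with compactly supported Schwartz kernels) correctly handles the non-compactness issue.
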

\begin{proof} 
 See e.g. \cite[Lemma 2.3]{Dyatlov-Zworski-16}.
\end{proof}

\begin{proposition}[Elliptic regularity]\label{prop:Elliptic-regularity}
Take $P\in \Psi^k(M,L)$, $A\in \Psi^0(M,L)$ and $u$ a tempered family of distributions. Then
\begin{enumerate}
	\item Let $\delta > 0$ and $\WF_h(A)\subset \Ell_\delta(P)$, then there is $Q\in \Psi^{-k}(M, L)$ with $\WF_h(Q)\subset \WF_h(A)$ such that 
	\[
	A=QP+\mathcal O(h^\infty\Psi^{-\infty}).
	\]
	\item Let $\delta > 0, r\in \R$ and $\WF_h(A)\subset \Ell_\delta(P)$, then there is 
	a constant $C$ such that
	\[
\| Au\|_{H^r} \leq C_\delta \| Pu \|_{H^{r-k}} + \mathcal{O}_{H^\infty}(h^\infty).
	\]
	\item As a consequence,
	\[
\WF_h(u)\cap \Ell(P) \subset\WF_h(Pu)
	\]
\end{enumerate}
\end{proposition}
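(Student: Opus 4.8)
The plan is to prove the two standard consequences of ellipticity. The first is a quantitative elliptic estimate for a single pseudo-differential operator, and the second reformulates it as a statement about wavefront sets. Both follow the well-known microlocal parametrix argument; the only care needed concerns the noncompact cusp ends, but the symbol calculus from Proposition~\ref{prop:properties-quantization} is already built to handle this uniformly.

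First I would construct a microlocal parametrix for $P$ on the elliptic region. Since $\WF_h(A)\subset \Ell_\delta(P)$, I would pick a cutoff $\tilde A\in\Psi^0(M,L)$ which is $1$-elliptic on $\WF_h(A)$ and microsupported inside $\Ell_{\delta/2}(P)$, so that $A = \tilde A A + \mathcal O_{H^{-\infty}\to H^\infty}(h^\infty)$ (using that the difference has empty wavefront set). On the microsupport of $\tilde A$, the symbol $\sigma(P)$ is invertible with $\|\langle\xi\rangle^k\sigma(P)^{-1}\|$ bounded by $2/\delta$, so I would set $E := \Op(\langle\xi\rangle^{-k}\,[\text{smooth extension of }\langle\xi\rangle^k\sigma(P)^{-1}\,\tilde A])\in\Psi^{-k}(M,L)$ and use the product formula in Proposition~\ref{prop:properties-quantization}(1) to get $EP = \tilde A + h R_1 + \mathcal O(h^\infty)$ with $R_1\in\Psi^{-1+}$, or rather iterate the usual Neumann-series construction to improve the remainder: there is $E'\in\Psi^{-k}(M,L)$ with $E'P = \tilde A + \mathcal O_{H^{-\infty}\to H^\infty}(h^\infty)$ on the relevant region. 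Applying $A E'$ on the left and using $AE'P = A\tilde A + \mathcal O(h^\infty) = A + \mathcal O(h^\infty)$, together with the $L^2$/Sobolev boundedness of order-$(-k)$ operators from Proposition~\ref{prop:properties-quantization}(4), yields
\[
\|Au\|_{H^r} \le \|AE' (Pu)\|_{H^r} + \mathcal O(h^\infty)\|u\|_{H^{-N}} \le C_\delta \|Pu\|_{H^{r-k}} + \mathcal O(h^\infty),
\]
where the $\mathcal O(h^\infty)$ error is with respect to some fixed Sobolev norm of $u$ (allowed since $u$ is tempered). This is statement~(1).

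For statement~(2), I would argue by contrapositive. Suppose $(x,\xi)\in\Ell(P)$ and $(x,\xi)\notin\WF_h(Pu)$. Choose $\delta>0$ with $(x,\xi)\in\Ell_\delta(P)$ and a pseudor $B\in\Psi^0(M,L)$ which is $1$-elliptic at $(x,\xi)$ with $\WF_h(B)\subset\Ell_\delta(P)$ and small enough that $B(Pu) = \mathcal O_{H^\infty}(h^\infty)$, which is possible by the definition of $\WF_h(Pu)$ and shrinking the microsupport. Applying part~(1) with $A=B$ gives $\|Bu\|_{H^r}\le C_\delta\|Pu\|_{H^{r-k}}$; but I want $B(Pu)$ rather than $Pu$ itself on the right, so I would instead run the parametrix argument above to get $Bu = BE'(Pu) + \mathcal O(h^\infty)$ and then, choosing $E'$ microsupported in $\WF_h(B)$, commute to write $BE'(Pu) = B E' B_1 (Pu) + \mathcal O(h^\infty)$ with $B_1$ elliptic on $\WF_h(E')$ and microsupported where $B(Pu)=\mathcal O(h^\infty)$. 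This forces $Bu = \mathcal O_{H^\infty}(h^\infty)$, hence $(x,\xi)\notin\WF_h(u)$, which is the desired inclusion.

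The main obstacle is purely bookkeeping: making sure that all the cutoffs $\tilde A$, $E'$, $B_1$ are chosen with nested microsupports so that the parametrix identities hold with $\mathcal O_{H^{-\infty}\to H^\infty}(h^\infty)$ errors, and that the Neumann-series construction of $E'$ converges in the appropriate symbol class uniformly in the cusp. The latter is exactly where one uses that the Kohn--Nirenberg symbol calculus of Definition~\ref{def:symbol-classes} and Proposition~\ref{prop:properties-quantization} has uniform constants on admissible manifolds, so no new estimate is required beyond what is already established; I would simply cite Proposition~\ref{prop:properties-quantization}(1),(4),(5) at each step. Everything else is the standard compact-manifold argument as in \cite{Zworski-book}.
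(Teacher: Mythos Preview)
Your proposal is correct and follows essentially the same approach as the paper: a microlocal parametrix for $P$ on $\Ell_\delta(P)$ built from $\Op(\sigma(P)^{-1})$ and improved via a Neumann series, with the uniform symbol calculus of Proposition~\ref{prop:properties-quantization} handling the noncompact cusp ends and the possibly non-scalar principal symbol. Your treatment of part~(2) is in fact more explicit than the paper's, which simply says it is a ``direct consequence'' of part~(1); you correctly note that the global norm $\|Pu\|_{H^{r-k}}$ in part~(1) is not by itself local enough, and that one must go back to the microlocal identity $Au = AE'(Pu) + \mathcal{O}(h^\infty)$ with $E'$ microsupported near $\WF_h(A)$ to conclude.
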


\begin{proof}
1. follows from a standard inductive parametrix construction (see e.g. \cite[Proposition E.32]{Dyatlov-Zworski-book}). The  notion of $\delta$-elliptic set has been introduced precisely to assure that the construction yields symbol in the uniform symbol classes. 

2. follows from $\|Au\|_{H^r} = \|QPu\|_{H^r} + \mathcal O(h^\infty)$ after applying the uniform operator norm estimate (Proposition~\ref{prop:properties-quantization}(\ref{itm:L2_estimate})) to $Q$. 

For 3. assume that $(x,\xi)\in \Ell(P)$, then this particular point is also in $\Ell_\delta(P)$ for some $\delta>0$. Assume furthermore 
$(x,\xi)\notin\WF_h(Pu)$. By definition there is $B\in \Psi^0(M, L)$ with $(x,\xi)\in \Ell_\delta(B)$ such that $BPu\in \mathcal O_{H^\infty}(h^\infty)$. Now we apply (2) to the operator $B$ and use that $(x,\xi)\in \Ell_\delta(BP)$. We thus get $A\in \Psi^0$ with $(x,\xi)\in \Ell_\delta(A)$ fulfilling $Au\in \mathcal O_{H^\infty}(h^\infty)$.
\end{proof}
\subsection{Propagation of singularities and other estimates}

Throughout the paper, to obtain results on the wavefront sets of several operators, we have used lemmas that were almost identical to some lemmas in \cite{Dyatlov-Zworski-16}. In this section we give the versions on admissible cusp manifolds. 

For the most part, the proofs given in the appendix of \cite{Dyatlov-Zworski-16} are also valid in our case. As a consequence, this is a cursory review of some special technicalities, destined to the reader already acquainted with the detail of the arguments in \cite{Dyatlov-Zworski-16}.

\begin{remark}
The only difference in our setting from the usual quantization on compact manifolds is that we do \emph{not} have the conclusion of Beal's theorem, i.e, we cannot incorporate smoothing but not properly supported $\mathcal{O}(h^\infty)$ remainders in the symbols. However, that is not a problem, because Beal's theorem is not invoked in \cite{Dyatlov-Zworski-16}.
\end{remark}

Since we will refer to \cite{Dyatlov-Zworski-16} for details, we explain the correspondance between our lemmas and theirs. Proposition \ref{prop:wavefront-R_Q} is where the following lemmas will be used. It is the equivalent of the wavefront set part of proposition 3.4 in \cite{Dyatlov-Zworski-16}. Its proof employs Lemma \ref{lemma:property-WF'} and Propositions \ref{prop:Elliptic-regularity}, \ref{lemma:Propagation-of-singularities} and \ref{prop:Sink-estimate}.
Lemma \ref{lemma:property-WF'} is equivalent to their lemma 2.3; Proposition \ref{prop:Elliptic-regularity} is similar to their proposition 2.4, and Proposition \ref{prop:Sink-estimate} to their proposition 2.6.
Now, we turn to the most involved one, the Propagation of Singularity Lemma \ref{lemma:Propagation-of-singularities}, equivalent to their Lemma 2.5.
\begin{lemma}[Propagation of singularities]\label{lemma:Propagation-of-singularities}
Let $\mathbf{X}\in \Psi^1(M,L)$ have a scalar principal symbol of the form
\[
[ ip - q] \in S^1/ h S^0,
\]
with $p,q$ real, and $q\geq 0$. Also assume that $p\in S^1_{cl}(M)$. Take a tempered family $u_h$, and $\delta>0$.
\begin{enumerate}
	\item Consider $A,B,B_1\in \Psi^0(M,L)$, such that $B,B_1$ $\delta$-control $A$ in time $T_0$. That is, whenever $(x,\xi) \in\WF_h(A)$, there exists $0<T < T_0$ such that $e^{T H_p}(x,\xi)\in \Ell_\delta(B)$ and $e^{t H_p}(x,\xi)\in \Ell_\delta(B_1)$ for all $t\in [0,T]$. Then for each weight $m\in S^0(M,\R)$,
	\[
\| A u \|_{H^m(M,L)} \leq C_\delta \| B u \|_{H^m(M,L)} + \frac{C_\delta}{h} \| B_1 \mathbf{X} u\|_{H^{m}(M,L)} + \mathcal{O}(h^\infty).
	\]
	\item As a consequence, if $(x,\xi) \notin\WF(u)$ and $e^{-t H_p}(x,\xi)\notin\WF(\mathbf{X}u)$ for $t\in [0,T]$, $e^{-T H_p}(x,\xi)\notin\WF(u)$.
\end{enumerate}
\end{lemma}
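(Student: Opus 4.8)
The plan is to adapt the standard positive-commutator proof of propagation of singularities (as in \cite[Lemma 2.5]{Dyatlov-Zworski-16}), being careful that all the pseudo-differential tools and the sharp G\r{a}rding inequality we use are the uniform-in-the-cusp versions from the previous subsections. Since (2) follows from (1) by a routine covering argument — apply (1) with $A$ microsupported near $(x,\xi)$ and $B$, $B_1$ microsupported near the backward flowout, using that $\WF(u)$ and $\WF(\mathbf X u)$ are closed and the flowout segment $\{e^{-tH_p}(x,\xi): t\in[0,T]\}$ is compact — I will concentrate on (1).

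First I would reduce to an estimate on $L^2$-based spaces: conjugating by $\Op(\langle\xi\rangle^m)$ (or rather by a quantization of the anisotropic weight $e^{-\mathscr r G}$ when $m$ is anisotropic, using Lemma~\ref{lem:regularity_of_H_r}), the operator $\mathbf X$ is transformed into an operator of the same structure $ip - q' + \mathcal O(h\Psi^0)$ with $q'\geq 0$ still nonnegative modulo $\mathcal O(h)$, so it suffices to treat the case $m=0$. Next I would build the escape/propagation symbol: for the compact time interval $[0,T_0]$ and each $(x,\xi)\in\WF_h(A)$ there is $T\leq T_0$ with $e^{TH_p}(x,\xi)\in\Ell_\delta(B)$; by a compactness/partition argument one constructs $g\in S^0_{cl}(M)$, supported in a slightly fattened backward flowout of $\WF_h(A)$ contained in $\Ell_\delta(B_1)$, positive on $\WF_h(A)$, and such that along the flow $H_p g \leq -c\,a^2 + C\,b^2$ where $a = \sigma(A)$, $b=\sigma(B)$, $c>0$. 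Here the only subtlety over the compact case is uniformity: because $p\in S^1_{cl}(M)$ and the quantization has uniform symbol bounds on the cusps (Proposition~\ref{prop:properties-quantization}), $g$ and all its symbolic derivatives can be taken uniformly bounded, and the flow for finite time $T_0$ moves compactly-microsupported data to compactly-microsupported data with uniform constants.

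Then I would run the commutator argument: set $G = \Op(g)$ (self-adjoint up to $\mathcal O(h\Psi^{-1})$) and compute $\frac 2h\Im\langle \mathbf X u, G u\rangle = \langle \tfrac ih[\mathbf X, G]u,u\rangle + \tfrac 2h\langle (\Re\mathbf X) G u, u\rangle + \ldots$, whose principal part is $\Op(H_p g) + (\text{terms from }q\geq 0)$; the damping term $q$ contributes with a favorable sign (using the not-so-sharp G\r{a}rding Lemma~\ref{lem:not_so_sharp_Garding} applied to $q\geq0$), and $\Op(H_pg)\leq -c\,A^\ast A + C\,B^\ast B + \mathcal O(h)$ by the sharp G\r{a}rding inequality Lemma~\ref{lemma:sharp-Garding}. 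Rearranging yields $\|Au\|_{L^2}^2 \leq C\|Bu\|_{L^2}^2 + Ch^{-1}|\langle \mathbf X u, Gu\rangle| + Ch\|u\|^2 + \mathcal O(h^\infty)$, and since $g$ is microsupported in $\Ell_\delta(B_1)$ we can insert $B_1$ (up to $\mathcal O(h^\infty)$ by the elliptic parametrix, Proposition~\ref{prop:Elliptic-regularity}) and bound the middle term by $Ch^{-2}\|B_1\mathbf X u\|^2 + (\text{small})\|Au\|^2$, then absorb; taking square roots gives the claim. The main obstacle I anticipate is purely bookkeeping rather than conceptual: ensuring that every constant produced — in the construction of $g$, in the sharp G\r{a}rding remainder, and in the elliptic parametrix — is genuinely uniform over the noncompact cusp ends, which is exactly what the uniform quantization of \cite{Bonthonneau-2} and the bounded-geometry hypotheses guarantee, so that the propagation estimate holds with a single constant $C_\delta$ independent of the point being propagated.
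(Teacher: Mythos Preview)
Your overall strategy—reduce to $L^2$, build an escape symbol $g$ with $H_pg\leq -c\,a^2+C\,b^2$, and run the positive-commutator/sharp G\r{a}rding argument as in \cite{Dyatlov-Zworski-16}—is the same one the paper uses, and the commutator estimate goes through exactly as you outline once $g$ is in hand. The gap is in the construction of $g$, which you describe as a ``compactness/partition argument'' with uniformity following from ``bounded-geometry hypotheses.'' This is not quite right: $M=S^\ast N$ does \emph{not} have uniformly positive injectivity radius, because the $\theta$-tori in the cusp shrink like $e^{-r}$. Consequently the standard way of building $g$ from uniform flow boxes (tubular coordinates along $H_p$) does not go through directly high in the cusp, and the issue is not merely bookkeeping.

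The paper's proof handles exactly this point. After reducing to $A,B,B_1$ microsupported far up in a single cusp, it exploits that $\partial_\theta p=\mathcal{O}(y^{-\infty})$ to replace $p$ by its $\theta$-average and make all the relevant symbols $\theta$-invariant. This allows one to project out the $\theta$-variable and work on the quotient $\mathsf{M}^0=\R^d\times T^\ast(\R\times\FibreM)$, which \emph{does} have positive injectivity radius. On $\mathsf{M}^0$ one separates the region where the projected vector field $H_p^0$ is small (handled by the elliptic estimate, since the flow barely moves) from the region where it is bounded below, where uniform tubular coordinates exist and the escape function can be built by the explicit formula $f=\sum_{\mathrm{coord}(z,\tau)=(x',\xi')}\chi(z)\psi(\tau)$, with a uniform bound on the number of branches coming from local injectivity. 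This $\theta$-projection step is the substantive content you are missing; once you add it, the rest of your outline is correct.
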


The constants are $\mathcal{O}(1)e^{\mathcal{O}(T_0)}$, but we will not need this fact. One can mimick the proof in \cite{Dyatlov-Zworski-16} step by step. Be mindful that $\Re \mathbf{P}$ has to be replaced by $-\Im \mathbf{X}$, and $\Im \mathbf{P}$ by $\Re \mathbf{X}$.

\begin{proof}
In the whole proof, when working on subsets of $\overline{T^\ast} M$, we will be working with the notion of distance obtained on $\overline{T^\ast} M$ obtained by pulling back the distance on $B(0, 1) \subset T^\ast M$ by the map $\comp$ defined in Definition~\ref{def:C-infinity-structure-compactified}. Since $p\in S^1_{cl}$, $e^{tH_p}$ is a smooth flow for this structure. Additionally, we can always assume that the symbols of $A$, $B$ and $B_1$ are in $S^0_{cl}$, i.e smooth up to the boundary of $\overline{T^\ast} M$.

To start with, applying a partition of unity argument, we can assume that $A$ is microsupported in a ball with small radius $\epsilon_0>0$. Then we can also assume that $B$ is microsupported in a $3\epsilon_0$-neighbourhood of the image $e^{T H_p} (\WF_h(A))$ for some $T\in [0,T_0]$, and $B_1$ is microsupported in a $3\epsilon_0$-neighbourhood of the union $\cup_{t\in[0,T]} e^{tH_p}(\WF_h(A))$.

Since the proof in \cite{Dyatlov-Zworski-16} is based on local considerations along the trajectories of the flow in bounded time, and we are \emph{not} seeking to determine the behaviour of the constants when the time $T_0$ goes to infinity, we already observe that the estimate holds if $A$ is supposed to be microsupported in a fixed compact set of $M$, with constants that depend on the compact set. As a consequence, we can restrict our attention to the case when $A$, $B$, $B_1$ are supported in a fibred cusp end $M_\ell$, above a set of the form $\{ y > y_0\}$ with $y_0$ arbitrary large, and satisfy symbol estimates with constants \emph{not} depending on $y_0$.

From the structure of symbols estimates in the cusps --- see equation \eqref{eq:structure-symbols-cusp} --- we deduce that we can find operators $\widetilde{B}$, $\widetilde{B}_1$ such that 
\[
\WF_h(\widetilde{B}) \subset \Ell_{\delta/2}(B),\quad e^{T H_p}(\WF_h(A)) \subset \Ell_{\delta/2}(\widetilde{B})\text{ and }[\partial_\theta,\widetilde{B}] =0.
\]
and similarly for $\widetilde{B}_1$ and $B_1$. The idea is that the symbols of $B$ and $B_1$ are almost invariant under rotations in the $\theta$ variable high in the cusp, so that we can forget that variable altogether. Indeed, then we replace $A$ by $\tilde{A}$ such that $|A| \leq \tilde{A}$, and $\tilde{A}$ also is invariant under rotations, and its wavefront set takes the form
\[
\WF_h(\tilde{A})=  \{ (y',\theta,\PointM')\ |\ (y',\theta_0,\PointM')\in B((y,\theta_0,\PointM) ; 2\epsilon_0), \theta \in \R^d/\Lambda_Z \}.
\]
($\PointM$ designs a generic point in the generic fiber $\mathsf{M}$ of $M\to N$). 

Let us do some more reduction. The vector field $H_p$ acts in a uniform $C^\infty$ fashion on $\overline{T^\ast} M$, and as such $|\nabla H_p|_{L^\infty}<\infty$. Additionally, by symbol estimates, we know that $\partial_\theta p = \mathcal{O}(y^{-\infty})$. As a consequence, for $y$ large enough, an escape function for $\overline{p} = \int p d\theta$ is also an escape function for $p$. In other words, we can assume that $p$ does not depend on $\theta$. Then $H_p$ commutes with $\partial_\theta$. 

Consider that in the cusp, we have an additional fiber structure. Indeed, write $M_\ell = (\R^d/\Lambda_\ell)_\theta \times \R_r \times \FibreM_{\PointM}$. Then we can see $T^\ast M_\ell$ as a fiber bundle, 
\[
\begin{split}
\proj : T^\ast M_\ell = \left[(\R^d/\Lambda_\ell)_\theta\times \R^d\right] &\times T^\ast\left[ \R_r \times \FibreM_{\PointM}\right] \\
 	&\to \mathsf{M}^0:= \R^d \times T^\ast\left(\R \times \FibreM\right)
\end{split}
\]
by forgetting the $\theta$ variable. Seeing $\mathsf{M}^0$ as a vector bundle over $\R\times \FibreM$, we can also extend $\proj$ as a map $\overline{T^\ast}M_\ell \to \overline{\mathsf{M}^0}$. Since $H_p$ commutes with $\partial_\theta$, it projects to a vector field $H_p^0$ on the base $\overline{\mathsf{M}^0}$. Then, for $\delta'>0$, let
\[
U_{\delta'} :=\{ (x,\xi)\in \overline{T^\ast} M_\ell\ |\ |H_p^0(x,\xi)| < \delta' e^{-C T_0} \},
\]
with $C/|\nabla H_p|_{L^\infty} >1$. These are $\theta$ invariant sets.

Provided $C$ was chosen large enough, when $(x,\xi)\in U_{\delta'}$, $e^{t H_p}(x,\xi)\in U_{e^{CT_0} \delta'}$ for $t\in [0,T_0]$, so that \[
d(\proj(x,\xi), \proj(e^{t H_p}(x,\xi)))= \mathcal{O}(\delta').
\]
Since the symbol estimates on the symbol of $B$ are uniform over the whole manifold, we deduce that when $e^{T H_p}(x,\xi) \in \Ell_\delta(B)$ for some $T\in[0,T_0]$ and $(x,\xi) \in U_{\delta'}$, then $(x,\xi) \in \Ell_{\delta/2}(B)$, provided $\delta'$ is small enough --- and smaller and smaller as the symbol of $B$ is allowed to become more singular. In such a case, we can apply directly the elliptic estimate (proposition \ref{prop:Elliptic-regularity}) to conclude.

Now, we can concentrate on the case when $\WF_h(A) \cap U_{\delta'} = \emptyset$. But the injectivity radius of $\R \times \FibreM$ is positive, and the vector field $H_p^0$ is $\mathscr{C}^\infty$. As a consequence, on the complement of $U_{\delta'}$, we can apply a formal form for non-vanishing vector fields to obtain tubular coordinates.

We can build a local section of the flow $z\to (x(z),\xi(z))$ from a small open set $U_{tube}\subset \R^{\ell}$ around $0$ to $\overline{\mathsf{M}^0}$ with $(x(0),\xi(0))=(x,\xi)$, and a local diffeomorphism:
\[
\begin{split}
\coord: (z,\tau)\in U_{tube}\times &]-1/2, T_0 + 1/2[ \\
			&\mapsto e^{\tau H_p^0}(x'(z)) \in \proj(U_{\delta' e^{-CT_0}}^c).
\end{split}
\]
We can choose these coordinates so that they satisfy $\mathscr{C}^k$ estimates that do not depend on the central point $(x,\xi)$, and the size of the open set $U_{tube}$ is fixed also indepently of $(x,\xi)$. If the point $(x,\xi)$ is close to a periodic orbit, this map is not injective, but the map is injective on each set of the form $\{ |\tau - \tau_0| < \delta''\}$ for $\delta''>0$ small enough.

Consider a function $\chi \in C^\infty_c(U_{tube})$ equal to $1$ around $0$, and $\psi\in C^\infty_c(]-1/2, T + 1/2 [)$ such that $\psi(\tau)> 1$ for $\tau \in [0,T_0]$, $\psi \geq 0$ everywhere, and $\psi'(\tau) \geq C \psi(\tau)$ for $\tau \in [-1/2, T - 1/2]$. Finally, let 
\begin{equation}\label{eq:def-escape-prop-sing}
f(x',\xi') := \sum_{\coord(z,\tau) = (x',\xi')} \chi(z) \psi(\tau).
\end{equation}
When the trajectory of $(x,\xi)$ is sufficiently far from periodic trajectories, the sum is reduced to $1$ element, but there \emph{may} be periodic points. Now, we need to check that $f$ thus defined is satisfies symbol estimates independently of $(x,\xi)$. There are two things to verify. First, since the tubular coordinates were constructed with uniform $\mathscr{C}^n$ norms each branch in the equation \eqref{eq:def-escape-prop-sing} satisfies uniform $\mathscr{C}^n$ estimates. Then, we need to check that there are a finite number of such branches. But from the local injectivity of the tubular coordinates, the sum has at most $T/\delta''$ non-vanishing terms.

Now that we have an escape function adapted to problem, the rest of the proof in \cite{Dyatlov-Zworski-16} follows through.
\end{proof}
Before going to the equivalent of Proposition 2.6 in \cite{Dyatlov-Zworski-16}, 
let us recall the definition of radial sinks:
\begin{definition}\label{def:sources-sinks}
Let $L$ be a conic subset of $T^\ast M \setminus \{0\}$. Assume that it is 
invariant under $\Phi_t$. Also assume that for some $\epsilon>0$ its $\epsilon$-conic 
neighbourhood $U_\epsilon$ is such that if $\kappa$ is the projection on 
$\partial \overline{T^\ast} M$,
\[
d( \kappa(e^{tH_p} U), \kappa(L)) \to 0\text{ as } t\to +\infty,
\]
and for some constant $C_0>0$, $|e^{tH_p}(x,\xi)| > C e^{C_0 t}|\xi|_x$ 
whenever $(x,\xi)\in U$. Then $L$ is a \emph{radial sink}. 
\end{definition}

Note that $E^*_u \subset T^*M$ is a radial sink (cf. Lemma~\ref{lem:S_star_M_dynamic}).
Now we can state the high regularity radial sink estimate analogous to 
\cite[Prop 2.6]{Dyatlov-Zworski-16} (note that their terminology of sink and source
is reversed compared to ours, as they propagate in the opposite time direction). We will not introduces sources, since we will not use them.
\begin{proposition}[Sink estimate]\label{prop:Sink-estimate}
Let $\mathbf{X}$ be as in lemma \ref{lemma:Propagation-of-singularities}. Let $L$ be a radial sink. Then there exists $k_0>0$ such that for some $\epsilon>0$,
\begin{enumerate}
	\item For all $C\in \Psi^0$, with $\kappa(L)\subset ell_\epsilon(C)$, there exists $C_1\in \Psi^0$ also $\epsilon$-elliptic around $\kappa(L)$ such that whenever $u$ is tempered and $k\geq k_0$,
	\[
C_1 u\in H^{k_0}\ \Rightarrow\ \| C_1 u\|_{H^k} \leq C h^{-1} \| C \mathbf{X} u\|_{H^k} + \mathcal{O}(h^\infty).
	\]
	\item As a consequence, if $C u \in H^{k_0}$ and $\WF(\mathbf{X} u)\cap \kappa(L) = \emptyset$, then $\WF(u)\cap \kappa(L) = \emptyset$.
\end{enumerate}
\end{proposition}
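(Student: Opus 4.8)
The plan is to follow the standard positive-commutator argument for radial estimates, as in \cite{Dyatlov-Zworski-16}, Proposition 2.6, while using the uniform cusp-adapted quantization and symbol classes developed in this appendix to ensure that all estimates hold uniformly in the cusp. Since the statement is a high-regularity (``above the threshold'') estimate at a radial sink, the key is to build a suitable escape/weight function near $\kappa(L)$ that is increasing along the flow towards the sink, and combine it with the sink hypothesis $|e^{tH_p}(x,\xi)| > Ce^{C_0 t}|\xi|_x$ to beat the loss coming from differentiating $\langle\xi\rangle^k$.

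\textbf{Step 1: Localization and reduction.} First I would, as in the propagation lemma, use a partition of unity to reduce to the case where $C_1$ is microsupported in a small conic neighbourhood $U_\epsilon$ of $L$ on which the contraction estimate of Definition~\ref{def:sources-sinks} holds. Away from a fixed compact set, everything takes place inside a fibred cusp end $M_\ell$; since $p\in S^1_{cl}(M)$, $H_p$ acts as a uniform $\mathscr{C}^\infty$ vector field on $\overline{T^\ast} M$, and the symbol estimates of Section~\ref{sec:symbols-on-cusps} give constants independent of the height $y$. As in the proof of Lemma~\ref{lemma:Propagation-of-singularities}, I can arrange that all the cutoffs involved are $\partial_\theta$-invariant high in the cusp (the symbol of $p$ being $\theta$-independent up to $\mathcal{O}(y^{-\infty})$), so that the analysis descends to the base $\overline{\mathsf{M}^0}$ and there is no obstruction from non-compactness.

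\textbf{Step 2: The commutant and the positive commutator.} Following Dyatlov--Zworski, I would choose a weight of the form $a = \chi(x,\xi)\langle\xi\rangle^{k}$, where $\chi\in S^0_{cl}$ is supported in $U_\epsilon$, equals $1$ near $\kappa(L)$, and is chosen so that $H_p(\log\chi)\le 0$ on $U_\epsilon$ with $H_p(\log\chi)$ strictly negative off a neighbourhood of $\kappa(L)$. Using that $L$ is a \emph{sink}, $H_p\log\langle\xi\rangle \ge C_0 + o(1)$ near $\kappa(L)$, so that
\[
H_p(\log a) \;=\; k\,H_p\log\langle\xi\rangle + H_p\log\chi \;\ge\; kC_0/2 > 0
\]
microlocally near $\kappa(L)$, once $k\ge k_0$ for an appropriate threshold $k_0$ related to the subprincipal term $-\Im\mathbf X = q + \mathcal{O}(h)$ and the rate $C_0$. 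Then I would consider $\Re\langle \Op(a)^{*}\Op(a)\,\mathbf X u, u\rangle$, expand the commutator $[\mathbf X,\Op(a)^*\Op(a)]$ using Proposition~\ref{prop:properties-quantization}(2) (the principal part being $\frac{h}{i}\Op(\{p,a^2\})$, with $\{p,a^2\}=2a^2 H_p(\log a)$), and apply the sharp G\r{a}rding inequality Lemma~\ref{lemma:sharp-Garding} to the positive part. This yields, after absorbing the $q\ge0$ damping term and controlling error terms by $C$ (microlocally near $\kappa(L)$, elliptic there) and $C_1$-microlocalized lower-order pieces,
\[
\|C_1 u\|_{H^k}^2 \;\le\; \frac{C}{h}\|C\mathbf X u\|_{H^k}\,\|C_1 u\|_{H^k} + C h^{-2}\|C\mathbf X u\|_{H^k}^2 + \mathcal{O}(h^\infty)\|u\|^2,
\]
provided the a priori regularity $C_1 u\in H^{k_0}$ is available to justify that the relevant pairings are finite; an iteration/bootstrap in the regularity index (raising $k_0$ to $k$ in finitely many steps, as in \cite{Dyatlov-Zworski-16}) removes the a priori assumption beyond $H^{k_0}$. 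Young's inequality then gives the stated bound, and part (2) follows by letting $k\to\infty$ and noting that the $\mathcal{O}(h^\infty)$ and $C\mathbf X u$ terms vanish in the wavefront statement.

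\textbf{Main obstacle.} The delicate point, exactly as in the propagation lemma, is verifying that the weight $a$ (and in particular the escape function $\chi$ near the sink) can be constructed with symbol seminorms uniform in the cusp, i.e. that the contraction rate $C_0$ and the uniform $\mathscr{C}^\infty$ control of $H_p$ on $\overline{\mathsf{M}^0}$ really let one run the standard construction of a flow-monotone cutoff near $\kappa(L)$ with height-independent constants. This is where the $\partial_\theta$-reduction and the bounded-geometry of the Kohn--Nirenberg metric $\overline g$ (Lemma~\ref{lemma:bounded-curvature-Kohn-Nirenberg}) are used; once that is in place, the rest is a line-by-line transcription of \cite[Prop.~2.6]{Dyatlov-Zworski-16}, with $\Re\mathbf P$ replaced by $-\Im\mathbf X$ and $\Im\mathbf P$ by $\Re\mathbf X$, so I would not reproduce those computations in detail.
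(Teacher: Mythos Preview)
Your proposal is correct and follows essentially the same route as the paper: the paper's proof is a one-paragraph reference to \cite[Prop.~2.6]{Dyatlov-Zworski-16}, noting that the only new ingredient is the escape-function construction near the sink (their ``Lemma~C.1''), which goes through uniformly in the cusp precisely because the definition of radial sink here requires a \emph{uniform} $\epsilon$-neighbourhood, and that the remaining reductions are those already carried out in the proof of the propagation lemma. Your write-up expands this into the explicit positive-commutator computation and correctly isolates the uniform construction of the weight as the only point needing care; one minor caveat is the sign of $H_p(\log\chi)$ near a sink (trajectories flow \emph{into} $\kappa(L)$, so $\chi$ increases along the flow and $H_p\chi\ge 0$ where $0<\chi<1$), but this does not affect the argument.
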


\begin{proof}
Inspecting the proof in \cite{Dyatlov-Zworski-16}, the arguments are very similar to those in the proof of lemma \ref{lemma:Propagation-of-singularities}. The only novelty is the introduction of a lemma `C.1' on the construction of escape functions. These escape functions are simplified versions of the escape function we built in section \ref{sec:escape-function}, which itself  is adapted from \cite{Faure-Sjostrand-10}. Since we have put in the definition of sinks that the neighbourhood $U$ is actually a uniform $\epsilon$-neighbourhood, the constructions are valid.

\end{proof}

\def\dbar{\leavevmode\hbox to 0pt{\hskip.2ex \accent"16\hss}d} \def\cprime{$'$}
  \def\cprime{$'$}

\end{document}